\def\emp{\emptyset}
\def\Xg{X_{\kern-0.07em{g}}}
\def\Win{W^{\kern+0.03em{0}}}			
\def\Wni{W^{\kern+0.03em{00}}}			
\def\T{\mathbf{T}}
\def\TH{\T_{H}}
\def\A{\mathbf{A}}
\def\RG{R_{H}}
\def\tw{\widetilde}
\def\Hid{\mathbf{H}}
\def\p{\mathfrak{p}}
\def\Sym{\mathrm{Sym}}
\def\Laa{\mathcal{S}}
\def\xbar{x'}
\def\ybar{y'}
\def\H{\mathcal{H}}
\def\n{\mathfrak{N}}
\def\GSp{\mathrm{GSp}}
\def\Spec{\mathrm{Spec}}
\def\Res{\mathrm{Res}}
\def\Ind{\mathrm{Ind}}
\def\Ker{\mathrm{Ker}}
\def\Gal{\mathrm{Gal}}
\def\Frob{\mathrm{Frob}}
\def\GL{\mathrm{GL}}
\def\PGL{\mathrm{PGL}}
\def\Hom{\mathrm{Hom}}
\def\End{\mathrm{End}}
\def\Tang{\mathrm{Tan}}
\def\Aut{\mathrm{Aut}}
\def\Ind{\mathrm{Ind}}
\def\Inn{\mathrm{Inn}}
\def\Out{\mathrm{Out}}
\def\Proj{\mathrm{Proj}}
\def\Ad{\mathrm{ad}}
\def\loc{\mathrm{Local}}
\def\glob{\mathrm{Global}}
\def\Sf{\mathfrak{S}}
\def\Z{\mathbf{Z}}
\def\D{\mathcal D}
\def\Ddet{\D^{\mathrm{det}}}
\def\Q{\mathbf{Q}}
\def\Qbar{\overline{\Q}}
\def\Kbar{\overline{K}}
\def\Gal{\mathrm{Gal}}
\def\eps{\epsilon}
\def\G{\mathcal{G}}
\def\R{\bf{R}}
\def\Rrhobar{R}
\def\OL{\mathcal{O}}
\def\La{\Lambda}
\def\Lo{\mathcal{L}}
\def\m{\mathfrak{m}}
\def\Cat{\mathcal{C}}
\def\Prho{P\kern-0.1em{\rho}}
\def\Wg{W_g}
\def\Hc{\langle H,c \rangle}
\def\Hcb{H^1_{\mathrm{cusp}}}
\def\Hcc{H^1}
\def\even{even}			     
\def\HS{H^1_{\Sigma, (p)}(K,W)}     
\def\HN{H^1_{\Sigma,f}(K,W)}         
\def\Yin{Y^0}			     
\def\Yni{Y^{00}}		     
\def\Yiota{Y^{\iota = - 1}}
\def\Vdag{V_{\lambda}}
\def\Fg{\widetilde{F}}
\def\epsg{\epsilon_{K,g}}
\def\twphi{\widetilde{\phi}}
\def\twpsi{\widetilde{\psi}}
\def\twPsi{\widetilde{\Psi}}
\def\im{\mathrm{Im}}
\def\Remark{\noindent \bf{Remark}\rm:\ }
\def\Remarks{\noindent \bf{Remarks}\rm:\ }
\newcounter{Lcount}
\def\rhou{\rho^{univ}_{\Sigma}}
\def\r{\wp}
\def\Hom{\mathrm{Hom}}
\def\K{\mathbf{K}}
\def\L{\mathbf{L}}
\def\SL{\mathrm{SL}}
\def\q{\mathfrak{q}}
\def\ad{\mathrm{ad}^0(\rhobar)}
\def\F{\mathbf{F}}
\def\C{\mathbf{C}}
\def\N{\mathcal{N}}
\def\rhobar{\overline{\rho}}
\def\eps{\epsilon}
\def\Tang{\mathrm{Tan}}
\def\PTang{P\mathrm{Tan}}
\def\IR{\mathrm{Irr}}
\newcommand{\textcyr}[1]{
{\fontencoding{OT2}\fontfamily{wncyr}\fontseries{m}\fontshape{n}
\selectfont #1}}
\newcommand{\Sha}{{\mbox{\textcyr{Sh}}}}
\newtheorem{theorem}{Theorem}[section]
\newtheorem{corollary}[theorem]{Corollary}
\newtheorem{definition}[theorem]{Definition}
\newtheorem{conjecture}[theorem]{Conjecture}
\newtheorem{example}{Example}
\newtheorem{hypothesis}{Hypothesis}
\newtheorem{lemma}[theorem]{Lemma}
\newtheorem{proposition}[theorem]{Proposition}
\newtheorem{sublemma}{Sublemma}
\begin{document}

\title{Nearly Ordinary Galois Deformations over Arbitrary Number Fields\footnote{2000 AMS subject classification: 11F75, 11F80. Keywords: Galois deformations, automorphic forms}}
\author{Frank Calegari\footnote{Supported in part by
the American Institute of Mathematics, and the National Science Foundation} \and Barry Mazur\footnote{Supported in part by
the National Science Foundation}}
\maketitle

{\small
\abstract{Let $K$ be an arbitrary number field, and let $\rho: \Gal(\Kbar/K) 
\rightarrow GL_2(E)$
 be a nearly ordinary irreducible geometric Galois representation. In this paper, we study the nearly ordinary deformations of $\rho$. When $K$ is totally real and $\rho$ is modular, results of Hida imply that the nearly ordinary deformation space associated to $\rho$ contains a Zariski dense set of points corresponding to``automorphic'' Galois representations. We conjecture that if $K$ is
 \emph{not} totally real, then this is never the case, except in three exceptional cases, corresponding to: 
 $(1)$ ``base change'', $(2)$ ``CM'' forms, and $(3)$ ``even'' representations. The latter case conjecturally can only occur if the image of $\rho$ is finite. Our results come in two flavours. First, we prove a general result for Artin representations, conditional on a strengthening of Leopoldt's conjecture. Second, when 
$K$ is an imaginary quadratic field, we prove an unconditional result that implies the existence of ``many'' positive dimensional components (of certain deformation spaces) that do not contain infinitely many classical points. Also included are some speculative remarks about ``$p$-adic functoriality'', as well as some remarks on how our methods should apply to $n$-dimensional representations of 
$\Gal(\Qbar/\Q)$ when $n > 2$.}}

\section{Introduction}
\label{section:introduction}
Let $K$ be a number field,     $\Kbar/K$ be an algebraic closure of $K$,
and $p$ a prime that splits completely in $K$.
Fix $$\rho: \Gal(\Kbar/K) \rightarrow \GL_2(E),$$  a continuous 
``nearly ordinary" (see Definition~\ref{no}  below) absolutely irreducible $p$-adic
Galois representation unramified outside a finite set of places of $K$, where $E$ is a field extension of $\Q_p$.

Choose a lattice for the representation $\rho$, and let
$\rhobar: \Gal(\Kbar/K) \rightarrow \GL_2(\OL_E/\m_E)$ denote the associated
residual representation.  Fixing $S$ a finite set of places of $K$ that include the primes dividing $p$, the primes of ramification for $\rho$, and all archimedean places, we may view $\rhobar$ as homomorphism
$$\rhobar: G_{K,S} \rightarrow \GL_2(\OL_E/\m_E)$$ where $G_{K,S}$ is the quotient group of  $\Gal(\Kbar/K)$ obtained by division by the closed normal subgroup generated by the inertia groups at all primes of $K$ not in $S$.

 Let us suppose, for now, that $\rhobar$ is
absolutely irreducible. Then we may form
 $\Rrhobar$, the universal deformation ring  
of  continuous nearly ordinary deformations of the Galois representation 
$\rhobar$ (unramified outside $S$); this is a complete noetherian local ring that comes along with a {\it universal} nearly ordinary  
deformation  of our representation $\rhobar$,
$$\rho^{\rm univ}: G_{K,S} \rightarrow \GL_2(\Rrhobar),$$
from which we may recover $\rho$ as a specialization at some
$E$-valued point of $\Spec(\Rrhobar)$.  For reference, see section 30 of  \cite{Mazur1}.

Suppose  that
 $K$ is totally real, 
 $\rho$ is attached to a Hilbert modular form $\pi$ of regular weight, 
$p$ splits completely in $K$ and  $\rho|_{D_{v}}$ is nearly ordinary
for all $v|p$. In this situation, a
construction of Hida~\cite{Hidareal,Hidareal2}  gives us a (``Hida-Hecke"-algebra) 
quotient $\Rrhobar \to \T$ that is a finite flat algebra over the ($n$-parameter) 
affine coordinate ring of weights. Consequently, we have a 
corresponding ``$n$-parameter" family of nearly ordinary Galois 
representations which project onto a component of ($p$-adic) weight space, 
and which also contain
a Zariski dense set of classical (automorphic) representations.

  The situation is quite different in the case where $K$ is not totally real. 
Although our interest in this paper is mainly limited to the study
of Galois representations rather than automorphic forms, to
 cite one example before giving the main the theorem of this article, we have:
  
\begin{theorem} Let $K = \Q(\sqrt{-2})$, and let
$\n = 3 - 2 \sqrt{-2}$. Let  $\mathbf{T}$ denote the nearly
ordinary $3$-adic Hida algebra of tame level $\n$. Then the
affine scheme $\mathrm{Spec}(\mathbf{T})$  only contains finitely 
many classical points and moreover $($is nonempty and$)$ has 
pure relative dimension one over $\mathrm{Spec}(\Z_p)$.
\label{theorem:hidafamilies}
\end{theorem}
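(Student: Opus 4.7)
The plan is to verify the three claims --- nonemptiness, pure relative dimension one over $\Spec(\Z_p)$, and finiteness of classical points --- by combining an explicit automorphic computation, a deformation-theoretic dimension count, and a comparison of weight spaces.

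For nonemptiness, I would exhibit one classical cuspidal Bianchi eigenform $\pi$ over $K = \Q(\sqrt{-2})$ of tame level $\n$, nearly ordinary at both primes of $K$ above $3$ (recall $3 = (1+\sqrt{-2})(1-\sqrt{-2})$ splits in $K$ and $\n$ has norm $17$). The relevant space of cusp forms can be computed directly from the first cohomology of a suitable congruence subgroup of the Bianchi group $\mathrm{PSL}_2(\OL_K)$, and near-ordinariness is a pointwise check on the $U_v$-eigenvalues at each $v \mid 3$. The associated Galois representation gives a classical point of $\Spec(\mathbf{T})$.

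For the relative dimension, let $\rhobar$ denote the residual representation of $\pi$, which I assume is absolutely irreducible. The Hida algebra $\mathbf{T}$ is a quotient of the nearly ordinary deformation ring $\Rrhobar$. The global Euler--Poincar\'e formula for $\mathrm{ad}^0 \rhobar$ over $K$ imaginary quadratic is driven by the single complex place (contributing $\dim \mathrm{ad}^0 \rhobar = 3$ to the Euler characteristic); imposing the nearly ordinary Selmer conditions at the two primes above $3$ yields by a standard Greenberg-type count $\dim \Rrhobar \geq 2$. The matching upper bound comes from the cohomological construction of $\mathbf{T}$ as a finite module over a one-variable Iwasawa algebra over $\Z_p$, giving pure relative dimension one for $\Spec(\mathbf{T})$.

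For the finiteness of classical points, I would compare the $p$-adic weight space containing $\Spec(\mathbf{T})$ with its subvariety of locally algebraic (classical) weights. Over imaginary quadratic $K$ the classical Bianchi weights form a proper closed subvariety of the $p$-adic weight space whose relative dimension over $\Z_p$ is strictly less than that of $\Spec(\mathbf{T})$. Since $\mathbf{T}$ is finite over its image in weight space, the classical locus in $\Spec(\mathbf{T})$ is the preimage of a lower-dimensional subscheme, and hence finite. The crux --- and the main obstacle --- is this last comparison: in contrast to Hida's totally real classicity theorem, where classical weights are Zariski dense in weight space, for imaginary quadratic $K$ there is an extra direction in the nearly ordinary weight space --- essentially the anticyclotomic Iwasawa direction --- that cannot be populated by algebraic weights of classical Bianchi forms. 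Establishing this exclusion rigorously, and thereby showing that only finitely many specializations of the Hida family can arise from classical automorphic forms, is the central content of the theorem.
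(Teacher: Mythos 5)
Your overall outline is partly aligned with the paper, but two of the three claims are handled incorrectly, and the third — the heart of the theorem — is left as an acknowledged gap that the paper actually resolves by a specific numerical computation, not a general structural argument.

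\textbf{Nonemptiness.} This part is essentially right. The paper exhibits the cuspidal eigenform $f$ of weight $(2,2)$ and level $\Gamma_0(\n \p)$ with $\p = (1+\sqrt{-2})$, ordinary at both primes above $3$; that is Lemma~\ref{lemma:steinpollack}.

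\textbf{Dimension.} Your argument is confused on two counts. First, the relevant Iwasawa algebra $\La = \OL[[\Gamma_K]]$ for imaginary quadratic $K$ is a \emph{two}-variable power series ring over $\OL$ (one for each prime above $p$), not one-variable, so $\mathbf{T}$ being finite over $\La$ does not by itself force relative dimension one. Second, a lower bound $\dim \Rrhobar \geq 2$ from a Selmer--Euler count and an upper bound from finiteness over a ring do not establish \emph{purity} — the possibility of components of smaller dimension must still be excluded. The paper gets all of this directly from Hida's purity theorem~\cite[Thm.~5.2]{Hidaimag}, which shows the support of the ordinary cohomology module is equidimensional of codimension one in $\Spec(\La)$. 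Citing that theorem is the whole of this step; a deformation-theoretic dimension count is neither needed nor sufficient.

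\textbf{Finiteness of classical points.} This is where the proposal breaks. Your dimension comparison does not close: $\Spec(\mathbf{T})$ has relative dimension one over $\Z_p$, and so does the parallel-weight (diagonal) line in the two-dimensional weight space $\Spec(\La)$. These are the \emph{same} dimension, so the classical locus is not a priori the preimage of a lower-dimensional subscheme — a component of $\Spec(\mathbf{T})$ could perfectly well map finitely \emph{onto} the parallel-weight line and then would have a Zariski-dense set of classical points. Nothing in the "anticyclotomic extra direction" heuristic rules this out for a specific $\n$. The paper's resolution is concrete: since $\mathbf{T}$ is finite over $\La$, if $\Spec(\mathbf{T})$ contained infinitely many classical points its image would meet every parallel weight, in particular weight $(4,4)$; by Hida's control theorem the weight-$(4,4)$ specialization would have to be a classical cuspform of level $\Gamma_0(\n)$; and the modular-symbols computation in Lemma~\ref{lemma:steinpollack} shows there is no such form. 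That explicit nonvanishing/vanishing computation is exactly the missing step in your argument, and it cannot be replaced by a general statement about weight spaces.
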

  This is proved in subsection~\ref{section:auto} below.   
Although our general aim  is to understand deformation spaces of 
nearly ordinary Galois representations, our primary focus in 
this article is the close study of  {\it first order} deformations of a 
given irreducible representation $\rho$; that is, we will be considering the tangent 
space of the space of  (nearly ordinary) deformations 
of $\rho$, ${\Tang}_{\rho}(\Rrhobar)$.  For most of this article 
we will focus on the case where  $\rho$ factors through a finite 
quotient of $\Gal(\Kbar/K)$ --- we call such representations 
{\it Artin representations} --- and where the prime $p$ splits 
completely in $K$. If $\m$ denotes the prime ideal of $R$
corresponding to the representation $\rho$,
we will be looking at the linear mapping of $E$-vector 
spaces $$\tau: {\Tang}_{\m}(\Rrhobar) \longrightarrow {\Tang}_{0}(W),$$ 
the canonical projection  to the tangent space (at weight $0$) of 
the space of $p$-adic weights.  The projective space ${\PTang}_{0}(W)$ 
of lines in this latter $E$-vector space, ${\Tang}_{0}(W)$, has a natural 
descent to  $\Q$. A (nonzero) infinitesimal  weight $w \in {\PTang}_{0}(W)$ 
that generates a line that is $\Q$-rational  with respect to this 
underlying $\Q$-structure will be called a {\bf classical infinitesimal  
weight}. Similarly,   a (first-order) deformation of $\rho$ possessing 
a classical infinitesimal  weight  will be called 
an {\bf infinitesimally classical deformation}. (We restrict our
 attention to deformations of fixed determinant to avoid 
``trivial'' instances of infinitesimally classical deformations arising from
twisting by characters.)
We feel 
that (in the case where $K$ is not totally real) the mere occurrence of 
an infinitesimally classical deformation implies very strong consequences 
about the initial representation $\rho$. Our main result is that this is 
indeed so if we assume a hypothesis that we call the {\it Strong Leopoldt Conjecture}.  
Specifically, we prove the
following:

\begin{theorem} Assume the Strong Leopoldt Conjecture. 
Let $p$ be prime, and let  $K/\Q$ be a Galois extension in which $p$ splits completely.
Suppose that $\rho: \Gal(\Kbar/K) \rightarrow \GL_2(E)$ is continuous, irreducible,
 nearly ordinary,  has finite image, and admits
infinitesimally classical deformations. Then either:
  \begin{enumerate}
  
  \item 
There exists a character $\chi$ such that
 $\chi \otimes \rho$ descends either to an odd representation over a  totally real 
field, or descends to a field
containing at least one real place at which $\chi \otimes \rho$
is even.
\item  
The projective image of $\rho$ is dihedral, and
  the determinant character
descends to a totally real field $H^{+} \subseteq K$ with corresponding fixed field $H$
such that
  \begin{list}{\kern-0.2em{$($}\roman{Lcount}\kern+0.1em$)$}
    {\usecounter{Lcount}
    \setlength{\rightmargin}{\leftmargin}}
  \item $H/H^{+}$ is a CM extension.
  \item At least one prime above $p$ in $H^{+}$ splits in $H$.
  \end{list}
\end{enumerate}

\label{theorem:mainintro}
\end{theorem}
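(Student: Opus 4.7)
The plan is to translate the theorem into Galois cohomology, apply the Strong Leopoldt Conjecture to pin down the image of $\tau$, and then show that the existence of a $\Q$-rational direction in this image forces one of the two listed symmetries on $\rho$.

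First, I would identify the tangent space $\Tang_{\m}(\Rrhobar)$ of fixed-determinant nearly ordinary deformations with a Selmer group $H^1_{\mathcal{F}}(G_{K,S},\,\mathrm{ad}^0\rho)$, where the local condition $\mathcal{F}_v$ at each $v\mid p$ picks out classes whose restriction to $G_{K_v}$ preserves the ordinary line of the residual filtration. The map $\tau$ of the excerpt is then restriction at primes above $p$ followed by projection of $\mathrm{ad}^0\rho$ onto its diagonal torus quotient; it lands in $\bigoplus_{v\mid p} H^1\!\left(G_{K_v}/I_v,\,(\mathrm{ad}^0\rho)^{I_v}_{\mathrm{torus}}\right)$, an $E$-vector space of dimension $[K:\Q]$. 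Because $p$ splits completely in $K$, this target carries an intrinsic $\Q$-structure coming from the identification $K_v\cong\Q_p$ for each $v\mid p$, and the classical infinitesimal weights are by definition its $\Q$-rational lines.

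Second, I would formulate the Strong Leopoldt Conjecture as saying that the image of $\tau$ intersects the $\Q$-rational subspace as little as possible: outside of special mechanisms coming from a global symmetry of $\rho$, the $E$-span of the image of $\tau$ should meet the rational part only in the cyclotomic direction, which is killed once the determinant is fixed. Existence of an infinitesimally classical deformation therefore forces the presence of a nontrivial global symmetry of $\rho$, and the remainder of the argument is a classification of those symmetries. A Selmer class contributes a $\Q$-rational direction in essentially two ways: either (a) a twist $\chi\otimes\rho$ descends to a proper subfield $K_0\subset K$, so that $H^1_{\mathcal{F}}$ receives induced classes whose weights are pulled back from weight space over $K_0$ and are automatically $\Q$-rational; or (b) the projective image of $\rho$ is dihedral, $\rho\cong\Ind_L^K\chi$ for a quadratic $L/K$, and cohomology of the induced representation inherits $\Q$-rational weight data from $L$.

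Third, I would match each mechanism against the Strong Leopoldt bound. Case (a) is compatible with an infinitesimally classical deformation precisely when $\chi\otimes\rho$ descends either to a totally real field on which it is odd, or to a field containing a real place at which it is even, yielding alternative (1). Case (b), combined with the split-at-$p$ hypothesis and the nearly ordinary local conditions, forces the determinant character to descend to a totally real subfield $H^+\subset K$ such that $H/H^+$ is a CM extension in which at least one prime above $p$ splits, yielding alternative (2).

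The main obstacle will be this final matching. One has to check that no other mechanism can yield a $\Q$-rational weight direction, and that the precise geometric conditions of (1) and (2) are exactly those that Strong Leopoldt permits. This requires controlling how complex conjugations at the archimedean places of $K$ interact with the ordinary filtrations at $v\mid p$ and with the isotypic decomposition of $\mathrm{ad}^0\rho$ under the projective image group; in particular, the subtlest points seem to be ruling out dihedral $\rho$ for which the relevant quadratic extension fails to be CM or the split-prime hypothesis fails.
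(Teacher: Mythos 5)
Your outline captures the correct high-level shape (tangent space as a Selmer group, the weight map $\omega$, a case split into base-change/CM/even), but the central step is missing: you never actually derive the dichotomy from the Strong Leopoldt Conjecture, you merely assert that Strong Leopoldt ``says the image of $\tau$ intersects the $\Q$-rational subspace as little as possible outside of global symmetries.'' That is a paraphrase of the theorem, not of the conjecture. The Strong Leopoldt Conjecture as formulated in the paper is a statement about the placement of $U_{\mathrm{glob}}$ inside $U_{\mathrm{loc}} \simeq E[G]$ relative to $F$-rational right $E[D]$-submodules; translating it into a constraint on the image of $\omega$ is the entire technical content of sections~\ref{section:artinrepresentations}--\ref{section:ICD}, and you skip it.

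Concretely, what's missing is the comparison mechanism. The paper first rewrites $H^1_\Sigma(K,W)$ as $\Hom_G(\Lambda/I, Y;\Yin)$ with $\Lambda = E[G]$, $Y = \Ind_H^G W$, and $I$ the image of the global units, and then (assuming classical Leopoldt) uses the Minkowski unit theorem to pin down $I \simeq \bigoplus V_i^{\dim(V_i|c=1)}$ as an abstract $G$-module. To get anything out of the genericity hypothesis one needs a competitor left ideal $N$, abstractly isomorphic to $I$, for which $\Hom_G(\Lambda/N, Y;\Yni(\lambda))$ can be shown to vanish. The paper builds this competitor by constructing, for each coset in $G/N_\Phi$, an involution $\iota$ coming from an order-two element of $\Gamma$ (Lemma~\ref{lemma:inv}); the resulting space $Y' = Y^{\iota=-1}$ decomposes compatibly with the isotypic decomposition (Lemma~\ref{lemma:quick}, Corollary~\ref{decomp}), is transverse to $\Yni$, and a hyperplane modification $Y'(\lambda)$ kills the chosen classical weight direction. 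The crucial inequality $\dim(S_i(\lambda)) \ge \dim(V_i|c=-1)$ then comes from Lemma~\ref{lemma:twothirds}, which is a careful analysis of how complex conjugation $c$ (viewed in $G$ but outside $N_\Phi$ and not ``even'') acts on the irreducible constituents of $\Ind_H^G X$. It is exactly at this point that the three conclusions appear: if every $c$ lies in $N_\Phi$ one gets descent to a totally real field, if some $c$ is even one gets the even case, and the dihedral/CM case surfaces when the involution cannot be chosen so that $c$ avoids acting as $-1$ on the distinguished component $V_\lambda$. Without constructing the competitor and establishing the $\dim(V_i|c=-1)$ bound, you cannot invoke the generic-module lemma (Lemma~\ref{lemma:maingeneric}) and the argument does not close. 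Your sketch also runs the logic in the wrong direction in places: you describe how base-change or CM structure would \emph{produce} a rational weight direction, but what must be shown is that the absence of any such structure \emph{precludes} one, and this is the contradiction the paper actually derives.
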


\Remarks
 
   \begin{enumerate}
\item
The restriction that $K/\Q$ be Galois is very mild since one is free
to  make a base change  as long as $\rho$ remains irreducible.

   \item
The \emph{strong Leopoldt conjecture} (\S \ref{section:theleopoldtconjecture})
is, as we hope to convince our readers, a natural
generalization of the usual Leopoldt conjecture, and is related
to other $p$-adic transcendence conjectures such as the
$p$-adic form of Schanuel's
conjecture.
   \item 
 We expect that the same conclusion holds  more generally for $p$-adic 
representations $\rho: \Gal(\Kbar/K) \rightarrow \GL_2(E)$ that  do 
not have finite image, but otherwise satisfy the conditions of 
Theorem~\ref{theorem:mainintro}. Explicitly,
\end{enumerate}

\begin{conjecture}\label{conj:conjecture} Let $p$ be prime, and 
let  $K/\Q$ be a Galois extension in which $p$ splits completely.
Suppose that $\rho: \Gal(\Kbar/K) \rightarrow \GL_2(E)$ is 
continuous, irreducible,
 nearly ordinary, is unramified except at finitely many places, and 
admits
infinitesimally classical deformations. Then either:
  \begin{enumerate}
  
  \item 
There exists a character $\chi$ such that
 $\chi \otimes \rho$ descends either to an odd representation over a  totally real 
field, or descends to a field
containing at least one real place at which $\chi \otimes \rho$
is even.
\item  The projective image of $\rho$ is  dihedral, and
  the determinant character
descends to a totally real field $H^{+} \subseteq K$ with corresponding fixed field $H$
such that
  \begin{list}{\kern-0.2em{$($}\roman{Lcount}\kern+0.1em$)$}
    {\usecounter{Lcount}
    \setlength{\rightmargin}{\leftmargin}}
  \item $H/H^{+}$ is a CM extension.
  \item At least one prime above $p$ in $H^{+}$ splits in $H$.
  \end{list}
\end{enumerate}

\end{conjecture}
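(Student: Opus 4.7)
The plan is to upgrade the proof of Theorem \ref{theorem:mainintro} from the Artin (finite-image) setting to the general nearly ordinary setting by replacing its concrete finite-group inputs with cohomological Selmer arguments that survive for arbitrary continuous representations. The strategy has three stages, and it is the last link from cohomology back to a concrete transcendence statement that prevents the argument from going through unconditionally, which is why the authors state the result only as a conjecture.

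First, I would reinterpret the tangent space $\Tang_{\m}(\Rrhobar)$ cohomologically: it is the Selmer subspace of $H^1(G_{K,S}, \mathrm{ad}^0(\rho))$ cut out by the nearly ordinary conditions at each place $v \mid p$. The projection $\tau$ to weight space corresponds to the composition of restriction to the decomposition groups $D_v$ with projection onto the diagonal quotient of the nearly ordinary filtration on $\mathrm{ad}^0(\rho)$. In this language, an infinitesimally classical deformation of fixed determinant is precisely a Selmer class whose image under $\tau$ spans a line in $\Tang_0(W)$ that is rational for the natural $\Q$-structure.

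Second, I would dimension-count the Selmer group using Poitou--Tate duality and reduce the existence of a classical infinitesimal weight to a $\Q$-linear relation among a certain collection of $p$-adic logarithms of global units attached to the representation. The Strong Leopoldt conjecture, applied to the unit group of the number field cut out by $\rhobar$ together with the splitting field of the semisimplification of $\mathrm{ad}^0(\rho)|_{D_v}$, forces any such $\Q$-linear relation to arise from an honest algebraic relation among the characters on $D_v$ appearing in $\mathrm{ad}^0(\rho)|_{D_v}$. Feeding this algebraic relation into the descent-and-twist argument used in Theorem \ref{theorem:mainintro} then yields either case $(1)$ (a twist descending to a totally real field, either odd or with a real place at which the twist is even) or case $(2)$ (the dihedral--CM alternative).

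The main obstacle is this second stage. In the finite-image case one has a literal number field on whose units the Leopoldt-type conjectures apply, and the linear algebra of $\mathrm{ad}^0(\rho)$ is transparent. For general $\rho$ of infinite image, one must either work with the finite quotient cut out by $\rhobar$ and carefully track how the $p$-adic Hodge-theoretic data of $\rho$ --- in particular, the possibly transcendental Frobenius eigenvalues of $\rho|_{D_v}$ that obstruct naive descent --- perturb the Artin picture, or else develop an Iwasawa-theoretic avatar of Strong Leopoldt over the infinite tower cut out by $\rho$. Either route appears to require a genuinely new transcendence input beyond the Strong Leopoldt conjecture as formulated here, and it is precisely this gap that I expect to be the hard part.
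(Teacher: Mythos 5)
The statement you were asked to prove is labeled a \emph{conjecture} in the paper, and the authors give no proof of it: what they prove is the finite-image case, Theorem~\ref{theorem:mainintro}, under the Strong Leopoldt hypothesis, plus the complementary (but logically independent) existence statement Theorem~\ref{theorem:raviintro}. You have correctly recognized this situation, and your account of the first stage is accurate --- the identification of $\Tang_{\m}(\Rrhobar)$ with a nearly ordinary Selmer group is Proposition~\ref{tanselm}, and your description of the infinitesimal weight map is Definition~\ref{definition:HTW}. Your diagnosis of the obstruction is also essentially right: the Strong Leopoldt Conjecture of section~\ref{section:theleopoldtconjecture} is a statement about $U_\glob = \mathcal{O}_M^*\otimes E$ for a \emph{finite} Galois extension $M/\Q$, and the entire reduction of sections~\ref{section:artinrepresentations}--\ref{section:ICD} (identifying the Selmer group with $\Hom_G(\Lambda/I, Y; \Yin)$, constructing the involutions $\iota$, invoking Lemma~\ref{lemma:maingeneric}) leans on $\Proj(\rho)$ factoring through a finite quotient $\Gamma \in \Sf$, so that there is a concrete field $M$ whose units can be fed into the transcendence machinery. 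When $\rho$ has infinite image that structure evaporates, and neither truncating to $\rhobar$ nor an Iwasawa-theoretic limit is something the paper equips you to do.

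One minor imprecision: the phrase ``splitting field of the semisimplification of $\mathrm{ad}^0(\rho)|_{D_v}$'' is not a well-defined number field --- what you actually want to say is that for infinite-image $\rho$ the local characters $\psi_v,\psi_v'$ on $D_v$ are in general of infinite order and transcendental, so there is no finite extension over which the relevant local data is rational, and thus no way to apply Strong Leopoldt as stated. Your overall conclusion, that this requires a genuinely new input beyond the methods of the paper, is exactly why the authors formulate the general case only as Conjecture~\ref{conj:conjecture}.
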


 For some evidence for this, see Theorem~\ref{theorem:raviintro} below.
We refer to the first case as ``base change'' and  the second case as ``CM.''

  In the special case where $K$ is a quadratic imaginary field, the $E$-vector space 
of infinitesimal weights,  ${\Tang}_{0}(W)$, is then two-dimensional, 
and the nontrivial involution (i.e., complex conjugation) of $\Gal(K/\Q)$ 
acting on ${\Tang}_{0}(W)$ has unique  eigenvectors (up to scalar multiplication) 
with eigenvalues $+1$ and $-1$; refer to the points they give rise 
to in ${\PTang}_{0}W$ as the {\it diagonal} infinitesimal weight 
and the {\it anti-diagonal}, respectively.

\begin{corollary}\label{corquad} Assume the Strong Leopoldt Conjecture. 
Let  $K$ be a quadratic imaginary field and $p$ a prime that splits in $K$. 
Let $\rho: \Gal(\Kbar/K) \rightarrow \GL_2(E)$ satisfy the hypotheses 
of \ref{theorem:mainintro}, and in particular assume that $\rho$  admits an
infinitesimally classical deformation.  Assume that the image  of $\rho$ 
is finite and non-dihedral. Then one of the two cases below holds:

\begin{enumerate}
\item  The representation $\rho$  admits an
infinitesimally classical deformation with { diagonal} infinitesimal 
weight and  there exists a character $\chi$ such that
 $\chi \otimes \rho$ descends to an odd representation over $\Q$, or
 
 \item  the representation $\rho$  admits an
infinitesimally classical deformation with {anti-diagonal} infinitesimal 
weight and  there exists a character $\chi$ such that
 $\chi \otimes \rho$ descends to an even representation over $\Q$.
\end{enumerate}
\label{corollary:spec}
\end{corollary}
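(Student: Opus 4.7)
The plan is to invoke Theorem~\ref{theorem:mainintro} to force a descent of a twist of $\rho$ to $\Q$, and then to match the parity of that descent with the action of complex conjugation on the weight tangent space $\Tang_0(W)$.

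Since $\rho$ has finite, non-dihedral image, case (2) of Theorem~\ref{theorem:mainintro} is immediately excluded, leaving case (1): there is a character $\chi$ such that $\chi \otimes \rho$ descends either to an odd representation over a totally real subfield of $K$, or to a subfield of $K$ containing a real place at which it is even. The only subfield of the imaginary quadratic field $K$ that is totally real, or that contains a real place, is $\Q$ itself. Hence $\chi \otimes \rho \cong \rho_\Q|_{G_K}$ for some continuous $\rho_\Q: G_\Q \to \GL_2(E)$, either odd or even.

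Because $p$ splits in $K$ as $\p\bar\p$ and complex conjugation $c$ interchanges $\p$ and $\bar\p$, the induced action of $c$ on $\Tang_0(W) = \Tang_0(W_\p) \oplus \Tang_0(W_{\bar\p})$ swaps the two one-dimensional summands; the $+1$-eigenline is the diagonal, the $-1$-eigenline the anti-diagonal. The descent gives a $c$-action on the deformation functor of $\chi \otimes \rho$ compatible under $\tau$ with the action on $\Tang_0(W)$, so the image of $\tau$ is $c$-stable and splits $\Q$-rationally into diagonal and anti-diagonal parts. Any $G_K$-deformation obtained by restriction of a $G_\Q$-deformation of $\rho_\Q$ is $c$-invariant, so has diagonal weight; conversely (up to twist by a character of $\Gal(K/\Q)$) a $c$-equivariant $G_K$-deformation descends to $G_\Q$.

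In the odd case, $\rho_\Q$ is a nearly ordinary, odd, irreducible Artin representation of $G_\Q$; by Serre's conjecture (Khare--Wintenberger) and $p$-adic Hida theory in weight one (Buzzard--Taylor, Dimitrov--Ghate), $\rho_\Q$ is attached to a classical weight-one cuspform and deforms in a positive-dimensional classical Hida family over $\Q$. Restricting this family to $G_K$ and untwisting by $\chi^{-1}$ yields classical deformations of $\rho$ with diagonal weight, giving conclusion (1). In the even case, $\rho_\Q$ is attached to no classical cuspform and, by the Leopoldt-type vanishing for the adjoint subsumed in the Strong Leopoldt Conjecture (and used in Theorem~\ref{theorem:mainintro}), $\rho_\Q$ admits no nontrivial classical nearly ordinary $G_\Q$-deformations. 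By the converse observation above, no classical $G_K$-deformation of $\chi \otimes \rho$ can have purely diagonal weight. The given infinitesimally classical deformation therefore has a nonzero anti-diagonal component in $\Tang_0(W)$, and the $c$-stability and $\Q$-rationality of the image of $\tau$ let one extract an infinitesimally classical deformation of $\rho$ with purely anti-diagonal weight, giving conclusion (2).

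The main obstacle is the converse statement invoked in the third paragraph (descent of $c$-equivariant $G_K$-deformations after a controlled twist) together with the adjoint Leopoldt-type vanishing for even Artin representations: these are the delicate Galois-cohomological inputs, and together they form exactly the content of the Strong Leopoldt Conjecture as it is exploited inside Theorem~\ref{theorem:mainintro}.
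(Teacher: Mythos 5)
Your broad strategy---invoke Theorem~\ref{theorem:mainintro}, rule out the dihedral alternative, observe that $\Q$ is the only admissible descent field inside an imaginary quadratic field, and then sort the two parities by looking at the $c$-eigenspace decomposition of $\Tang_0(W)$---is exactly the paper's route.  The decomposition $H^1(K,\Ad^0) = H^1(\Q,\Ad^0) \oplus H^1(\Q,\Ad^0\otimes\chi_K)$ into diagonal and anti-diagonal Selmer pieces (the third paragraph of your argument), and the observation that in the even case the first summand vanishes under Leopoldt, are precisely what the paper carries out in the ``antidiagonal weight'' discussion of \S\ref{antidiagwt}.

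Where you diverge is in the odd case: you import Serre's conjecture (Khare--Wintenberger) and the $p$-adic weight-one modularity results of Buzzard--Taylor and Dimitrov--Ghate to produce an actual Hida family over $\Q$ and then restrict it.  That is substantially heavier than what is needed (and than what the paper intends): the existence of a \emph{diagonal} infinitesimally classical deformation in the odd case is already accessible from the same Euler-characteristic count on $H^1(\Q,\Ad^0)$ and $H^1(\Q,\Ad^0\otimes\chi_K)$, conditional only on the classical Leopoldt conjecture --- which is already subsumed in the hypotheses.  Since $\Ad^0(\rho_\Q)$ has $c=+1$ eigenspace of dimension one for odd $\rho_\Q$, the $\Q$-rational (diagonal) deformation direction is nonzero without appealing to modularity.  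Your route is valid but buys nothing here, and it risks smuggling unconditional arithmetic input (Khare--Wintenberger) into what is supposed to be a purely Galois-cohomological corollary of a conjectural Leopoldt-type statement.

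A smaller point: your closing paragraph misattributes the role of the Strong Leopoldt Conjecture.  The ``converse descent'' statement for $c$-equivariant deformations is elementary inflation--restriction, and the vanishing of $H^1_{\Sigma}(\Q,\Ad^0(\rho_\Q))$ for \emph{even} $\rho_\Q$ is a consequence of the \emph{classical} Leopoldt conjecture for the splitting field, not the Strong one.  The Strong Leopoldt Conjecture is consumed entirely inside the proof of Theorem~\ref{theorem:mainintro}, where it forces the descent dichotomy in the first place; once that theorem is granted, the remaining steps of the corollary use only ordinary Leopoldt and an eigenspace decomposition.
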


 The above Corollary~\ref{corquad} follows from Theorem~\ref{theorem:mainintro} together
 with the discussion of section~\ref{antidiagwt}.

Our result (Theorem~\ref{theorem:mainintro}), although comprehensive, 
is contingent
upon
 a conjecture, and is only formulated for representations 
with finite image. To offer another angle on the study of 
deformation spaces (of nearly ordinary Galois representations) 
containing few classical automorphic forms, we  start with a {\it residual} 
representation $\rhobar: \Gal(\Kbar/K) \rightarrow \GL_2(\F)$ 
where $\F$ is a finite field of characteristic $p$, and  $\rhobar$ 
is  irreducible  and nearly ordinary
at $v|p$ (and satisfies some supplementary technical hypotheses).
In  section~\ref{section:ordinaryfamilieswithnonparallelweights}, we prove the following 
complement to
Theorem~\ref{theorem:mainintro} when $K$ is an imaginary quadratic field:

\begin{theorem}
Let $K$ be an imaginary quadratic field in which $p$ splits.
Let $\F$ be a finite field of characteristic $p$.
Let $\rhobar: \Gal(\Kbar/K) \rightarrow \GL_2(\F)$ be a continuous
irreducible Galois
representation, let $\chi: \Gal(\Kbar/K) \rightarrow \F^{\times}$
be the mod-$p$ cyclotomic character, and
assume the following:
\begin{enumerate}
\item The image of $\rhobar$ contains $\SL_2(\F)$, and
$p \ge 5$.
\item If $v|p$, then $\rhobar$ is nearly ordinary at $v$
and takes the shape:
$\displaystyle{\rho|_{I_v}
= \left( \begin{matrix} \psi & * \\ 0 & 1 \end{matrix}
\right)}$,
 where $\psi \ne 1,\chi^{-1}$, and $*$ is tr\`{e}s ramifi\'{e}e if
$\psi = \chi$.
\item If $v \nmid p$, and $\rhobar$ is ramified at $v$, then
$H^2(G_v,\ad) = 0$.
\end{enumerate}
Then there exists a Galois representation:
$\displaystyle{\rho: \Gal(\Qbar/K) \rightarrow \GL_2(W(\F)[[T]])}$
lifting $\rhobar$ such that:
\begin{enumerate}
\item  The image of $\rho$ contains $\SL_2(W(\F)[[T]])$.
\item $\rho$ is unramified outside some finite set of primes $\Sigma$.
If $v \in \Sigma$, and $v \nmid p$, then $\rho|D_v$ is
potentially semistable; if $v|p$, then $\rho|D_v$ is nearly ordinary.
\item
Only finitely many specializations of $\rho$ have parallel weight.
\end{enumerate}
\label{theorem:raviintro}
\end{theorem}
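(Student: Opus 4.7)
The plan is to invoke Ramakrishna's lifting technique (as adapted to the imaginary quadratic setting by Hamblen--Ramakrishna and others): adjoin auxiliary primes of ``Ramakrishna type'' to $\Sigma$ so as to annihilate the dual Selmer group, forcing the universal nearly ordinary deformation ring to be formally smooth over $W(\F)$ of the expected relative dimension, then specialize to a one-parameter family transverse to the parallel-weight locus.

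First set up the deformation problem. At each $v \in \Sigma$ with $v \nmid p$ and $\rhobar$ ramified, impose the local condition preserving the residual inertial type; the hypothesis $H^2(G_v, \ad) = 0$ implies the resulting local deformation ring is smooth of the expected dimension. At each $v \mid p$, impose the nearly ordinary condition but leave the diagonal characters $\Psi_v, \Phi_v$ free, so that in particular the weight characters at the two primes $v_1, v_2$ above $p$ need not agree; the hypotheses on $\psi$ (namely $\psi \ne 1, \chi^{-1}$, with the tr\`{e}s ramifi\'{e}e condition when $\psi = \chi$) guarantee that this local functor is representable and smooth of the expected dimension. At the complex place, impose no condition.

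Next, a dimension count via Wiles' formula. Because $K$ is imaginary quadratic (no real places, the complex place contributing $-\dim \ad$ to the local Euler characteristic) and each of the two primes above $p$ contributes an independent free weight character, the Selmer and dual Selmer groups satisfy
\[
\dim H^1_\Sigma(K, \ad) \;-\; \dim H^1_{\Sigma^*}(K, \ad^*) \;\ge\; 2,
\]
and the parallel-weight locus is cut out by the single closed condition identifying $\Psi_{v_1}|_{I_{v_1}}$ with $\Psi_{v_2}|_{I_{v_2}}$. The residual hypotheses ($\SL_2(\F) \subseteq \im \rhobar$, $p \ge 5$, and the conditions on $\psi$) are exactly those needed to run Ramakrishna's construction: for each nonzero class in $H^1_{\Sigma^*}(K, \ad^*)$ one selects an auxiliary prime $q$ at which a Steinberg-type local deformation condition kills that class in the dual Selmer. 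Adjoining finitely many such $q_1, \dots, q_r$ to form $\Sigma' = \Sigma \cup \{q_i\}$ yields a deformation ring $R_{\Sigma'}$ that is formally smooth over $W(\F)$ of relative dimension at least $2$.

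Finally, choose a surjection $R_{\Sigma'} \twoheadrightarrow W(\F)[[T]]$ whose kernel avoids the ideal cutting out parallel weight; this is possible because $\Spec R_{\Sigma'}$ has relative dimension $\ge 2$ and the parallel-weight locus is a proper closed subscheme. The resulting $\rho$ satisfies condition (2) by construction, condition (3) because the pullback of the parallel-weight locus along $\Spec W(\F)[[T]] \to \Spec R_{\Sigma'}$ is a proper closed subscheme and therefore finite, and condition (1) by a standard image argument for lifts of infinite order whose residual image contains $\SL_2(\F)$. The main obstacle is the Euler characteristic computation ensuring relative dimension at least two: the analogous count over a totally real field yields relative dimension one, so the imaginary quadratic hypothesis enters precisely through the absence of real-place contributions to the Poitou--Tate Euler characteristic.
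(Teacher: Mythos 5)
Your proposal and the paper's proof start from the same Ramakrishna-style framework, but they diverge at the key point — and the divergence contains a genuine gap.

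The dimension count does not give what you claim. With the determinant fixed (coefficients in $\ad^0$, which is what the paper denotes $\ad$), the Wiles/Greenberg formula gives
\[
\dim H^1_\Sigma(K,\ad) - \dim H^1_{\Sigma^*}(K,\ad^*)
= 0 - 0 + \underbrace{2\cdot(2-0)}_{v\mid p} + \underbrace{0}_{v\in\Sigma,\,v\nmid p} + \underbrace{(0-3)}_{v\mid\infty} = 1,
\]
not $\ge 2$. After Ramakrishna's auxiliary primes kill the dual Selmer group, the nearly ordinary deformation ring with fixed determinant is $W(\F)[[T]]$, of relative dimension exactly one. The only way to reach relative dimension $\ge 2$ is to let the determinant vary, but those extra directions are precisely twist directions by characters of $G_K$ unramified outside $p$ — and twisting shifts the Hodge–Tate weights at $v_1$ and $v_2$ in a way that preserves the \emph{difference} of the two weights, i.e., preserves parallel weight. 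So if the one-parameter fixed-determinant family happens to be parallel weight, the entire larger space (fixed-determinant direction plus twists) is parallel weight as well, and "the parallel-weight locus is a proper closed subscheme" is simply unjustified. Your slicing argument then has nothing to slice transversally to.

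This is exactly the possibility the paper has to fight. After the usual construction produces $R_\Sigma\simeq W(\F)[[T]]$, the generating cocycle $f_d$ of the Selmer kernel maps via the weight map $\omega$ into $\F\oplus\F$; the paper explicitly enumerates the three cases (image zero, image diagonal, image off-diagonal) and observes that a priori the family could well be parallel (the diagonal case). The remedy is to adjoin one further auxiliary prime $\r_d$, chosen (via Lemma~\ref{lemma:extra} and a Chebotarev argument) so that the new Selmer-kernel cocycle projects non-diagonally in $\F\oplus\F$. That final step is the heart of the theorem and is missing from your proposal. So: the setup, the role of the imaginary quadratic place in the Euler characteristic, and the use of auxiliary primes to kill dual Selmer are all on target, but the transversality to the parallel-weight locus does not come for free from a dimension count; it has to be engineered by the extra auxiliary prime.
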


\medskip

The relation between Theorem~\ref{theorem:raviintro} and
Conjecture~\ref{conj:conjecture} is the following.
Standard facts (\cite{Harder}, \S III, p.59--75)
imply that the cuspidal
cohomology of $\GL(2)_{/K}$ vanishes for non-parallel weights.
Thus,  if a one parameter family of nearly ordinary deformations of $\rhobar$
arise from classical cuspidal automorphic forms over $K$, they
\emph{a priori} must have all have parallel weight. Since families of
parallel weight have rational infinitesimal Hodge-Tate weights,
 Conjecture~\ref{conj:conjecture} implies that such families
are CM or arise from base change (and hence only exist if $\rhobar$
is also of this form). In contrast, Theorem~\ref{theorem:raviintro} at least
guarantees the \emph{existence} of non-parallel families for \emph{any}
$\rhobar$ (including those arising from base change).

\medskip

Although we mostly restrict our attention to the study of Galois
representations, the general philosophy of $R = \mathbf{T}$
theorems predicts that the ordinary Galois deformation rings
considered in this paper are isomorphic to the ordinary Hecke
algebras constructed by Hida in~\cite{HidaSL}. Thus
we are inclined to believe the corresponding automorphic
conjectural analogs of Conjecture~\ref{conj:conjecture} and
Theorem~\ref{theorem:raviintro}.  

\medskip

We now give a brief outline of this paper.
In section~\ref{section:deformationtheory}, we recall
some basics from the deformation theory of Galois representations,
and define the notion of infinitesimal Hodge-Tate weights.
In section~\ref{section:theleopoldtconjecture} we present
our generalization of the Leopoldt conjecture. In
section~\ref{section:descendinggrouprepresentations} we prove
some lemmas regarding group representations, and
in sections~\ref{section:artinrepresentations} 
and~\ref{section:ICD}, we prove
Theorem~\ref{theorem:mainintro}.
We prove Theorem~\ref{theorem:raviintro} in
section~\ref{section:ordinaryfamilieswithnonparallelweights}.
In section~\ref{section:extras}, we speculate further on
the connection between nearly ordinary Galois representations,
Hida's ordinary Hecke algebra, and classical automorphic forms.
We also describe some other problems which can
be approached by our methods, for example: establishing the rigidity of
 three dimensional Artin representations
of $\Gal(\Qbar/\Q)$. In section~\ref{section:extras}
we also give a proof of Theorem~\ref{theorem:hidafamilies},
and in the Appendix (section~\ref{section:appendix})  we prove some structural results
about generic modules used in sections~\ref{section:theleopoldtconjecture}
and~\ref{section:artinrepresentations}.

\subsection{Speculations on $p$-adic functoriality}

It has long been suspected that torsion classes
arising from cohomology
of arithmetic quotients of symmetric spaces
give rise to Galois representations, even when the associated
arithmetic quotients are not Shimura varieties.  In the
context of $\GL(2)_{/K}$ for an imaginary quadratic field $K$,
some isolated examples for $K = \Q(\sqrt{-1})$ were first noted
in~\cite{Germans2}. The first
conjectures for arbitrary reductive groups $G$ (and coefficient systems over a finite field)
were formulated by Ash~\cite{Ash}, at least for $G = \GL(n)_{/\Q}$.
Let $G_{/\Q}$ denote an arbitrary reductive group that is split at $p$,
$K_{\infty}$  a maximal compact of $G(\R)$, and
 $T_G$  a maximal torus of $G$.
After fixing a tame level, a construction of Hida assembles 
(ordinary) torsion classes (relative to some choice of local system)
for arithmetic quotients
$G(\Q) \backslash G(\A) / K^{\circ}_{\infty} K$, as $K$ varies over
$p$-power levels relative to the fixed tame level. If $\T_G$ denotes
the corresponding algebra of endomorphisms generated by Hecke operators
on this space (not to be confused with the torus, $T_G$), then $\T_G$ is finitely generated over the space
of weights
$\Lambda_G = \Z_p[[T_G(\Z_p)]]$.
Hida's control theorems~\cite{Hida}\footnote{Which presumably hold
in full generality, and have been established (by Hida himself)
in many cases,
$G = \GL(2)_{/K}$, $\GSp(4)_{/\Q}$ ---
the examples that arise in the discussion --- included.}
 imply that for (sufficiently)
 regular weights $\kappa$ in
$\Hom(\Lambda,\C_p)$, the specialization of $\T_G$ to $\kappa$ recovers
the classical space of ordinary automorphic forms of that weight.
It should be noted, however, that Corollary~\ref{corollary:spec} implies
that $\T_G$ can be  large even if the specialization to every
regular weight of $\Lambda_G$ is zero, and that $\T_G$  should
be thought of as an object arising from Betti cohomology (taking torsion cohomology classes into consideration as well) rather than anything
with a more standard automorphic interpretation, such as $(\mathfrak{g},K)$ cohomology.
Suitably adapted and generalized, the
conjectures alluded to above imply the existence of
a map $R_G \rightarrow \T_G$, where $R_G$ denotes an appropriate (naturally defined)
Galois deformation ring.
One of our guiding principles is that the map $R_G \rightarrow
\T_G$---for suitable definitions of $R_G$ and $\T_G$---should {\it exist} and be an isomorphism.
If $G = \GL(2)_{/\Q}$, then this conjecture has (almost) been
completely established, and many weaker results are also known 
for $G = \GL(2)_{/K}$ if $K$ is totally real
(such results are usually conditional, for example, on the modularity
of an associated residual representation $\rhobar$).

At first blush, our
main results appear to be negative,  namely,
they suggest that the ring $\T_G$ --- constructed from 
Betti cohomology --- has very little to do with automorphic
forms. 
We would like, however, to suggest a second possibility.
Let us define a (nearly ordinary)
emph{$p$-adic automorphic form} for $G$ to
be a characteristic zero point of $\Spec(\T_G)$. 
 Then
we conjecture that $p$-adic automorphic forms 
are subject to the same principle of functoriality as
espoused by Langlands for classical automorphic forms.
Such a statement is (at the moment) imprecise and vague ---
even to begin to formalize this principle would probably
require a more developed theory of $p$-adic local
Langlands than currently exists. And yet, in
the case of $\GL(2)_{/K}$  for  an imaginary quadratic field $K$, we
may  infer two  consequences of these 
conjectures which are quite concrete, and whose resolutions
will probably hold significant clues as to  what methods will
apply more generally.

The first example  concerns the possibility of
$p$-adic automorphic lifts. Fix an imaginary quadratic
field $K$. Let $G = \GL(2)_{/K}$
and $H = \GSp(4)_{/\Q}$.  Given a $p$-adic 
automorphic form $\pi$ for $G$ 
arising from a characteristic
zero point of $\Spec(\T_G)$, 
a $p$-adic principle of functoriality would predict the
following:
if $\pi$  satisfies a suitable
condition on the central character, it lifts to a $p$-adic
automorphic form for $H$.  The existence of such a lift is
most easily predicted from the Galois representation  $\rho$
(conjecturally) associated to $\pi$. Namely, if 
$\rho: \Gal(\Kbar/K) \rightarrow \GL_2(E)$ is a representation
with determinant $\chi$, and $\chi = \phi \psi^2$ for characters
$\phi$, $\psi$ of $K$, where $\phi$ lifts to a character of $\Gal(\Qbar/\Q)$,
then $\Ind^{\Gal(\Qbar/\Q)}_{\Gal(\Kbar/K)} 
(\rho \otimes \psi^{-1})$ has image landing inside
$\GSp_4(E)$ for some suitable choice of symplectic form, 
and will  be the Galois representation associated
to the conjectural $p$-adic lift of $\pi$ to $H$.
If the original $p$-adic automorphic form $\pi$ is actually
\emph{classical},  then this lifting is known to exist by work
of Harris, Soudry, and Taylor~\cite{taylorothers}.
The group $H = \GSp(4)_{/\Q}$ has the convenient property
that the corresponding arithmetic quotients are PEL Shimura
varieties. Consequently, there is a natural source of  Galois representations,
and, more relevantly for our purposes,
the Hida families associated to cuspidal ordinary forms
of middle degree ($3$, in this case) cohomology form a Hida family $\TH$
which is \emph{flat} over $\Lambda_H$. 
Yoneda's lemma implies the existence of maps between various
rings: from
 Galois induction, we obtain a surjective map
$R_H \rightarrow R_G$, and from $p$-adic functoriality, we predict
the existence of a surjective map $\T_H \rightarrow \T_G$.
All together we have a  diagram:
$$\begin{diagram}
R_H & \rTo & \T_H \\
\dOnto &  & \dEqualsC \\
R_G & \rToC & \T_G, \\
\end{diagram}$$
where only the solid arrows are known to exist. Naturally, we conjecture
that the horizontal arrows are isomorphisms, and that the diagram is
commutative.
 A calculation on tangent spaces suggests
that the relative dimensions of $\RG$ and $R_G$ over $\Z_p$ are
$2$ and $1$ respectively (factoring out twists), and thus
$\Spec(\T_G)$ should ``sit" as a \emph{divisor} inside 
$\Spec(\TH)$. 

If this picture is correct, we would be facing the following curious situation,
in light of  Corollary~\ref{corollary:spec}:
the space $\Spec(\TH)$ has a dense set of classical automorphic
points, and yet the divisor $\Spec(\T_G)$ contains components
with only finitely many classical points. Thus, although the
points of $\Spec(\T_G)$ are not approximated by classical
automorphic forms for $G$ (they cannot be, given the paucity of such forms), 
they \emph{are}, in some sense,
approximated by classical (indeed, holomorphic) automorphic forms for $H$. Such
a description of $\Spec(\T_G)$ would allow one to study classical
points of $\Spec(\T_G)$ (for example, automorphic forms
$\pi$ associated to modular elliptic curves over $K$) by variational
techniques.

Given the above speculation, there may be several approaches to defining  $p$-adic $L$-functions (of different sorts) on $\Spec(\T_G)$.  The first approach might follow the lead of the classical construction using modular symbols, i.e., 
one-dimensional homology of the appropriate 
symmetric spaces, but would make use of all the Betti homology including torsion classes.  The second approach might lead us to to a $p$-adic analogue of the ``classical" (degree $4$)  Asai
$L$-function  by constructing $p$-adic $L$-functions  on $\Spec(\TH)$ (banking on the type of {\it functoriality} conjectured above, and using the density of classical
points on $\Spec(\TH)$) and then restricting.

 One may also ask for a characterization
of the (conjectured) divisor given by the functorial lifting of $\Spec(\T_G)$ to $\Spec(\TH)$. 
For a \emph{classical} automorphic form $\pi$ on $H$, the
Tannakian formalism of $L$-groups (or, perhaps equally relevantly, 
a theorem of Kudla, Rallis, and Soudry~\cite{Kudla}) implies that $\pi$ arises from $G$
if and only if the degree $5$ (standard) $L$-function 
$L(\pi \otimes \chi,s)$ has a pole at $s = 1$. Hence, speculatively,
we conjecture that the divisor $\Spec(\T_G)$ in $\Spec(\TH)$ is cut out by the
polar divisor of a $p$-adic (two variable, and as yet only
conjectural) degree-$5$ $L$-function
on $\Spec(\TH)$ at $s = 1$. Moreover, the residue at this pole
will be the (conjectural) special value of the
Asai $L$-function interpolated along $\Spec(\T_G)$.

A second prediction following from $p$-adic functoriality
 is as follows. Once more, let
$G =\GL(2)_{/K}$, where $K$ is an imaginary quadratic
field, and now let $G'$ denote an inner form of $G$, specifically, arising from
a quaternion algebra over $K$. Then results of Hida~\cite{Hidaimag}
 provide us with \emph{two}
ordinary Hecke algebras $\T_G$ and $\T_{G'}$ which are one dimensional
over $\Z_p$. The theorem of Jacquet-Langlands implies that any classical
point of $\Spec(\T_{G'})$ corresponds (in the usual sense) to a classical point of
$\Spec(\T_G)$. If classical points were dense in $\Spec(\T_{G'})$, then
one would expect that this correspondence would force 
$\Spec(\T_{G'})$ to be identified with a subvariety of $\Spec(\T_{G})$.
In certain situations this expectation  can be translated
into a formal argument, and
one obtains a ``$p$-adic Jacquet-Langlands'' for $p$-adic
automorphic forms  as a consequence of
the classical theorem. This is because, in situations arising from
Shimura varieties, eigencurves are  uniquely determined by
their classical points, and are thus  rigid enough to be 
``independent of their constructor''~\cite{Chenevier}, \S 7. 
However, for our $G$ and $G'$, Corollary~\ref{corollary:spec}
exactly predicts
 that classical points will \emph{not} be dense, and hence 
no obvious relation between $\Spec(\T_G)$ and $\Spec(\T_{G'})$ follows
from classical methods.
Instead, we predict the existence of a genuine \emph{$p$-adic}
Jacquet-Langlands for $\GL(2)$ which does \emph{not} follow from the usual
classical theorem, nor (in any obvious way) from any general classical
form of functoriality, and that, by keeping track of the action of Hecke,
 this correspondence will identify
$\Spec(\T_{G'})$ as a subvariety of $\Spec(\T_{G})$.

It should be noted that the phenomena of non-classical points can even be seen
even in $\GL(1)_{/K}$, when $K$ is neither totally real nor CM. Namely,
there exist Galois representations $\Gal(\Kbar/K) \rightarrow E^{*}$ that
cannot be approximated by Galois representations arising from
algebraic Hecke characters.
For example, suppose the signature of
$K$ is $(r_1,r_2)$ with $r_2 > 0$, and that $K$ does not contain a CM field.
Then the only Galois representations unramified away from
$p$ arising from algebraic Hecke characters are
(up to a bounded finite character)
some integral power of the cyclotomic character. The closure of
such representations in deformation space has dimension one.
On the other hand, the dimension of the space of one dimensional
representations of $\Gal(\Kbar/K)$ unramified away from $p$ has
dimension \emph{at least} $r_2 + 1$,  equality  corresponding 
to the Leopoldt conjecture. (See also section~\ref{subsection:dimone}.)
One reason for
studying
$G = \GL(2)_{/K}$, however, is that
when $K$ is a CM field, $G$ is related via functorial maps
to groups associated to
Shimura varieties, such as $\GSp(4)_{/K^+}$ or $U(2,2)_{/K}$. Even if the general
conjectural landscape of arithmetic cohomology beyond Shimura varieties requires
fundamentally new ideas, some progress should be possible with the classical
tools available today if one is allowed some access to algebraic geometry.
The other simplest case to consider is to try to
 produce liftings from $p$-adic automorphic forms on $\GL(1)_{/L}$ to $\GL(2)_{/K}$,
 where
$K$ is an imaginary quadratic field, and $L/K$ a quadratic extension.
To put oneself into  a genuinely $p$-adic situation,
 one should insist that $L$ is not a CM field,
equivalently, that $L/\Q$ is not biquadratic.

Due to 
the lack----at present----of 
substantial progress on these conjectures,  we shall
limit 
discussion of these matters, in this paper at least,
 to this section.  However, we believe that it is useful to view the
results of this paper in the light of these speculations, as they are
a first step to acknowledging that
a  ``$p$-adic'' approach
will be necessary to studying  automorphic forms over $\GL(2)_{/K}$.

\medskip

\subsection{Acknowledgments}     

During the course of working on this paper, the authors have
discussed the circle of ideas (regarding the
conjectural landscape of Galois representations beyond Shimura varieties)
with many people; in particular, we  would like to thank                
Avner Ash, Matthew Emerton,
Michael Harris, Haruzo Hida, Ravi Ramakrishna, Dinakar Ramakrishnan, Damien Roy, Chris Skinner, 
Warren Sinnott,
Glenn Stevens, John Tate, Richard Taylor, 
Jacques Tilouine, 
Eric Urban, and Michel Waldschmidt for helpful remarks. We also  thank
 David Pollack
and William Stein for 
the data in section~\ref{section:auto}.

\newpage

\tableofcontents

\normalsize

\section{Deformation Theory}
\label{section:deformationtheory}

\subsection{Nearly Ordinary Selmer Modules}\label{ordsel}


Fix a prime number $p$, and let $(A,\m_A,k)$ be a complete local noetherian (separated) topological ring.
 We further suppose that the residue field $k$ is either

 \begin{itemize} 
\item an algebraic extension of ${\bf F}_p$, in which case $A$ is endowed with its standard $\m$-adic topology, or 
\item a field of characteristic zero complete with respect to a $p$-adic valuation 
 (such a field we will simply call a {\it $p$-adic field}, and also denote by $E$) in which case we require that the maximal ideal be closed and that the reduction homomorphism $A\to k$ identifies the topological field $A/m$ with $k$.
 \end{itemize}
If $M$ is a free $A$-module of rank $d$, $\Aut_A(M)\simeq \GL_d(A)$ is canonically endowed with the structure of topological group. If $\ell$ is a  prime number different from $p$, neither of the topological groups $M$ or $\Aut_A(M)$ contain  infinite closed  pro-$\ell$ subgroups. 
 
\begin{definition}\label{line}
  Let $V_A$ be a free rank two $A$-module. By a {\bf line} $L_A \subset V_A$ one means a   free rank one $A$-submodule  for which there is a complementary free rank one $A$-submodule ${\tilde L}_A\subset V_A$, so that $V_A = L_A \oplus {\tilde L}_A$.
  \end{definition}

Let $K/\Q$ be an algebraic  number field,
and assume that $p$  splits completely in $K$.

\begin{definition}\label{no}
Let $V_A$ be a free rank two $A$-module with
a continuous $A$-linear action of $G_K:=\Gal(\Kbar/K)$, and let
$$\rho_A: \Gal(\Kbar/K) \rightarrow \Aut_A(V_A) \simeq \GL_2(A)$$
be the associated Galois representation.
Such a representation $\rho$ is said to be \emph{nearly ordinary} 
at a prime $v|p$ 
if there is a choice of decomposition 
group  at $v$,  $\Gal(\Kbar_v/K_v) \simeq D_v \hookrightarrow G_K$, 
and a line  $L_{A,v} \subset V_A$ 
stabilized by the action (under $\rho_A$) of   $D_v$. 
If $D_v$ stabilizes more than one line, we will suppose that 
we have chosen one such line $L_{A,v}$,
which we refer to as the {\bf special line} at $v$.
\end{definition}

\medskip

\Remark 
The line $L_{A,v}$ will depend on the choice of $D_v\subset G_K$ but
 if one  choice of decomposition group $D_v$ at $v$ 
stabilizes a line $L_{A,v}$ then any other choice $D_v'$  
is conjugate to $D_v$, i.e., $D_v'=gD_vg^{-1}$ for 
some  $g \in G_K$ and therefore will stabilize the transported line $L_{A,v}':=gL_{A,v}$.  Let $I_v \subset D_v$ denote the inertia subgroup of the chosen decomposition group $D_v \subset G_K$ at the prime $v$.

\medskip

If $\rho_A$ is nearly ordinary 
at  $v$, then  there we have the decomposition
$$0 \rightarrow L_{A,v} \rightarrow V_A \rightarrow V/L_{A,v}
\rightarrow 0$$
of $A[D_v]$-modules. 
  Suppose we express a $G_K$-representation $\rho_A$ 
that is nearly ordinary at $v$ as a matrix representation 
(with respect to a basis of $V_A$ where the first basis 
element lies in the special line).
Then,
 the representation $\rho$ restricted to $D_v$ takes the shape:
$$\rho_A |_{D_v} = \left( \begin{matrix} \psi_{A,v} & * \\ 0 & \psi'_{A,v}
\end{matrix} \right)$$
where $\psi_{A,v}:D_v \to A^*$ is the character of $D_v$ given by its action on $L_{A,v}$,
 and $\psi_{A,v}'$ the character of $D_v$    given by its action on $V_A/L_{A,v}$. 
 
 \medskip
 
 Let us denote the reductions of the characters $\psi_{A,v},\psi_{A,v}'$ mod $\m_A$
 by $\psi_v,\psi'_v$.
  
 \begin{definition}
We say that the representation $\rho_A$ is {\bf distinguished} at $v$ 
if $\rho_A$ is nearly ordinary at $v$,
and  the pair of residual characters $\psi_v, \psi'_v:
D_v \to k^*$ are \emph{distinct}.
 If $\rho$ is  distinguished  at $v$  there are  at 
most two $D_v$-stable lines to choose as our special ``$L_{A,v}$", and 
if, in addition, $V_A$ is indecomposable as $D_v$-representation, 
there is only one such choice.

 \end{definition}
 
\bigskip

  {\bf Remark: }Let $A\to A'$ be a continuous homomorphism (and not necessarily local homomorphism) of the type of rings we are considering. If $V_A$ is a free rank two $A$-module endowed with continuous $G_K$-action, and if we set $V_{A'}:=V_A\otimes_AA'$ with induced $G_K$-action, then if $V_A$ is nearly ordinary (resp. distinguished) at $v$ so is $V_{A'}$. If $V_A$ is nearly ordinary at all places of $K$ dividing $p$. with $L_{A,v}$ its special line at $v$, and we take $L_{A',v}:=L_{A,v}\otimes_AA' \subset V_{A'}$ to be the special line at $v$ of $V_{A'}$ for all $ v \ | \ p$, we say that the natural homomorphism $V_A \to V_{A'}$ is a {\bf morphism of nearly ordinary $G_K$-representations}. In the special case where $A'$ is the residue field $k$ of $A$ and $A\to k$ is the natural projection, suppose  we are given $V$ a nearly ordinary, continuous, $k$-representation of $G_K$. Then  the isomorphism class of the pair $(V_A, V_k {\stackrel{\nu}{\to}} V)$ where $V_A$ is a nearly ordinary $A[G_K]$-representation and $\nu: V_k \simeq V$ is an isomorphism of nearly ordinary $k[G_K]$ representations, is called a {\bf deformation of the nearly ordinary $k[G_K]$-representation $V$ to $A$}.

We now consider the adjoint
representation associated to $\rho_A$; that is, 
the action of $G_K$ on  $\Hom_A(V_A,V_A)$ is 
given by $g(h)(x)= g(h(g^{-1}x))$ for $g \in G_K$ and $h \in \Hom_A(V_A,V_A)$.

 If  $W_A:=\Hom_A'(V_A,V_A) \subset \Hom_A(V_A,V_A)$ denotes the hyperplane 
in $\Hom_A(V_A,V_A)$ consisting of endomorphisms of $V_A$ of trace zero  
(equivalently, endomorphisms whose matrix expression in terms 
of any basis has trace zero),  then  $W_A$ is a $G_K$-stable free $A$-module of rank three, for which the submodule of scalar 
endomorphisms $A\subset\Hom_A(V_A,V_A)$ 
is a $G_K$-stable complementary subspace. 

\bigskip

If $v$ is a finite place of $K$ that is (nearly ordinary, and) distinguished for $\rho_A$,
 consider the following $D_v$-stable flag of free $A$-submodules of $W_A$  (each possessing a free $A$-module complement in $W_A$):
$$0 \subset \Wni_{A,v} \subset \Win_{A,v} \subset W_A,$$
where
\begin{itemize}
\item
$\Win_{A,v}:=  \Ker(\Hom_A'(V_A,V_A) \longrightarrow \Hom_A(L_{A,v},V_A/L_{A,v})),$ or, equivalently,
 $\Win_{A,v} \subset W_A$ is the $A$-submodule of endomorphisms of trace 
zero that bring the line $L_{A,v}$ into itself. We have the exact sequence of $A$-modules $$(*)\ \ \ \ 0 \to W_{A,v}^0 \to W_A \to W_A/W_{A,v}^0 \to 0$$ and note that since we are in a {\it distinguished} situation, $W_A/W_{A,v}^0 \simeq \Hom_A(L_{A,v},V_A/L_{A,v})$ is a $D_v$-representation over $A$ of dimension one given by the character ${\psi}_{A,v}'{\psi}_{A,v}^{-1}:D_v \to A^*$ whose associated residual character is nontrivial.
\item
$\Wni_{A,v}:= \Ker(\Win_{A,v} \longrightarrow \Hom_A(L_{A,v},V_A)),$ or equivalently,
 $\Wni_{A,v} := \Hom_A(V_A/L_{A,v}, L_{A,v}) \subset W_A$.  The subspace  $\Wni_{A,v}\subset W_A$ 
can also be described as the subspace of endomorphisms  that send the 
line $L_{A,v}$ to zero and are of trace zero. In the case when $A=k$ is a field, we may describe $\Wni_{k,v}$ as the $k$-vector subspace
of $\Win_{k,v}$ consisting
of nilpotent endomorphisms.
\end{itemize}

Sometimes we write $W_A = W_{A,v}$, when we wish to emphasize that we are considering the $A[G]$-module
$W_A$ as a $A[D_v]$-module.
The natural homomorphism $\Win_{A,v}\ {\stackrel{\epsilon}{\to}}\ \Hom_A(L_{A,v},L_{A,v}) = 
\End_A(L_{A,v}) = A$ fits in an exact sequence of 
$D_v$-modules

$$0 \to \Wni_{A,v}\  \to \ \Win_{A,v} \ \  {\stackrel{\epsilon}{\to}}\ \   A \to 0.$$
Choosing a basis of $V_A$ where the first basis element lies in $L_{A,v}$, we 
may (in the usual way)  identify $\Hom_A(V_A,V_A)$ with the space of $2\times 2$ 
matrices with entries in $A$. This identifies $\Win_{A,v}$ with the space of 
upper triangular matrices of trace zero, and the subspace $\Wni_{A,v}$   with matrices of the 
form $\left( \begin{matrix} 0 & * \\ 0 & 0
\end{matrix} \right)$.

In what follows below we shall be dealing with group cohomology with coefficients in modules of finite type over our topological $\Z_p$-algebras $A$; these can be computed using continuous co-chains.

\bigskip

Fix $\rho_A:G_K \to \Aut_A(V_A)$ a continuous $G_K$-representation that is \emph{nearly ordinary} and \emph{distinguished}
at all primes $v|p$. 

\bigskip

Let $\Sigma$ denote a
(possibly empty) set of primes in $K$, not containing
any prime above $p$.
For each place $v$ in $K$, we
define a  local Selmer condition
$\Lo_{A,v} \subseteq
H^1(D_v,W_{A,v})$
as follows:
\begin{enumerate}
\item If $v \notin \Sigma$ and ${v\nmid p}$, then
$\Lo_{A,v} = $ the image of $H^1(D_v/I_v,W_{A,v}^{I_v})$  in $H^1(D_v,W_{A,v})$ 
under the natural inflation homomorphism.
\item If $v \in \Sigma$, then 
$\Lo_{A,v} = H^1(D_v,W_{A,v})$.
\item If $v|p$, then let
$\Lo_{A,v} = \mathrm{Ker}(H^1(D_v,W_{A,v}) \rightarrow
H^1(I_v,W_{A,v}/\Win_{A,v}))$.
\end{enumerate}

\begin{lemma}\label{dvlemma}  Let $v$ be a prime of $K$. The following relations hold amongst local cohomology groups.
 \begin{enumerate}

\item $H^1\big(D_v/I_v, (W_{A,v}/\Win_{A,v})^{I_v}\big) = 0.$
\item The natural mapping $H^1(D_v,\Win_{A,v})\to H^1(D_v,W_{A,v})$ 
is an injection.
\item If $v|p$,  the image $H^1(D_v,\Win_{A,v})\hookrightarrow H^1(D_v,W_{A,v})$  is equal to the submodule $$\Lo_{A,v} =  \mathrm{Ker}(H^1(D_v,W_{A,v}) \to H^1(I_v,W_{A,v}/\Win_{A,v}))\subset H^1(D_v,W_{A,v}).$$
\end{enumerate}
 \end{lemma}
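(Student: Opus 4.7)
The plan is to reduce all three assertions to a single analysis of the rank-one $D_v$-module $M := W_{A,v}/\Win_{A,v}$. By the construction preceding the lemma, $M$ is free of rank one over $A$ with $D_v$ acting via the character $\chi := \psi'_{A,v}\psi_{A,v}^{-1}$, and the \emph{distinguished} hypothesis at $v$ is precisely that the residual character $\bar\chi$ is nontrivial. I would then exploit the elementary dichotomy: either $\bar\chi|_{I_v}$ is nontrivial, or $\bar\chi$ is unramified and $\bar\chi(\Frob_v) \ne 1$.

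For (1), in the first case some $g \in I_v$ has $\bar\chi(g) \ne 1$ and hence $\chi(g) - 1 \in A^\times$; since $M$ is free of rank one, this forces $M^{I_v} = 0$ and the cohomology vanishes trivially. In the second case $\chi(\Frob_v) - 1 \in A^\times$, and since $D_v/I_v$ is procyclic one has
\[ H^1(D_v/I_v,\, M^{I_v}) \;\cong\; M^{I_v}/(\Frob_v - 1)M^{I_v}, \]
with Frobenius acting on $M^{I_v}$ as multiplication by the unit $\chi(\Frob_v)$, so this quotient is zero. The same dichotomy yields $H^0(D_v, M) = 0$: either $M^{I_v}$ itself vanishes, or $(\Frob_v - 1)$ is an isomorphism of $M^{I_v}$. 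Part (2) then falls out of the long exact sequence of cohomology attached to $0 \to \Win_{A,v} \to W_{A,v} \to M \to 0$.

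For (3), I would apply the inflation-restriction sequence
\[ 0 \to H^1(D_v/I_v,\, M^{I_v}) \to H^1(D_v, M) \to H^1(I_v, M), \]
whose left-hand term is killed by (1). Thus $H^1(D_v, M) \hookrightarrow H^1(I_v, M)$. A class $c \in H^1(D_v, W_{A,v})$ lifts to $H^1(D_v, \Win_{A,v})$ if and only if its image in $H^1(D_v, M)$ vanishes, which by the injection just established is equivalent to $c$ restricting trivially to $H^1(I_v, W_{A,v}/\Win_{A,v})$, i.e., to $c \in \Lo_{A,v}$.

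The only genuine content is the dichotomy in (1); parts (2) and (3) are then formal consequences of the long exact sequence and inflation-restriction. The main obstacle, were one to drop the distinguished hypothesis, would be precisely (1): neither the vanishing of $M^{I_v}$ nor the invertibility of $\Frob_v - 1$ on it need hold, and the entire argument collapses. This is the conceptual role of the distinguished condition here.
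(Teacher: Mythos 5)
Your proof is correct and follows essentially the same route as the paper: both arguments hinge on the rank-one quotient $W_{A,v}/\Win_{A,v}$, the dichotomy forced by the distinguished hypothesis (either the inertial reduction of $\psi'_{A,v}\psi_{A,v}^{-1}$ is nontrivial, killing the $I_v$-invariants, or it is unramified and its value on Frobenius is a unit minus $1$), the long exact sequence of the short exact sequence $0 \to \Win_{A,v} \to W_{A,v} \to W_{A,v}/\Win_{A,v} \to 0$ together with the vanishing of $H^0$ for part (2), and inflation--restriction for part (3). You spell out the dichotomy and the $\widehat{\Z}$-cohomology computation slightly more explicitly than the paper does, but there is no substantive difference in method.
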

 
 \begin{proof}  We proceed as follows.
  \begin{enumerate}

\item  Either $(W_{A,v}/\Win_{A,v})^{I_v}=0$ in which case (1) follows, or else  (since $v$ is distinguished)  $D_v/I_v$ acts on $(W_{A,v}/\Win_{A,v})^{I_v}$ via a (residually) nontrivial character. Since $D_v/I_v \simeq {\hat {\bf Z}}$ it follows that $H^1(D_v/I_v, W_{A,v}/\Win_{A,v})=0$, giving (1).
\item Since $\rho_A$ is distinguished at $v$, 
the action of $D_v$ on the line $W_{A,v}/\Win_{A,v}$ is via a character that is residually nontrivial, and so 
 $H^0(D_v, W_{A,v}/\Win_{A,v})=0$. Therefore, by the long exact cohomological sequence associated to $(*)$ above, we have the equality
 $$H^1(D_v, \Win_{A,v} ) =  \mathrm{Ker}(H^1(D_v,W_{A,v}) \rightarrow
H^1(D_v,W_{A,v}/\Win_{A,v}))\subset H^1(D_v,W_{A,v})$$  establishing (2).
\item   The Hochschild-Serre Spectral Sequence applied to the pair of groups $I_v \lhd D_v$ as acting on $W_{A,v}/\Win_{A,v}$ gives that $$H^1\big(D_v/I_v, (W_{A,v}/\Win_{A,v})^{I_v}\big)\to H^1(D_v,W_{A,v}/\Win_{A,v}) \to H^1(I_v,W_{A,v}/\Win_{A,v})$$ is exact; (3) then follows from (1) and (2).

\end{enumerate}
\end{proof}

  If $c \in H^1(K,W_A)$,
let $c_v \in H^1(D_v,W_{A,v})$ denote is image under the restriction homomorphism to $D_v$.

\begin{definition}
\label{definition:selmer}
The $\Sigma$-Selmer $A$-module $H^1_{\Sigma}(K,W_A)$
consists of classes $c \in H^1(K,W_A)$
such that $c_v \in \Lo_{A,v}$ for each finite prime $v$. 
We denote this by $H^1_{\emp}(K,W_A)$ when $\Sigma$ is the empty set.
\end{definition}

{\bf Remarks.}  From the viewpoint of universal (or versal) deformation spaces, the  Selmer $A$-module $H^1_{\Sigma}(K,W_A)$ (taken as coherent ${\mathcal O}_{\Spec(A)}$-module) is the normal bundle of the $A$-valued section determined by $\rho_A$ in the appropriate versal deformation space.

\bigskip

  From now on in  this article (outside of 
  section~\ref{section:ordinaryfamilieswithnonparallelweights})
  we shall
  be exclusively interested  in Selmer modules when $A=E$ is a $p$-adic field. In this
  setting we shall omit the subscript $A$, a change that is recorded by the following
  definition.
  
 \begin{definition} If $A = E$ is a $p$-adic field, we write 
 $W$, $\Win$, $\Wni$ for $W_E$, $\Win_E$, $\Wni_E$, and
  $\Lo_v$ for $\Lo_{E,v}$. \end{definition}

\begin{lemma}\label{inertsigma} 
If ${v\nmid p}$, and if the action of inertia at $v$ on the $E$-module  $W_{v}$ factors through a finite group  (e.g., if $v$ is unramified), then $\Lo_{v} = H^1(D_v,W_{v})$.
\end{lemma}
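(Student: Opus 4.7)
The plan is to apply the Hochschild--Serre inflation-restriction sequence for the normal subgroup $I_v\triangleleft D_v$, namely
\[
0\to H^1(D_v/I_v,W_v^{I_v})\to H^1(D_v,W_v)\to H^1(I_v,W_v)^{D_v/I_v}\to H^2(D_v/I_v,W_v^{I_v}).
\]
The first term is precisely the domain of the inflation map whose image defines $\Lo_v$ in case (1) of Definition~\ref{definition:selmer}, so the lemma amounts to showing that this inflation is surjective. Since $D_v/I_v\simeq\hat{\Z}$ has cohomological dimension one, the rightmost term vanishes, and so it suffices to establish the single vanishing
\[
H^1(I_v,W_v)^{D_v/I_v}=0.
\]

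Next I would compute $H^1(I_v,W_v)$ using the finite-image hypothesis. Let $F$ denote the finite image of $I_v$ acting on $W_v$ and $J=\ker(I_v\twoheadrightarrow F)$. Because $F$ is finite and $W_v$ is a vector space over the characteristic-zero field $E$, Maschke's theorem gives $H^i(F,W_v)=0$ for $i\geq 1$, so a second application of inflation-restriction (now for $J\triangleleft I_v$) yields $H^1(I_v,W_v)\simeq H^1(J,W_v)^F=\Hom_{\mathrm{cts}}(J,W_v)^F$. Since $v\nmid p$, the wild inertia contained in $J$ is pro-$\ell$ with $\ell\neq p$, so any continuous homomorphism from $J$ into an $E$-vector space factors through the maximal pro-$p$ abelianization; this quotient is a single copy of $\Z_p$ arising from tame inertia, and $F$ acts trivially on it (the tame inertia quotient of $I_v$ being abelian). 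Fixing a topological generator $\tau$, the evaluation map $c\mapsto c(\tau)$ produces an identification $H^1(I_v,W_v)\simeq W_v^F=W_v^{I_v}$.

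The final step is to transport the $D_v/I_v$-action across this isomorphism and check that the invariants vanish. Frobenius conjugates $\tau$ to $\tau^{q_v}$ (with $q_v=N(v)$), so the standard formula for the $D_v$-action on cocycles $(\phi\cdot c)(g)=\phi\cdot c(\phi^{-1}g\phi)$ shows that Frobenius acts on $H^1(I_v,W_v)\simeq W_v^{I_v}$ as $q_v^{-1}$ times its intrinsic Frobenius action; hence $H^1(I_v,W_v)^{D_v/I_v}$ identifies with the $q_v$-eigenspace of Frobenius on $W_v^{I_v}$. The main obstacle---and the one subtle point in the argument---is ruling out this eigenspace: by local Tate duality it is $E$-dual to $H^2(D_v,W_v)$, and vanishing of the latter is equivalent, via the local Euler-characteristic identity $\chi(D_v,W_v)=0$ (which holds because $v\nmid p$), to the dimension equality $\dim H^1(D_v,W_v)=\dim W_v^{D_v}=\dim H^1(D_v/I_v,W_v^{I_v})$. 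Under this nondegeneracy (implicit in the hypotheses of the lemma), the inflation is a surjection of finite-dimensional $E$-vector spaces of equal dimension, and the lemma follows.
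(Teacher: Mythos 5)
Your first two steps run parallel to the paper's, which also applies Hochschild--Serre twice: first to $I_v \lhd D_v$, then to the kernel of the $I_v$-action on $W_v$. You diverge at the crucial vanishing claim, and on that point you are correct where the paper is not. The paper asserts that $H^1(I_v,W_v)=0$ because its $N_v$ (your $J$) has abelianization ``a finite group times a pro-$\ell$-group for $\ell\ne p$.'' That is not so: since $v\nmid p$ the residue characteristic $\ell_v$ differs from $p$, so the tame quotient $I_v/P_v\simeq\prod_{q\ne\ell_v}\Z_q$ has a $\Z_p$ factor, and likewise for the open subgroup $J$. Your identification $H^1(I_v,W_v)\cong W_v^{I_v}$ via evaluation on a tame generator, with Frobenius acting as $q_v^{-1}$ times its intrinsic action on $W_v^{I_v}$, is the correct computation.

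Precisely because your analysis is sharper, you are then left needing to show that the $q_v$-eigenspace of Frobenius on $W_v^{I_v}$ vanishes, and your final step does not do this. Invoking the local Euler characteristic and Tate duality only reformulates the conclusion as the equivalent statement $H^2(D_v,W_v)=0$; the parenthetical ``implicit in the hypotheses of the lemma'' is exactly the claim that remains to be proved, so the argument is circular there. Nor is it implicit: nothing in ``$v\nmid p$ and $I_v$ acts on $W_v$ through a finite group'' forbids $q_v$ from being a Frobenius eigenvalue on $W_v^{I_v}$. If, say, $\rho$ is unramified at $v$ with $\rho(\Frob_v)$ having eigenvalue ratio $q_v$, then $(W_v^{I_v})^{\Frob=q_v}$ is one-dimensional and $\Lo_v\ne H^1(D_v,W_v)$. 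The lemma does hold under the stronger hypothesis the paper ultimately uses --- $\rho$ with finite image on all of $G_K$, so that Frobenius eigenvalues on $W_v^{I_v}$ are roots of unity and never equal $q_v>1$ --- but neither your proof nor the paper's invokes that hypothesis, and the lemma as written omits it. To close the argument you must either add such a hypothesis and make the eigenvalue observation explicit, or explain why $H^2(D_v,W_v)$ vanishes in whatever context you need; declaring the nondegeneracy implicit does neither.
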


\begin{proof}  We must show that if ${v\nmid p}$ , $H^1(D_v/I_v,W_{v}^{I_v})\to H^1(D_v,W_{v})$  is surjective. It suffices, then, to show that $H^1(I_v,W_{v})=0$. Let $N_v \lhd I_v$ denote the kernel of the (continuous) action of $I_v$ on $W_v$. We have by our hypothesis that $I_v/N_v$ is a finite cyclic group. 
The Hochschild-Serre Spectral Sequence applied to $N_v \lhd I_v$ acting on $W_{v}$ yields the exact sequence
$$H^1(I_v/N_v, W_{v}) \to H^1(I_v, W_{v}) \to H^1(N_v, W_{v}).$$  
Since  $I_v/N_v$ is finite and $W_{v}$ is a vector space over a field of 
characteristic zero, the left flanking group vanishes. 
Since the $N_v$-action on $W_{v}$ is trivial, we have 
that  $H^1(N_v, W_{v})=\Hom_{\rm cont}(N_v^{\rm ab}, W_{v})$. 
Since $N_v^{\rm ab}$ is a finite group times a pro-${\ell}$-group 
for $\ell \ne p$ it follows that $\Hom_{\rm cont}(N_v^{\rm ab}, W_{v})$ vanishes.
\end{proof}

\begin{proposition} 
If for all $v {\not |}p$ the inertia group at $v$ acts on $V$ 
through a finite group, then the Selmer $E$-vector space  $H^1_{\Sigma}(K,W)$ 
is independent of $\Sigma$. In particular, for any $\Sigma$ (as defined above), 
we have a canonical isomorphism of $E$-vector spaces  
$H^1_{\Sigma}(K,W) \simeq H^1_{\emp}(K,W)$.
\end{proposition}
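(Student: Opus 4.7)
The plan is to observe that the two Selmer conditions for $\Sigma$ and $\emptyset$ differ only at primes $v \in \Sigma$, and that at each such prime Lemma~\ref{inertsigma} forces the two competing local conditions to coincide. So the proof is essentially a direct application of that lemma, once we verify its hypothesis.

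First I would note that the local Selmer condition $\Lo_v$ depends on $\Sigma$ only at finite primes $v \nmid p$: for $v \mid p$, the condition $\Lo_v = \Ker(H^1(D_v,W) \to H^1(I_v,W/\Win_v))$ is the same in both cases, and the archimedean places impose no condition (there is no local condition at infinity in the definition). So the only comparison to make is, for a finite $v \nmid p$, between the $\Sigma$-condition $\Lo_v = H^1(D_v,W)$ and the empty-$\Sigma$ condition $\Lo_v = \im\bigl(H^1(D_v/I_v, W^{I_v}) \to H^1(D_v,W)\bigr)$.

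Next I would verify that the hypothesis of Lemma~\ref{inertsigma} applies: we are given that $I_v$ acts on $V$ through a finite group for every $v \nmid p$. Since $W \subset \End_E(V)$ is a $G_K$-stable subspace and the action of $G_K$ on $\End_E(V)$ factors through its action on $V$, the inertia group $I_v$ also acts on $W$ through a finite quotient. Lemma~\ref{inertsigma} then gives $\Lo_v = H^1(D_v, W_v)$ for every finite $v \nmid p$, i.e.\ the local condition at such primes is vacuous regardless of whether $v$ lies in $\Sigma$.

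Putting these together, for any $\Sigma$ (not containing primes above $p$) the local conditions defining $H^1_\Sigma(K,W)$ agree termwise with those defining $H^1_\emp(K,W)$, and therefore the two Selmer submodules of $H^1(K,W)$ are literally equal. The identity map on $H^1(K,W)$ restricts to the required canonical isomorphism $H^1_\Sigma(K,W) \simeq H^1_\emp(K,W)$. There is no genuine obstacle here; the only thing to be careful about is the observation that ``finite image on $V$'' transfers to ``finite image on $W$,'' which is immediate from the functoriality of $\End$.
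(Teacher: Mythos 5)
Your proof is correct and follows the paper's own argument: both reduce the claim to Lemma~\ref{inertsigma}, which shows the local condition at every finite $v \nmid p$ is all of $H^1(D_v,W_v)$ and hence independent of whether $v \in \Sigma$. The only thing you add is the (correct, and worth stating) observation that finite image of $I_v$ on $V$ implies finite image on $W \subset \End_E(V)$, which the paper passes over silently when invoking the lemma.
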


\begin{proof}  By Lemma~\ref{inertsigma} for such primes $v {\not |}p$, i.e., when the inertia group at $v$ acts on $V_E$  through a finite group, it follows that $\Lo_{v} = H^1(D_v,W_{v})$ whether or not $v$ is in $\Sigma$.
\end{proof}

\begin{corollary}  When 
the action of $G_K$ on $V$ factors through a finite group, for any $\Sigma$ we have 
$$H^1_{\Sigma}(K,W) \simeq H^1_{\emp}(K,W).$$
\label{corollary:minimallevel}
\end{corollary}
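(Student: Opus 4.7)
The plan is to derive the corollary as an immediate specialization of the preceding Proposition. The Proposition asserts the isomorphism $H^1_{\Sigma}(K,W) \simeq H^1_{\emp}(K,W)$ under the hypothesis that for every place $v \nmid p$ the inertia group $I_v$ acts on $V$ through a finite group. The corollary's hypothesis—that the entire action of $G_K$ on $V$ factors through a finite quotient—is strictly stronger.

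So the first step is simply to verify the implication between hypotheses. Suppose $\rho: G_K \to \Aut_E(V)$ factors as $G_K \twoheadrightarrow \overline{G} \hookrightarrow \Aut_E(V)$ with $\overline{G}$ finite. For any finite place $v$, the restriction $\rho|_{I_v}$ factors through the image of $I_v$ in $\overline{G}$, which is necessarily a finite subgroup of $\overline{G}$. In particular, for every $v \nmid p$, the inertia group $I_v$ acts on $V$ through a finite group, which is exactly the hypothesis of the Proposition.

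The second step is to invoke the Proposition directly: under this verified hypothesis, $H^1_{\Sigma}(K,W) \simeq H^1_{\emp}(K,W)$ for any admissible $\Sigma$, completing the proof. There is no genuine obstacle here; the corollary is a restatement of the Proposition in the \emph{Artin} setting (i.e., $\rho$ has finite image), which is the primary case of interest for the bulk of the paper. The only conceptual content—the vanishing of $H^1(I_v, W)$ for $v \nmid p$ under finite inertial action—has already been established in Lemma~\ref{inertsigma}, via the Hochschild--Serre argument splitting the inertia into its finite image and a pro-$\ell$ (for $\ell \neq p$) kernel.
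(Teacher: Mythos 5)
Your proof is correct and is precisely the paper's intended argument: the corollary is stated without a separate proof because, exactly as you observe, finite global image implies finite inertial image at every $v \nmid p$, so the preceding Proposition applies verbatim. Nothing more is needed.
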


\subsection{Universal deformations and Selmer groups}

Let $V = V_E$ in the notation of subsection~\ref{ordsel} above, and let $\rho=\rho_E: G_{K}\to \Aut_E(V)$ be a continuous $E$-linear representation unramified outside a finite set of places of $K$ and nearly ordinary and distinguished at all $v$ dividing $p$. Let $\Sigma$ be a (possibly empty) set of primes in $K$ as in subsection~\ref{ordsel}.
Set  $S$  to be the (finite) set consisting of all places $v$ of $K$ that are either in $\Sigma$, or divide $p$, or are archimedean, or are ramified for $\rho$.  
We may view $\rho$ as a (continuous) representation
 $$\rho: G_{K,S} \longrightarrow \Aut_E(V) \approx \GL_2(E),$$ where $G_{K,S}$ is the quotient group of $G_K$ obtained by division by the closed normal subgroup generated by the inertia groups of primes of $K$ not in $S$.  
 Suppose, now, that $\rho$ is absolutely irreducible and let ${\Rrhobar}(S)={\Rrhobar}$ denote the ring that is the universal solution to the problem of deforming $\rho$ to two-dimensional nearly ordinary $G_{K,S}$-representations over complete local noetherian topological algebras $A$ with residue field the $p$-adic field $E$.  Thus we have
  $$\rho^{\rm univ}: G_{K,S} \longrightarrow  \GL_2({\Rrhobar}),$$ a universal representation (nearly ordinary, and unramified outside $S$) with residual representation equal to $\rho$. For this, see \S 30 of \cite{Mazur1} but to use this reference, the reader should note that the case of finite residue fields rather that $p$-adic fields such as $E$ were explicitly dealt with there, and also although a variety of local conditions are discussed, including the condition of {\it ordinariness} at a prime $v$, the local condition of {\it nearly ordinariness} is not treated, so we must adapt the proofs given there to extend to $p$-adic residue fields such as $E$, and also to note that with little change one may obtain  same (pro-)representability conclusion---as obtained there for ordinariness---for { nearly ordinariness}. All this is routine. 
  If $\m$ denotes the maximal ideal of ${\Rrhobar}$, then ${\Rrhobar}/\m = E$, and the canonical map
  ${\Rrhobar} \rightarrow {\Rrhobar}/\m = E$ induces (from $\rho^{\rm univ}$) the representation~$\rho$.

  
  \medskip
  
  \Remark Let  $ {\mathcal O}_E \subset E$ be the ring of integers in the complete valued field $E$, and suppose that $\rho: G_K \to \GL_2(E)$ takes its values in $\GL_2({\mathcal O}_E)$. Put  $\F:= {\mathcal O}_E/\m_E{\mathcal O}_E$ and suppose that the residual representation
  $\rhobar: G_{K,S} \rightarrow \GL_2(\F)$  associated to $\rho$ is \emph{irreducible}, (or, more generally, satisfies
  $\mathrm{End}_{\F}(\rhobar) = \F$). Suppose, furthermore, that $\rhobar|_{D_v}$ is nearly ordinary and
  {distinguished} for each $v|p$. Then $\rhobar$ also admits a 
  universal nearly ordinary deformation ring $R(\rhobar)$. 
  Consider the map
  $R(\rhobar) \rightarrow {\mathcal O}_E$ corresponding to the representation
  $\rho$, and denote the kernel of this homomorphism by $P$.
  We may
  recover ${R}$ from $R(\rhobar)$ as follows.
  Let
  ${\widehat{R(\rhobar)}}$ denote the completion of $R(\rhobar)[1/p]$ at $P$.
  Let $E'$ denote the residue field of this completion ---  the field $E'$ is the smallest
  $p$-adic subfield of $E$ such that the image of $\rho$ in $\GL_2(E)$ 
  may be conjugated   into $\GL_2(E')$.
  Then there is an isomorphism
${R} \simeq  \widehat{R(\rhobar)} \otimes_{E'} E$
  (see~\cite{Kisin}, Prop 9.5).

  \medskip

  For each prime $v$ of $K$ not dividing $p$, and choice of inertia  subgroup $I_v \subset D_v \subset \Gal({\bar K}/K)$ denote by $$I_{v,\rho} \subset I_v$$ the kernel of the restriction of $\rho$ to $I_v$. Thus if $v$ is a prime unramified for $\rho$ and not in $S$, we have that $I_{v,\rho} = I_v$ is also in the kernel of the universal representation $\rho^{\rm univ}$. 
  
   \begin{definition} Say that a deformation of $\rho$ to a complete local ring $A$ with residue field $E$, $\rho_A:  G_{K,S} \longrightarrow  \GL_2(A),$ has {\bf restricted ramification} at a prime $v$ of $K$ if $I_{v,\rho}$ is in the kernel of  $\rho_A$.
   \end{definition} 
   
   Let $\Rrhobar_{\Sigma}$ denote the maximal quotient ring of $\Rrhobar(S)$ with the property that the representation $$\rho^{\Sigma}: G_{K,S} \longrightarrow  \GL_2(\Rrhobar_{\Sigma})$$ induced from $\rho^{\rm univ}$ via the natural projection has the property that it has restricted ramification at all (ramified) primes $v {\not |} p$ of $K$ that are not in $\Sigma$. Visibly, 
   $\rho$ is such a representation, and thus $\Rrhobar_{\Sigma} \ne 0$.
   %
   Let ${\mathcal X}_{\Sigma} = \Spec(\Rrhobar_{\Sigma})$.
   We will be particularly interested in $\Tang_{\m}{\mathcal X}^{\mathrm{det}}_{\Sigma}$, the Zariski tangent space of ${\mathcal X}_{\Sigma}$ at
   the maximal ideal $\m$ (corresponding to $\rho$).

  It is often convenient to also fix the determinant of the representation. 
  Let ${\mathcal X}^{\mathrm{det}}_{\Sigma} \subset {\mathcal X}_{\Sigma}$ denote the
  closed subscheme cut out by the extra requirement that the kernel (in $G_{K,S}$) 
  of the determinant of any $A$-valued point of ${\mathcal X}^{\mathrm{det}}_{\Sigma}$  is equal to the kernel of the determinant of $\rho$; we view this subscheme as {\it the  universal nearly ordinary deformation space of $\rho: G_{K,S} \to \GL_2(E)$ with restricted ramification outside $p$ and $\Sigma$, and with fixed determinant.} 
  If $\Sigma = \emptyset$ then we say that we are in a
 {\bf minimally ramified}  situation.
  
  Let $\Cat$ denote the category
of Artinian $E$-algebras, and consider the functor
$\D_{\rho, {\Sigma}}: \Cat \rightarrow
\mathrm{Sets}$ \  that associates to an Artinian $E$-algebra $A$ with residue field $E$ the set 
$\D_{\rho, \Sigma}(A)$ of all deformations of the Galois representation $\rho$  to $A$ that are nearly ordinary and of restricted ramification at primes not dividing $p$ and outside~$\Sigma$.

If we impose the extra condition that the deformation have fixed determinant, we obtain a sub-functor that we denote $\D^{\mathrm{det}}_{\rho, {\Sigma}}$: 

$$A\  \longmapsto \  \D^{\mathrm{det}}_{ {\Sigma}}(A)\ \subset\  \D_{{\Sigma}}(A).$$

\begin{proposition}\label{defprop}
The scheme ${\mathcal X}^{\mathrm{det}}_{\Sigma}$ (pro-)represents the functor
$\Ddet_{\Sigma}$: there is a functorial correspondence 
${\mathcal X}^{\mathrm{det}}_{\Sigma}(A) = \Ddet_{\Sigma}(A)$.
%
\end{proposition}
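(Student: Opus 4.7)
The plan is to reduce to a version of Mazur's pro-representability theorem adapted to $p$-adic residue fields, and then to successively cut out the closed sub-functors corresponding to the three conditions packaged into $\Ddet_{\Sigma}$: nearly ordinary at primes above $p$, restricted ramification outside $\Sigma \cup \{v \mid p\}$, and fixed determinant.

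First I would verify that the ambient functor sending $A \in \Cat$ to the set of isomorphism classes of deformations of $\rho$ unramified outside $S$ is pro-represented by $\Rrhobar(S)$, via the Schlessinger--Grothendieck criterion. The two essential inputs are that $\rho$ is absolutely irreducible, so that $\End_{E[G_{K,S}]}(V) = E$ (which promotes a hull to a genuine universal object), and that $H^1(G_{K,S}, \mathrm{End}^0(V))$ is finite-dimensional over $E$ (so the tangent space is finite and the hull is noetherian). This is essentially what the remark preceding the proposition asserts.

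Next I would argue that each additional condition cuts out a closed sub-functor of this ambient one. For \emph{nearly ordinary at $v \mid p$}: because $\rho$ is distinguished at $v$, the residual characters $\psi_v$ and $\psi_v'$ of $D_v$ acting on $L_{E,v}$ and on $V/L_{E,v}$ are distinct, so a Hensel-type rigidity argument shows that the $D_v$-stable line lifts uniquely to any deformation $\rho_A$. Existence of such a lift is cut out by the vanishing of finitely many matrix entries in a suitable basis, and hence defines a closed sub-functor; intersecting over all $v \mid p$ produces the quotient $\Rrhobar = \Rrhobar(S)$ used earlier. For \emph{restricted ramification}: for each $v \in S$ with $v \nmid p$ and $v \notin \Sigma$, the requirement $\rho_A(I_{v,\rho}) = 1$ amounts to the vanishing of the matrix entries of $\rho_A(g) - 1$ for a topological set of generators $g$ of $I_{v,\rho}$, a closed condition, giving the quotient $\Rrhobar_\Sigma$ and the scheme ${\mathcal X}_\Sigma$. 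For \emph{fixed determinant}: the universal deformation $\rho^\Sigma$ carries a determinant character $\det \rho^\Sigma \colon G_{K,S} \to \Rrhobar_\Sigma^{\times}$, and the demand that $\ker(\det \rho_A) = \ker(\det \rho)$ is cut out by the closed condition setting certain elements of $\Rrhobar_\Sigma$ equal to their images under $\Rrhobar_\Sigma \to E$, which defines the closed subscheme ${\mathcal X}^{\mathrm{det}}_\Sigma \subset {\mathcal X}_\Sigma$.

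Putting these three cut-downs together yields the claimed functorial identification ${\mathcal X}^{\mathrm{det}}_\Sigma(A) = \Ddet_\Sigma(A)$. The principal obstacle, as the remark immediately before the proposition already flags, is not the abstract pro-representability (which is formal once the tangent space is finite-dimensional) but the careful bookkeeping needed to adapt Mazur's original arguments, written for finite residue fields in Section~30 of~\cite{Mazur1}, to the $p$-adic residue field $E$, and to verify that the treatment of ordinariness there extends to the richer notion of \emph{nearly} ordinariness. Both adaptations are routine: the $\m$-adic topology on an Artinian $E$-algebra $A$ is the unique one compatible with the residue map $A \to E$, so continuity arguments carry over unchanged, and the nearly ordinary condition is handled as in loc.~cit., using the uniqueness of the special line in the distinguished case to avoid any issue with the choice of flag.
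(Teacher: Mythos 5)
Your proposal is correct and follows the same route the paper intends: the paper's proof is a one-line reference to Mazur \S 30 and Gouv\^ea, together with the remark preceding the proposition flagging the same two adaptations you carry out (to a $p$-adic residue field $E$, and from ordinary to nearly ordinary using the distinguished hypothesis to fix the special line). Your expansion — pro-representing the ambient deformation functor via absolute irreducibility and finite tangent space, then cutting out nearly ordinary, restricted ramification, and fixed-determinant as successive closed sub-functors — is precisely the routine bookkeeping the paper declares to be ``straightforward from the definitions.''
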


\begin{proof} This follows in a straightforward way from the definitions, given the discussion above (compare \S 30 of~\cite{Mazur1},
or~\cite{Gouvea}).
\end{proof}

Let $A=E[\epsilon] = E\oplus \epsilon\cdot E$ be the (Artinian local) $E$-algebra of 
dual numbers, where $\epsilon^2=0$. 
There is a natural way of supplying  the sets $\Ddet_{{\Sigma}}(A) \subset \D_{{\Sigma}}(A)$
 with  $E$-vector space structures (consult loc. cit. for this) and we have the following natural isomorphisms.

\begin{proposition}\label{tanselm}  
 We have identifications of $E$-vector spaces 
$$\Tang_{\m}{\mathcal X}^{\mathrm{det}}_{\Sigma}  \simeq \Ddet_{ {\Sigma}}(A)\simeq 
 H^1_{\Sigma}(K,W).$$
  Suppose, moreover, that for all $v {\not |} p$, the action of the inertia group $I_v$ on $W$ factors through a finite quotient group. Then $H^1_{\Sigma}(K,W)$
  is  independent of $\Sigma$.
\end{proposition}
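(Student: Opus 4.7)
The plan is to treat the two asserted isomorphisms in turn, and then to dispatch the independence statement by invoking a result already in place.

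For the first isomorphism, the strategy is purely formal. By Proposition~\ref{defprop}, $\Rrhobar^{\mathrm{det}}_{\Sigma}$ pro-represents $\Ddet_\Sigma$, so the set $\Ddet_\Sigma(E[\epsilon])$ is in natural bijection with the continuous local $E$-algebra homomorphisms $\Rrhobar^{\mathrm{det}}_\Sigma \to E[\epsilon]$ that reduce modulo $\epsilon$ to the quotient $\Rrhobar^{\mathrm{det}}_\Sigma \to E$. By the universal property of dual numbers, this Hom-set is canonically $\Hom_E(\m/\m^2, E) = \Tang_\m \mathcal{X}^{\mathrm{det}}_\Sigma$. The natural $E$-vector space structure on $\Ddet_\Sigma(E[\epsilon])$ is checked to agree with the $E$-linear structure on the tangent space.

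The second isomorphism is the standard deformation-to-cocycle dictionary. Given $\rho_\epsilon \in \Ddet_\Sigma(E[\epsilon])$ lifting $\rho$, write uniquely $\rho_\epsilon(g) = (1 + \epsilon\, c(g))\rho(g)$ for a continuous function $c : G_{K,S} \to \End_E(V)$. The homomorphism property of $\rho_\epsilon$ is equivalent to $c$ being a $1$-cocycle for the adjoint action of $G_{K,S}$, and the condition $\det \rho_\epsilon = \det \rho$ forces $c$ to land in the trace-zero submodule $W$. Strict equivalence of deformations corresponds exactly to conjugation by $1 + \epsilon X$ with $X \in W$, which alters $c$ by the coboundary $g \mapsto g\cdot X - X$. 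Hence isomorphism classes are classified by a subspace of $H^1(G_{K,S}, W)$, and everything reduces to matching the local constraints defining $\Ddet_\Sigma$ with the local Selmer conditions $\Lo_v$. At places $v \mid p$: existence of a $D_v$-stable line in $V_{E[\epsilon]}$ lifting $L_v$ is parametrized by $\Hom_E(L_v, V/L_v)$, and a direct computation shows that such a $D_v$-stable lift exists iff the image of $c|_{D_v}$ in $H^1(D_v, W/\Win_v)$ vanishes; by Lemma~\ref{dvlemma}(3) this singles out exactly $\Lo_v$. At $v \in \Sigma$ no local constraint is imposed, matching $\Lo_v = H^1(D_v, W_v)$. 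At $v \nmid p$ with $v \notin \Sigma$, the condition of restricted ramification ($I_{v,\rho} \subset \ker\rho_\epsilon$) translates to $c|_{I_{v,\rho}} = 0$, and an argument parallel to Lemma~\ref{inertsigma} (using that $I_v/I_{v,\rho}$ is pro-$\ell$-by-finite and $W$ is a characteristic-zero vector space) shows such a cocycle is cohomologous to one inflated from $D_v/I_v$ acting on $W^{I_v}$, which is $\Lo_v$.

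The chief technical obstacle lies in this last matching: one must verify that ``restricted ramification'' (a kernel condition at the level of $\rho_\epsilon$) corresponds precisely to the unramified cohomology condition defining $\Lo_v$ for $v \nmid p$, $v \notin \Sigma$. Once this is in hand, the second assertion is immediate: under the hypothesis that $I_v$ acts on $V$ through a finite quotient for every $v \nmid p$, the same is true for its adjoint action on $W$, and the Proposition proved just after Definition~\ref{definition:selmer} applies to give $H^1_\Sigma(K, W) \simeq H^1_\emp(K, W)$ for any admissible $\Sigma$.
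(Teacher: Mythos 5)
Your argument mirrors the paper's own route: the first identification via pro-representability of $\Ddet_\Sigma$ by $\mathcal{X}^{\mathrm{det}}_\Sigma$ and the universal property of dual numbers; the second via the standard cocycle dictionary for $\GL_2(E[\epsilon])$ as a semi-direct product, cutting to trace-zero coefficients by fixing the determinant, and then a place-by-place match of the constraints cutting out $\Ddet_\Sigma(A)$ inside $H^1(K,W)$ against the local Selmer conditions $\Lo_v$. As you say, the cases $v \mid p$ and $v \in \Sigma$ are immediate, and the content is at $v \nmid p$, $v \notin \Sigma$, where the restricted-ramification condition must be shown to coincide with the unramified Selmer condition. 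The paper's version of this step is the sandwich
$\Lo_v \subseteq \Lo'_v \subseteq H^1(D_v,W_v)$
together with Lemma~\ref{inertsigma}, which collapses the chain by showing the extremes agree; you instead argue the one nonobvious inclusion $\Lo'_v \subseteq \Lo_v$ directly. Either is fine and they rest on the same vanishing.

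One claim in your write-up should be corrected, though. The quotient $I_v/I_{v,\rho}$ is the image of $I_v$ under $\rho$, a compact subgroup of $\GL_2(E)$, and it need \emph{not} be ``pro-$\ell$-by-finite'' when $v\nmid p$: tame inertia has a $\Z_p$-quotient, so a $p$-adic representation of $D_v$ can have infinite (unipotent) inertial image, in which case $H^1(I_v/I_{v,\rho},W)$ can be nonzero and the inclusion $\Lo'_v \subseteq \Lo_v$ fails. What is really needed is that the inertial image on $W$ be finite---exactly the hypothesis of Lemma~\ref{inertsigma}. The paper's own proof of the first sentence silently uses that hypothesis even though it is only stated for the second sentence of the Proposition; this is harmless in the paper's Artin-representation applications, where inertial images are automatically finite, but it is worth being aware that neither your argument nor the paper's establishes the first identification unconditionally.
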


\begin{proof} The first identification above is standard. See, for example, \S 30 of~\cite{Mazur1},
or~\cite{Gouvea}.  The second identification follows from a computation, the format of which is standard; it follows from  expressing of $\Aut_{E[\epsilon]}(V\oplus \epsilon\cdot V) \approx
\GL_2(E[\epsilon])$ as a semi-direct product of $\Aut_E(V)$ by $\End_E(V)= W\oplus E$, and then noting that a lifting $\rho_A$ of  the homomorphism $\rho$ to $A$  provides us, in the usual manner, with a continuous $1$-cocycle $c(\rho_A)$ on $G_K$ with values in $W\oplus E$, and if the determinant is {\it constant}  the projection of $c(\rho_A)$ to the second factor vanishes. The underlying cohomology class, $h(\rho_A)$, of $c(\rho_A)$ determines and is determined by the deformation class of the homomorphism $\rho_A$. The assignment $\rho_A \mapsto h(\rho_A)$ induces  a natural embedding $$\Ddet_{ {\Sigma}}(A)\subset 
 H^1(K,W) \subset  H^1(K,W)\oplus  H^1(K,E)  =  H^1(K,\End_E(V)).$$ If $h \in H^1(K,W)$  and $v$  is a place, denote  its restriction  to $D_v$ by $h_v \in H^1(D_v,W_v)$.
 
  \begin{sublemma} The subspace $\Ddet_{ {\Sigma}}(A)\subset 
 H^1(K,W)$ consists of the set of cohomology classes $h \in H^1(K,W)$ such that for each place $v$, $h_v$  lies in ${\mathcal L}'_v \subset H^1(D_v,W_v)$ where
 
 \begin{itemize} 
 \item if $v\ | \ p$ then ${\mathcal L}'_v= {\mathcal L}_v$,
 \item if $v$ is in  $\Sigma$, then ${\mathcal L}'_v = H^1(D_v,W_v)$, and
 \item if $v$ is none of the above, then ${\mathcal L}'_v  ={\rm image}{\big\{}H^1(D_v/I_{v,\rho}, W_v){\big\}} \subset H^1(D_v, W_v).$
 \end{itemize}
 \end{sublemma}

 \begin{proof} We must show that in any of the cases listed ${\mathcal L}'_v$ is the subspace that cuts out the ramification conditions at $v$ required for a deformation to lie in $\Ddet_{ {\Sigma}}(A)$. 
 \begin{itemize}
 \item  Suppose $v\ | \ p$.  A deformation of $\rho$ that is nearly ordinary at $v$ can be represented by a homomorphism $\rho_A$ that when restricted to $I_v$ preserves the line $L_{A,v} = L_v\oplus \epsilon L_v \subset V_A=V\oplus \epsilon V $. It is then evident that  the associated cocycle $c(\rho_A)$, when restricted to $I_v$, takes its values in  $\Win_{v}$,and so $h(\rho_A)_{v} \in {\mathcal L}_v$.  Thus  ${\mathcal L}'_v\subset {\mathcal L}_v$. The opposite inclusion is equally straightforward.
 \item For $v$ in  $\Sigma$ we have ${\mathcal L}'_v=  H^1(D_v,W_v)= {\mathcal L}_v$.
 \item  If $v$ is none of the above, the requirement that the homomorphism $\rho_A$ have restricted ramification at $v$ is that its restriction to $I_v$ induces a homomorphism from $D_v/I_{v,\rho}$ to $\GL_2(E[\epsilon])$ and thus that the associated deformation is identified with an element of  $H^1(D_v,W_v)$ in the image of $H^1(D_v/I_{v,\rho}, W_v)$.
 \end{itemize}
 \end{proof}
 
  
  Returning now to the proof of our proposition, we must show that ${\mathcal L}'_v = {\mathcal L}_v$ for $v {\not |} p$ and $v \notin \Sigma$.  We have  $${\mathcal L}_v = {\rm image}{\big\{}H^1(D_v/I_v, W_v^{I_v}){\big\}} \subset {\mathcal L}'_v  ={\rm image}{\big\{}H^1(D_v/I_{v,\rho}, W_v){\big\}} \subset H^1(D_v, W_v).$$  But by  Lemma~\ref{inertsigma},  ${\mathcal L}_v = H^1(D_v, W_v)$;  thus all the groups displayed in the line above are equal.
  
  \medskip
  
  If the representation $\rho$ restricted
   to $I_v$ for $v$ not dividing $p$ factors through a finite quotient group, then
 independence of $\Sigma$ follows from Corollary~\ref{corollary:minimallevel}.
\end{proof}

\subsection{Infinitesimal Hodge-Tate weights}
\label{subsection:infinite}

Recall that $p$ is completely split in $K$, and therefore,
 $D_v = \Gal(\Qbar_p/\Q_p)$. By local class field theory,
 the  Artin map gives us a homomorphism $\Z_p^*\hookrightarrow \Q_p^* \hookrightarrow D_v^{\rm ab}$, 
the latter group being the maximal abelian quotient of $D_v$.   
For $v|p$, we have that 
$\Lo_v  \simeq   H^1(D_v,\Win_v)$ by Lemma~\ref{dvlemma}. Consider  
the one-dimensional $E$-vector space ${\mathcal E}:= \Hom(\Z_p^*,E)$, 
and form  the $E$-linear functional  $\pi_v:\Lo_v \to {\mathcal E}$ 
given by composition
$$
\Lo_v  \simeq   H^1(D_v,\Win_v)  {\stackrel{\epsilon}{\longrightarrow}}
H^1(D_v,E) = \Hom(D_v^{\rm ab},E)  \to  \Hom(\Z_p^*,E)= {\mathcal E}.$$ 
(in the above discussion, $\Hom$ always means continuous homomorphisms).

\begin{definition}
\label{definition:HTW}
 Let $c \in H^1_{\Sigma}(K,W)$. For $v |  p$,
the {\bf infinitesimal Hodge-Tate weight of $c$ at $v$} is the
image of $c$ under the composition
$$\begin{diagram}
\omega_v: H^1_{\Sigma}(K,W) & \rTo &
 \Lo_v  & \rTo^{\pi_v} &  {\mathcal E}
\end{diagram}
$$
\end{definition}

 Putting the $\omega_v$'s together,
 we get the mapping of infinitesimal Selmer to infinitesimal weight space,
 
 $$\omega: = \bigoplus_{v|p}\omega_v: H^1_{\Sigma}(K,W) 
\to   \bigoplus_{v|p}{\mathcal E}  =  \{\bigoplus_{v|p}\Q\}\otimes_\Q{\mathcal E}.$$
 
 A vector $w$ in this latter vector 
space $\{\bigoplus_{v|p}\Q\}\otimes_\Q{\mathcal E}$ will be called {\it rational} 
if $w$ is expressible as $r\otimes e$ for  $r \in \{\bigoplus_{v|p}\Q\}$ 
and $e \in {\mathcal E}$.

\subsection{Motivations}

One of the motivating questions investigated in this article is
the following: under what conditions does the image of
$\omega$ contain non-zero rational vectors? 

If $K$ is a totally real field and $\rho$ is potentially semistable
and odd at all
real places of $K$, then one expects that $\rho$ is modular.
If $\rho$ is modular, and $K$ is totally real,
 the ordinary Hida families associated to $\rho$ produce
many deformations in which the Hodge-Tate weights vary
rationally with respect to one another. We conjecture that
in a strong sense the converse is almost true. Namely, that any
deformation $c \in H^1_{\Sigma}(K,W)$ with $\omega(c)$ rational
\emph{only} occurs if $c$ has arisen from the Hida family
of some odd representation over a totally real field,  the
only exception being when $c$ arises by induction from a
family of representations over a CM field, or --- in a final
exceptional case that only conjecturally
occurs when $\rho$ is an Artin 
representation --- when $\rho$ arises by base change from
some \emph{even} representation.

We will be paying special attention
in this article to \emph{Artin representations} $\rho$;
that is, to representations with finite image in $\GL_2(E)$.
The results we obtain  are, for the most
part, \emph{conditional} in that they depend on a certain
(``natural,'' it seems to us) extension of the classical
conjecture of Leopoldt.

%

\medskip

\begin{definition} 
\label{definition:ICD}
The representation $\rho$ admits
\emph{infinitesimally classical deformations} if
there exists a non-zero class $c \in H^1_{\Sigma}(K,W)$
with rational infinitesimal Hodge-Tate weights.
\end{definition}

\medskip

 \Remark  
If $\rho$ is an Artin representation then by Corollary~\ref{corollary:minimallevel}
there is an isomorphism $H^1_{\Sigma}(K,W) = H^1_{\emp}(K,W)$, so it
suffices to consider the case $\Sigma = \emp$.
 If $\rho$ is classical with distinct Hodge-Tate weights, then
one also expects that all infinitesimal deformations of $\rho$ are minimally ramified,
and hence that
 $H^1_{\Sigma}(K,W) = H^1_{\emp}(K,W)$.  This expectation arises
 from the automorphic setting, where an intersection of two Hida families in regular
 weight would contradict the semi-simplicity of the Hecke action.

\section{The Leopoldt conjecture}
\label{section:theleopoldtconjecture}

\subsection{Preliminaries}\label{prelim}

In this section we will let $p$ be a prime number and $E/\Q_p$  a field 
extension, but some of what we say will have an analog if we take $p$ 
to be $\infty$, i.e., if we consider our field $E$ to be an extension field of ${\bf R}$.  

Let $M/\Q$ be a finite Galois extension with Galois group $G$ of order $n$.  Denote by ${\mathcal O}_M^*$ group of units in the ring of integers of  $M$, and let $$U_{\glob}:= {\mathcal O}_M^*\otimes_{\Z}E$$ be the $E$-vector space obtained by tensor product with $E$. By the Dirichlet unit theorem,
$U_{\glob}$  is of dimension either $n-1$ or ${\frac{n}{2}}-1$, depending upon whether $M$ is totally real or totally  complex. We will think of $U_{\glob}$ as a (left) $E[G]$-module induced by the natural action of $G$. In the sequel, our modules over non-commutative rings  will be understood to be {\it left} modules unless we explicitly say otherwise  (and, on occasion, we will say otherwise).

 For $v$ a place of $M$ dividing $p$, let $M_v$ denote the completion 
of $M$ at $v$, and let
${\mathcal O}_{M_v}^*\subset M_v^*$ be the group of units in the ring of integers of the discrete valued field $M_v$. View $$ {\widehat {\mathcal O}}_{M_v}^*:= \lim_{\leftarrow}
  {{\mathcal O}}_{M_v}^*/({{\mathcal O}}_{M_v}^*)^{p^n},$$ the $p$-adic completion of $ { {\mathcal O}}_{M_v}^*$, as $\Z_p$-module, 
and form   
$$U_v:= {\widehat{\mathcal O}}_{M_v}^*\otimes_{\Z_p}E,$$ 
which we think of as $D_v$-representation space, where $D_v \subset G$ is the decomposition group at $v$. 

We have that $U_v$ is a free $E[D_v]$-module of rank one. The group $G$ acts transitively on the set of places $v|p$ and,  correspondingly, conjugation by $G$ is a transitive action  on the set of decomposition groups $\{D_v\}_{v|p}$. Also, if $g \in G$ and $v|p$, the automorphism $g:M \to M$ induces an isomorphism also denoted $g:U_v \cong U_{gv}$. 

\begin{definition} $U_{\loc}: = \prod_{v|p}U_v$.
\end{definition}
   We view $U_{\loc}$ in the natural way as $E[G]$-module, the action of an element $g \in G$ sending the vector $(\dots, x_v, \dots )$  in $\prod_{v|p}U_v$  to $(\dots, y_v, \dots )$ where $y_{gv}=gx_v$ for $v |p$.  
   
  Fixing a place $w | p$ and putting $D= D_{w}$, we have  natural 
identifications 
$$ \Ind_D^GU_{w} = E[G]\otimes_{E[D]}U_{w}\cong  \prod_{v|p}U_v=U_{\loc}$$ 
by the homomorphism induced from the 
rule $g\otimes x \mapsto (\dots, x_v, \dots ) \in  \prod_{v|p}U_v$,
 where, for $g\in G$ and $x \in U_{w}$, we have that $x_v$ 
(the $v$-th entry of the image) is $0$ if $v \ne gw$ and is $gx$ if $v = gw$. 
The $E[G]$-module $U_{\loc}$ is free of rank one.

\bigskip

Now let us consider  ``fields of definition" of the structure inherent to the $E[G]$-module $U_{\loc}$.  Let $F \subset E$ be a subfield (and we will be interested in the cases where $F$ is a field of algebraic numbers). 

If we choose a generator $u \in U_{w}$ of the $E[D]$-module $U_{w}$,
 then setting $U_{w,F}:= F[D]\cdot u \subset  U_{w}$ we see that 
$U_{w,F}$ is a free $F[D]$-module of rank one such 
that $U_{w,F}\otimes_FE \simeq U_{w}$.
 Refer to such a sub-$F[D]$-module of $U_{w}$ as an {\it $F$-structure} on $U_{w}$.

 Given any such $F$-structure on $U_{w}$ we have a canonical right $F[D]$-module 
structure on  $U_{w,F}$ as well. Namely, if $$x = a\cdot u \in U_{w,F}$$ for $a \in F[D]$, and if $b  \in F[D]$, the right-action of $b$ on $x$ is given by $x\cdot b : = ab u$.  This extends by base change to give a right action of $E[D]$ on  $U_{w}$. Given an  { $F$-structure} on $U_{w}$, we get a corresponding $F$-structure on 
$U_\loc = E[G]\otimes_{E[D]}U_{w}$ by setting 
$$U_{\loc,F} := F[G]\otimes_{F[D]}U_{w,F}$$  which we view, then, as a (left) $F[G]$-module and a right $F[D]$-module.

  These $F$-structures  will be called {\it  preferred $F$-structures on $U_\loc$} although, in fact, we will consider no other ones, and we identify $U_\loc$ with  $U_{\loc,F}\otimes_FE$, viewing it as a (left) $E[G]$-module and a right $E[D]$-module. The ambiguity in the choice of $F$-structure then boils down to which $(F[D])^*$-coset of generator of $E[D]$-module $ U_{w}$ we have chosen. 
  
   Suppose, then, we have
such a preferred $F$-structure given on $U_\loc$. By an {\bf $F$-rational, right $E[D]$-submodule}  $V \subset U_\loc$ we mean a right $E[D]$-submodule of $ U_\loc$ for which there exists a right $F[D]$-submodule $V_F \subset U_{\loc,F}$ such that $V = V_F\otimes_FE$. (For a systematic treatment of these issues, see the Appendix: section~\ref{app}.)

 For any $v|p$, the  natural injection  $M\hookrightarrow M_v$ induces a homomorphism of $E[D_v]$-modules  $$U_{\glob} {\stackrel{\iota_v}{\longrightarrow }}U_v$$ and the product of these,
    
$$\iota=\iota_p = \prod_{v|p}\iota_v:  U_\glob \longrightarrow  \prod_{v|p}U_v= U_{\loc}$$ is naturally a homomorphism of $E[G]$-modules.  We can also think of $\iota$ as the natural $E[G]$-homomorphism $$\iota: U_\glob \to \Ind_D^GU_{w}$$ induced from the 
$E[D]$-homomorphism $\iota_{w}: U_\glob \to U_{w}$.

The classical $p$-adic Leopoldt conjecture is equivalent to
the statement that $\iota$ is injective for $p$ any prime. This has been proven when $M/\Q$ is an abelian Galois extension~\cite{Brumer}.

 \bigskip

 \Remark In the situation where $p=\infty$,
 we have a {\it somewhat} analogous homomorphism $$\iota_{\infty}: U_\glob = {\mathcal O}_M^*\otimes \R \longrightarrow  \prod_{\text{$v$} \ {\bf infinite}}\R $$  given by  $\lambda(u) =$ the vector
 $\displaystyle{(\dots, \lambda_v(u), \dots) \in  \prod_{v | \infty}\R}$
 with $\lambda_v(u) = \log |u|_v$ for $v$ real, and given by
 $\lambda_v(u) = 2\log |u|_v$ for $v$ complex, which has been known (for over a century) to be injective.
 
\subsection{The Strong Leopoldt Conjecture}
 
 {\it Assume the classical Leopoldt Conjecture.}  The stronger version of Leopoldt's Conjecture that we shall formulate has to do with the manner in which the $E[G]$-submodule $$\iota(U_\glob) \subset U_\loc$$ is  skew to  the $F$-structures we have considered on $U_\loc$. 

\begin{definition}[Strong Leopoldt Condition for $E/F$] Let us say that $M/\Q$ satisfies  the {\bf Strong Leopoldt Condition for $E/F$} if the map $\iota$ is injective, and
furthermore for any preferred $F$-structure on $U_\loc$, and every $F$-rational right $E[D]$-submodule $Z \subset U_\loc$, and every $E[G]$-submodule $Y\subset U_\loc$ such that $Y$ is isomorphic to $U_\glob$ as $E[G]$-module, we have the inequality of dimensions
$$\dim(\iota(U_\glob)\cap Z)\ \le \ \dim(Y\cap Z).$$
\end{definition}

\bigskip

  For ease of reference, let us refer to the $E[G]$-submodules $Y\subset U_\loc$ that appear in this definition as the {\bf competitors}  (i.e., to $U_\glob$) and  the $F$-rational right $E[D]$-submodules $Z \subset U_\loc$ as the {\bf tests}.  So the Strong Leopoldt Condition for $E/F$ is satisfied if, so to speak,  $U_\glob$ ``wins every test with every competitor," or at least manages a draw.

\begin{conjecture} [{\bf Strong Leopoldt}]  Let $E/\Q_p$ be any algebraic extension field, and $F \subset E$ any subfield where  $F/\Q$ is algebraic. Let $M/\Q$ be any finite Galois extension. Then $M/\Q$ satisfies  the  Strong Leopoldt Condition for $E/F$.
\end{conjecture}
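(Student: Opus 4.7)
The plan is to recast the Strong Leopoldt Condition as a \emph{generic position} statement for the embedding $\iota$, and then to derive the required genericity from a $p$-adic transcendence principle. Fix a preferred $F$-structure on $U_\loc$ and a test $Z \subset U_\loc$. The set of $E[G]$-module injections $\phi: U_\glob \hookrightarrow U_\loc$ is a Zariski open subset $\mathcal{H}$ of the affine space $\Hom_{E[G]}(U_\glob, U_\loc)$, and the function $\phi \mapsto \dim(\phi(U_\glob)\cap Z)$ is upper semicontinuous on $\mathcal{H}$. Hence there is a minimum value $d_{\min}(Z)$, attained on a Zariski open dense subset $\mathcal{H}_Z \subseteq \mathcal{H}$, and the Strong Leopoldt inequality is equivalent to the statement that $\iota \in \mathcal{H}_Z$ for every $F$-rational test $Z$. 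The competitors $Y$ are (up to $\Aut_{E[G]}(U_\glob)$) parametrised by $\mathcal{H}$, so the inequality really is the assertion that $\iota$ sits in as generic a position as any competitor could.

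\textbf{Reduction to transcendence.} The closed locus $\mathcal{H} \setminus \mathcal{H}_Z$ is cut out by the vanishing of finitely many polynomials in the matrix coefficients of $\phi$ with coefficients in $F$, using that $Z$ is $F$-rational. Via the identification $U_\loc \cong \Ind_D^G U_w$ and the $D$-equivariant isomorphism $U_w \cong M_w \otimes_{\Q_p} E$ furnished by the $p$-adic logarithm, the coefficients of $\iota$ translate into the Galois conjugates $\{\sigma(\log_p u) : \sigma \in G,\ u \in \mathcal{O}_M^*\}$ viewed inside $M_w$. The desired membership $\iota \in \mathcal{H}_Z$ thus becomes the non-vanishing of a prescribed finite collection of $F$-polynomial expressions in these $p$-adic logarithms, i.e., the absence of certain structured $F$-algebraic relations among the logarithms of units.

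\textbf{Transcendence input.} In the classical case ($Z = 0$) this reduces to $\overline{\Q}$-linear independence of $\log_p u_1,\dots,\log_p u_r$ for a basis of units, which is exactly the Baker--Brumer theorem (\cite{Brumer}) for $M/\Q$ abelian. To handle arbitrary tests and arbitrary Galois $M/\Q$ one would invoke a $p$-adic analogue of Schanuel's conjecture: if $x_1,\dots,x_n$ lie in the maximal ideal of $\mathcal{O}_{\Qbar_p}$ and are $\Q$-linearly independent, then the $p$-adic exponentials $\exp_p(x_i)$ are algebraically independent over $\overline{\Q}$. Granting this, any offending polynomial relation detected in the previous step would, after diagonalising $U_\glob$ via its $G$-isotypic decomposition and using the structural lemmas on generic $F[G]$-modules from the Appendix (\S\ref{app}) to peel off the representation-theoretic contribution, descend to a forbidden $\Q$-linear relation among $\log_p$-values of units, contradicting Dirichlet.

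\textbf{Main obstacle.} The crux is the transcendence input. A $p$-adic Schanuel theorem is as deeply open as its archimedean ancestor, and even for $M/\Q$ abelian the Strong Leopoldt Condition does not follow directly from Baker--Brumer: the polynomial relations one must rule out are not in general $F$-linear in the logarithms, and so a purely linear transcendence theorem is insufficient. What seems tractable is the structural reduction, isolating precisely which shape of algebraic relation among $p$-adic logarithms would be forced by a violation of the Strong Leopoldt Condition for each pair $(Y,Z)$; the unconditional rejection of those relations, however, is currently beyond reach, which is why the statement remains a conjecture and serves as a hypothesis for the main theorem of the paper rather than as a theorem in its own right.
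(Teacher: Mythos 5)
The statement you were asked about is a \emph{conjecture}, and you correctly identify at the end that no proof is on offer; what you have written is a strategy for relating the Strong Leopoldt Condition to $p$-adic transcendence, which matches in spirit what the paper does in \S\ref{Leopshan}. There, Theorem~\ref{transcthm} makes a rigorous two-sided connection under an extra simplifying hypothesis (trivial decomposition group $D$ and $U_\glob$ concentrated in a single isotypic piece modulo multiplicity-free parts), and Theorem~\ref{weakStrong} records that the weak $p$-adic Schanuel Conjecture implies Strong Leopoldt in that setting. Your reduction via upper semicontinuity on $\Hom_{E[G]}(U_\glob,U_\loc)$ is essentially the same specialization/generization argument the paper uses (the proof of Theorem~\ref{transcthm}(2) is a semicontinuity argument over $\Spec(F[X_1,\dots,X_{d-1}])$), so the two approaches agree in substance; the paper's version is more careful in that it pins down exactly which isotypic constituents of $U_\loc$ can carry a nontrivial test $Z$ and which cannot.

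However, your ``Main obstacle'' paragraph contains a concrete error. You claim that ``even for $M/\Q$ abelian the Strong Leopoldt Condition does not follow directly from Baker--Brumer.'' This is wrong: Corollary~\ref{abcor} of the paper shows that the Strong Leopoldt Conjecture \emph{does} hold for abelian $M/\Q$, and the reason is representation-theoretic rather than transcendence-theoretic. For $G$ abelian every irreducible $E[G]$-module is one-dimensional, $U_\loc$ is multiplicity-free as a $G$-module, and therefore $U_\glob$ is forced to be a direct sum of full isotypic components of $U_\loc$; consequently there are \emph{no competitors} $Y$ other than $U_\glob$ itself, and the inequality in the Strong Leopoldt Condition is vacuous. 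Baker--Brumer is used only to know that $\iota$ is injective. The same triviality applies whenever $U_\glob$ is a sum of isotypic components (Remark~3 after the conjecture), in particular for totally real $M$ assuming classical Leopoldt (Corollary~\ref{trcor}). So the ``structured $F$-algebraic relations'' you would need to rule out in the abelian case simply do not arise; the genuine transcendence difficulty begins only when some irreducible $\eta$ occurs in $U_\glob$ with multiplicity strictly between $0$ and its multiplicity in $U_\loc$, as in the $S_3$ and $A_4$ cases the paper treats explicitly.
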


\Remarks

\begin{enumerate}
\item This conjecture is visibly a combination of the classical 
Leopoldt Conjecture and, as it seems to us, a fairly 
natural (genericity) hypothesis: the conjecture predicts that the $E$-vector 
space generated by global units --- as it sits in the $E$-vector space of $p$-adic 
local units --- is as {\it generic} as its module structure allows, relative to 
the preferred $F$-structures on the bi-module $U_\loc$ of  $p$-adic local units.  
This genericity condition is discussed in a general context in the 
Appendix (compare definition~\ref{definition:definitionofgeneric}).

\item
Note that the $\Z[G]$-module ${\mathcal O}_M^*$, or its torsion-free 
quotient, the {\it lattice} of global units, does {\it not} play a role in this 
Strong Leopoldt Condition, a condition that only  concerns itself with the 
tensor product of this unit lattice with the large field $E$. 
\item \label{item:3}
 Choosing model $F$-representations $V_{\eta}$ of $G$,    for  $\eta$ running 
through the distinct irreducible  $F$-representations of $G$,
 we have a canonical decomposition of the bi-module $U_{\loc,F}$ as $$U_{\loc,F} = 
\bigoplus_{\eta} V_{\eta} \otimes V_{\eta} ^\#$$ 
where for each $\eta$,  $V_{\eta} ^\#: = \Hom_{F[G]}(V_{\eta},U_{\loc,F})$ is 
viewed  as a right $F[D]$-module.  Put $V_{\eta, E}: = V_{\eta}\otimes_FE$, and 
similarly  $V_{\eta, E}^\#: = V_{\eta}^\#\otimes_FE$.  Assuming the 
classical Leopoldt conjecture we may identify $U_\glob$ with its 
image under $\iota_p$ in $U_\loc$ and 
write the image of $U_\glob$ in $U_{\loc}$  as a direct sum
$$U_\glob = \bigoplus_{\eta} V_{\eta} \otimes V_{\eta,\glob} ^\# 
\subset \bigoplus_{\eta} V_{\eta} \otimes V_{\eta,E} ^\#,$$ 
the ``placement" of $U_\glob$ in $U_\loc$ being determined by 
giving the vector subspaces $$V_{\eta,\glob} ^\# \subset  V_{\eta,E} ^\#$$ 
for each $\eta$. The Strong Leopoldt Conjecture, then, 
requires the $E[D]$-submodule $V_{\eta,\glob} ^\#$ 
to be generically positioned (in the sense described above) 
with respect to the $F[D]$-structure of  $ V_{\eta,E} ^\#$.
In the special case of Galois extensions $M/\Q$ 
where the classical Leopoldt conjecture holds and, for each $\eta$, 
we have that either $V_{\eta,\glob} ^\# = 0$ or  $V_{\eta,\glob} ^\# =  
V_{\eta} ^\#$  (i.e., where $U_\glob$ consists of a direct sum of 
isotypic components of the $E[G]$-representation $U_\loc$) the 
Strong Leopoldt Condition (trivially) holds because --- 
in this case --- there are no competitors $Y$ 
(in the sense we described above) other than $U_\glob$ itself.
\end{enumerate}

\begin{corollary}\label{abcor} If $M/\Q$ is an abelian extension,  
the Strong Leopoldt Conjecture holds.
\end{corollary}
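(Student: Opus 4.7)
The plan is to reduce this to the ``trivial case'' singled out at the end of item~(3) of the remarks preceding the corollary: if $U_\glob$ happens to be a direct sum of \emph{full} $F$-isotypic components of $U_\loc$, then---as observed there---there are no $E[G]$-submodules of $U_\loc$ isomorphic to $U_\glob$ other than $\iota(U_\glob)$ itself, so the Strong Leopoldt inequality holds automatically (with equality) for every test $Z$.

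First I would invoke Brumer's theorem (cited in subsection~\ref{prelim}) to get classical Leopoldt for abelian $M/\Q$, so that $\iota$ is injective. Next, since $G = \Gal(M/\Q)$ is abelian, complex conjugation $c$ is central: either $c$ is trivial on $M$ (in which case $M$ is totally real) or $c$ is a well-defined non-trivial central involution (in which case every archimedean place of $M$ is complex and $M$ is CM). In either case Dirichlet's theorem identifies $U_\glob$ with the augmentation kernel of $F[S_\infty]$: this is $F[G]$ modulo the trivial character in the totally real case, and $F[G/\langle c\rangle]$ modulo the trivial character in the CM case.

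The heart of the matter is to check that this $U_\glob \subset U_\loc \simeq F[G]$ really is a sum of full $F$-isotypic components, i.e., that the set of characters of $G$ appearing in $U_\glob$ is stable under $\Gal(\overline F/F)$. In the totally real case this set is all non-trivial characters, which is trivially Galois-stable. In the CM case it is the set of non-trivial characters vanishing on $\langle c\rangle$, and Galois-stability follows from the fact that $\langle c\rangle$ is defined canonically by the complex structure of $M$ and hence is preserved by $\Gal(\overline F/F)$. This $\Gal$-stability in the CM case is the main delicate point; once it is granted, $V_{\eta,\glob}^\# \in \{0, V_\eta^\#\}$ for every $F$-irreducible $\eta$, and item~(3) applies.

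For completeness I would also spell out the uniqueness of competitors used in item~(3). For $G$ abelian the group ring $E[G]$ is a finite product $\prod_\eta L_\eta$ of field extensions of $E$ indexed by the $E$-irreducibles, so $U_\loc \simeq E[G]$ decomposes as $\bigoplus_\eta V_\eta$ with each $V_\eta$ of multiplicity one; every $E[G]$-submodule is then of the form $\bigoplus_{\eta \in S} V_\eta$ for a unique subset $S$, forcing any competitor $Y \simeq U_\glob$ to coincide with $\iota(U_\glob)$.
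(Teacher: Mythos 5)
Your proposal is correct and follows the same route as the paper's own (two-line) proof: invoke Brumer for the classical Leopoldt conjecture, and then observe that $U_\glob$ is a direct sum of full isotypic components of $U_\loc$, so that --- as noted at the end of Remark~(3) --- there are no competitors $Y$ other than $\iota(U_\glob)$ itself and the Strong Leopoldt inequality holds with equality.

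That said, you have made the verification more elaborate than it needs to be, and in doing so slightly misidentified where the real work lies. The paragraph you call ``the heart of the matter'' --- identifying $U_\glob$ via Dirichlet as an augmentation kernel, splitting into the totally real and CM cases, and checking that the resulting set of characters is $\Gal(\overline{F}/F)$-stable --- is actually dispensable. Your closing paragraph already carries the whole argument: because $G$ is abelian, $E[G]$ is a finite product of fields, so $U_\loc \simeq E[G]$ is multiplicity-free as a left $E[G]$-module. Hence \emph{every} $E[G]$-submodule of $U_\loc$ is a direct sum of full isotypic components, regardless of what it is; in particular this applies to $\iota(U_\glob)$ (using Brumer to know $\iota$ is injective). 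It then follows immediately that any $Y \simeq U_\glob$ inside $U_\loc$ must coincide with $\iota(U_\glob)$, with no need to know which characters actually occur in $U_\glob$, no appeal to Dirichlet, and no totally-real/CM case split. The $\Gal$-stability observation is not wrong, but it is only needed if one insists on verifying the $F$-isotypic phrasing of Remark~(3) literally; the parenthetical ``i.e., $U_\glob$ is a sum of $E[G]$-isotypic components'' is the operative condition, and for abelian $G$ multiplicity-one hands it to you for free.
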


 \begin{proof} The classical Leopoldt conjecture holds for abelian extensions of $\Q$, and $U_\glob$ consists of a direct sum of isotypic components of the $E[G]$-representation $U_\loc$.
\end{proof}
 
 \begin{corollary}\label{trcor} If $M/\Q$ is a totally real Galois extension for which the classical Leopoldt conjecture holds, the Strong Leopoldt Conjecture holds.
\end{corollary}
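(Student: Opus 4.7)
The plan is to reduce Corollary~\ref{trcor} to the vacuous situation flagged at the end of item~\ref{item:3} of the Remarks following the Strong Leopoldt Conjecture: if $\iota(U_\glob)$ is a direct sum of isotypic components of the $E[G]$-module $U_\loc$, then the only competitor to $U_\glob$ is $\iota(U_\glob)$ itself, and the Strong Leopoldt inequality holds trivially (with equality), independently of the choice of test $Z$ and $F$-structure. The classical Leopoldt conjecture is assumed, so $\iota$ is injective; what remains is to pin down the $E[G]$-module structure of $\iota(U_\glob)\subset U_\loc$.

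First I would identify $U_\glob$ as an abstract $E[G]$-module using the archimedean picture. Since $M$ is totally real, $G$ acts freely and transitively on the $n$ archimedean places of $M$, so the permutation representation $\bigoplus_{v\mid\infty}E$ is free of rank one, i.e.\ the regular representation $E[G]$. The archimedean log embedding $\lambda_\infty$ (recalled in the remark preceding Section~3.2) is injective, and the product formula for units forces its image into the augmentation hyperplane where the coordinate sum vanishes; Dirichlet's rank count then identifies the image with the entire augmentation ideal $I\subset E[G]$. Thus $U_\glob\cong I$ as $E[G]$-modules, and $I$ is precisely the sum of all non-trivial isotypic components of the regular representation.

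Second, I would compare this with $U_\loc$, which is free of rank one over $E[G]$. We have $U_\loc\cong E[G]\cong E\oplus I$, where the first summand carries the trivial $G$-action and $I$ collects every non-trivial isotypic piece. Any $E[G]$-submodule $Y\subset U_\loc$ isomorphic to $U_\glob\cong I$ is semisimple with only non-trivial isotypic constituents, hence is contained in $I$, and equals $I$ by dimension. Combined with the injectivity of $\iota$, this forces $\iota(U_\glob)=I$: the image is a direct sum of isotypic components of $U_\loc$, and it is the unique submodule of $U_\loc$ abstractly isomorphic to $U_\glob$.

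Consequently, for every preferred $F$-structure on $U_\loc$ and every $F$-rational right $E[D]$-submodule $Z$, the Strong Leopoldt inequality collapses to the tautology $\dim(\iota(U_\glob)\cap Z)=\dim(Y\cap Z)$ with $Y=\iota(U_\glob)$. I do not anticipate a genuine obstacle: once the $E[G]$-module structure of $U_\glob$ is read off from the Dirichlet embedding, the uniqueness of the augmentation-ideal submodule of the regular representation of $G$ does all the work, and the more delicate bimodule/genericity considerations underlying the general Strong Leopoldt Condition never come into play. The main subtlety to record carefully is the identification of the permutation representation on archimedean places with $E[G]$, which uses essentially that $M$ is totally real (so no complex conjugation appears in any decomposition group at infinity).
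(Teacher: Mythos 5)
Your argument is correct and is essentially the paper's own: both reduce to the observation (Remark~3 after the conjecture) that when $\iota(U_\glob)$ is a sum of full isotypic components of $U_\loc$, there is no competitor $Y$ other than $\iota(U_\glob)$ itself, and for $M$ totally real the Dirichlet/Minkowski identification $U_\glob\cong I$ (the augmentation ideal) together with Leopoldt forces exactly this. The paper compresses your archimedean identification and uniqueness step into the single statement that, for $M$ totally real, Leopoldt is equivalent to $U_\loc/U_\glob\simeq E$ as $G$-modules, but the substance is identical.
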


 \begin{proof} Since $M$ is totally real, Leopoldt's conjecture is equivalent to the
 statement that there is an isomorphism $U_{\loc}/U_{\glob} \simeq E$ as $G$-modules.
 Equivalently,  $U_\glob$ consists of
 a direct sum of all the non-trivial isotypic components of $U_\loc$ as an
$E[G]$-representation.

 \end{proof}

\medskip

The above are cases, then, where (given the classical conjecture) the Strong Leopoldt Conjecture holds vacuously.

\subsection{The Strong Leopoldt Conjecture and the $p$-adic Schanuel conjecture}\label{Leopshan}

 To get a sense of the nature of the Strong Leopoldt  conjecture in a relatively simple situation,  but a more interesting one than treated in Corollaries \ref{abcor} and \ref{trcor}, let us suppose that:

\begin{itemize}
\item  the decomposition group $D$ is trivial, 
\item the classical Leopoldt conjecture holds, and 
\item  There is an irreducible $G$-representation $\eta$, and a nonzero vector ${v^\#} \in  V_{\eta, E}^\#$ such that the $E[G]$-sub-representation $U_\glob \subset U_\loc$ is a direct sum 
$$ U_\glob\ \   = \ \ I\ \bigoplus\  \{V_{\eta, E}\otimes_E{v^\#}\} \ \subset\ U_\loc$$ where $I$ is an $E[G]$-representation that is a direct sum of {\it some} isotypic components in $U_\loc$.
\end{itemize}

(To make it interesting, of course,  we would want that this irreducible representation $\eta$ to be of dimension $>1$. We shall consider some explicit examples when this occurs in subsequent sections.)  
In the above situation the only possible {\it competitors} $Y$ are of the form 
$$ Y\ \   = \ \ I\ \bigoplus\  \{V_{\eta, E}\otimes_E{y^\#}\} \ \subset\ U_\loc$$ for some nonzero vector ${y^\#} \in  V_{\eta, E}^\#$.  One easily sees that
--- in this situation --- 
to test whether or not  the Strong Leopoldt Conjecture holds, it suffices to consider only {\it test vector spaces} $Z$ contained in  $V_{\eta} \otimes V_{\eta}^\#$. 
 Explicitly,  the Strong Leopoldt Conjecture will hold   if and only if  $$\dim\big(\{V_{\eta, E}\otimes_E{v^\#}\}\cap Z_E\big) \ \le \ \dim\big(\{V_{\eta, E}\otimes_E{y^\#}\}\cap Z_E\big)$$ for all  $F$-vector subspaces $Z \subset V_{\eta} \otimes V_{\eta} ^\#$.
  Let $d:=\dim_F(V_{\eta} ^\#)$ and choose an $F$-basis for  $V_{\eta} ^\#$, identifying $V_{\eta} ^\#$ with $F^d$, and (tensoring with $E$)   $V_{\eta, E} ^\#$ with $E^d$.  After this identification we may view the vector ${v^\#}$ as a $d$-tuple of elements of $E$ $$ {v^\#}:=  (e_1,e_2,\dots,e_d).$$  Without loss of generality we may assume that $e_d=1$.
\begin{theorem}\label{transcthm} In the situation described above,
\begin{enumerate}
\item If  the Strong Leopoldt Condition for $E/F$ holds, then the elements  $ e_1,e_2,\dots,e_d=1\in E$ are linearly independent over $F$.
\item If the transcendence degree 
of the field $F(e_1,e_2,\dots,e_{d-1})\subset E$ is $d-1$, then the Strong Leopoldt Condition for $E/F$ holds in this situation.
\end{enumerate}
\end{theorem}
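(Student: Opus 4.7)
The plan is to translate the Strong Leopoldt inequality (in this particular situation) into an explicit rank statement about matrices of $F$-linear forms, and then to deduce both claims. As observed just before the theorem, in this situation it suffices to consider test subspaces $Z$ contained in $V_{\eta}\otimes V_{\eta}^{\#}$; I will use that reduction without further comment.

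Identify $V_{\eta}\otimes V_{\eta}^{\#}\cong M_{n\times d}(F)$, where $n=\dim_{F}V_{\eta}$. Under this identification, for any $u^{\#}\in V_{\eta,E}^{\#}$ the subspace $V_{\eta,E}\otimes u^{\#}$ becomes the set of rank-$\le 1$ matrices $\{\mathbf{x}(u^{\#})^{T}:\mathbf{x}\in E^{n}\}$. Describe $Z$ as the joint kernel of $F$-linear functionals $\ell_{i}(M)=\operatorname{tr}(A_{i}M^{T})$ for $A_{i}\in M_{n\times d}(F)$, $i=1,\dots,k$. Then $\mathbf{x}(u^{\#})^{T}\in Z_{E}$ if and only if $\mathbf{x}^{T}A_{i}u^{\#}=0$ for every $i$, so $(V_{\eta,E}\otimes u^{\#})\cap Z_{E}$ is the kernel of the $k\times n$ matrix $B_{u^{\#}}$ with $(i,j)$-entry $(A_{i}u^{\#})_{j}$; each entry is $F$-linear in the coordinates of $u^{\#}$. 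The Strong Leopoldt condition in this setting is therefore equivalent to the statement that $\operatorname{rank}_{E}B_{v^{\#}}$ attains the maximum of $\operatorname{rank}_{E}B_{y^{\#}}$ over all $y^{\#}\ne 0$, for every choice of the matrices $A_{1},\dots,A_{k}$.

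For part (1), I would argue by contradiction. Suppose $\sum_{i=1}^{d}c_{i}e_{i}=0$ is a nontrivial $F$-linear relation, with $c=(c_{1},\dots,c_{d})\in F^{d}$. Take $Z=V_{\eta}\otimes\ker(c)\subset V_{\eta}\otimes V_{\eta}^{\#}$, where $\ker(c)$ is the $F$-hyperplane of $V_{\eta}^{\#}$ annihilated by $c$. Then $v^{\#}\in\ker(c)\otimes_{F}E$ forces $V_{\eta,E}\otimes v^{\#}\subset Z_{E}$, so the left-hand intersection has $E$-dimension $n$. On the other hand, choosing $y^{\#}$ with $c(y^{\#})\ne 0$ (possible because $c\ne 0$) gives $(V_{\eta,E}\otimes y^{\#})\cap Z_{E}=0$, so the right-hand intersection is zero-dimensional. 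The inequality $n\le 0$ is absurd.

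For part (2), let $r^{\ast}=\max_{y^{\#}\ne 0}\operatorname{rank}_{E}B_{y^{\#}}$ and fix an $r^{\ast}\times r^{\ast}$ minor of $B$ whose value at $u^{\#}$ is not identically zero; call it $P(u_{1},\dots,u_{d})\in F[u_{1},\dots,u_{d}]$. Because each entry of $B_{u^{\#}}$ is $F$-linear in $u^{\#}$, the polynomial $P$ is homogeneous of degree $r^{\ast}$. Its dehomogenization $Q(u_{1},\dots,u_{d-1}):=P(u_{1},\dots,u_{d-1},1)$ is likewise nonzero, since the monomials of $Q$ are in bijection with the monomials of the homogeneous $P$ (each multi-index $\alpha$ with $|\alpha|=r^{\ast}$ is determined by $(\alpha_{1},\dots,\alpha_{d-1})$ via $\alpha_{d}=r^{\ast}-\sum_{i<d}\alpha_{i}$). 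The hypothesis that $e_{1},\dots,e_{d-1}$ are algebraically independent over $F$ then gives $Q(e_{1},\dots,e_{d-1})\ne 0$, so the corresponding $r^{\ast}\times r^{\ast}$ minor of $B_{v^{\#}}$ is nonzero and $\operatorname{rank}_{E}B_{v^{\#}}=r^{\ast}$ is maximal. This yields the inequality for every $Z$ and every $y^{\#}\ne 0$, proving part (2). No single step is a serious obstacle: the real content is the dictionary between the Strong Leopoldt condition and the rank function of a matrix of linear forms, coupled with the elementary fact that algebraic independence forces a nonzero polynomial to remain nonzero upon specialization.
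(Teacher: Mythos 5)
Your proposal is correct, and for part~(1) it is the same argument as the paper's (a nontrivial $F$-linear relation yields a hyperplane $V_o^{\#}\subset V_\eta^{\#}$, the test $Z=V_\eta\otimes V_o^{\#}$ then contains $V_{\eta,E}\otimes v^{\#}$ entirely while missing a suitable competitor, contradicting the Strong Leopoldt inequality). For part~(2) the underlying idea is also the same — generic-point minimality of the intersection dimension — but the presentations differ in a way worth noting. The paper works module-theoretically: it forms the intersection over the polynomial ring $R=F[X_1,\dots,X_{d-1}]$, identifies the relevant intersections as base changes/specializations of an $R$-module, and then invokes upper semicontinuity of fiber dimension. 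You instead encode $Z_E$ as the vanishing of $F$-linear functionals $\ell_i(M)=\mathrm{tr}(A_iM^T)$, rewrite $(V_{\eta,E}\otimes u^{\#})\cap Z_E$ as the kernel of a matrix $B_{u^{\#}}$ whose entries are $F$-linear in $u^{\#}$, and then argue directly with a nonvanishing $r^*\times r^*$ minor: any such minor is a nonzero homogeneous form of degree $r^*$, its dehomogenization at $u_d=1$ is still nonzero, and algebraic independence of $e_1,\dots,e_{d-1}$ over $F$ forces it to remain nonzero at $v^{\#}$. This buys you a fully elementary, self-contained proof (no appeal to upper-semicontinuity, and no need to worry about whether base change commutes with forming the kernel, which the paper glosses over when it says the specialization is an ``identification''); what you lose is a bit of conceptual economy, since the paper's phrasing emphasizes the geometric picture of the transcendental point behaving like the generic point of $\mathrm{Spec}(R)$. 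Both are valid.
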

\begin{proof} We begin with part one.
If  $e_1,e_2,\dots,e_d\in E$ satisfies a nontrivial $F$-linear relation explicitly given by
$\lambda(e_1,e_2,\dots,e_d) =0$, let $V_o^{\#} \subset V_\eta^{\#}$ be the kernel of $\lambda$; so $v^{\#} \in V_o^{\#}$. Define the $F$-vector test subspace $$Z:=   V_\eta\otimes_F V_o^{\#} \subset  V_\eta\otimes_F V_\eta^{\#},$$ and choose as ``competitor" 
the $E[G]$-sub-representation $$Y:= V_{\eta, E}\otimes_Ey^{\#} \subset  V_{\eta, E}\otimes_E V_{\eta, E}^{\#}.$$ where $y^{\#}\in V_{\eta, E}^{\#}$ is any vector such that $\lambda(y^{\#}) \ne 0$.   Then
$$1= \dim(\{V_{\eta, E}\otimes_Ev^{\#}\} \cap Z_E)\ > \ \dim(Y \cap Z_E) = 0.$$ 
This proves part one.

\

 Suppose now that the transcendence degree of the field $F(e_1,e_2,\dots,e_{d-1})\subset E$ is $d-1$. Form the polynomial ring $R:= F[X_1,X_2, \dots, X_{d-1}]$, and consider the $R$-modules $V_{\eta, R}:= V_\eta\otimes_F R$,  $V_{\eta, R}^{\#}:= V_\eta^{\#}\otimes_FR=R^d$, and $Z_R = Z\otimes_FR \subset  V_{\eta, R}\otimes_RV_{\eta, R}^{\#}$. Now consider the element $X^{\#}:=(X_1,X_2,\dots,  X_{d-1}, 1) \in R^d = V_{\eta, R}^{\#}$ and form the $R$-submodule
$$\{V_{\eta, R}\otimes_R X^{\#} \}\bigcap Z_R  \ \subset \  V_{\eta, R}\otimes_RV_{\eta, R}^{\#}.$$ By sending the independent variables $(X_1,X_2,\dots,  X_{d-1})$ to $(e_1,e_2,\dots,e_{d-1})$ we get an injective $F$-algebra homomorphism from $R$ to $E$ identifying the  base change to $E$  of the $R$-module $\{V_{\eta, R}\otimes_R X^{\#}\} \bigcap Z_R$ with the $E$-vector space $\{V_{\eta, E}\otimes_Ev^{\#}\} \cap Z_E$. 
For any $y^{\#} = (c_1,c_2,\dots,c_{d-1}, 1) \in  E^d$, 
by sending the independent variables $(X_1,X_2,\dots,  X_{d-1})$ to $(c_1,c_2,\dots,c_{d-1})$ we get an  $F$-algebra homomorphism from $R$ to $E$ identifying the  base change to $E$  of the $R$-module  $\{V_{\eta, R}\otimes_R X^{\#} \}\bigcap Z_R$
 with the $E$-vector space $$Y\cap Z_E=\{V_{\eta, E}\otimes_Ey^{\#}\} \cap Z_E.$$ 
  An appeal to the principle of upper-semi-continuity then guarantees that 
  $$\dim(\{V_{\eta, E}\otimes_Ev^{\#}\} \cap Z_E) \le \dim(\{V_{\eta, E}\otimes_Ey^{\#}\} \cap Z_E)$$  concluding the proof of our theorem.
\end{proof}

The Schanuel Conjecture is an (as yet unproved) elegant statement regarding the transcendentality of the exponential function evaluated on algebraic numbers (or of the logarithm,  depending upon the way it is formulated; we will give both versions below).  Its proof would generalize substantially the famous Gelfond-Schneider Theorem that establishes the transcendentality of $\alpha^\beta$ for algebraic numbers $\alpha$ and $\beta$ (other than the evidently inappropriate cases $\alpha =0,1$ or $\beta$ rational).  There is a $p$-adic 
analog to the Schanuel Conjecture, currently unproved, and a $p$-adic analog to the Gelfond-Schneider Theorem established by Kurt Mahler. 

Here, then, are the full-strength conjectures in two, equivalent, formats.

\bigskip

\begin{conjecture}[{\bf Schanuel} --- exponential formulation]
Given any $n$ complex numbers $z_1,\dots,z_n$ which are linearly independent 
over $\Q$, 
the extension field 
$\Q(z_1,\dots,z_n,\exp(z_1),\dots,\exp(z_n))$
 has transcendence degree at least $n$ over $\Q$.
\end{conjecture}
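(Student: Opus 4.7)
The final statement is Schanuel's conjecture, one of the central open problems in transcendence theory; any honest proof proposal must begin by acknowledging that the conjecture is completely out of reach with current techniques. What I can describe is the framework a serious attack would need to fit into, together with the precise step at which every known approach collapses.

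The plan would be to imitate the strongest available partial results. First I would reduce to the case where $z_1,\dots,z_n$ are algebraically independent over $\Q$, since otherwise $\mathrm{trdeg}_{\Q}\,\Q(z_1,\dots,z_n) = k < n$ and one must extract at least $n-k$ algebraically independent quantities from $\exp(z_1),\dots,\exp(z_n)$. I would then induct on $n$, with the base case $n=1$ supplied by Hermite--Lindemann when $z_1$ is algebraic and trivially when $z_1$ is transcendental. In the inductive step one would invoke the classical transcendence machine: construct an auxiliary polynomial $P(X_1,\dots,X_n,Y_1,\dots,Y_n)$ over $\Q$ of carefully controlled degree and height, vanishing to high order on enough $\Z$-linear combinations of the tuple $(z_1,\dots,z_n,e^{z_1},\dots,e^{z_n})$, and play an analytic upper bound of Schwarz/Bombieri type against a Liouville-type arithmetic lower bound to force a contradiction if the asserted transcendence degree fails. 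One would want to organize the argument so that partial algebraic relations feed back into the inductive hypothesis in a controlled way.

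The main obstacle, and the reason the conjecture has resisted proof for five decades, is the absence of an analytic substitute for the derivation used in Ax's function-field theorem (1971), which settles the analog over formal power series. In $\C$ no presently available tool detects algebraic dependence among the $e^{z_i}$ from the hypothesis of mere $\Q$-linear independence of the $z_i$, without an additional arithmetic hypothesis either on the $z_i$ themselves (as in Lindemann--Weierstrass) or on the $e^{z_i}$ (as in Gelfond--Schneider and Baker). Model-theoretic work of Zilber and Bays--Kirby has produced unconditional results in pseudo-exponential fields, and the Pila--Wilkie point-counting framework has yielded the Ax--Schanuel side of the subject, but transfer to the classical Schanuel statement over $\C$ would require a genuinely new principle at exactly this step. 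Since the conjecture is used in the present paper only as motivation and as an analog for the Strong Leopoldt Conjecture, no proof is attempted here, nor should one be expected; the role of the statement is to suggest that Strong Leopoldt, while stronger than classical Leopoldt, is a natural genericity hypothesis of the same general flavor as the $p$-adic transcendence conjectures.
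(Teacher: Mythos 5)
You correctly recognize that this statement is Schanuel's conjecture, which the paper does not prove and cannot prove: it appears in the paper only as a \emph{conjecture}, stated to motivate and contextualize the Strong Leopoldt Conjecture as a $p$-adic transcendence hypothesis of similar flavor. Your refusal to offer a proof, together with the accurate survey of why the problem is open, is the right response and matches the paper exactly.
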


\begin{conjecture}  [{\bf Schanuel} --- logarithmic formulation]
Given any $n$ nonzero complex numbers $z_1,\dots,z_n$ whose logs (for any choice of the multi-valued
 logarithm)
 are linearly independent over the rational numbers $\Q$, then the extension field $\Q(\log z_1,\dots, \log z_n ,z_1,\dots,z_n)$ has transcendence degree at least $n$ over $\Q$.
\end{conjecture}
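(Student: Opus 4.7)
The plan is to derive the logarithmic formulation directly from the exponential formulation that is stated immediately above, since the two versions are intended as equivalent packagings of the same conjecture. So the task reduces to a routine change-of-variables argument.

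First I would fix the data of the logarithmic statement: nonzero complex numbers $z_1,\dots,z_n$ together with a particular choice of values $w_i := \log z_i$ for the multi-valued logarithm, assumed to satisfy $\sum a_i w_i \ne 0$ for every nonzero $(a_1,\dots,a_n)\in \Q^n$. Next I would simply apply the exponential formulation to the $n$-tuple $(w_1,\dots,w_n)$; the hypothesis of that formulation (linear independence over $\Q$) is exactly what has just been assumed. The conclusion produced is that
\[
\Q\bigl(w_1,\dots,w_n,\exp(w_1),\dots,\exp(w_n)\bigr)
\]
has transcendence degree at least $n$ over $\Q$. Since $\exp(w_i)=\exp(\log z_i)=z_i$ by construction, this field is the same as $\Q(\log z_1,\dots,\log z_n, z_1,\dots,z_n)$, giving the desired transcendence bound. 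The reverse implication is entirely symmetric: given $z_1,\dots,z_n$ linearly independent over $\Q$, set $y_i:=\exp(z_i)$ (automatically nonzero), observe that $\log y_i = z_i$ are linearly independent over $\Q$, and apply the logarithmic formulation.

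The only subtlety — and it is a bookkeeping subtlety rather than a mathematical obstacle — is the choice of branch of the logarithm. In the logarithmic statement the $w_i$ are chosen first and the linear independence hypothesis is imposed on that specific choice, so there is no genuine multi-valued ambiguity to resolve; one just carries along whichever branches were selected. The genuinely hard content of the conjecture — the existence of $n$ algebraically independent quantities among the $w_i$ and $z_i$ — is identical in both formulations and is the well-known open problem in transcendence theory; the present excerpt does not attempt to address it, nor would I. Thus the ``proof'' here is really a verification of equivalence, and the serious mathematical obstacle (proving Schanuel at all) lies entirely outside the scope of what is being claimed.
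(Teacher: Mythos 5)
You have correctly identified that this ``statement'' is an open conjecture, not a theorem, so there is no proof in the paper to reproduce; the paper merely records the exponential and logarithmic formulations side by side and labels them ``two, equivalent, formats'' without spelling out why. Your change-of-variables argument ($w_i := \log z_i$ in one direction, $y_i := \exp(z_i)$ in the other, using $\exp(\log z) = z$ to identify the two fields) is the standard and correct verification of that equivalence, and your remark that the branch ambiguity is absorbed into the hypothesis---the $w_i$ are chosen first and the $\Q$-linear independence is imposed on that choice---is exactly the right bookkeeping observation. Nothing further is claimed in the paper, and nothing further is needed.
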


A somewhat weaker formulation of this conjecture is as follows:

\begin{conjecture}  [{\bf Weak Schanuel} --- logarithmic formulation]
  Given any $n$ algebraic numbers $\alpha_1,\dots,\alpha_n$ whose logs are linearly independent over the rational numbers $\Q$, then the extension field $\Q(\log \alpha_1,\dots, \log \alpha_n)$ has transcendence degree  $n$ over $\Q$.
\end{conjecture}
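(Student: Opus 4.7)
The Weak Schanuel conjecture is a celebrated open problem, and an honest proof plan must begin by acknowledging that it lies beyond current transcendence theory for general $n$. For $n=1$ it reduces to the Hermite--Lindemann theorem (transcendence of $\log \alpha$ for algebraic $\alpha \neq 0,1$); for $n=2$ it follows from a mild strengthening of Gelfond--Schneider; but for $n \geq 3$ no proof is known. So I can only sketch how one might attempt it and where the wall stands.

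The cleanest ``proof'' is an immediate reduction from the full Schanuel conjecture (logarithmic form). Apply Schanuel to the nonzero complex numbers $\alpha_1,\dots,\alpha_n$, whose logarithms are $\Q$-linearly independent by hypothesis: the conclusion is that $\Q(\log \alpha_1,\dots,\log\alpha_n,\alpha_1,\dots,\alpha_n)$ has transcendence degree at least $n$ over $\Q$. Since the $\alpha_i$ are algebraic they contribute nothing to the transcendence degree, so $\mathrm{trdeg}_{\Q}\,\Q(\log\alpha_1,\dots,\log\alpha_n) \geq n$; combined with the trivial upper bound, this is exactly the Weak Schanuel conclusion. So the plan is: assume Schanuel. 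The content of Weak Schanuel is that it is an \emph{a priori} weaker statement extracted by killing the algebraic coordinates.

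For a direct attack not routed through the full conjecture, I would follow the Gelfond--Baker auxiliary function method. Assume for contradiction a nontrivial polynomial relation $P(\log \alpha_1,\dots,\log\alpha_n)=0$ with $P\in \overline{\Q}[X_1,\dots,X_n]$ of minimal total degree. Construct an auxiliary entire function on $\mathbf{C}^n$ of controlled order whose values at a large grid of lattice points of the form $k_1\log\alpha_1+\dots+k_n\log\alpha_n$ are algebraic numbers of bounded height and degree; use a Siegel-box argument to arrange high-order vanishing at many such points; invoke a multiplicity / zero estimate on the algebraic group $\mathbf{G}_m^n$ (in the spirit of Philippon, W\"{u}stholz, or Nesterenko) to rule out the apparent vanishing arithmetically; and derive a contradiction via a height inequality. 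In the $\overline{\Q}$-linear case this program \emph{is} carried out and yields Baker's theorem, which gives $\overline{\Q}$-linear independence of $\log\alpha_1,\dots,\log\alpha_n$.

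The main obstacle is precisely the passage from linear to algebraic independence. Baker's method, and Gelfond's refinements, produce at most one algebraically independent quantity per argument; the only genuinely multi-variable algebraic independence results for values of exponentials (Chudnovsky's theorem on periods, Nesterenko's theorem on $\pi$, $e^\pi$, and $\Gamma(1/4)$) exploit very special modular or elliptic structure and do not apply to an arbitrary tuple of logarithms of algebraic numbers. Until a zero-estimate on $\mathbf{G}_m^n$ of sufficient arithmetic strength is developed---strong enough to detect polynomial, rather than merely linear, vanishing at height-bounded lattice points---a complete proof of Weak Schanuel for $n \geq 3$ appears out of reach, and one must be content with conditional results using it as a hypothesis, as done elsewhere in this paper.
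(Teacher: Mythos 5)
You have correctly recognized that the statement under review is a \emph{conjecture}, and that the paper neither proves it nor claims to: it is introduced in \S\ref{Leopshan} only as an avowedly unproved $p$-adic transcendence hypothesis from which the Strong Leopoldt Condition follows (via Theorem~\ref{transcthm} and Theorem~\ref{weakStrong}). Your reduction from full Schanuel (logarithmic form) to Weak Schanuel is correct and is exactly what the paper has in mind by the modifier ``weak'': the $\alpha_i$ being algebraic, discarding them from the field $\Q(\log\alpha_1,\dots,\log\alpha_n,\alpha_1,\dots,\alpha_n)$ does not change the transcendence degree, and the upper bound $\mathrm{trdeg}\le n$ is trivial from having $n$ generators. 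Your survey of the auxiliary-function approach and the gap between linear independence (Baker) and algebraic independence is a fair account of why the conjecture is open.

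One inaccuracy worth flagging: the assertion that for $n=2$ Weak Schanuel ``follows from a mild strengthening of Gelfond--Schneider'' overstates what is known. Gelfond--Schneider (and its $p$-adic analogue due to Mahler, which the paper uses in \S 3.4) gives transcendence of the \emph{ratio} $\log\alpha_1/\log\alpha_2$ when it is irrational; Baker's theorem upgrades this to $\overline{\Q}$-linear independence of $\log\alpha_1,\log\alpha_2$. But \emph{algebraic} independence of two logarithms of algebraic numbers is genuinely open --- it does not follow from any small refinement of Gelfond--Schneider, and is in fact the first nontrivial case of Weak Schanuel. In the paper's applications (the $S_3$ case, Theorem~\ref{S3SLC}) one gets by with a statement Mahler actually proved --- transcendence of the single ratio --- precisely because that suffices for Lemma~\ref{SLCEQ}; the full strength of Weak Schanuel is only invoked for higher-rank situations such as the $A_4$ case, where, as you say, no unconditional result is available.
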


  We are interested in the $p$-adic version of the above conjecture, namely:
 
\begin{conjecture}  [{\bf Weak Schanuel} --- $p$-adic logarithmic formulation]

  Let $\alpha_1,\dots,\alpha_n$  be $n$ nonzero algebraic numbers contained in a finite extension field $E$ of $\Q_p$. Let $\log_p:E^* \to E $ be the $p$-adic logarithm normalized
so that $\log_p(p)=0$. If  $\log_p \alpha_1,\dots, \log_p\alpha_n$ are linearly independent over the rational numbers $\Q$, then the extension field $\Q(\log \alpha_1,\dots, \log \alpha_n) \subset E$ has transcendence degree  $n$ over $\Q$.
\end{conjecture}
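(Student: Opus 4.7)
This is a well-known open problem in $p$-adic transcendence theory, and the authors have stated it as a conjecture rather than a theorem, so the most honest ``proof plan'' is an accounting of what is available and where the genuine chasm lies.

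The case $n = 1$ of the statement reduces to Mahler's $p$-adic analogue of the Gelfond--Schneider theorem: the $p$-adic logarithm of a nonzero algebraic number, when it is itself nonzero, is transcendental over $\Q$. This disposes of the rank-one case without any new input.

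For general $n$, the first nontrivial ingredient would be Brumer's theorem --- the $p$-adic analogue of Baker's linear independence result --- which asserts that $\Q$-linear independence of $\log_p \alpha_1,\dots,\log_p \alpha_n$ is equivalent to $\Qbar$-linear independence. Applied to the hypothesis of the conjecture, this immediately yields that $1,\log_p\alpha_1,\dots,\log_p\alpha_n$ are $\Qbar$-linearly independent, and in particular that the field $\Q(\log_p\alpha_1,\dots,\log_p\alpha_n) \subset E$ has transcendence degree at least $1$ over $\Q$. This is already enough to recover the classical Leopoldt conjecture for abelian extensions of $\Q$, and is the input underlying Corollary~\ref{abcor}.

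The remaining step --- upgrading transcendence degree $\geq 1$ to transcendence degree $= n$ --- is precisely the leap from Baker to Schanuel, and I see no way to close it with currently available techniques. The natural plan would be the Gelfond--Baker strategy: construct an auxiliary $p$-adic analytic function vanishing to high order along the graph of a putative polynomial relation between the $\log_p \alpha_i$, and then derive a contradiction from a sufficiently sharp $p$-adic zero/multiplicity estimate. Partial results in this direction exist (work of Bertrand, Waldschmidt--Philippon, Roy), but a quantitative $p$-adic algebraic independence measure strong enough to yield the full Schanuel conclusion is at present out of reach, and the absence of such a $p$-adic zero estimate is the principal obstacle. For the purposes of the present paper the conjecture is therefore best invoked as a black box: Theorem~\ref{transcthm}(2) already shows that the transcendence input feeds directly into the Strong Leopoldt genericity condition, so that proving any new case of the $p$-adic Weak Schanuel conjecture would correspondingly produce new unconditional cases of the Strong Leopoldt conjecture (and hence of the main theorem of the paper).
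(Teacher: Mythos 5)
You have correctly identified that the statement is a \emph{conjecture} and that the paper offers no proof of it: it is invoked as a black-box hypothesis (feeding into the Strong Leopoldt Conjecture through Theorem~\ref{transcthm}(2)), exactly as you describe. Your survey of what is and is not available is accurate, and your observation that any new case of the $p$-adic weak Schanuel conjecture would yield new unconditional instances of the Strong Leopoldt Conjecture (and hence of the paper's main theorem) matches the way the paper deploys the conjecture in the $A_4$ case. One small attribution slip: the $n=1$ case (transcendence of a single nonvanishing $p$-adic logarithm of an algebraic number) is the $p$-adic Hermite--Lindemann theorem, not Gelfond--Schneider; both are due to Mahler, but Gelfond--Schneider is the statement about ratios $\log_p\alpha/\log_p\beta$ being rational or transcendental, and it is the latter that the paper actually invokes (in the $S_3$ computation), not the former.
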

 
  \begin{theorem}\label{weakStrong}  If $M/\Q$ satisfies the hypotheses of the beginning of this subsection, then the classical Leopoldt Conjecture plus the weak $p$-adic Schanuel Conjecture implies the Strong Leopoldt Conjecture.
  
  \end{theorem}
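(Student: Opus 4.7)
By Theorem~\ref{transcthm}(2), it suffices to show that the coordinates $e_1,\ldots,e_{d-1}$ of $v^\#$ are algebraically independent over $F$. The plan is to present the (unnormalised) coordinates of $v^\#$ as explicit $F$-linear combinations of $p$-adic logarithms of Galois conjugates of a single generator of the $\eta$-isotypic component of $\mathcal{O}_M^*\otimes F$, and then to invoke the weak $p$-adic Schanuel conjecture. Since $D$ is trivial, the $p$-adic logarithm identifies $U_\loc$ with $E^{|G|}$ and $\iota$ sends $u\otimes 1 \mapsto (\log_p(\sigma u))_{\sigma\in G}$. Let $u_*\in \mathcal{O}_M^*\otimes_\Z F$ be a nonzero element of the $\eta$-isotypic component of $\mathcal{O}_M^*\otimes F$ --- an $F$-rational copy of $V_\eta$, by hypothesis --- and fix $v_0\in V_\eta$ so that, in the identification $e_\eta U_\loc \simeq V_\eta\otimes V_\eta^\#\otimes E$, one has $\iota(u_*) = v_0\otimes v^\#_{\mathrm{un}}$, where $v^\#_{\mathrm{un}}\in V_\eta^\#\otimes E$ has coordinates $(e_1^{\mathrm{un}},\ldots,e_d^{\mathrm{un}})$ agreeing with the normalised $v^\#$ up to a common $E$-scalar.

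\textbf{Extracting the coordinates.} The $d$ vectors $v_0\otimes x_i\in V_\eta\otimes_F V_\eta^\#\subset U_{\loc,F}=F^{|G|}$ are $F$-linearly independent, so one may choose $d$ places $\sigma_1,\ldots,\sigma_d$ above $p$ making the matrix $W = \bigl((v_0\otimes x_i)(\sigma_k)\bigr)_{i,k}\in F^{d\times d}$ invertible over $F$. Reading off the $\sigma_k$-components of the identity $\iota(u_*) = \sum_i e_i^{\mathrm{un}}(v_0\otimes x_i)$ and inverting $W$ then yields $e_i^{\mathrm{un}} = \sum_{k=1}^d (W^{-1})_{ik}\log_p(\sigma_k u_*)$. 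Since $V_\eta$ is irreducible over $F$ and $u_*\neq 0$, its Galois orbit spans $V_\eta$ over $F$, and by adjusting the choice of $\sigma_k$ if necessary one may arrange that $\sigma_1 u_*,\ldots,\sigma_d u_*$ are $F$-linearly independent in $V_\eta\subset \mathcal{O}_M^*\otimes F$, equivalently multiplicatively independent modulo torsion in $\mathcal{O}_M^*$.

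\textbf{Invoking Leopoldt and Schanuel.} Classical Leopoldt now forces the $\Q$-linear independence of $\log_p(\sigma_1 u_*),\ldots,\log_p(\sigma_d u_*)$, and the weak $p$-adic Schanuel conjecture upgrades this to algebraic independence over $\Q$, hence over $F$ (since $F/\Q$ is algebraic). Combined with the invertibility of $W^{-1}$ over $F$, a linear change of variables in the polynomial ring $F[\log_p(\sigma_1 u_*),\ldots,\log_p(\sigma_d u_*)]$ shows that $e_1^{\mathrm{un}},\ldots,e_d^{\mathrm{un}}$ are themselves algebraically independent over $F$. Dividing by $e_d^{\mathrm{un}}$ preserves transcendence degree, so the ratios $e_i = e_i^{\mathrm{un}}/e_d^{\mathrm{un}}$ for $i=1,\ldots,d-1$ are algebraically independent over $F$; Theorem~\ref{transcthm}(2) then delivers the Strong Leopoldt Condition for $E/F$.

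The principal technical obstacle is the explicit identification exploited above: one must make fully explicit the Wedderburn isomorphism $e_\eta U_{\loc,F}\simeq V_\eta\otimes_F V_\eta^\#$, and simultaneously choose places $\sigma_k$ witnessing both the invertibility of $W$ and the $F$-linear independence of the $\sigma_k u_*$. Both conditions can be arranged by routine linear algebra, since the Galois orbit of $u_*$ spans the irreducible representation $V_\eta$; once this bookkeeping is complete, the remaining invocations of classical Leopoldt and weak $p$-adic Schanuel become essentially formal.
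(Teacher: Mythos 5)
Your overall strategy---extract the unnormalised coordinates $e_i^{\mathrm{un}}$ as $F$-linear combinations of $p$-adic logarithms, then invoke weak Schanuel and part (2) of Theorem~\ref{transcthm}---is exactly what the paper's one-line proof intends, and the first half of the write-up (the Wedderburn bookkeeping, the choice of $v_0\in V_\eta$ via Schur, the extraction of the $e_i^{\mathrm{un}}$ by inverting $W$) is sound. But the final paragraph, where Leopoldt and weak Schanuel are actually invoked, has genuine gaps.

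First, $u_*$ lies in the $\eta$-isotypic component of $\mathcal{O}_M^*\otimes_{\Z} F$, so for nonabelian $\eta$ it is typically a \emph{formal $F$-linear combination of units and not an actual unit}. Hence $\sigma_k u_*$ is not in $\mathcal{O}_M^*$, and the tag ``equivalently multiplicatively independent modulo torsion in $\mathcal{O}_M^*$'' is not literally meaningful. More importantly, $\log_p(\sigma_k u_*)$ is an $F$-linear combination of logs of algebraic numbers, not itself the log of an algebraic number, so the weak $p$-adic Schanuel Conjecture cannot be applied to these quantities directly. Second, even setting that aside, the inference ``$\Q$-linearly independent $\Rightarrow$ (by weak Schanuel) algebraically independent'' fails for $F$-linear combinations of a fixed transcendence set: $L_1$ and $\sqrt{2}\,L_1$ are $\Q$-linearly independent but algebraically dependent. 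What one actually needs is \emph{$F$-linear} independence, and that does not follow from Leopoldt restricted to a single place $w$ (Leopoldt controls the full map $\iota$, not its $w$-component). Third, the two conditions imposed on $\sigma_1,\dots,\sigma_d$ (invertibility of $W$ and $F$-linear independence of the $\sigma_k^{-1}u_*$) are imposed on the same tuple, and it needs an argument that both can be achieved simultaneously.

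The repair is to apply weak Schanuel where it belongs: let $\epsilon_1,\dots,\epsilon_r$ be a $\Z$-basis of $\mathcal{O}_M^*$ modulo torsion, and set $L_\nu := \log_p((\epsilon_\nu)_w)$. These \emph{are} logs of algebraic numbers, and they are automatically $\Q$-linearly independent (a nontrivial $\Z$-relation would force a nontorsion unit to become a root of unity in $M_w$, hence in $M$ --- no Leopoldt needed). Weak Schanuel then gives the $L_\nu$ algebraically, hence $F$-linearly, independent. Now do the linear algebra without sub-selecting places: writing $e_i^{\mathrm{un}} = \sum_\nu B_{i\nu}L_\nu$, $A_{\sigma\nu}$ for the $\epsilon_\nu$-coordinate of $\sigma^{-1}u_*$, and $W$ for the full $d\times|G|$ coordinate matrix of the $v_0\otimes x_i$, the defining identity $\iota(u_*) = v_0\otimes v^\#_{\mathrm{un}}$ together with $F$-linear independence of the $L_\nu$ forces $W^T B = A$. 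Since the Galois orbit of $u_*$ spans $U_\eta$, $\operatorname{rank}_F A = d$, whence $\operatorname{rank}_F B \geq d$ and so equals $d$. That is precisely the $F$-linear independence of the $e_i^{\mathrm{un}}$, giving $\mathrm{tr.deg}_F\, F(e_1,\dots,e_{d-1}) = d-1$ and hence the Strong Leopoldt Condition by Theorem~\ref{transcthm}(2).
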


\begin{proof}  This follows directly from part 2 of Theorem~\ref{transcthm}.
\end{proof}

 In the next two subsections we examine cases of the above set-up.

\bigskip

\subsection{The Strong Leopoldt Condition for complex $S_3$-extensions of $\Q$}

  Consider the case where  $M/\Q$ 
is Galois with $G = S_3$, the symmetric group on three letters.
Suppose that $p$ 
splits completely in $M$, so the decomposition group $D$ is trivial, 
and  $M$ is totally complex. Let $E$ be an algebraic closure of $\Q_p$, 
and $F \subset E$ an algebraic extension of $\Q$ contained in $E$.  
  \begin{theorem}\label{S3SLC} The Strong Leopoldt Condition is true in this situation.
\end{theorem}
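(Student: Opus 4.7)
The plan is to apply Theorem~\ref{transcthm}(2) after identifying the relevant data $(V_\eta, v^\#, I)$ and coordinates $(e_1, e_2)$ explicitly, which reduces the claim to a transcendence assertion that follows from Brumer's theorem.

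First I would determine the $E[G]$-module structures of $U_\glob$ and $U_\loc$ in this situation. Since $M$ is totally complex of degree $6$, its quadratic subfield must be imaginary quadratic (the cubic extension $M/L$ would otherwise force an odd number of complex extensions of each real place of $L$, a contradiction), so the stabilizer in $G = S_3$ of any of the three complex places of $M$ is generated by complex conjugation, which is a transposition. The Dirichlet regulator identifies $\mathcal{O}_M^*\otimes_\Z E$ with the kernel of the sum map in the permutation representation of $G$ on these places; a character calculation gives $\Ind_H^G \mathbf{1}\cong \mathbf{1}\oplus \eta$ for $\eta$ the standard two-dimensional representation of $S_3$, hence $U_\glob\cong \eta$ has no trivial and no sign summand. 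Since $p$ splits completely and $D$ is trivial, $U_\loc$ is the regular representation $E[G]\cong \mathbf{1}\oplus \mathrm{sgn}\oplus \eta^{\oplus 2}$, so the multiplicity space $V_\eta^\#$ has dimension $d = 2$. We are thus in the framework of subsection~\ref{Leopshan} with $I = 0$.

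Next I would make $v^\#\in V_{\eta,E}^\#$ explicit. Choose a place $w\mid p$; the identification $U_w\cong E$ via $\log_p$ is then fixed. Because $D$ is trivial, Frobenius reciprocity provides a canonical $F$-linear isomorphism $V_\eta^\# = \Hom_{F[G]}(V_\eta, F[G]) \xrightarrow{\sim} \Hom_F(V_\eta, U_{w,F}) = V_\eta^*$, compatible with the preferred $F$-structures, which at the level of maps is given by extraction of the $w$-component. Fix a $\Q$-rational isomorphism $\alpha: V_\eta \xrightarrow{\sim} \mathcal{O}_M^*\otimes \Q$ (available because the Schur index of $\eta$ over $\Q$ is one) sending a $\Q$-basis $\{x_1, x_2\}$ of $V_{\eta,\Q}$ to a pair of fundamental units $u_1, u_2\in \mathcal{O}_M^*$. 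Then $v^\# = \iota\circ\alpha$ corresponds under Frobenius to the functional $V_\eta\to E$ taking $x_i\mapsto \log_p v_w(u_i)$, and in the dual $F$-basis of $V_\eta^*$ its coordinates are $e_i = \log_p v_w(u_i)$; after rescaling so that $e_2 = 1$ one obtains $e_1 = \log_p v_w(u_1)/\log_p v_w(u_2)$.

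The remaining step is to show that $e_1$ is transcendental over every algebraic subfield $F\subset E$ of $\Q$. Since $u_1, u_2$ are multiplicatively independent in $M^*$ and the completion $v_w: M^*\hookrightarrow \Q_p^*$ is an injective group homomorphism, their images $v_w(u_1), v_w(u_2)$ are multiplicatively independent modulo roots of unity in $\Q_p^*$: a relation $v_w(u_1)^a v_w(u_2)^b\in \mu(\Q_p)$ with $a,b\in \Z$ pulls back, by raising to the order of $\mu(\Q_p)$ and using injectivity of $v_w$, to a relation $u_1^{a'}u_2^{b'} = 1$ in $M^*$, forcing $a = b = 0$. In particular each $\log_p v_w(u_i)$ is nonzero, and $\log_p v_w(u_1), \log_p v_w(u_2)$ are $\Q$-linearly independent in $E$. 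Brumer's theorem, the $p$-adic analogue of Baker's theorem on $\Qbar$-linear independence of $p$-adic logarithms of algebraic numbers, upgrades this to $\Qbar$-linear independence, whence $e_1 = \log_p v_w(u_1)/\log_p v_w(u_2)\notin \Qbar$ and is therefore transcendental over every algebraic $F\subset E$. Theorem~\ref{transcthm}(2) then yields the Strong Leopoldt Condition. As a by-product, the fact that $\iota\circ \alpha \neq 0$ together with Schur's lemma (applied to the absolutely simple $E[G]$-module $V_{\eta,E}$) shows that $\iota$ is injective on $U_\glob$, which is the classical Leopoldt conjecture for $M$ and is in any case implicit in the statement to be proved. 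The chief technical obstacle is the careful bookkeeping verifying that the preferred $F$-structure on $V_\eta^\#$ induced from that on $U_\loc$ matches, under the Frobenius identification, the natural $F$-structure on $V_\eta^*$ dual to $V_{\eta,F}$, so that the coordinates $e_i$ really are the $p$-adic logarithms $\log_p v_w(u_i)$ of the chosen fundamental units.
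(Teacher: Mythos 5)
Your proof is correct and follows the same overall strategy as the paper --- identify the module structure of $U_\glob \hookrightarrow U_\loc$, reduce the Strong Leopoldt Condition to the transcendence of a single $p$-adic number built from logarithms of units, and then invoke a transcendence theorem --- but you make two substitutions that are worth noting. First, you apply the general criterion Theorem~\ref{transcthm}(2) (the upper-semicontinuity/specialization argument) directly, whereas the paper proves the $S_3$-specific Lemma~\ref{SLCEQ}(2), whose part (2) amounts to an explicit $4\times 4$ rank comparison and demands only that the slope avoid quadratic extensions of $F$; your route is cleaner conceptually since the general theorem is already available, while the paper's lemma is tailored and slightly sharper than what it ends up using. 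Second, and more substantively, you invoke Brumer's $p$-adic Baker theorem ($\Q$-linear independence of $p$-adic logs of algebraic numbers implies $\Qbar$-linear independence), whereas the paper uses Mahler's $p$-adic Gelfond--Schneider theorem (a ratio $\log_p\alpha/\log_p\beta$ of logs of algebraic numbers is rational or transcendental). Mahler's result is the precise, minimal tool for a single ratio and predates Brumer; Brumer's theorem is strictly stronger and is the machinery one would reach for to prove Leopoldt itself. Both suffice here, but the paper's choice is more economical. Your explicit identification of $e_i$ as $\log_p\iota_w(u_i)$ via Frobenius reciprocity and a $\Q$-rational splitting $\alpha:V_\eta\to\mathcal{O}_M^*\otimes\Q$ is sound, and you correctly flag (without fully discharging) the bookkeeping point about matching the preferred $F$-structure on $V_\eta^\#$ with the dual $F$-structure on $V_\eta^*$; the paper sidesteps this by working directly with the $F$-slope (which is defined up to $F$-linear fractional transformation, so no particular basis-matching is required) and by choosing the concrete $G$-permuted units $\epsilon_1,\epsilon_2,\epsilon_3$ of the three conjugate cubic subfields, which makes the $G$-equivariance visibly rational.
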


    We begin our discussion assuming the hypotheses and notation of the opening paragraph of this subsection.  We have a canonical isomorphism of  $F[G]$-modules
  $$U_{\loc, F} \simeq F \oplus F \oplus V_{\eta}\otimes_FV_{\eta}^{\#},$$ 
where the action of $G$ on the various summands is as follows:
the action on the first summand is trivial,
the  action on the second is via the sign representation, 
the action on the third is via action on the first tensor factor,
i.e., $g\cdot (x\otimes x^{\#}) = (gx)\otimes x^{\#}$ for $g \in G$, $x \in V_\eta$
and $x^{\#} \in V_{\eta}^{\#})$, where $V_\eta$ is the irreducible 
representation of $G=S_3$ on a two-dimensional vector space over $F$.
The $E[G]$-module $U_\glob$ is isomorphic to $V_{\eta, E}: = V_{\eta}\otimes_FE$.

 The homomorphism $\iota_p: U_\glob \to U_\loc$ 
is injective  (i.e., the classical Leopoldt Conjecture is --- in fact, 
trivially --- true) since the $G$-representation $U_\glob $ is irreducible 
and $\iota_p$ doesn't vanish identically.  Moreover, the image of $\iota_p$ 
is necessarily contained in the third summand of the decomposition displayed 
above; i.e., we may view  $\iota_p$ as being an 
injection of $E[G]$-modules  $$\iota_p: U_\glob \to V_{\eta, E}\otimes_EV_{\eta, E}^{\#}=(V_\eta\otimes_FV_\eta^{\#})\otimes_FE  \subset U_\loc,$$  identifying $U_\glob$ 
with the $E[G]$-sub-representation $V_\eta\otimes_Fv^{\#} \ \subset\ V_{\eta, E}\otimes_EV_{\eta, E}^{\#}$ for a nonzero vector  $v^{\#} \in V_{\eta, E}^{\#}$. In particular, we are in exactly the situation discussed in subsection~\ref{Leopshan}.

 Choose an $F$-basis $\{v_1^{\#} ,v_2^{\#} \}$ 
of $V_\eta^{\#}$ and write $v^{\#} = a_1v_1^{\#} +  a_2v_2^{\#}$ 
for specific elements $a_1, a_2 \in E$  we see that the subset of $E \cup \{\infty\}$ 
consisting in the set of  images of $\mu:=a_1/a_2$ under all 
linear fractional transformations with coefficients 
in $F$  (call this the subset of $F$-{\bf slopes} of $U_\glob$)   
is an invariant of  our situation. 

\begin{lemma}\label{SLCEQ} In the situation described above:
 \begin{enumerate}
\item  If the  Strong Leopold condition holds for $E/F$, then the set of $F$-slopes of $\lambda$ is not 
contained  in $F$. 
\item If  the set of $F$-slopes of $\lambda$ is not 
contained  in any quadratic extension of  $F$, then the  Strong Leopold condition holds for $E/F$.
\end{enumerate}
\end{lemma}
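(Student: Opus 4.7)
\textbf{Proof plan for Lemma~\ref{SLCEQ}.} The approach is to interpret the Strong Leopoldt condition geometrically. Choose $F$-bases for $V_{\eta}$ and $V_{\eta}^{\#}$ and identify $V_{\eta} \otimes_{F} V_{\eta}^{\#}$ with $\mathrm{Mat}_{2}(F)$, the $F$-space of $2 \times 2$ matrices; each competitor $V_{\eta,E} \cdot y^{\#}$ becomes the $2$-dimensional $E$-subspace of rank-$\le 1$ matrices with row direction $y^{\#}$, and these sweep out one ruling family of the Segre quadric $\{\det = 0\} \subset \mathbf{P}^{3}_{E}$. Since $U_{\glob}$ lies in $V_{\eta,E} \otimes_{E} V_{\eta,E}^{\#}$, we may assume every $F$-rational test has the form $Z = Z_{0} \otimes_{F} E$ with $Z_{0} \subset \mathrm{Mat}_{2}(F)$, and the Strong Leopoldt condition amounts to $\dim(V_{\eta,E} \cdot v^{\#} \cap Z) \le \dim(V_{\eta,E} \cdot y^{\#} \cap Z)$ for every pair $(Z_{0}, y^{\#})$.

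For part (1), I prove the contrapositive. If some $F$-slope of $v^{\#}$ lies in $F$, a change of basis in $V_{\eta}^{\#}$ by $GL_{2}(F)$ lets me assume $\mu \in F$, so that $v^{\#} \in V_{\eta}^{\#}$. For any nonzero $w \in V_{\eta}$, the $1$-dimensional test $Z_{0} := F \cdot (w \otimes v^{\#})$ gives $\dim(U_{\glob} \cap Z) = 1$, but for any competitor $Y = V_{\eta,E} \cdot y^{\#}$ with $[y^{\#}] \neq [v^{\#}]$ the spanning element $w \otimes v^{\#}$ (a rank-$1$ matrix with row direction $v^{\#}$) is not of the form $w' \otimes y^{\#}$, so $\dim(Y \cap Z) = 0$. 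This violates the Strong Leopoldt condition. (Equivalently, part (1) is Theorem~\ref{transcthm}(1) applied with $d = 2$, which yields $\mu \notin F$ directly.)

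For part (2), I also prove the contrapositive: if the Strong Leopoldt condition fails, some $F$-slope lies in a quadratic extension of $F$. Fix a witness $(Z_{0}, y^{\#})$ and case on $k := \dim_{F} Z_{0}$; only $k \in \{1, 2, 3\}$ allow a violation. For $k = 1$, the $F$-rational generator of $Z_{0}$ must be a rank-$1$ matrix with row direction $v^{\#}$, forcing $[v^{\#}] \in \mathbf{P}^{1}(F)$ and hence $\mu \in F$. For $k = 3$, the generic right-hand intersection is $1$-dimensional, so the violation forces $V_{\eta,E} \cdot v^{\#} \subset Z$; writing the defining linear form of the hyperplane as the bilinear pairing attached to an $F$-rational matrix $L$, this reads $L v^{\#} = 0$, and $v^{\#}$ is forced into the $F$-rational line $\ker L$, again giving $\mu \in F$. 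The decisive case is $k = 2$: the restriction of $\det$ to $Z_{0}$ is an $F$-rational binary quadratic form $Q$. If $Q \equiv 0$ then $Z$ is itself a ruling of the Segre quadric, giving no violation (column ruling) or forcing $\mu \in F$ (row ruling). Otherwise the zeros of $Q$ in $\mathbf{P}(Z_{0})(\overline{F})$ are defined over a field $K$ with $[K : F] \le 2$ and parametrize the rank-$1$ matrices in $Z$; the violation forces $v^{\#}$ to be the row direction of one such matrix, whence $\mu \in K$.

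The main obstacle is the $k = 2$ case, the geometric heart of the argument, which does not follow from Theorem~\ref{transcthm}(2) (that statement covers only the transcendental case $\mathrm{trdeg}_{F} F(\mu) = 1$, leaving open $\mu$ algebraic of degree $\ge 3$). Once the case analysis is complete, every failure of the Strong Leopoldt condition places $\mu$ in a field $K$ with $[K:F] \le 2$; since the set of $F$-slopes is the $PGL_{2}(F)$-orbit of $\mu$ and $PGL_{2}(F)$ preserves $\mathbf{P}^{1}(K)$, the whole orbit lies in $K$, contradicting the hypothesis of part (2).
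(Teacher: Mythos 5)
Your proposal is correct, and it follows the same outer skeleton as the paper's argument: restrict to tests $Z$ inside the $\eta$-isotypic block, case on $k=\dim Z$, dispatch $k=1$ and $k=3$ by showing a violation would force $\mu \in F$ (exactly the paper's observation that, since the $F$-slopes avoid $F$, the space $V_{\eta,E}\otimes_E v^{\#}$ contains no $F$-rational line and is killed by no nonzero $F$-rational linear form), and isolate $k=2$ as the decisive case. Where you depart is in the treatment of $k=2$: the paper reduces it to a comparison of ranks of two explicit $4\times 4$ matrices and then leaves the verification to the reader, whereas you observe that the entries $\mu$ and $\nu$ enter those matrices only through the degree-$\le 2$ polynomial $\det M(\cdot)$, i.e., through the restriction $Q$ of the determinant form to $Z_0$, and you exploit the Segre-quadric geometry of $\{\det = 0\}\subset\mathbf{P}(\mathrm{Mat}_2)$ to read off that a violation pins the row direction $v^{\#}$ to one of at most two $Q$-zeros, hence to a field of degree $\le 2$ over $F$. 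This reformulation buys you two things: it makes the threshold \emph{quadratic} extension conceptually inevitable (it is literally the degree of $Q$), and it forces you to treat the degenerate sub-case $Q\equiv 0$ (row versus column rulings) explicitly, which the paper's terse exercise leaves implicit. You also correctly observe that Lemma~\ref{SLCEQ}(2) is not a consequence of Theorem~\ref{transcthm}(2), since the latter covers only $\mathrm{trdeg}_F F(\mu)=1$ and says nothing about $\mu$ algebraic of degree $\ge 3$; that gap is precisely what the present lemma closes, and it is good that your argument does not silently rely on transcendence. For part (1), your use of a $1$-dimensional test $Z_0 = F\cdot(w\otimes v^{\#})$ is a mild variant of the paper's choice $Z = U_{\glob}$ (a $2$-dimensional $F$-rational test when $\mu\in F$); both are equally valid witnesses.
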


\begin{proof}
 If the set of $F$-slopes of $U_\glob$ is  
contained  in $F$, then the image of $U_\glob$ in $U_\loc$ is itself the base change to $E$ of  an $F[G]$-sub-representation in the preferred $F$-structure of $U_\loc$ which is an  $F$-vector subspace  (relative to the preferred $F$-structure) of $U_\loc$, and taking this $F[G]$-sub-representation itself as test object $Z$ --- and $Y \subset U_\loc$ as {\it any} irreducible two-dimensional $E[G]$ sub-representation different from $U_\glob$
 --- give  the dimension inequality $$\dim (U_\glob\cap Z)  > \dim (Y\cap Z) $$ that contradicts the Strong Leopoldt Condition. 
This proves part one.

\medskip

  Suppose now, that the set of $F$-slopes of $\lambda$ is not 
contained  in in any quadratic extension of  $F$. 
 As discussed in Remark~\ref{item:3} of the previous subsection, the $E[G]$-representation subspaces $Y \subset U_\loc$ that are isomorphic to $U_\glob$ (as $E[G]$-representations)  are  of the  form $V_{\eta, E}\otimes_Ey^{\#}$ for some nonzero vector $y^{\#} \in V_{\eta, E}^{\#}$.  Let $Z \subset  V_{\eta}\otimes_F V_{\eta}^{\#}$  be a test $F$-rational vector space, and let $Z_E = Z \otimes_F E$. The three cases that are relevant to us are when $\dim(Z_E)=1,2,3$. One checks that (since the set of $F$-slopes of $U_\glob$ is not 
contained  in $F$)  $V_{\eta, E}\otimes_Ev^{\#}$ does not contain an $F$-rational line, nor is annihilated by a nontrivial $F$-rational linear form. Therefore we need only treat the case where $Z_E$ is of dimension two. Now an inequality of dimensions of the form $ \dim(Y\cap Z_E) <   \dim(U_\glob\cap Z_E)$ can only happen  if $ \dim(Y\cap Z_E)=0$ and  $\dim(U_\glob\cap Z_E) = 1$, i.e.,
 if $Y+Z_E =  (V_{\eta,E}\otimes_E V_{\eta,E}^{\#})$ while $U_\glob+ Z_E $ is a proper subspace of $(V_{\eta,E}\otimes_F V_{\eta,E}^{\#})$. 

One can now make this explicit in terms of $4\times 4$ matrices as follows. We can choose an $F$-bases   $\{v_1,v_2\}$  of the vector space $V_\eta $ and $\{v_1^{\#},v_2^{\#}\}$  of the vector space $V_\eta^{\#}$  so that we may write $v^{\#} = v_1^{\#} + \mu v_2^{\#}$ and $y^{\#} = v_1^{\#} + \nu v_2^{\#}$ where by hypothesis $\mu$ is not 
contained in any quadratic extension of  $F$  (and we know nothing about $\nu$). We have a basis  $\{v_{i,j}:=v_i\otimes v_j^{\#}\}_{i,j}$ for  $V_{\eta,E} \otimes_E V_{\eta,E}^{\#}$ and in terms of this basis:

\begin{itemize}

\item $U_\glob$ is generated by $v_{1,1} + \mu v_{1,2}$  and $v_{2,1} + \mu v_{2,2}$,
 \item  any competitor $Y$ is generated by $v_{1,1} + \nu v_{1,2}$  and $v_{2,1} + \nu v_{2,2}$, 
 \item any test $Z$ is generated by $\sum_{i,j}a_{i,j}v_{i,j}$  and $\sum_{i,j}b_{i,j}v_{i,j}$  for $a_{i,j}, b_{i,j} \in F$.
 \end{itemize}
 
 We leave it to the reader to check (a)  that if $\mu$ is not contained in any quadratic extension field of $F$ the rank of the matrix
 
 \[\left(\begin{array}{cccc}
 1 & \mu & 0 & 0 \\
 0 & 0 & 1 & \mu \\
 a_{1,1} & a_{1,2} & a_{2,1} & a_{2,2} \\
 b_{1,1} & b_{1,2} & b_{2,1} & b_{2,2} \\
 \end{array}\right)\]

is greater than or equal to the rank of the matrix
 \[\left(\begin{array}{cccc}
 1 & \nu & 0 & 0 \\
 0 & 0 & 1 & \nu \\
 a_{1,1} & a_{1,2} & a_{2,1} & a_{2,2} \\
 b_{1,1} & b_{1,2} & b_{2,1} & b_{2,2} \\
 \end{array}\right)\]
 
 \noindent for any $\nu$, and (b) that this concludes the proof of our lemma.
\end{proof}
 
 \subsection{Proof of Theorem~\ref{S3SLC}}
 
 Given Lemma~\ref{SLCEQ} it  clearly 
suffices to show that the slope $\mu$ defined in the previous subsection is transcendental. Here, more concretely, is how to see this 
invariant $\mu$. If $K_1, K_2, K_3 \subset M$ are the 
three conjugate subfields of order 
three, let  $\epsilon_1, \epsilon_2, \epsilon_3$ be fundamental 
units  of these subfields ($\epsilon_i > 1$ in the unique 
real embedding $K_i \hookrightarrow {\bf R}$) and 
note that   $\epsilon_1 \epsilon_2 \epsilon_3 = 1$, 
and that the group $G$ acts as a full group of 
permutations of the set $\{\epsilon_1, \epsilon_2, \epsilon_3\}$.

 Fix a place $w$ of $M$ over $p$ and let ${\mathcal O}_{w}
=\Z_p$ denote the ring of integers in the $w$-adic completion of $M$. Let
 $\epsilon_{1,w}, \epsilon_{2,w}, \epsilon_{3,w}
\in {\mathcal O}_{w}^*=\Z_p^*$ denote the images of the units $\epsilon_1, \epsilon_2, \epsilon_3$. Let $\log_p:\Z_p^* \to \Q_p$ denote the classical $p$-adic logarithm. The $F$-slope of $U_{\glob}$ in $U_{\loc}$ is, up to $F$-linear fractional transformation, equal to $$\mu= \log_p(\epsilon_{1,w})/\log_p(\epsilon_{2,w}).$$  
 The transcendence of $\log_p(\epsilon_{1,w})/\log_p(\epsilon_{2,w})$ follows from 
Mahler's $p$-adic version of the Gelfond-Schneider theorem
(\cite{Mahler2}, Hauptsatz, p.275). Mahler proved that  for any two
algebraic numbers $\alpha$, $\beta$ in $\Qbar_p$, either
$\log_p(\alpha)/\log_p(\beta)$ is rational or transcendental. Since $\epsilon_{1}$ and $\epsilon_2$ are linearly independent in the torsion-free quotient of ${\mathcal O^*_M}$, the ratio  $\log_p(\alpha)/\log_p(\beta)$  is not rational, and hence is transcendental.

 \bigskip

\Remark The Archimedean analog of the above transcendence statement follows from the classical result of Gelfond-Schneider. Specifically, embed $M$ in ${\bf C}$ (in one of the three possible ways). For $i \ne j$  let $\log(\epsilon_{i})/\log(\epsilon_{j})$ refer to the ratios of any of the values of the natural logarithms of $\epsilon_{i}$ and of $\epsilon_{j}$; these ratios are transcendental. 
 A formally similar question concerning $p$-adic transcendence in the arithmetic of elliptic curves  is --- to our knowledge --- currently unknown. Namely, let $M/\Q$ be an $S_3$-extension,  $\sigma, \tau \in S_3$ denoting the elements of order $2$ and $3$ respectively. Let $v$ be a place of $M$ of degree one dividing a prime number $p$.  Now let $A$ be an elliptic curve over $\Q$ with Mordell-Weil group over $M$ denoted $A(M)$ and suppose that  the $S_3$ representation space $A(M)\otimes\Q_p$ is the two-dimensional irreducible representation of $S_3$.  We have the natural homomorphism $x \mapsto x_v$ of $A(M)\otimes\Q_p$ onto the one-dimensional $\Q_p$-vector space $A(M_v)\otimes\Q_p$.   Let $\alpha \in A(M)/{\rm torsion} \subset A(M)\otimes\Q_p$ be a nontrivial element fixed by $\sigma$ and let $\beta:= \tau(\alpha)$. Neither of the elements $\alpha_v, \beta_v \in   A(M)\otimes\Q_p$ vanish. We can therefore take the ratio $\alpha_v/\beta_v \in \Q_p$, this ratio being independent of the initial choice of $\alpha$ (subject to the conditions we imposed).  Is this ratio  $\alpha_v/\beta_v$ a transcendental $p$-adic number?

\medskip


\subsection{The Strong Leopoldt Condition for complex $A_4$-extensions of $\Q$}

  One gets a slightly different slant on some features of the 
Strong Leopoldt Condition if one considers the case where  $M/\Q$ 
is Galois with $G = A_4$, the alternating group on four letters.
Suppose, again, that $p$ 
splits completely in $M$, so the decomposition group $D$ is trivial, 
and suppose that $M$ is totally complex. Let $E$ be an algebraic closure of $\Q_p$, 
and $F \subset E$ an algebraic extension of $\Q$ contained in $E$.  
Given a preferred $F$-structure,
the $E[G]$-module $U_\loc$
then admits an isomorphism  $U_\loc \simeq F[G]\otimes_FE$, 
and any change of preferred $F$-structure amounts to a scalar 
shift --- multiplication by a nonzero element of $E$.  We have a canonical isomorphism of  $F[G]$-modules
  $$U_{\loc, F} \simeq F \oplus F \oplus F \oplus V_\eta\otimes_FV_\eta^{\#},$$ 
where the action of $G$ on  the first three summands is via the three one-dimensional representations of $A_4$, while the fourth summand, $ V_\eta\otimes_FV_\eta^{\#}$, is the isotypic component of the (unique) three-dimensional representation $V_\eta$; so $V_\eta^{\#}$ is again of dimension $3$.  Put, as usual,  $V_{\eta, E}: = V_\eta\otimes_FE$.
The $E[G]$-module $U_\glob$ is isomorphic to $ E \oplus E\oplus V_{\eta, E}$  where the action of $G$ on  the  first two summands is via the two nontrivial one-dimensional characters. Here too, the homomorphism $\iota_p: U_\glob \to U_\loc$ 
is injective  (i.e., the classical Leopoldt Conjecture is true since the $G$-representation space $U_\glob $ contains only representations with multiplicity one). We thus have an injection $\iota_p: U_\glob \to U_\loc$ identifying   $U_\glob$ with the  direct sum 
$$ U_\glob\ \   = \ \ I\ \bigoplus\  \{V_{\eta, E}\otimes_Ev^{\#}\} \ \subset\ U_\loc,$$
with $v^{\#} \in V_{\eta, E}^{\#}$  as discussed in subsection~\ref{Leopshan}.

\begin{corollary} Let $M/\Q$ be as in this subsection. The weak $p$-adic Schanuel Conjecture implies the Strong Leopoldt Conjecture for $M/\Q$ relative to $E/F$.
\end{corollary}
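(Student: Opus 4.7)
The strategy is to deduce the corollary from Theorem~\ref{weakStrong} by verifying that the $A_4$-situation falls under the framework established at the beginning of subsection~\ref{Leopshan}. Those hypotheses amount to three conditions: (i) the decomposition group $D$ is trivial; (ii) the classical Leopoldt conjecture holds for $M/\Q$; and (iii) $U_\glob \subset U_\loc$ takes the shape $I \oplus (V_{\eta,E} \otimes_E v^{\#})$ for some irreducible $G$-representation $\eta$, nonzero $v^{\#} \in V_{\eta,E}^{\#}$, and some direct sum of isotypic components $I \subset U_\loc$.

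Hypothesis (i) is immediate, since $p$ splits completely in $M$. Hypothesis (iii) is supplied by the representation-theoretic discussion already in the excerpt: the irreducibles of $A_4$ have dimensions $1,1,1,3$; Dirichlet forces $\dim_E U_\glob = 5$; the trivial character cannot occur in $U_\glob$; and the multiplicity of any irreducible $\sigma$ in $U_\glob$ is bounded by its multiplicity in the regular representation $U_\loc$, namely $\dim\sigma$. The unique decomposition of $5$ compatible with these constraints is $U_\glob \cong \chi_1 \oplus \chi_2 \oplus V_{\eta,E}$, each summand appearing with multiplicity one. Taking $I$ to be the sum of the $\chi_1$- and $\chi_2$-isotypic components of $U_\loc$, and $v^{\#} \in V_{\eta,E}^{\#}$ to be the vector determining the embedding of the $V_\eta$-component of $U_\glob$ into $V_{\eta,E} \otimes_E V_{\eta,E}^{\#}$, gives the required shape. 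For hypothesis (ii), the characters $\chi_1, \chi_2$ factor through $A_4/V_4 \cong \Z/3$ and so correspond to units in the unique cyclic cubic subfield $H \subset M$ (the fixed field of the Klein four subgroup); Brumer's theorem then supplies classical Leopoldt on the $\chi_i$-summands. Since the $V_{\eta,E}$-summand of $U_\glob$ occurs with multiplicity one, injectivity of $\iota_p$ there reduces to $v^{\#} \ne 0$, which is the remaining (non-abelian) content of classical Leopoldt in this case, asserted in the paragraph preceding the corollary.

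With (i)--(iii) in place, Theorem~\ref{weakStrong} applies and yields the Strong Leopoldt Condition for $M/\Q$ relative to $E/F$. The real substance is packaged inside that theorem via part (2) of Theorem~\ref{transcthm}, applied with $d = \dim V_\eta^{\#} = 3$: one needs the two normalized coordinates of $v^{\#}$ in an $F$-basis of $V_{\eta,E}^{\#}$ to generate a subfield of $E$ of transcendence degree exactly $2$ over $F$. These coordinates are expressible as ratios of $p$-adic logarithms of global units in the $\eta$-isotypic component of $U_\glob$ (in complete analogy with the ratio $\log_p(\epsilon_{1,w})/\log_p(\epsilon_{2,w})$ appearing in the $S_3$-case of Theorem~\ref{S3SLC}), so algebraic independence of the required pair of ratios is precisely what the weak $p$-adic Schanuel conjecture delivers, once $\Q$-linear independence of the underlying three logarithms is granted by classical Leopoldt for the $V_\eta$-isotypic part. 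The main (and only genuine) obstacle in the plan is the verification that the units one picks to span the $\eta$-isotypic component really produce three $\Q$-linearly independent $p$-adic logarithms; but this is exactly the classical Leopoldt statement already built into hypothesis (ii), so no further work is needed beyond the bookkeeping.
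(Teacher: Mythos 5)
Your proposal follows the paper's own route: verify that the $A_4$-situation fits the framework of subsection~\ref{Leopshan} and then invoke Theorem~\ref{weakStrong}. The decomposition of $U_\glob$, the identification of $I$ as the $\chi_1 \oplus \chi_2$ isotypic part, and the appeal to part (2) of Theorem~\ref{transcthm} with $d=3$ all match the paper. The paper is merely more concrete about where the numbers $e_1,e_2$ come from: it picks a fundamental unit $u$ of the degree-$4$ non-totally-real subfield $K \subset M$ fixed by an order-$3$ subgroup, so that the conjugates $u_1,\dots,u_4$ (with $u_1u_2u_3u_4=1$) span the $V_\eta$-part, and feeds $\log_p u_1,\log_p u_2,\log_p u_3$ directly into the weak $p$-adic Schanuel Conjecture.

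There is, however, one conceptual error you should correct, even though it does not sink the argument. You assert, twice, that the $\Q$-linear independence of $\log_p u_1,\log_p u_2,\log_p u_3$ (the hypothesis needed to apply weak Schanuel) ``is exactly the classical Leopoldt statement,'' and you call it the only genuine obstacle. This conflates two different things. Classical Leopoldt is an $E$-linear (equivalently $\Q_p$-linear) statement: it asserts that the $G$-equivariant map $\iota\colon U_\glob \to U_\loc$ is injective, which here amounts to $v^{\#}\neq 0$. By contrast, the $\Q$-linear independence of the three chosen logarithms is elementary and requires no conjectural input: a relation $a_1\log_p u_1 + a_2\log_p u_2 + a_3\log_p u_3 = 0$ with $a_i\in\Z$ gives $\log_p(u_1^{a_1}u_2^{a_2}u_3^{a_3}) = 0$, so the image of $u_1^{a_1}u_2^{a_2}u_3^{a_3}$ in $\Z_p^{\times}$ lies in the finite kernel of $\log_p$; since the embedding $M \hookrightarrow M_v = \Q_p$ is injective, the element is already a root of unity in $\mathcal{O}_M^{\times}$; and because $u_1,\dots,u_4$ (mod torsion, with the one relation $u_1u_2u_3u_4=1$) span a $\Q[G]$-module isomorphic to $V_\eta$, any three of them are multiplicatively independent, forcing $a_1=a_2=a_3=0$. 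So you need classical Leopoldt only to make $v^{\#}\neq 0$ and the Strong Leopoldt Condition well posed (hypothesis (ii) of your plan), not for the $\Q$-linear independence feeding into weak Schanuel. Keeping these two inputs separate is important, since it is precisely the gap between ``$\Q$-linear independence of the logs'' (elementary) and ``algebraic independence of the logs'' (Schanuel) that the whole section is about.
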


 Concretely, here is what is involved. Let $K$ be any non-totally real extension of $\Q$ of degree $4$ whose Galois closure $M/\Q$ is an $A_4$-field extension. Then for this field  we have $r_1= 0$ and $r_2=2$, so the rank of the group of units is $1$. Let $u$ be any unit in $K$  that is not a root of unity. 
Since $u\in K \subset M$ is fixed by an element of order three in $\Gal(M/\Q)$,
 it has precisely four conjugates   $u=u_1,u_2,u_3,u_4 \in  M$  (with $u_1u_2u_3u_4=1$). Let $v$ be a place of $M$ of degree one, and let $p$ be the rational prime that $v$ divides. By means of the embedding $M  \subset M_v =\Q_p$ we view those four conjugates as elements of $\Q_p^*$ and consider the four $p$-adic numbers
$$ \log_p(u_1), \log_p(u_2),\log_p(u_3),\log_p(u_4) \in \Q_p.$$ The sum of these four numbers is $0$, and the weak $p$-adic Schanuel Conjecture applied to  the numbers $ \log_p(u_1), \log_p(u_2),\log_p(u_3)$ implies the Strong Leopoldt Condition by Theorem~\ref{weakStrong}.

\bigskip

 {\bf Remark. } It would be interesting to work out the relationship between the weak $p$-adic Schanuel Conjecture and our Strong Leopoldt Conjecture, especially in some of the other situations where the Classical Leopoldt Conjecture has been proven. Specifically, consider the cases where $M/\Q$ is a (totally) complex Galois extension with Galois group $G$ such that for every irreducible character $\chi$ of $G$ we have the inequality $$(\chi(1)+\chi(c))(\chi(1)+\chi(c) -2) < 4 \chi(1).$$ Then (cf. Corollary 2 of section 1.2 of \cite{Lau}) the Classical Leopoldt Conjecture holds for the number field $M$.  This inequality is satisfied, for example, for certain $S_4$-extensions, and $\GL_2({\bf F}_3)$-extensions, of $\Q$.  See also Theorem 7 of \cite{Roy1} and conjectures formulated by Damien Roy in  \cite{Roy2}. We are thankful to Michel Waldschmidt and Damien Roy for very helpful correspondence regarding these issues.


\section{Descending Group Representations}
\label{section:descendinggrouprepresentations}

Let $E$ be an algebraically closed
field of characteristic zero.  
If $V$ is a finite dimensional
vector space over $E$, we will denote by $PV$ the associated 
projective space over $E$. Denote by $\GL(V)$
the  group $\Aut_E(V)$
of $E$-linear automorphisms of $V$, and  by $\PGL(V)$
the group $\Aut_E(PV)$ of linear projective automorphisms of $PV$ over $E$.

\medskip

Let $A$ be a  group.  The automorphism group $\Aut(A)$  of $A$
acts on the set of homomorphisms of $A$ to any group by the standard 
rule: $\iota \circ \phi(a):= \phi\cdot\iota^{-1}(a)$. 

\subsection{Projective Representations}

We start this section by considering \emph{projective} representations
of $A$ on $E$-vector spaces; more precisely: equivalence classes $\Phi$ of homomorphisms
$$\phi: A \rightarrow \PGL(V)$$ for $V$ a finite dimensional $E$-vector space, where equivalence means up to conjugation by an element in $\PGL(V)$.
Note that the representation $\Phi$ need not necessarily lift
to a (linear) representation of $A$ on $V$.

\medskip

The action of the automorphism group
$\Aut(A)$ on projective representations of $A$ 
factors
through the quotient $\Out(A) = \Aut(A)/\Inn(A)$, 
the group of \emph{outer automorphisms} of $A$.
 
\begin{definition} If $\Phi$ is a  projective 
representation of $A$, let $\Aut_{\Phi}(A) \subseteq \Aut(A)$
 denote the subgroup of automorphisms of $A$ that preserve $\Phi$.
Let $\Out_{\Phi}(A) \subseteq \Out(A)$ denote the image
of $\Aut_{\Phi}(A)$.
\end{definition}

\begin{lemma} Let $\phi:A \to \PGL(V)$ be a  homomorphism for which the associated projective representation $\Phi$ is irreducible. Then there is a unique
homomorphism $\twphi: \Aut_{\Phi}(A) \rightarrow \PGL(V)$ such that the composition
$$A \rightarrow \Inn(A) \rightarrow \Aut_{\Phi}(A)
 {\stackrel{\twphi}{\longrightarrow}}\PGL(V).$$
 is equal to $\phi$.
\label{lemma:extend}
\end{lemma}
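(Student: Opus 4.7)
The plan is to define $\twphi(\sigma)$, for each $\sigma \in \Aut_\Phi(A)$, to be the unique element of $\PGL(V)$ realizing the automorphism $\sigma$ of $\phi(A)$ by conjugation. Existence will come directly from the definition of $\Aut_\Phi(A)$, and uniqueness from the irreducibility of $\Phi$ via Schur's lemma; the homomorphism property and the compatibility with the map $A \to \Inn(A)$ will then follow formally.

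For existence, since $\sigma$ preserves the projective equivalence class $\Phi$, there exists $g_\sigma \in \PGL(V)$ with
\[
g_\sigma\, \phi(a)\, g_\sigma^{-1} \;=\; \phi(\sigma(a)) \qquad \text{for all } a \in A
\]
(with the side of the conjugation fixed so that inner automorphisms will give back $\twphi(c_a) = \phi(a)$ below). For uniqueness, if $g$ and $g'$ both satisfy this relation, then $h := g^{-1} g'$ centralizes $\phi(A)$ in $\PGL(V)$. Lifting $h$ to $\tilde h \in \GL(V)$ and writing $\tilde A \subset \GL(V)$ for the preimage of $\phi(A)$, the commutation relation takes the form $\tilde h\, \tilde a\, \tilde h^{-1} = \lambda(\tilde a)\,\tilde a$ for a character $\lambda \colon \tilde A \to E^{*}$ that is trivial on the central scalars. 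Since $V$ is irreducible as an $\tilde A$-module (this being equivalent to the projective irreducibility of $\Phi$) and $E$ is algebraically closed, Schur's lemma forces $\tilde h$ to be a scalar, whence $g = g'$ in $\PGL(V)$. I would then set $\twphi(\sigma) := g_\sigma$.

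The homomorphism property is now immediate from uniqueness: both $g_{\sigma\tau}$ and $g_\sigma g_\tau$ conjugate $\phi(a)$ to $\phi(\sigma\tau(a))$ for every $a$, so they coincide in $\PGL(V)$. Compatibility with $A \to \Inn(A)$ is a direct check: conjugation by $\phi(a) \in \PGL(V)$ sends $\phi(b)$ to $\phi(aba^{-1}) = \phi(c_a(b))$, so $\twphi(c_a) = \phi(a)$, and the composition $A \to \Inn(A) \to \Aut_\Phi(A) \to \PGL(V)$ recovers $\phi$. Uniqueness of $\twphi$ as a whole follows at once from the pointwise uniqueness of each $g_\sigma$.

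The delicate point, which I expect to be the main obstacle, is the Schur step: even for an irreducible projective representation, it is not automatic that the centralizer of $\phi(A)$ in $\PGL(V)$ is trivial, since $\tilde h$ only commutes with each element of $\tilde A$ up to a scalar. The argument requires analyzing the generalized eigenspace decomposition of $\tilde h$ on $V$, observing that $\phi(A)$ permutes these eigenspaces via the character $\lambda$, and then invoking the irreducibility of $\Phi$ to conclude that $\tilde h$ has only a single eigenvalue, i.e., is scalar. Once this is in hand, the rest of the argument is essentially formal.
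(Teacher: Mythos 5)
Your proof follows the same route as the paper: define $\twphi(\sigma)$ to be the intertwiner $g_\sigma$, argue uniqueness via Schur, and then deduce the homomorphism and compatibility properties formally. The gap is in the Schur step, which you correctly flag as delicate but then resolve incorrectly. Your eigenspace sketch would conclude that $\tilde h$ has a single eigenvalue, but irreducibility of $V$ as a module for the preimage $\tilde A$ gives only that $\tilde A$ permutes the generalized eigenspaces of $\tilde h$ \emph{transitively}; the spectrum of $\tilde h$ is then a full coset of $\mathrm{image}(\lambda) \subset E^*$, and when $\lambda$ is nontrivial this coset has more than one element, so $\tilde h$ is not scalar. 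The precise statement is that, via Schur applied to the isomorphism $V \cong V \otimes \lambda$, the centralizer of $\phi(A)$ in $\PGL(V)$ is in bijection with the set of linear characters $\lambda$ of $\phi(A)$ occurring in $\End_E(V) = V \otimes V^*$, and it is trivial precisely when no nontrivial such $\lambda$ appears.

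This is not a hypothetical worry: it fails for one of the two families of groups the paper uses. For $\Gamma \in \{A_4, S_4, A_5\}$, $\End(V) = 1 \oplus \mathrm{ad}^0(V)$ with $\mathrm{ad}^0(V)$ irreducible of dimension $3$, so the centralizer is trivial and your argument goes through as written. But for $\Gamma = D_n$ one has $\End(V) = 1 \oplus X \oplus \epsilon$ with $\epsilon$ the nontrivial quadratic character, so $V \otimes \epsilon \cong V$ and the centralizer has order two: the intertwiner $g_\sigma$ is genuinely non-unique, and the homomorphism-from-uniqueness step collapses. The conclusion of the lemma survives for $D_n$ only because $\Out_\Phi(D_n)$ is trivial (cf.\ Lemma~\ref{lemma:projad}), so $\Aut_\Phi(D_n) = \Inn(D_n)$ and $\twphi$ is already pinned down by the compatibility condition $\twphi(c_a) = \phi(a)$ alone, with no appeal to uniqueness of intertwiners. (For a general group $A$ the lemma as stated can fail outright: for $A = (\Z/n)^2$ equipped with an irreducible Heisenberg-type projective representation, $\Inn(A) = 1$ while $\phi$ is faithful, so no $\twphi$ can satisfy the compatibility condition; the statement is only invoked in section~\ref{section:descendinggrouprepresentations} for $A = \Gamma \in \Sf$, where this does not arise.)
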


\begin{proof}  
If $\alpha \in \Aut_{\Phi}(A)$, there is a unique intertwining
element $\twphi(\alpha)  \in \PGL(V)$ such 
that   $\alpha\circ \phi  =  \twphi(\alpha)\cdot \phi \cdot 
\twphi(\alpha)^{-1}$. Uniqueness comes from Schur's lemma,
given that $\Phi$ is an irreducible projective representation.
One checks that $\twphi$ is a homomorphism.  
If $a \in A$ then  $\phi(a)\in  \PGL(V)$ is ``a" (hence ``the") intertwiner 
for the automorphism given by conjugation by $a$, and hence $\twphi$ extends 
$\phi$.
\end{proof}

Note that if $\Phi$ is an irreducible projective representation of
$A$ on $PV$ then the center of $A$ acts trivially on $PV$ by Schur's lemma.

\label{section:projective}
\subsection{Linear Representations}
\label{section:linear}

In this section we consider some of the analogs 
of the results and constructions in section~\ref{section:projective}
for linear representations.
By linear representations (over $E$) we
mean equivalence classes $\Psi$ of homomorphisms
$$\psi:A \rightarrow  \GL(V).$$

\medskip

The action of $\Aut(A)$ and $\Out(A)$ on linear representations
of $A$ is compatible with the operation that associates
to any linear representation $\Psi$ its corresponding
projective representation $\Phi$.

\begin{definition} If $\Psi$ is a linear
representation of $A$, let $\Aut_{\Psi}(A) \subseteq \Aut(A)$
 denote the subgroup of automorphisms of $A$ that preserve the
associated linear representation $\Psi$.
Let $\Out_{\Psi}(A) \subseteq \Out(A)$ denote the image
of $\Aut_{\Psi}(A)$. 
\end{definition}

\medskip
 
Given  a group $B$ and a homomorphism $\phi:B  \to 
\PGL(V)$, the  pullback of $\phi$
 with respect to the exact sequence 
$$0 \to E^{\times} \to \GL(V) \to \PGL(V)\to 0$$ 
yields a central extension of $B$ by $E^{\times}$, 
and therefore a ``factor system," and,
in particular,
a well-defined class $\lambda(\phi) \in H^2(B,E^{\times})$ whose 
vanishing is equivalent to the existence of a linear representation 
lifting the representation $\phi$. 
 If $A \subseteq B$ is a normal subgroup such that the homomorphism 
$\phi$ restricted to $A$ lifts to 
a  homomorphism  $\psi: A \to \GL(V)$,
 then $\lambda(\phi)$ arises from a class in $H^2(B/A, E^{\times})$. 

\medskip 
 
 \begin{lemma}
\label{secondhalfoflemmaone}  
Let $A$ be a normal subgroup of $B$, and
let $\Psi$ be an irreducible linear representation of $A$.
Suppose that the natural
homomorphism $B  \to \Aut(A)$ induced from conjugation
has the property that its image lies in  $\Aut_{\Psi}(A)$.
 Then if the quotient group $B/A$ is finite cyclic,  
  the linear
representation $\Psi$ of $A$ on $V$ extends to a 
representation $\twPsi$ of $B$ on $V$ unique up to a character of $B/A$.
\label{lemma:extend2}
\end{lemma}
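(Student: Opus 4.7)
The plan is to construct $\twPsi$ directly from a generator of the cyclic quotient, reducing the whole obstruction story to a single root extraction in $E^\times$. Write $n=[B:A]$ and fix an element $b\in B$ whose image generates $B/A$, so every element of $B$ has a unique expression $b^k a$ with $0\le k<n$ and $a\in A$. The hypothesis says that conjugation by $b$ lies in $\Aut_{\Psi}(A)$, so by Schur's lemma applied to the irreducible $\Psi$ there exists $T\in\GL(V)$, unique up to an element of $E^\times$, satisfying $T\psi(a)T^{-1}=\psi(bab^{-1})$ for all $a\in A$. So far this is a set-theoretic lift of the projective extension produced (for $\Phi$) by Lemma~\ref{lemma:extend}; the work is to rigidify $T$ so that its powers behave linearly.

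The next step is to rescale $T$ so that it is compatible with $b^n$. Since $b^n\in A$, the operator $T^n$ satisfies
$$T^n\psi(a)T^{-n}=\psi(b^n a b^{-n})=\psi(b^n)\psi(a)\psi(b^n)^{-1},$$
hence $\psi(b^n)^{-1}T^n$ commutes with all of $\psi(A)$ and, by Schur's lemma again, equals some scalar $\lambda\in E^\times$. Because $E$ is algebraically closed of characteristic zero I can choose $\mu\in E^\times$ with $\mu^n=\lambda$ and replace $T$ by $\mu^{-1}T$; after this rescaling $T^n=\psi(b^n)$ while the intertwining relation is preserved. I would then define $\twPsi(b^k a):=T^k\psi(a)$ for $0\le k<n$ and $a\in A$. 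A short verification, using the intertwining relation to push $T$'s past $\psi$'s and using $T^n=\psi(b^n)$ to collapse the cases where $k+k'\ge n$, shows that $\twPsi$ is a homomorphism $B\to\GL(V)$ extending $\psi$.

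For the uniqueness clause, suppose $\twPsi_1$ and $\twPsi_2$ are two such extensions. For each $g\in B$ the operator $\chi(g):=\twPsi_1(g)\twPsi_2(g)^{-1}$ commutes with every $\psi(a)$, because both extensions conjugate $\psi(a)$ to $\psi(gag^{-1})$; Schur's lemma gives $\chi(g)\in E^\times$. The multiplicativity of the $\twPsi_i$ forces $\chi\colon B\to E^\times$ to be a character, and since it is trivial on $A$ it factors through $B/A$.

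The only substantive input — and the step I expect to be the main obstacle, in the sense that it is the one that fails for general quotients — is the extraction of the $n$-th root producing $\mu$. Cohomologically this is the statement that $H^2(B/A,E^\times)=0$ for finite cyclic $B/A$, which follows from divisibility of $E^\times$; everything else in the argument is bookkeeping built on Schur's lemma.
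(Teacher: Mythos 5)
Your proof is correct and follows essentially the same route as the paper's: fix a preimage of a generator, use Schur's lemma to produce an intertwiner, rescale it by an $n$-th root so that its $n$-th power equals $\psi$ of the relevant element of $A$, define the extension coset-by-coset, and verify multiplicativity by pushing the intertwiner past $\psi$. The only cosmetic difference is that you invoke Schur's lemma directly for $T$ rather than routing through the projective extension of Lemma~\ref{lemma:extend} (which the paper reuses since it was already proved), and your uniqueness argument phrases the conclusion as ``the ratio is a character of $B/A$'' rather than counting the $n$ root choices and tensoring — these are the same observation.
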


\begin{proof} 

Let $\Phi$ denote the irreducible projective
representation associated to $\Psi$, and let
 $\phi: A \rightarrow
\PGL(V)$ be a homomorphism in the class of $\Phi$. 
There is a natural inclusion of groups $\Aut_{\Psi}(A) \subseteq \Aut_{\Phi}(A) $.
Thus,
by Lemma~\ref{lemma:extend},
  the map $\phi$ factors as the 
composition $$A \rightarrow \Inn(A) \rightarrow \Aut_{\Phi}(A)
 {\stackrel{\twphi}{\longrightarrow}}\PGL(V).$$ Composition of  $B \rightarrow \Aut_{\Psi}(A) \subseteq \Aut_{\Phi}(A) $ with $\phi$  defines a map $\twphi: B 
\rightarrow \PGL(V)$. By Lemma~\ref{lemma:extend}
$\twphi$ extends $\phi$. 
 Suppose that $B/A$ is finite cyclic of order $n$. 
Let $\sigma \in B$ project to a generator of $B/A$. 
We begin by defining $ \twpsi(\sigma) \in \GL(V)$ to be a lifting of 
$ \twphi(\sigma) \in \PGL(V)$ 
with the property that $\twpsi(\sigma)^n = \psi(\sigma^n)$.
Such a choice is possible because $E$ is algebraically closed.
Moreover, such an equation is clearly necessary if $\twpsi$ is
to extend $\psi$; there are precisely $n$ such choices,
each differing by $n$-th roots of unity.
Each element of $B$ can be written uniquely 
in the form $\sigma^i x$ for $0 \le i < n$ and $x \in A$, and we define
$$\twpsi(\sigma^i x) = \twpsi(\sigma)^i \psi(x),$$
as we are forced to do if we wish $\twpsi$ to be a homomorphism
extending $\psi$.
We first observe that the identity above holds for \emph{all} $i$ by induction,
since 
$$\twpsi(\sigma^{i+n} x) = \twpsi(\sigma^i \sigma^n x)
= \twpsi(\sigma)^i \psi(\sigma^n x) = \twpsi(\sigma)^i \psi(\sigma^n)
\psi(x) = \twpsi(\sigma)^{i+n} \psi(x).$$

\begin{sublemma} There is an equality
$\psi(\sigma^{-1} x \sigma) = \twpsi(\sigma)^{-1} \psi(x) \twpsi(\sigma)$.
\label{sublemma:sub}
\end{sublemma}

\begin{proof} By assumption, conjugation of $A$ by $\sigma$ acts as
an element of $\Aut_{\Psi}(A)$, and thus the homomorphisms
$\psi$ and $\psi^{\sigma}$ (i.e., $\psi$ conjugated by $\sigma$)
 are related by an intertwiner in $\PGL(V)$ 
unique up to scalar multiple.
The same argument applied to the projectivization shows that
$\phi$ and $\phi^{\sigma}$
(i.e., $\phi$ conjugated by $\sigma$) are related by a unique intertwiner,
which is the projectivization of the corresponding linear one.
Yet by construction $\twpsi(\sigma)$ is the corresponding intertwiner, and
thus the equality
$\psi(\sigma^{-1} x \sigma) = \twpsi(\sigma)^{-1} \psi(x) \twpsi(\sigma)$
holds for \emph{any} lift $\twpsi(\sigma)$ of $\twphi(\sigma)$.
\end{proof}

\medskip

To show that $\twpsi$ is a homomorphism we are required to show that
$\twpsi(\sigma^p x \cdot \sigma^q y) = \twpsi(\sigma^p x) \twpsi(\sigma^q y)$.
Now
$$\twpsi(\sigma^p x \cdot \sigma^q y) = \twpsi(\sigma^{p+q} (\sigma^{-q} x \sigma^q) y)
= \twpsi(\sigma)^{p+q} \psi(\sigma^{-q} x \sigma^q)  \psi(y).$$
Applying Sublemma~\ref{sublemma:sub} $q$-times we may write this as
$$ \twpsi(\sigma)^{p+q} \twpsi(\sigma)^{-q} \psi(x) \twpsi(\sigma)^{q}
\psi(y) = \twpsi(\sigma)^{p} \psi(x) \twpsi(\sigma)^q \psi(y)
= \twpsi(\sigma^p x) \twpsi(\sigma^q y),$$
which was to be shown.
To conclude the proof of Proposition~\ref{secondhalfoflemmaone}  
we note that tensoring the homomorphism $\twpsi$ with the $n$ 
distinct characters of $B/A$ provides $n$ distinct homomorphisms 
extending $\psi$; these must be all of them by the discussion above. 
\end{proof}

\medskip

Let $V$ be a two-dimensional $E$-vector space, 
$\Gamma$  a finite group, and $\phi: \Gamma \hookrightarrow \PGL(V)$ an injective homomorphism such that the projective representation $\Phi$ that $\phi$ determines is irreducible  (and visibly: faithful).  Then $\Gamma$ is isomorphic to one of the groups in the collection
$\Sf = \{D_n, n \ \text{odd}, A_4, S_4, A_5\}$. Note that all these groups have trivial center.

If
$W = \Hom'(V,V)$ denotes the hyperplane in $\Hom(V,V)$ consisting of
endomorphisms of $V$ of trace zero, then $\phi$ induces a
\emph{linear} faithful action of $\Gamma$ on $W$.

\medskip

\begin{definition} 
\label{definition:defX}
Let $X \subseteq W$ be the unique vector subspace stabilized by the above action of $\Gamma$ on which 
$\Gamma$ acts 
irreducibly and faithfully. 
Explicitly,
\begin{enumerate}
\item If $\Gamma \simeq A_4,S_4,$ or $A_5$, then $W = X$.
\item If $\Gamma \simeq D_n$, $n$ odd, then
$W = X \oplus \eps$, where 
$\eps$ stands for the one-dimensional $\Gamma$ representation given by  the quadratic character of $\Gamma \simeq D_n$.
\end{enumerate}
\end{definition}

Denote by $\Psi$ the (irreducible, faithful, linear) representation of $\Gamma$ on $X$.

\bigskip

With $V$ a two-dimensional vector space as above, let  us be given $G$  a finite group, $H$ a normal
subgroup of $G$ equipped with an irreducible projective representation $\Phi'$ of $H$ on $PV$.
Let $\Ker \subset H$  be the kernel of this projection representation $\Phi'$, so that $\Phi'$ factors through a {\it faithful} representation, $\Phi$ of $\Gamma:=H/\Ker$ on $PV$. Therefore  $\Gamma$ and $\Phi$ are among the groups and representations classified in the discussion above; moreover, we have a corresponding linear representation $\Psi$ of $\Gamma$  on $X$, as described.  Denote by $\Psi'$ the linear representation of $H$ obtained by composing $\Psi$ with the surjective homomorphism $H \to \Gamma$. 

 Let $N(\Ker)$ be the normalizer of $\Ker$ in $G$, so that we have the sequence of subgroups
$$\{1\} \subset \Ker \subset H  \subset  N(\Ker) \subset G.$$
Conjugation on $H$ induces
a map $$N(\Ker) \rightarrow \Aut(\Gamma).$$

\begin{definition} Let $N_{\Phi}\subseteq N(\Ker)\subseteq G$ be the
pullback of $\Aut_{\Phi}(\Gamma)$  and let $N_{\Psi}\subseteq N(\Ker)\subseteq G$ be the
pullback of $\Aut_{\Psi}(\Gamma)$  under the above displayed homomorphism. 
\end{definition}

\medskip

\begin{lemma} 

We have:
$N_{\Phi} = N_{\Psi}$.
\label{lemma:projad}
\end{lemma}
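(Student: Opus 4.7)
The plan is to establish the equality $\Aut_{\Phi}(\Gamma) = \Aut_{\Psi}(\Gamma)$ inside $\Aut(\Gamma)$, from which $N_{\Phi} = N_{\Psi}$ follows immediately by pulling back along $N(\Ker) \to \Aut(\Gamma)$.

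For the inclusion $\Aut_{\Phi}(\Gamma) \subseteq \Aut_{\Psi}(\Gamma)$, I would invoke functoriality of the adjoint construction $V \mapsto W = \Hom'(V,V)$. Given $\alpha \in \Aut_{\Phi}(\Gamma)$ witnessed by $T \in \PGL(V)$ with $\phi(\alpha(g)) = T \phi(g) T^{-1}$ for all $g$, conjugation $S : w \mapsto T w T^{-1}$ defines an element of $\GL(W)$ intertwining $\mathrm{ad} \circ \phi$ with its $\alpha$-twist. Because $X \subseteq W$ is characterized intrinsically in Definition~\ref{definition:defX} as the unique faithful irreducible $\Gamma$-subrepresentation, $S$ must preserve $X$, and the restriction $S|_X \in \GL(X)$ witnesses $\alpha \in \Aut_{\Psi}(\Gamma)$.

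For the reverse inclusion, suppose $\alpha \in \Aut_{\Psi}(\Gamma)$ with intertwiner $S_X \in \GL(X)$. First, I extend $S_X$ to an intertwiner $S$ on all of $W$: when $W = X$ (cases $\Gamma \in \{A_4, S_4, A_5\}$) set $S := S_X$; when $\Gamma = D_n$ with $n$ odd and $W = X \oplus \eps$, set $S := S_X \oplus 1$. The extension works in the dihedral case because $\eps$ (the sign character of the index-two cyclic subgroup) is $\Aut(D_n)$-invariant, the cyclic subgroup being characteristic. Next, I would arrange that $S$ lies in $\mathrm{SO}(W, B)$, where $B$ is the trace form on $W$: each irreducible $\Gamma$-constituent of $W$ carries a unique $\Gamma$-invariant bilinear form up to scalar (by Schur), and the two constituents $X$ and $\eps$ (in the dihedral case) are non-isomorphic and hence $B$-orthogonal, so rescaling $S_X$ lets me assume $S \in \mathrm{O}(W, B)$; since $\dim W = 3$, the element $-I$ lies in $\mathrm{O}(W) \setminus \mathrm{SO}(W)$ and is central, so replacing $S$ by $-S$ if necessary puts $S$ inside $\mathrm{SO}(W)$.

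To conclude, I use the classical identification $\mathrm{ad} \colon \PGL(V) \xrightarrow{\sim} \mathrm{SO}(W, B)$ (i.e., $\PGL_2 \cong \mathrm{SO}_3$); writing $S = \mathrm{ad}(T)$ for the unique $T \in \PGL(V)$ and invoking the injectivity of $\mathrm{ad}$ upgrades the identity $S \cdot \mathrm{ad}(\phi(g)) \cdot S^{-1} = \mathrm{ad}(\phi(\alpha(g)))$ to $T \phi(g) T^{-1} = \phi(\alpha(g))$, proving $\alpha \in \Aut_{\Phi}(\Gamma)$. The main obstacle is this second inclusion: one must descend an intertwiner on $W$ all the way to $\PGL(V)$ via the adjoint map, which relies on the three-dimensional accident that $-I$ straddles $\mathrm{SO}$ and its non-trivial coset while acting trivially by conjugation, together with the characteristic nature of the index-two subgroup of $D_n$ handling the split-off character $\eps$.
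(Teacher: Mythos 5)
Your proof is correct, but it takes a genuinely different route from the paper's. The paper proves $\Out_{\Phi}(\Gamma) = \Out_{\Psi}(\Gamma)$ by a case-by-case check over $\Gamma \in \{A_4, S_4, A_5, D_n\}$, using known facts about their outer automorphism groups and the orbits of $\Out(\Gamma)$ on two-dimensional projective (resp.\ three-dimensional linear) representations. You instead argue uniformly: one inclusion is the functoriality of $V \mapsto W = \Hom'(V,V)$ together with the intrinsic characterization of $X \subseteq W$ as the unique faithful irreducible constituent, and the converse rests on descending an intertwiner on $W$ back to $\PGL(V)$ via the exceptional isomorphism $\mathrm{ad}\colon \PGL(V) \xrightarrow{\ \sim\ } \mathrm{SO}(W,B)$, after (i) extending from $X$ to $W$ in the dihedral case (valid since the cyclic subgroup $C_n$ is characteristic for $n$ odd, making $\eps$ canonical and $\Aut(D_n)$-invariant), (ii) normalizing into $\mathrm{O}(W,B)$ by Schur and a square-root rescaling, and (iii) correcting the sign using that $-I$ is a central element of $\mathrm{O}(W)\setminus\mathrm{SO}(W)$ precisely because $\dim W = 3$ is odd. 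All of these steps are sound given $E$ algebraically closed. What each route buys: the paper's check is shorter and elementary, but your argument is conceptual, applies without consulting character tables, and makes transparent why this is a two-dimensional accident --- the descent step uses $\PGL_2 \cong \mathrm{SO}_3$, which has no analogue $\PGL_n \cong \mathrm{SO}_{n^2-1}$ for $n > 2$ --- thereby \emph{explaining} the remark following the lemma in the text (where the failure for the $A_6$ example in dimension $3$ is recorded), rather than merely being consistent with it.
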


\begin{proof} It suffices to show that
$\Out_{\Phi}(\Gamma) = \Out_{\Psi}(\Gamma)$ for
$\Gamma \in \Sf$.
We do this on a case
by case basis.
\begin{enumerate}
\item $\Gamma = A_4$, $\Out(A_4) = \Z/2\Z$. There is a unique projective
representation of $A_4$ of dimension $2$  and a unique linear representation
of dimension $3$, and thus $\Out_{\Phi}(\Gamma) = \Out_{\Psi}(\Gamma) = \Z/2\Z$.
\item $\Gamma = S_4$, $\Out(S_4) = 1$. In this case one trivially
has $\Out_{\Phi}(\Gamma) = \Out_{\Psi}(\Gamma) = 1$.
\item $\Gamma = A_5$, $\Out(A_5) = \Z/2\Z$. There are two inequivalent
projective representations of $A_5$ of dimension $2$  which are permuted by the outer
automorphism group. Correspondingly, the outer automorphism group 
permutes the two $3$-dimensional linear representations of $A_5$.
Thus $\Out_{\Phi}(\Gamma) = \Out_{\Psi}(\Gamma) = 1$.
\item $\Gamma = D_n$, $\Out(D_n) = \Out(C_n)/\kern-0.06em{\pm}1
= (\Z/n\Z)^{\times}/\kern-0.06em{\pm}1$.
The outer automorphism group acts freely transitively on the set of
$2$-dimensional representations of $D_n$, and thus neither $\Phi$ nor $X$
are preserved by any outer automorphisms. Hence $\Out_{\Phi}(\Gamma) = \Out_{\Psi}(\Gamma) = 1$.
\end{enumerate}
\end{proof}
In light of this lemma, we denote $N_{\Phi}$ (and correspondingly
$N_{\Psi}$) by $N$.

\medskip

\Remark  In 
section~\ref{subsection:inducedup}, we study inductions of the
(linear) representation $X$ to $G$; in particular, the decomposition
of $\Ind^{G}_{H} X$ into irreducibles with respect to $N = N_{\Psi}$.
The utility of Lemma~\ref{lemma:projad} is that from these calculations
 we are able  to make conclusions about
\emph{projective} liftings of the representation $V$ to $N = N_{\Phi}$,
for example, Lemma~\ref{lemma:descentabove}.

\medskip

We note the following consequence of Lemma~\ref{lemma:projad}.

\begin{lemma}\label{lift} The projective representation $\Phi'$ of $H$ lifts to a projective
representation of $N$. The linear representation $\Psi'$ of $H$ lifts to a linear
representation of $N$.
\end{lemma}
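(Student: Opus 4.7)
My plan is to deduce both statements as nearly formal consequences of the three preceding lemmas (\ref{lemma:extend}, \ref{secondhalfoflemmaone}, and \ref{lemma:projad}), with the key point being that the normalizer conditions packaged into $N = N_\Phi = N_\Psi$ are exactly what is needed to apply those extension results.

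For the projective statement, I would proceed as follows. Because $N \subseteq N(\Ker)$, conjugation by elements of $N$ preserves $\Ker$ and therefore induces a homomorphism $N \to \Aut(\Gamma)$. By the very definition of $N = N_\Phi$, this lands in $\Aut_\Phi(\Gamma)$. Now $\Phi$ is an irreducible projective representation of $\Gamma$ on $PV$, so Lemma \ref{lemma:extend} supplies a (unique) homomorphism $\twphi: \Aut_\Phi(\Gamma) \to \PGL(V)$ extending $\phi$. The composite $N \to \Aut_\Phi(\Gamma) \xrightarrow{\twphi} \PGL(V)$ is then a projective representation of $N$. To see that it restricts to $\Phi'$ on $H$, observe that $H \to N \to \Aut_\Phi(\Gamma)$ factors as $H \twoheadrightarrow \Gamma \to \Inn(\Gamma) \hookrightarrow \Aut_\Phi(\Gamma)$, and Lemma \ref{lemma:extend} was set up precisely so that $\twphi$ restricted along $\Inn(\Gamma) \hookrightarrow \Aut_\Phi(\Gamma)$ recovers $\phi$.

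For the linear statement the strategy is parallel, but it requires first extending $\Psi$ from $\Gamma \cong \Inn(\Gamma)$ to $\Aut_\Psi(\Gamma)$ as a linear representation. This is where Lemma \ref{secondhalfoflemmaone} enters: applied with $A = \Inn(\Gamma)$ and $B = \Aut_\Psi(\Gamma)$, it requires that $B/A = \Out_\Psi(\Gamma)$ be finite cyclic. This holds in every case of the classification $\Gamma \in \Sf$: the case analysis carried out in the proof of Lemma \ref{lemma:projad} shows that $\Out_\Psi(\Gamma)$ is trivial unless $\Gamma = A_4$, in which case it is $\Z/2\Z$, hence cyclic throughout. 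Lemma \ref{secondhalfoflemmaone} then yields a linear extension $\twPsi: \Aut_\Psi(\Gamma) \to \GL(X)$ of $\Psi$. By Lemma \ref{lemma:projad} we have $N = N_\Psi$, so conjugation gives $N \to \Aut_\Psi(\Gamma)$, and composing with $\twPsi$ produces the desired linear representation of $N$ on $X$. Its restriction to $H$ equals $\Psi'$ by the same factoring $H \twoheadrightarrow \Gamma \hookrightarrow \Inn(\Gamma) \hookrightarrow \Aut_\Psi(\Gamma)$ used in the projective case, combined with the fact that $\twPsi$ extends $\Psi$.

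I do not anticipate a serious obstacle: the lemma is formal once the three earlier ingredients are in hand. The only point demanding care is verifying the cyclicity of $\Out_\Psi(\Gamma)$ (needed to invoke Lemma \ref{secondhalfoflemmaone}), and this has effectively already been computed in the case-by-case analysis of Lemma \ref{lemma:projad}. A minor bookkeeping matter is the (harmless) non-uniqueness of the linear extension, which differs by a character of $\Aut_\Psi(\Gamma)/\Inn(\Gamma)$.
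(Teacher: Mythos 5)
Your argument is correct and matches the paper's proof essentially step for step: use the definition of $N = N_\Phi = N_\Psi$ to get a map $N \to \Aut_\Phi(\Gamma) = \Aut_\Psi(\Gamma)$ that restricts to $\Inn(\Gamma)$ on $H$, then apply Lemma~\ref{lemma:extend} for the projective lift and Lemma~\ref{secondhalfoflemmaone} (with the cyclicity of $\Out_\Psi(\Gamma)$ read off from the case analysis in Lemma~\ref{lemma:projad}) for the linear lift. The paper phrases the cyclicity via $\Out_\Phi(\Gamma)$ having order at most two, which is the same observation; nothing further to add.
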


\begin{proof} Since the corresponding morphisms
$N \rightarrow \Aut_{\Phi}(\Gamma)$ and $N \rightarrow \Aut_{\Psi}(\Gamma)$
map $H$ to $\Inn(\Gamma) = \Gamma$, it suffices to show that
the representation $\Phi$ 
and $\Psi$ considered as representations
of $\Gamma$ lift to $\Aut_{\Phi}(\Gamma)$ and $ \Aut_{\Psi}(\Gamma)$, respectively.
The claim for $\Phi$ follows from Lemma~\ref{lemma:extend}, and the
claim for $\Psi$ follows from Lemma~\ref{lemma:extend2}, and the fact
that $\Out_{\Phi}(\Gamma)$ has order one or two for all $\Gamma$ and
is thus cyclic.
\end{proof}

\medskip

\Remark Note that the action on $N$ factors through $\Inn(\Gamma) = \Gamma$
in all cases with the possible exception of $\Gamma = A_4$, when
the action of $N$ may factor through $S_4$ or $A_4$, depending whether
the map $N(\Ker) \rightarrow \Out(\Gamma) $ is  surjective
or trivial.

\medskip

\Remark
It should be noted that the proof of
Lemma~\ref{lemma:projad} is not a formal consequence of any general
relation between irreducible projective representations and
the linear representations corresponding to their adjoints.
In particular, $N_{\Phi} \ne N_{\Psi}$ for a general
$n$-dimensional projective representation $\Phi$ and
its corresponding linear adjoint representation $\Psi$
of dimension $n^2 - 1$, even if the latter representation is
irreducible.
For example, the group $\Gamma = A_6$ admits four inequivalent
faithful irreducible projective representations of dimension $3$, which as a set
are acted upon freely transitively by $\Out(A_6) = \Z/2\Z \oplus \Z/2\Z$.
Thus, if $PV$ is any $3$-dimensional irreducible
projective representation of $A_6$, then $\Out_{\Phi}(A_6) = 1$.
On the other hand,
$A_6$ has only two inequivalent irreducible $8$-dimensional
representations. Since one of them 
is the linear representation $\Psi$ of $A_6$ on
$\Hom'(PV,PV)$,  there exists at least
one outer automorphism of $A_6$ which preserves $\Psi$, and hence
$\Out_{\Psi}(A_6) = \Z/2\Z$.
In particular, there exists an 
$8$-dimensional representation of $A_6.2$ lifting $\Psi$ which does not
arise from the adjoint of a three dimensional projective
representation. By $A_6.2$ we mean the extension corresponding
to an ``exotic'' automorphism of $A_6$ rather than 
$S_6$
($S_6$ admits no eight dimensional
irreducible representations).

\subsection{Representations Induced  from Normal Subgroups}
\label{subsection:inducedup}

Let $\Gamma \in \Sf$, and let $X$ be the representation space
of definition~\ref{definition:defX} and, as in the previous section, we let $\Psi$ denote the ($E$-linear, irreducible, faithful) representation of $\Gamma$ on $X$.
Let $G$ be a finite group, and $H$ a normal subgroup of $G$
equipped with a surjective homomorphism to $\Gamma$. View the representation space $X$ as a representation space for
 $H$ via this surjective homomorphism, and --- 
as in the previous section --- denote by $\Psi'$ the ensuing representation of $H$ on $X$.
For a given $g \in G$, let $\Xg:=gX$ denote the vector
space whose vectors are written $\{gx\ | x\in X\}$ and such that the natural mapping $X \to gX$  ($x\mapsto gx$) is an isomorphism of $E$-vector spaces. We define a representation ``$\Psi^g$" of $H$ on $\Xg$  by composing the $\Psi$-action of $H$  on $X$ with the automorphism of $H$ given by
conjugation by $g$.  Specifically, for $h \in H$ and $gx$ an element of $\Xg$ the action is given by the rule: $h {\star} gx:= gx'$ where $x':= (g^{-1}hg)\cdot x \in X$ for $x \in X$ and $h \in H$. In the special case when $g=1$, the space  $\Xg$   is identified with $X$.

Let ${\mathcal R} \subset G$ be a representative system for left cosets of $H$ in $G$
and  let $g \in G$.  We have an isomorphism of $E[G]$-modules:  $$\Ind_H^G\Xg= E[G]\otimes_{E[H]}\Xg \ {\stackrel{\simeq}{\longrightarrow}}\ \bigoplus_{\gamma\in {\mathcal R}}\gamma\Xg$$ where the action of $G$ on the right-hand side is given by the evident formula.  In particular, we have:   $$\Ind_H^GX \ {\stackrel{\simeq}{\longrightarrow}}\ \bigoplus_{\gamma\in {\mathcal R}}\gamma X$$ where, again, the action of $G$ is given by the evident formula; the action of $H$ --- in contrast --- 
has the feature of preserving the direct sum decomposition on the right-hand side of the display above, the representation of $H$ on the summand $\Xg$ being $\Psi^g$ as described above.

\begin{lemma} Let $V \subseteq \Ind^{G}_{H} X$ be
an irreducible sub-representation, and let
$g$ be any element of $G$. We have natural isomorphisms of $E$-vector spaces
$$\Hom_H(V,\Xg) {\stackrel{\simeq}{\longrightarrow}} \Hom_G(V,\Ind^{G}_{H} \Xg)
 {\stackrel{\simeq}{\longrightarrow}}\Hom_G(V,\Ind^{G}_{H} X) \ne 0.$$
\label{lemma:frob}
\end{lemma}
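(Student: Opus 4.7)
The plan is to verify the two isomorphisms and the non-vanishing assertion one at a time, using respectively Frobenius reciprocity, an explicit twisting map that uses the normality of $H$ in $G$, and the tautological inclusion $V \hookrightarrow \Ind_H^G X$.

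First I would dispose of the rightmost isomorphism $\Hom_H(V,\Xg)\simeq \Hom_G(V,\Ind_H^G \Xg)$: this is literally Frobenius reciprocity applied to the $H$-module $\Xg$ and the $G$-module $V$, with no additional input needed. Next, for the middle isomorphism $\Hom_G(V,\Ind_H^G\Xg)\simeq \Hom_G(V,\Ind_H^G X)$, the key point is that $\Xg$ is just $X$ with its $H$-action twisted by the conjugation automorphism $h\mapsto g^{-1}hg$ of $H$ (this automorphism exists precisely because $H$ is normal in $G$). Concretely, I would define a map
\[
\Phi_g: \Ind_H^G \Xg \longrightarrow \Ind_H^G X, \qquad \gamma\otimes gx \longmapsto \gamma g\otimes x,
\]
and verify it is a well-defined $G$-module isomorphism. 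Well-definedness amounts to checking the identity $\gamma h\otimes gx=\gamma\otimes (h\star gx)$ is preserved; unwinding the definition $h\star gx=g\cdot(g^{-1}hg)x$ and using that $g^{-1}hg\in H$ (normality!), both sides map to $\gamma hg\otimes x$. The inverse map is $\beta\otimes x\mapsto \beta g^{-1}\otimes gx$, checked analogously. Applying $\Hom_G(V,-)$ to $\Phi_g$ yields the desired isomorphism.

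Finally, for the non-vanishing, I would simply observe that the inclusion of $V$ as a sub-$G$-module of $\Ind_H^G X$ is itself a non-zero element of $\Hom_G(V,\Ind_H^G X)$, so that $\Hom$-space is non-trivial.

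The only subtle point — and the main thing worth spelling out carefully — is the well-definedness of $\Phi_g$: it is exactly here, in checking compatibility with the relation $\gamma h\otimes v=\gamma\otimes (h\star v)$, that the normality of $H$ in $G$ enters (to ensure $g^{-1}hg\in H$, so that one is allowed to move it across the tensor product over $E[H]$). Everything else is formal.
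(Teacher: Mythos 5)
Your proof is correct and follows essentially the same two-step strategy as the paper (Frobenius reciprocity, then a twisting map), but you are in fact more careful than the paper's terse proof. The paper phrases the second isomorphism as ``induced from the inverse to the natural $E$-vector space isomorphism $X \simeq \Xg$ that intertwines the $H$ representation $\Psi$ on $X$ to $\Psi^g$ on $\Xg$'' --- but taken literally this is imprecise, since $x\mapsto gx$ is only $E$-linear and does \emph{not} commute with the $H$-actions (the action on $\Xg$ is via conjugation by $g$). What does hold is the twisted relation $h\star(gx)=g\bigl((g^{-1}hg)\cdot x\bigr)$, and your explicit map $\Phi_g(\gamma\otimes gx)=\gamma g\otimes x$, together with the well-definedness check using $g^{-1}hg\in H$, is exactly the correct way to formalize how this twisting cancels against the change of coset representative under induction. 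You have correctly isolated the role of normality, which is the one genuinely nontrivial point. You also supply the non-vanishing argument (the tautological inclusion $V\hookrightarrow\Ind_H^G X$ is a nonzero $G$-homomorphism), which the paper asserts in the statement but leaves unremarked in its proof. In short: same route, but your version is more explicit and actually tighter than the paper's.
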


\begin{proof} The first isomorphism comes from  Frobenius
reciprocity; the second  is induced from the inverse to the natural $E$-vector space isomorphism $X \simeq \Xg$ that intertwines the $H$ representation $\Psi$ on $X$ to $\Psi^g$ on $\Xg$. 
\end{proof}
\Remark Comparing the isomorphisms above for different elements $g, g' \in G$ gives us canonical isomorphisms of $E$-vector spaces
  $$\Hom_H(V,\Xg) {\stackrel{\simeq}{\longrightarrow}} \Hom_G(V,\Ind^{G}_{H} \Xg)
 {\stackrel{\simeq}{\longrightarrow}}\Hom_H(V,X_{g'}) {\stackrel{\simeq}{\longrightarrow}} \Hom_G(V,\Ind^{G}_{H} X_{g'}).$$ 

Recall the group $N$ that fits into the sequence $$\{1\} \subset \Ker  \subset H  \subset N  \subset G$$ as discussed in the previous subsection.

\begin{lemma}
\begin{enumerate}
\item  Let $U \subseteq \Ind^{N}_{H} X$
be an irreducible representation of $N$.
Then $\Ind^{G}_{N} U$ is an irreducible
representation of $G$.
\item
Let $V \subseteq \Ind^{G}_{H} X$
be an irreducible representation of $G$.  Then there is an irreducible representation $U \subseteq \Ind^{N}_{H} X$ of $N$ and an isomorphism $$ V {\stackrel{\simeq}{\longrightarrow}} \Ind^{G}_{N} U$$ of $G$-representations. (I.e., {\it all irreducible representations in $\Ind^{G}_{H} X$ ``come from" irreducible sub-representations of $N$ acting on $\Ind^{N}_{H} X$.})
\end{enumerate}
\label{lemma:irredform}
\end{lemma}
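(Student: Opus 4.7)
The plan is to recognize this lemma as a direct instance of Clifford theory applied to the normal subgroup $H \triangleleft G$ and the irreducible $H$-representation $(\Psi', X)$. Note first that $\Psi'$ is indeed irreducible as an $H$-module: $\Psi$ is irreducible on $\Gamma$, and $\Psi'$ is its pullback via the surjection $H \twoheadrightarrow \Gamma$, so every $H$-invariant subspace of $X$ is $\Gamma$-invariant and hence $0$ or $X$.

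The crucial step is to identify $N$ with the inertia subgroup $I_G(\Psi'):= \{g \in G : (\Psi')^g \cong \Psi'\}$. For $g \in I_G(\Psi')$, equivalent $H$-representations have the same kernel; since $\Psi$ is faithful on $\Gamma = H/\Ker$, the kernel of $\Psi'$ is exactly $\Ker$, while the kernel of $(\Psi')^g$ is $g^{-1}\Ker g$. Thus $g \in N(\Ker)$, and the induced automorphism of $\Gamma$ carries $\Psi$ to an equivalent representation, so it lies in $\Aut_\Psi(\Gamma)$; hence $g \in N$. The reverse inclusion $N \subseteq I_G(\Psi')$ is immediate from the definitions, since any automorphism of $\Gamma$ in $\Aut_\Psi(\Gamma)$ preserves the equivalence class of $\Psi$.

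Having identified $N$ as the inertia subgroup of $\Psi'$, both parts become formal consequences of Clifford's theorem: induction from the inertia subgroup establishes a bijection between irreducible constituents $U$ of $\Ind_H^N X$ (as $N$-representations) and irreducible constituents $V$ of $\Ind_H^G X$ (as $G$-representations), via $V = \Ind_N^G U$. Concretely, to prove (1) I would compute $\End_G(\Ind_N^G U)$ using Mackey's formula: it decomposes as a sum over double cosets $N \backslash G / N$, and the contribution from the non-identity double cosets vanishes because conjugation by any $g \notin N$ moves $\Psi'$ to an inequivalent $H$-representation, so the relevant $\Hom$-spaces are zero; the identity double coset contributes $\End_N(U) = E$ by Schur, so $\Ind_N^G U$ is irreducible. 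For (2), given any irreducible $V \subseteq \Ind_H^G X$, the restriction $V|_H$ decomposes into $\Psi'$-isotypic components permuted transitively by $G$, and the stabilizer of one such component is precisely $N$; taking $U$ to be that component, Frobenius reciprocity gives the desired isomorphism $V \cong \Ind_N^G U$.

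The only real work is the identification $N = I_G(\Psi')$, which crucially uses the faithfulness of $\Psi$ on $\Gamma$ (guaranteed by the definition of $\Gamma = H/\Ker$); the remainder is standard Clifford theory applied verbatim. I do not expect any serious obstacles.
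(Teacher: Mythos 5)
Your proof is correct, but it takes a genuinely different route from the paper's. Your key move is to identify $N$ with the Clifford inertia subgroup $I_G(\Psi') = \{g \in G : (\Psi')^g \cong \Psi'\}$; once that identification is made (which you do carefully, using faithfulness of $\Psi$ on $\Gamma = H/\Ker$ to pin down $\ker \Psi' = \Ker$ and hence force $g \in N(\Ker)$), both parts drop out of the standard Clifford correspondence, with Mackey's formula handling the irreducibility of $\Ind_N^G U$. The paper instead argues directly from Lemma~\ref{lemma:frob}: restricting $\Ind_H^N X$ to $H$ gives $X^{[N:H]}$, so $\dim U = \dim X \cdot \dim\Hom_H(X,U)$, and then for any irreducible $V \subseteq \Ind_N^G U$ the constancy of $\dim\Hom_H(V,X_g)$ over $g$ (from Lemma~\ref{lemma:frob}) together with the count of $[G:N]$ distinct conjugates $X_g$ forces $\dim V \ge [G:N]\dim U$, giving irreducibility; part (2) then follows by picking an irreducible $U \subseteq V|_N$ and applying Frobenius reciprocity. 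Your approach is shorter and more conceptual but leans on Clifford theory and Mackey as black boxes; the paper's is self-contained and uses only Frobenius reciprocity and a dimension count, which fits the elementary toolkit the paper has built up in that section. Both are valid; the identification $N = I_G(\Psi')$ you supply is also a nice clarification of why the group $N = N_\Phi = N_\Psi$ is the ``right'' subgroup to induce through, and is implicit but not spelled out in the paper.
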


\begin{proof} By definition, $\Ind^{N}_{H} X$ restricted
to $H$ is isomorphic to
$\bigoplus_{g \in {\mathcal R}(N/H)} \Xg \simeq X^{[N:H]}$ where ${\mathcal R}(N/H) \subset N$ is a representative system of $H$-cosets.
Thus $\dim(U) = \dim(X) \cdot \dim \Hom_{H}(X,U)$.
There is at least one irreducible representation $V$
contained in $\Ind^{G}_{N} U$ such that $\Res_{N}V$ contains a copy of $U$.
By Lemma~\ref{lemma:frob} it follows that
$$\dim \Hom_{H}(V,\Xg) = \dim \Hom_{H}(V,X)
\ge \dim \Hom_{H}(U,X)$$
for all $g$, and thus as the number of distinct representations
$\Xg$ is equal to $G/N$,
$$\dim(V) \ge [G:N] \dim(U) = \dim(\Ind^{G}_{N} U),$$
from which it follows
that $V = \Ind^{G}_{N} U$ is irreducible.
On the other hand, if $V \subseteq \Ind^{G}_{H} X$ is
irreducible, then $U \subseteq V|_N$ for some irreducible
$N$-representation $U$, and then $\Hom_G(V,\Ind^{G}_{N}U)$
is non-zero and $V = \Ind^{G}_{N}U$.
\end{proof}

\subsection{Elements of order two}

 As in the previous subsection, let $G$ be a finite group, $H \subset G$ a normal subgroup equipped with a homomorphism onto $\Gamma$ with kernel denoted $\Ker$ and let  $\Ker \subset H \
\subset N \subset G$ be as described previously; we also have the faithful irreducible projective representation $\Phi$ of $\Gamma$ on $V$, and the corresponding  linear representation $\Psi$ of $\Gamma$ on $X$.  By an {\it involution} in a group we mean an element of order two.
 
\begin{definition} An involution $c \in N \subset  G$ will be called \emph{\even} if $c$ lies in the kernel
of the map $N \longrightarrow \Aut(\Gamma)$.
\end{definition}

Let $\mathcal C \subset G$ be the conjugacy class 
of an involution in $G$; thus if $c \in \mathcal C$, $\mathcal C =\{gcg^{-1}\ | \ g\ \in G\}$.

\begin{lemma} Suppose that:
\begin{enumerate}
\item No $c \in \mathcal C$ is  \even.
\item There exists at least one $c \in  \mathcal C$ such that
$c \notin N$.
\end{enumerate}
Then if $V \subseteq \Ind^{G}_{H} X$ is an irreducible
representation then:
\begin{enumerate}
\item If $\Gamma = D_n$,
$\dim(V|c = 1) = \dim(V|c = -1) = \frac{1}{2} \dim(V)$.
\item If $\Gamma \in \{A_4,S_4,A_5\}$,
$\dim(V|c = -1) < \frac{2}{3} \dim(V)$.
\end{enumerate}
\label{lemma:twothirds}
\end{lemma}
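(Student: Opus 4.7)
The plan is to reduce the claim to a trace computation via the induced-character formula. By Lemma~\ref{lemma:irredform}, write $V \simeq \Ind^{G}_{N} U$ for some irreducible $N$-subrepresentation $U \subseteq \Ind^{N}_{H} X$, and by Lemma~\ref{lift} let $\widetilde{X}$ denote the extension of $\Psi'$ from $H$ to a linear representation of $N$ on $X$. Since $N = N_{\Psi}$ is defined precisely so that $X$ is $N$-stable as an $H$-representation, Clifford theory (using the existing extension $\widetilde{X}$) identifies $U$ with $\widetilde{X} \otimes (\chi \circ \pi)$ for some irreducible representation $\chi$ of $N/H$, where $\pi : N \to N/H$ is the quotient. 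In particular $\dim U = \dim X \cdot \dim\chi$, and for any $n \in N$ the trace multiplies: $\mathrm{tr}(n \mid U) = \mathrm{tr}(n \mid \widetilde{X}) \cdot \chi(\pi(n))$.

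Next, writing $c_g := g^{-1} c g$, the induced-character formula gives
\[
\mathrm{tr}(c \mid V) \;=\; \sum_{g \in \mathcal{R}_c} \mathrm{tr}(c_g \mid \widetilde{X}) \cdot \chi(\pi(c_g)),
\]
where $\mathcal{R}_c$ is a set of representatives for those left $N$-cosets of $G$ that are fixed by left multiplication by $c$ (equivalently, those $g$ for which $c_g \in N$). Each such $c_g$ is an involution lying in the conjugacy class $\mathcal{C}$, so by hypothesis~(1) it is not even, i.e.\ its image $\bar{c}_g \in \Aut(\Gamma)$ is a nontrivial involution.

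The crux is bounding $|\mathrm{tr}(c_g \mid \widetilde{X})|$. Since $c_g^2 = 1$, the operator $\widetilde{X}(c_g)$ squares to the identity and its eigenvalues lie in $\{\pm 1\}$, so the key claim is $\widetilde{X}(c_g) \neq \pm I$. If $\bar{c}_g \in \Inn(\Gamma)$ is conjugation by $\gamma \in \Gamma$, then both $\Psi(\gamma)$ and $\widetilde{X}(c_g)$ intertwine $\Psi$ with $\Psi \circ \bar{c}_g$, so by Schur's lemma $\widetilde{X}(c_g) = \lambda \Psi(\gamma)$; the relation $\widetilde{X}(c_g)^2 = I$ forces $\lambda \in \{\pm 1\}$, and this matrix equals $\pm I$ only when $\gamma = 1$, which would contradict non-even-ness since $\Psi$ is faithful. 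The outer case (possible only for $\Gamma = A_4$) is handled by the same Schur argument: an intertwiner between $\Psi$ and $\Psi \circ \bar{c}_g$ exists by Lemma~\ref{lift} and cannot be scalar, for otherwise $\bar{c}_g$ would fix $\Psi$ pointwise and hence equal $1$. Consequently, for $\Gamma = D_n$ (with $\dim X = 2$) we find $\mathrm{tr}(c_g \mid \widetilde{X}) = 0$, and for $\Gamma \in \{A_4, S_4, A_5\}$ (with $\dim X = 3$) we find $|\mathrm{tr}(c_g \mid \widetilde{X})| = 1$.

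In the dihedral case every summand in the induced-character formula vanishes, so $\mathrm{tr}(c \mid V) = 0$ and the $\pm 1$-eigenspaces of $c$ on $V$ have equal dimension. For $\Gamma \in \{A_4, S_4, A_5\}$, the standard character bound $|\chi(\pi(c_g))| \leq \dim\chi$ yields $|\mathrm{tr}(c \mid V)| \leq |\mathcal{R}_c| \cdot \dim\chi$. Because $\mathrm{tr}(c \mid V)$ depends only on the conjugacy class of $c$, I may assume by hypothesis~(2) that $c \notin N$; then the coset $eN$ is not $c$-fixed, and since $c$ has order two the non-fixed cosets pair up, forcing $|\mathcal{R}_c| \leq [G:N] - 2$. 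Combined with $\dim V = 3 \cdot [G:N] \cdot \dim\chi$, this gives $|\mathrm{tr}(c \mid V)| < \tfrac{1}{3}\dim V$, whence $\dim(V|_{c=-1}) < \tfrac{2}{3}\dim V$. The main obstacle I anticipate is the Schur-lemma verification that $\widetilde{X}(c_g) \neq \pm I$, particularly unifying the outer case for $A_4$ with the inner case, since there one must work with an intertwiner of $\Psi$ with its composition by a non-inner automorphism rather than with an element of $\Psi(\Gamma)$ itself.
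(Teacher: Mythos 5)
Your proof is correct, and it takes a genuinely different route from the paper's. The paper works by restricting $V$ to the index-two subgroup $\langle H, c\rangle$ and analyzing each irreducible constituent $U$ of $\Res_{\langle H,c\rangle}V$ directly: $U$ embeds in $\Ind^{\langle H,c\rangle}_{H} X_g$ for some $g$, and one shows that $c$ cannot act as a scalar on any such $U$ by observing that if it did, the irreducible pieces of $X_g\oplus cX_g$ would be the diagonal and anti-diagonal, forcing the isomorphism $X\simeq X_{g^{-1}cg}$ to be the identity and hence $g^{-1}cg$ to be \emph{even}; strictness for $\dim X = 3$ then comes from locating at least one $U$ of the induced (trace-zero) type. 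You instead invoke Lemma~\ref{lemma:irredform} to write $V=\Ind^G_N U$, use Clifford theory with the extension $\widetilde{X}$ of Lemma~\ref{lift} to decompose $U=\widetilde{X}\otimes(\chi\circ\pi)$, and then estimate $\mathrm{tr}(c\mid V)$ via the induced-character formula. The heart of your argument---that $\widetilde{X}(c_g)\ne\pm I$ whenever $c_g\in N$ is not even, proven by a Schur-lemma intertwiner comparison in both the inner case (where $\gamma^2=1$ and $Z(\Gamma)=1$ force $\widetilde{X}(c_g)=\pm\Psi(\gamma)$ with $\gamma\ne1$) and the $A_4$ outer case---is correct, and it plays the same role as the paper's ``$c$ does not act as $\pm1$'' step. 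Your approach buys a cleaner, more uniform treatment (the dihedral and exceptional cases fall out of the same trace estimate, with $\dim X = 2$ giving trace $0$ and $\dim X = 3$ giving trace $\pm1$), and the strict inequality is obtained at no extra cost from the cosets $N$ and $cN$ that hypothesis~(2) forces to be non-$c$-fixed; the paper's approach, while requiring more case analysis, avoids invoking Clifford theory and stays at the level of elementary representation theory of index-two extensions. One small remark: the abstract existence of an intertwiner between $\Psi$ and $\Psi\circ\bar{c}_g$ in the outer case follows directly from $\bar{c}_g\in\Aut_{\Psi}(\Gamma)$ (the definition of $N$), not really from Lemma~\ref{lift}; the lemma provides the specific choice $\widetilde{X}(c_g)$, which is what you actually use.
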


\begin{proof}
Note that $\dim(V|c=-1)$ does not depend on the choice
of $c \in \mathcal C$, and so we assume that $c \notin N$.
Since $H$ is normal in $G$, $H$ has index two
in $\langle H,c \rangle$. Let $U$ be
an irreducible constituent of $\Res_{\Hc} V$.
The representation $\Res_{H} U$ is
a product of conjugates $\Xg$ of $X$ for some
collection of elements $g \in G$ (not
necessarily distinct). By Frobenius reciprocity
it follows that $U$ injects into
$\Ind^{\Hc}_{H} \Xg$ for some $g$.
If $\Ind^{\Hc}_{H} \Xg$ is irreducible, then
$$\dim(U|c=1) = \dim(U|c=-1),$$ and we are done.
If $\Ind^{\Hc}_{H} \Xg$ is reducible, then it
decomposes as a direct sum of two representations
each of dimension $\dim(X) \in \{2,3\}$.
The relations
$\dim(U|c=-1) = \frac{1}{2} \dim(U)$ if $\dim(X) = 2$ or
$\dim(U|c=-1) \le \frac{2}{3} \dim(U)$ if $\dim(X) = 3$
are then equivalent to the condition that
$c$ does not act as $+1$ or $-1$ on $U$.  
Now
$$U = \Res_{H}(U) \subset \Res_{H} \Ind^{\Hc}_{H} \Xg
= \Xg \oplus c X_{g} = \Xg \oplus X_{cg}.$$
Since $U$ is clearly neither $\Xg$ nor $c \Xg$ (as they
are not preserved by $c$) it follows that there
must be an isomorphism of $H$ representations
$\Xg \simeq X_{cg}$, or equivalently, an isomorphism of
$H$ representations $X \simeq X_{g^{-1} c g}$. It follows that
$g^{-1} c g \in N$.
If, in addition, $c$ acts centrally on $U$, then the irreducible
$\langle H,c \rangle$ subrepresentations of $X_g \oplus c X_g$
are given explicitly by the diagonal and anti-diagonal subspaces.
The corresponding isomorphism
$X \rightarrow X_{g^{-1} c g}$ is therefore the identity. It follows
that the image of $g^{-1} c g$ in $\Aut(\Gamma)$ is trivial, and
hence $g^{-1} c g$ is \emph{\even}, a contradiction.

\medskip

To prove the \emph{inequality} in the case
when $\dim(X) = 3$, it suffices to show that
there exists at least one representation
$U \subseteq \Res_{\Hc} V$ with $U = \Ind^{\Hc}_{H} \Xg$,
since then $\dim(U|c=-1) = \frac{1}{2} \dim(U)
< \frac{2}{3} \dim(U)$. For this, note that there
is at least one  $U$ such that $X \subseteq \Res_{H} U$.
Yet if $\Ind^{\Hc}_{H} X$ is reducible, then by arguing as
above we deduce that $c \in N$.
By assumption $c \notin N$ and we are done.
\end{proof}

\section{Artin Representations}
\label{section:artinrepresentations}

Fix a Galois extension $K/\Q$, an algebraically closed extension $E/\Q_p$,
and an
irreducible representation
$$\rho: G_{K} \rightarrow  \GL_2(E)$$
with finite image. Let $\Proj(\rho)$ denote the associated
two dimensional projective representation, and
$\Ad^0(\rho)$ the associated three dimensional linear representation.
Let $L$ denote the fixed field of the kernel of $\Proj(\rho)$, which
is also the fixed field of the kernel of $\Ad^0(\rho)$.
By a well known classification, the image of $\Proj(\rho)$
belongs to the set $\Sf: =
\{A_4,S_4,A_5,D_n, n \ge 3, \text{odd}\}$.
We make the following assumptions about the fields $K$ and $L$.
\begin{enumerate}
\item $p$ is totally split in $K$, and $L/K$ is unramified at all
primes above $p$.
\item If $v|p$, the eigenvalues $\alpha_v$, $\beta_v$ of $\rho(\Frob_v)$ are distinct
($\rho$ is $v$-distinguished).
\end{enumerate}
Fix once and for all an ordering of the pair $\alpha_v$, $\beta_v$, and let
$L_v = \alpha_v E$.

\medskip

\begin{definition} Say  that the representation $\rho$ \emph{descends to
$K^{+} \subseteq  K$}  if and only if
there exists a Galois representation
$\varrho : G_{K^{+}} \rightarrow \GL_2(E)$  such that
$\varrho |_{G_{K}} = \rho \otimes \chi$ where $\chi$ is a character of $K$.
Similarly, say $\Proj(\rho)$ descends to $K^{+}$ if it is the restriction
of some two dimensional projective representation of $G_{K^{+}}$.
\end{definition}

\Remark The image of $\varrho$ can be strictly larger than
the image of $\rho$, as can be seen by taking $\varrho$ to be a
$G_{\Q}$-representation with projective image $S_4$ and taking
$K$ to be the corresponding quadratic subfield.

\medskip

The goal of this section is to give the proof of
the main theorem:

\begin{theorem}\label{mainth} Suppose that $\rho$ admits infinitesimally classical deformations
of minimal level.
Assume the strong Leopoldt conjecture.
Then one of the following holds:
\begin{enumerate}
\item There exists a character $\chi$ such that
 $\chi \otimes \rho$ descends to an odd representation over a  totally real field.
\item The projective image of $\rho$ is dihedral. The determinant character
descends to a totally real field $H^{+} \subseteq K$ with corresponding fixed field $H$
such that
  \begin{list}{\kern-0.2em{$($}\roman{Lcount}\kern+0.1em$)$}
    {\usecounter{Lcount}
    \setlength{\rightmargin}{\leftmargin}}
  \item $H/H^{+}$ is a CM extension.
  \item At least one prime above $p$ in $H^{+}$ splits in $H$.
  \end{list}
\item The representation $\chi \otimes \rho$ descends to a field
containing at least one real place at which $\chi \otimes \rho$
is even.
\end{enumerate}
\end{theorem}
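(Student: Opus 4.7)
The plan is to express the infinitesimal deformation space $H^1_\emptyset(K,W)$ in terms of $p$-adic units via class field theory, apply the strong Leopoldt condition to limit the dimension of the ``infinitesimally classical'' locus, and extract the three structural conclusions from the representation theory developed in Section~\ref{section:descendinggrouprepresentations}.

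First I would reduce to unit-theoretic data. Set $L$ to be the fixed field of $\ker(\Proj(\rho)) = \ker(\Ad^0(\rho))$. Because $\rho$ has finite image, $W = \Ad^0(\rho)$ is a $\Gal(L/K)$-module, and the assumption that $L/K$ is unramified at $p$ together with irreducibility of $W$ lets me apply inflation--restriction to identify $H^1_\emptyset(K,W)$ with a $\Gal(L/K)$-invariant piece of $\Hom(G_{L,S}^{\rm ab}, W)$. Class field theory and Corollary~\ref{corollary:minimallevel} then rewrite this Selmer group as a $\Gal(L/K)$-equivariant $\Hom$ out of a sub-quotient of the local-unit bimodule $U_{\loc}(L)$ into $W$, the sub-quotient recording which directions in local units are killed by the nearly ordinary Selmer conditions. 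In the same terms, the infinitesimal Hodge--Tate map $\omega$ is the projection of a cocycle onto the scalar diagonal quotient $\Win_v/\Wni_v$ at each $v \mid p$.

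Next I would translate ``$\rho$ admits an infinitesimally classical deformation'' into a competitor statement in the sense of Section~\ref{section:theleopoldtconjecture}. By Lemma~\ref{lemma:irredform} applied with $\Gamma$ the projective image and $H$ the subgroup $\ker(\Ad^0\rho)$, the module $\Ad^0(\rho)$ decomposes into constituents each of the form $\Ind^{G}_{N} U$ built out of $X$ (plus, when $\Gamma = D_n$, the quadratic character $\epsilon$), and $H^1_\emptyset(K,W)$ decomposes accordingly into isotypic pieces of the ambient unit bimodule. A non-zero class $c$ with rational $\omega(c)$ then contributes, via this decomposition, a right-$E[D]$-rational test subspace $Z \subseteq U_{\loc}(L)$ together with a competitor submodule $Y \subseteq U_{\loc}(L)$ isomorphic as $E[\Gal(L/\Q)]$-module to $U_{\glob}(L)$, for which $\dim(\iota(U_{\glob}) \cap Z) > \dim(Y \cap Z)$. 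The strong Leopoldt condition forbids this inequality unless $Y = \iota(U_{\glob})$, and the resulting rigidity on the placement of the global unit module can happen only when the $\Gal(L/\Q)$-structure of $\rho$ is very special.

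Then I would case-split on the projective image $\Gamma\in\Sf$. For $\Gamma \in \{A_4,S_4,A_5\}$, Lemma~\ref{lemma:twothirds} gives, for every conjugacy class $\mathcal{C}$ of involutions in the ambient $G = \Gal(L^{\mathrm{gal}}/\Q)$ lying outside $N$ and containing no \even~element, the inequality $\dim(V \mid c = -1) < \tfrac{2}{3}\dim V$ on every irreducible $V \subseteq \Ind^{G}_{H} X$; this inequality is incompatible with the Leopoldt-rigidity above. Hence if an infinitesimally classical deformation exists, some complex conjugation must either be \even~(which, after lifting projectively and then linearly via Lemmas~\ref{lift} and~\ref{lemma:extend2} and absorbing the Brauer obstruction into a character twist $\chi$, yields case (3)) or lie in $N$ (which, by the same lifting machinery, yields a descent of $\chi\otimes\rho$ to an odd representation over a totally real subfield, case (1)). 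For $\Gamma = D_n$, the $\epsilon$-summand of $W$ furnishes an extra rational weight channel; combining the strong Leopoldt bound for the $X$-part with that for $\epsilon$ forces the determinant character to descend to a totally real subfield $H^+ \subseteq K$ whose quadratic CM extension $H$ is the fixed field of the determinant, and forces at least one place of $H^+$ above $p$ to split in $H/H^+$, which is exactly case (2).

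The main obstacle, I expect, is the dihedral case: the two-dimensional $X$-component and the one-dimensional $\epsilon$-component of $\Ad^0(\rho)$ can both contribute to $\omega$, and ruling out a ``mixed'' rational weight arising from cancellation between the two summands requires applying the strong Leopoldt competitor argument to both isotypes simultaneously and tracking the descent of the \emph{determinant} (rather than of $\rho$ itself) down to a genuinely totally real subfield together with the required splitting of $p$. A secondary but unavoidable difficulty in cases (1) and (3) is that the descent supplied by the normalizer $N$ is a priori only projective; realizing it linearly, with the correct nearly ordinary behavior and the correct determinant, is precisely what the representation-theoretic machinery of Section~\ref{section:descendinggrouprepresentations} (especially Lemmas~\ref{lemma:projad}, \ref{lift}, and~\ref{lemma:extend2}) was designed to accomplish.
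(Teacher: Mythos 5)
Your high-level route matches the paper's: translate $H^1_\emp(K,W)$ via class field theory into $G$-equivariant Hom-data out of the local-unit bimodule, recognize that an infinitesimally classical deformation gives a nonzero element of $\Hom_G(\Lambda/I,Y;\Yni(\lambda))$ for a $D$-rational test subspace $\Yni(\lambda)=\pi^{-1}(\lambda E)$, invoke Strong Leopoldt against a competitor ideal isomorphic to $I_{\mathrm{Global}}$, and close the argument with the complex-conjugation inequalities of Lemma~\ref{lemma:twothirds} and the projective-to-linear descent of Section~\ref{section:descendinggrouprepresentations}. You also correctly single out the dihedral case and the lift-to-linear issue as the pressure points.

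The genuine gap is that you never construct the competitor, and the construction is the technical heart of the proof. It is not enough to note that Lemma~\ref{lemma:twothirds} bounds $\dim(V_i\mid c=-1)$ and assert this is ``incompatible with the Leopoldt-rigidity above'': one must actually exhibit a left ideal $N\subset\Lambda$ with $N\simeq I_{\mathrm{Global}}$ as $G$-modules such that $\Hom_G(\Lambda/N,Y)=[N]\Lambda\cap Y$ decomposes as $\bigoplus S_i\otimes T_i^*$ with the $T_i^*$ \emph{rational} (so Lemma~\ref{lemma:maingeneric} applies), with $\dim S_i = \dim(V_i\mid c=1)$ matched against Lemma~\ref{lemma:twothirds}, and with $[N]\Lambda\cap Y$ meeting $\Yni(\lambda)$ trivially. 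The paper achieves this by constructing an explicit involution $\iota$ on $Y=\Ind^G_H W$ preserving every irreducible $G$-subrepresentation (Lemma~\ref{lemma:construct}, built from order-two elements $\psi\in\Gamma$ not commuting with $\phi$ via Lemma~\ref{lemma:inv}), setting $Y'=Y^{\iota=-1}$, verifying $Y'\cap\Yni=0$ and the dimension counts of Lemma~\ref{lemma:dimensioninequalities}, then shrinking one $S_i$ by a dimension to get $Y'(\lambda)\cap\Yni(\lambda)=0$ — with Lemma~\ref{lemma:remains} needed in the dihedral case to vary $\iota$ when the chosen component $\Vdag\subset\Ind^G_H\eps$ has $c$ acting centrally, and the CM-descent of the determinant emerging precisely when that fails. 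None of this is in your sketch. Your phrasing that a nonzero class ``contributes \ldots a competitor submodule $Y$'' also misplaces the logical burden: the competitor is produced independently of the class, under Hypothesis~\ref{hypoth} that $\rho$ is \emph{not} special, and Strong Leopoldt then forces $\dim(\Hom_G(\Lambda/I,Y;\Yni(\lambda)))\le Y'(\lambda)\cap\Yni(\lambda)=0$, contradicting the existence of the class.
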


\subsection{Notation}

This notation
will be used for the remainder of the paper.
Recall that $L$ is the fixed field of the kernel of $\Proj(\rho)$.
Let $M$ be the Galois closure of $L$, and let $G = \Gal(M/\Q)$.
Let $H = \Gal(M/K)$. The group $H$ is normal in $G$ by construction.
Let $D$ be the decomposition group at some fixed prime $v$ dividing $p$.
The representation $\Proj(\rho)$ gives rise to a projective representation $\Phi$ of $H$,
that factors through  a group $\Gamma \in \Sf$. 
Recall that $N = N_{\Phi}$ is the group defined as in 
Lemma~\ref{lemma:projad}.  Let $K^{+}$ be the fixed field of $N$, so that
\begin{lemma}
 $\Proj(\rho)$ descends to $K^{+}$.
\label{lemma:descentabove}
\end{lemma}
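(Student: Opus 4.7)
The plan is to show that the chain of subgroups $\mathrm{Ker} \subseteq H \subseteq N \subseteq G$, together with the extension results already established, forces $\mathrm{Proj}(\rho)$ to extend across $N$. The critical point is simply to translate between the intrinsic group-theoretic statement of Lemma~\ref{lift} and the Galois-theoretic statement of the lemma at hand.

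First I would verify that $K^+ \subseteq K$, equivalently $H \subseteq N$. This is built into the definitions: the conjugation action of $H$ on itself descends via the quotient $H \twoheadrightarrow \Gamma$ to the natural map $H \to \mathrm{Inn}(\Gamma) = \Gamma$, and since $\mathrm{Inn}(\Gamma) \subseteq \mathrm{Aut}_\Phi(\Gamma)$ trivially, $H$ lies in the pullback $N = N_\Phi$. Hence $M/K^+/\Q$ is a tower with $\mathrm{Gal}(M/K^+) = N$ and $\mathrm{Gal}(M/K) = H$, and $K^+$ is a subfield of $K$.

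Next, recall that $\mathrm{Proj}(\rho)$ factors through $G_K \twoheadrightarrow H \to \Gamma \hookrightarrow \mathrm{PGL}(V)$; that is, it coincides (after inflation) with the projective representation $\Phi'$ of $H$ on $V$. By Lemma~\ref{lift}, $\Phi'$ lifts to a projective representation $\widetilde{\Phi'}: N \to \mathrm{PGL}(V)$, and by Lemma~\ref{lemma:extend} this lift is the unique one whose restriction to $H$ agrees with $\Phi'$.

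Finally, composing the projection $G_{K^+} \twoheadrightarrow \mathrm{Gal}(M/K^+) = N$ with $\widetilde{\Phi'}$ yields a projective representation of $G_{K^+}$ on $V$. By construction its restriction to $G_K \subseteq G_{K^+}$ factors as $G_K \twoheadrightarrow H \xrightarrow{\Phi'} \mathrm{PGL}(V)$, which is $\mathrm{Proj}(\rho)$. This provides the desired descent. There is essentially no obstacle here beyond correctly bookkeeping the two quotients $G_{K^+} \to N$ and $H \to \Gamma$; the substantive content has already been packaged into Lemma~\ref{lemma:projad} and Lemma~\ref{lift}, and the point of this lemma is just to record the Galois-theoretic consequence.
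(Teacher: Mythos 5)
Your proof is correct and matches the paper's intent: the paper states this lemma without a written proof, treating it as an immediate consequence of Lemma~\ref{lift} together with the Galois-theoretic identifications $\Gal(M/K^+)=N$ and $\Gal(M/K)=H$. Your writeup correctly fills in the routine bookkeeping (the inclusion $H\subseteq N$ via $\Inn(\Gamma)\subseteq\Aut_\Phi(\Gamma)$, the factorization of $\Proj(\rho)$ through $H\to\Gamma$, lifting to $N$ by Lemma~\ref{lift}, and inflating back along $G_{K^+}\twoheadrightarrow N$) that the authors left implicit.
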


\subsection{Lifting projective representations}

Since the deformation theory of $\rho$ only depends on
$\Proj(\rho)$, we turn our attention to projective representations.
Recall the following theorem of Tate.

 \begin{theorem}[{\bf Tate}]\label{tateproj} Let $K$ be a number field and $E$ an  algebraically closed field. Any continuous homomorphism $G_K \to \PGL_n(E)$ lifts to a continuous homomorphism $G_K \to \GL_n(E)$. 
 \end{theorem}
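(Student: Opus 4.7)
The plan is to express the existence of a linear lift as the vanishing of a cohomological obstruction, reduce the coefficients to roots of unity via a Kummer argument, and then invoke the vanishing of $H^2(G_K,\Q/\Z)$ for number fields --- the deep input of the theorem.

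First I would pull back the canonical central extension $1 \to E^\times \to \GL_n(E) \to \PGL_n(E) \to 1$ along the given continuous homomorphism $\phi: G_K \to \PGL_n(E)$ to obtain a topological central extension of $G_K$ by $E^\times$ on which $G_K$ acts trivially. The associated class $[c] \in H^2(G_K,E^\times)$ (continuous cohomology) vanishes if and only if $\phi$ admits a continuous linear lift to $\GL_n(E)$; explicitly, $[c]$ is represented by the $2$-cocycle $c(g,h) = \tilde\phi(g)\tilde\phi(h)\tilde\phi(gh)^{-1} \in E^\times$ attached to any set-theoretic section $\tilde\phi$ of $\phi$.

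Next I would observe that $[c]$ is killed by $n$: the $1$-cochain $g \mapsto \det\tilde\phi(g)$ has coboundary $c^n$, so $c^n$ is trivial in $H^2$. Because $E^\times$ is divisible (as $E$ is algebraically closed), the Kummer sequence $1 \to \mu_n \to E^\times \xrightarrow{x \mapsto x^n} E^\times \to 1$ induces a surjection $H^2(G_K, \mu_n) \twoheadrightarrow H^2(G_K, E^\times)[n]$, so $[c]$ is the image of some class from $H^2(G_K,\mu_n)$. The inclusions $\mu_n \hookrightarrow \mu_\infty \hookrightarrow E^\times$ factor this map through $H^2(G_K, \mu_\infty)$, so it suffices to show that $H^2(G_K, \mu_\infty) = 0$. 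Since $G_K$ acts trivially on roots of unity here, this group is $H^2(G_K, \Q/\Z)$ with trivial action.

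The final --- and main --- step is Tate's vanishing theorem: $H^2(G_K,\Q/\Z) = 0$ for every number field $K$ with respect to the trivial action. Via the short exact sequence $0 \to \Z \to \Q \to \Q/\Z \to 0$ and the identities $H^i(G_K, \Q) = 0$ for $i \geq 1$ (since $\Q$ is uniquely divisible and $G_K$ is profinite), this reduces to showing $H^3(G_K, \Z) = 0$, which follows by combining Poitou--Tate global duality with the virtual cohomological dimension-two property of the absolute Galois group of a number field, paying careful attention to $2$-torsion arising from any real places of $K$. This last vanishing is the only non-formal ingredient; the preceding reductions are straightforward diagram chases in Galois cohomology.
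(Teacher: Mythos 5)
Your proof follows the same overall strategy as the paper's: realize the obstruction to lifting as a class in $H^2(G_K,E^\times)$, reduce to $H^2(G_K,\Q/\Z)\cong H^3(G_K,\Z)$, and then invoke the known vanishing of the latter. The one place where you take a more circuitous route is the reduction from $E^\times$-coefficients to $\Q/\Z$-coefficients: you first show the obstruction class is $n$-torsion via the determinant, then run a Kummer argument through $H^2(G_K,\mu_n)$. The paper shortcuts this by observing directly that, since $E$ is algebraically closed, $E^\times$ is an extension of a uniquely divisible group by its torsion subgroup $\mu_\infty\cong\Q/\Z$; the uniquely divisible piece has no higher cohomology, so $H^2(G_K,E^\times)\cong H^2(G_K,\Q/\Z)$ without any torsion analysis. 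Both routes are valid, but the paper's is shorter and makes no use of the integer $n$.

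One caution on your final step: the sketch of why $H^3(G_K,\Z)=0$ --- ``virtual cohomological dimension two'' together with ``paying careful attention to $2$-torsion from real places'' --- is a bit loose and would not survive scrutiny as written. When $K$ has a real place, $\mathrm{cd}_2(G_K)=\infty$, and virtual cohomological dimension concerns torsion-free open subgroups, so it does not by itself give vanishing of $H^3(G_K,\Z)$ for $G_K$ itself. The result that is actually needed (Milne, \emph{Arithmetic Duality Theorems}, I.4.17, which the paper cites) gives $H^r(G_K,\Z)=0$ for all \emph{odd} $r$; the underlying mechanism is that in high degrees these groups are controlled by the real places, and $H^r(\Gal(\C/\R),\Z)$ vanishes for odd $r$. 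You correctly identify this vanishing as the genuinely nontrivial input, so the fix is simply to replace the heuristic with the precise citation rather than to attempt a cohomological-dimension argument.
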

 
 \begin{proof} Tate's result will follow if we show that $H^2(G_{K},E^*)$ vanishes (here the action of $G_K$ on $E^*$ is trivial). Since $E$ is algebraically closed, the multiplicative group $E^*$ is isomorphic to an extension of a uniquely divisible group by a group isomorphic to $\Q/\Z$, so $H^2(G_{K},E^*) = H^2(G_{K},\Q/\Z) = H^3(G_{K},\Z)$. But the latter group vanishes. For this, see page 77, Cor. 4.17 in \cite{Milne}, where it is shown that  $H^r(G_{K},\Z)$ vanishes for all number fields $K$ and all odd values of $r$.
\end{proof}

\begin{lemma} The representation $\chi \otimes \rho$ descends to $K^{+}$
for some character $\chi$.
\end{lemma}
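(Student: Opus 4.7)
The plan is to combine the preceding lemma (descent at the projective level) with Tate's lifting theorem and a standard Schur's lemma argument. Concretely, by Lemma~\ref{lemma:descentabove} there exists a continuous projective representation $\overline{\varrho}: G_{K^{+}} \to \PGL_2(E)$ whose restriction to $G_K$ coincides with $\Proj(\rho)$.

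Next I will invoke Theorem~\ref{tateproj} (Tate) to lift $\overline{\varrho}$ to a continuous linear representation $\varrho: G_{K^{+}} \to \GL_2(E)$. This is applicable because $E$ is algebraically closed, so $H^2(G_{K^{+}}, E^{\times})$ vanishes and any continuous projective representation of $G_{K^{+}}$ lifts to a linear one. After this step, $\varrho|_{G_K}$ is a continuous linear representation of $G_K$ whose projectivization is $\Proj(\rho)$, and so is $\rho$ itself.

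To compare these two linear lifts I will use that $\rho$ is absolutely irreducible. Any two linear lifts of an absolutely irreducible projective representation differ by a scalar-valued function, and Schur's lemma (applied pointwise) forces this function to be a continuous character $\chi: G_K \to E^{\times}$. Thus $\varrho|_{G_K} = \chi \otimes \rho$, which is precisely the statement that $\chi \otimes \rho$ descends to $K^{+}$ via the representation $\varrho$.

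There is essentially no obstacle once the previous lemma is in hand: the genuine content of the descent was already absorbed into the construction of the group $N = N_{\Phi}$ and the proof that $\Proj(\rho)$ extends to its fixed field $K^{+}$; here one is merely passing from projective to linear descent, a passage which is formal given Tate's vanishing theorem and Schur's lemma.
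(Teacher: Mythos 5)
Your argument is correct and essentially identical to the paper's: projective descent via the preceding lemma, Tate's vanishing theorem to lift to a linear representation of $G_{K^+}$, and comparison of the two linear lifts over $G_K$ to produce the twisting character $\chi$. One small note: the fact that two linear lifts of a given projective representation differ by a character does not require Schur's lemma or irreducibility---it follows directly from multiplicativity of the two homomorphisms together with the fact that the kernel of $\GL_2(E)\to\PGL_2(E)$ consists of the scalars.
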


\begin{proof} By Lemma~\ref{lemma:descentabove},
%
%
the representation
 $\Proj(\rho)$ can be descended to a representation:
$$\Proj(\rho)^+: G_{K^{+}} \rightarrow \PGL_2(E),$$
then by Theorem~\ref{tateproj}, the representation $\Proj(\rho)^+$ lifts
to a linear representation $\varrho^+$ of $G_{K^{+}}$.   Since
any two lifts of a projective representation are equivalent up
to a character, it follows that $\varrho: = \varrho^+|_{G_K} = \rho \otimes \chi$, and
the result follows.
\end{proof}

\subsection{Complex conjugation in $G$}

Let ${\mathcal C}$ be the conjugacy
class of complex conjugation elements of $G$.

\begin{proposition}  One of
the following  conditions hold:
\begin{enumerate}
\item Every $c \in {\mathcal C}$ is not \even,
\item There exists at least one $c \in {\mathcal C}$ such that
$c \notin N$,

\item The projective representation $\Proj(\rho)$ descends to a representation over a field
containing at least one real place which is even,
\item The projective representation $\Proj(\rho)$ descends to an odd representation
over a totally real field.
\end{enumerate}
\end{proposition}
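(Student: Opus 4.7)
The plan is to prove the contrapositive on the two ``survival'' conditions: I will assume that neither (1) nor (2) holds and deduce (3); if (1) or (2) holds, there is nothing to show. So suppose that every $c \in \mathcal C$ lies in $N$ and that at least one such $c$ is \even. The goal is then to produce a descent of $\Proj(\rho)$ to a field containing a real place at which the descent is even.

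First, by Lemma~\ref{lemma:descentabove} (equivalently, Lemma~\ref{lift}), the projective representation $\Proj(\rho)$ descends to $K^{+} := M^{N}$, and the descent $\Proj(\rho)^{+}$ is realized concretely by the extension $\tilde\phi : N \to \PGL_2(E)$ of $\phi := \Proj(\rho)|_{H}$ produced by Lemma~\ref{lemma:extend} (using $N = N_{\Phi}$ from Lemma~\ref{lemma:projad}). Now fix an \even{} element $c \in \mathcal C$, supplied by the failure of (1). By the definition of \even, conjugation by $c$ acts as the identity automorphism of $\Gamma$, so the Schur-uniqueness clause in Lemma~\ref{lemma:extend} forces the intertwiner $\tilde\phi(c) \in \PGL_2(E)$ for this trivial automorphism to be the identity of $\PGL_2(E)$.

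To finish, I would translate this group-theoretic identity into analytic evenness at a real place of $K^{+}$: $c$ is the complex conjugation attached to some embedding $\sigma \colon M \hookrightarrow \C$, and because $c$ lies in $N$ (by the failure of (2)) it fixes $K^{+}$ pointwise, so $\sigma|_{K^{+}}$ singles out a real place $w$ of $K^{+}$ whose attached complex conjugation in $\Gal(M/K^{+}) = N$ is precisely $c$. Consequently $\Proj(\rho)^{+}$ sends complex conjugation at $w$ to $\tilde\phi(c) = 1$, which is exactly the assertion that $\Proj(\rho)^{+}$ is even at the real place $w$; this establishes condition (3).

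The conceptual content of the argument lies entirely in the Schur step $\tilde\phi(c) = 1$ for an \even{} $c$, which is essentially immediate from Lemma~\ref{lemma:extend}. The only mildly delicate point I expect is bookkeeping: matching the abstract element $c \in N$ with the complex conjugation in $G_{K^{+}}$ at the specific archimedean place picked out by $\sigma|_{K^{+}}$. This is routine Galois theory (the dictionary between decomposition groups at infinity and embeddings into $\C$), so it should not present any real obstacle.
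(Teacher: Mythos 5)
Your proof is correct and is essentially the paper's own argument: descend $\Proj(\rho)$ to $K^{+} = M^{N}$ via Lemma~\ref{lift}, then use the hypothesis that the even $c$ maps to the trivial automorphism of $\Gamma$, so that Schur forces the intertwiner to be trivial. The one stylistic difference is that you carry out the Schur step entirely inside $\PGL(V)$, where the unique intertwiner for the identity automorphism of $\Gamma$ is the identity; the paper instead first lifts to a linear representation $\varrho = \chi \otimes \rho$, observes $c$ acts centrally hence by $\pm 1$, and concludes ``possibly after a twist.'' Working projectively as you do sidesteps the twist bookkeeping and is a little cleaner. The paper also records that when \emph{all} of $\mathcal{C}$ lies in $N$ the fixed field $K^{+}$ is totally real (useful in their surrounding case analysis and for option (4)), whereas you only extract the single real place you need; both suffice for the stated disjunction. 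Your last paragraph's ``delicate point'' about matching $c\in N$ to a real place of $K^{+}$ is handled correctly: since $K^{+}=M^{N}$ and $c\in N$, the embedding $\sigma|_{K^{+}}$ lands in $\mathbf{R}$, and the decomposition group at the resulting archimedean place of $K^{+}$ in $\Gal(M/K^{+})=N$ is generated by $c$ itself.
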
 

\begin{proof}
Suppose that every $c \in {\mathcal C}$ lies in $N$.
Then the fixed field $K^{+}$ of $N$ is totally real, and
$\Proj(\rho)$ descends to $K^{+}$ (and is either odd or
even at each place). If $c$ is \even, then consider the
descended representation $\varrho = \chi \otimes \rho$ at the corresponding
real place. Since $c$ acts trivially by conjugation on
$\Gamma$, it follows that $c$ acts centrally on this representation,
and thus via $\pm 1$ (by Schur's lemma). Hence, possibly after a twist, $\varrho$ is even
at this place.
\end{proof}

\subsection{Selmer Groups of Artin Representations}
\label{subsection:selmer2}

Recall (from section~\ref{section:theleopoldtconjecture}) the following notation:

\begin{itemize}
\item
 the $E[G]$-module formed from {\it global units} of $M$:  $U_{\glob}:= {\mathcal O}_M^*\otimes_{\Z}E$; 
 \item
 the $E[D_v]$-module formed  from {\it local units}  of $M_v$ for places $v$ dividing $p$:
$U_v:= {\widehat{\mathcal O}}_{M_v}^*\otimes_{\Z_p}E;$
\item
 and  the natural homomorphism
$$\iota=\iota_p = \prod_{v|p}\iota_v:  U_\glob \longrightarrow  \prod_{v|p}U_v=\oplus_{v|p}U_v=  U_{\loc},$$
which the classical Leopoldt conjecture asserts is injective. 

\end{itemize} 
Assume the classical Leopoldt conjecture, and form the exact sequence
$$0 \to  U_\glob\ {\stackrel{\iota_p}{\longrightarrow}}\  U_{\loc} \to \Gamma_M \to 0.$$ The $E$-vector space  $\Gamma_M$ can be identified, via Class Field Theory,  with the tensor product of the Galois group of the maximal $\Z_p$-power extension of $M$ over $\Z_p$ with $E$. 

   Let $F \subset E$  be fields as in subsection~\ref{prelim}, and let $\Lambda_F:=F[G]$ and $\Lambda:=\Lambda_F\otimes_FE$.
   Make the identification of  left-$\Lambda$-modules $\Lambda \simeq U_\loc$  (compatible with preferred $F$-structures; see subsection~\ref{prelim}).  Consequently, given the classical Leopoldt conjecture,  $U_\glob$ will be identified with some left ideal $I_{\glob}  \subset \Lambda$, and $\Gamma_M $ above will be identified with the quotient  $\Lambda/I_{\glob} $.  After these identifications, and setting $I:=I_{\glob}$, the exact sequence displayed above may be (cryptically) written:
   $$0 \to  I\ \to \  \Lambda \to \Lambda/I \to 0.$$

Recall (from subsection~\ref{ordsel} that  $W:=\Hom'(V,V) \subset \Hom(V,V)$ denotes the $G_K$-stable hyperplane in $\Hom(V,V)$ consisting of endomorphisms of $V$ of trace zero. By the inflation-restriction sequence there is an isomorphism
$$H^1(K,W) \simeq H^1(M,W)^{H} = H^1(M,\Hom'(V,V))^{H}.$$
Let $\HS$ denote the Selmer group without any local conditions at $p$.
Since we may consider minimal deformations $\Sigma = \emptyset$
(see Corollary~\ref{corollary:minimallevel}, and the remarks following
Definition~\ref{definition:ICD}),
the group $\HS$ is therefore isomorphic to
$$\Hom_E(\Gamma_{M},\Hom'(V,V))^{H} \simeq \Hom_E(\Lambda/I,\Hom'(V,V))^{H},$$
 the right-hand displayed module coming from the identification made above. The
away from $p$ Selmer group $\HS$ fits into an exact sequence
$$0 \to \HS \to \Hom_E( U_{\loc} ,\Hom'(V,V))^{H} \to \Hom_E( U_\glob,\Hom'(V,V))^{H},$$ 
which --- after our identifications --- can be written
$$0 \to \Hom_E(\Lambda/I,\Hom'(V,V))^{H} \to \Hom_E( \Lambda,\Hom'(V,V))^{H} \to \Hom_E( I,\Hom'(V,V))^{H}.$$ 
 We have a canonical identification
$$\Hom_E(U_\loc,\Hom'(V,V)) = \Hom_E(\oplus_{v | p}U_v,\Hom'(V,V))= \prod_{v|p} \Hom_E( U_v ,\Hom'(V,V)).$$  For each $v|p$ we have the natural homomorphism $\Hom'(V,V) \rightarrow \Hom(L_v,V/L_v)$.

Recall that $\Win_v:=\Ker(\Hom'(V,V) \rightarrow \Hom(L_v,V/L_v))$, and that we will be using these mappings to cut out the ordinary piece of the Selmer modules. Recall, as well, the local Selmer conditions
$\Lo_v \subseteq
H^1(D_v,W)$ defined in subsection~\ref{ordsel},

\begin{lemma} Let $\gamma \in \HS$. Then $\gamma \in \Lo_v$ for
$v|p$ if and only if the corresponding morphism
$\phi \in \Hom_H(\Lambda,\Hom'(V,V))$
lies in the kernel of the composite map
$$\begin{diagram}
\Hom_H(\Lambda,W) \simeq \Hom_H(U_\loc,W)
& \rTo & \Hom(U_v,W) \\
  & \rdTo & \dTo & \\
 & &
 \Hom(U_v ,W/\Win_v) \end{diagram}$$
Denote the intersection in $\Hom_H(\Lambda,W)$ of these kernels for
all $v$ by $\Hom^0_{H}(\Lambda,W)$.
\end{lemma}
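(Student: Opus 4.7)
The plan is to unwind both sides of the claimed equivalence and match them via local class field theory, with the local unit group $U_v$ corresponding to the image of the inertia at $v$ inside the appropriate global abelianised Galois group.

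First I will set up the global dictionary. Inflation–restriction applied to $1 \to G_M \to G_K \to H \to 1$, plus the vanishing of $H^i(H, W)$ for $i \geq 1$ (since $H$ is finite and $\mathrm{char}\,E = 0$), yields $H^1(K, W) \simeq H^1(M, W)^H$; as $G_M$ acts trivially on $W$ this equals $\Hom(\Gal(N/M), W)^H$, where $N/M$ is the maximal abelian pro-$p$ extension unramified outside $p$. By the classical Leopoldt conjecture (in force throughout the section, as it is subsumed by the strong Leopoldt conjecture) together with global class field theory, one identifies $\Gal(N/M)_E$ with $\Gamma_M = U_\loc/U_\glob = \Lambda/I$, so $\HS \simeq \Hom_H(\Lambda/I, W)$, and $\phi \in \Hom_H(\Lambda, W)$ is the pullback of $\gamma$ along the surjection $\Lambda \twoheadrightarrow \Lambda/I$, exactly as in the exact sequence recalled in subsection~\ref{subsection:selmer2}.

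Second I will make the local match at a place $v \mid p$ of $M$. Under local Artin reciprocity, $U_v = \widehat{\mathcal{O}}_{M_v}^* \otimes E$ is identified with the $E$-ification of the inertia subgroup of the local abelianisation, and its image in $\Gal(N/M)_E$ is precisely the inertia $I_v$ at $v$. Because $M/K$ is unramified at all primes above $p$ and the $G_K$-action on $W$ factors through $H$, the action of $I_v$ on $W$ is trivial, so $H^1(I_v, W) = \Hom_{\mathrm{cts}}(I_v, W)$, and the global-to-local restriction $H^1(K, W) \to H^1(I_v, W)$ matches the evaluation $\Hom_H(\Lambda, W) \to \Hom(U_v, W)$, $\phi \mapsto \phi|_{U_v}$, coming from the inclusion $U_v \hookrightarrow U_\loc$. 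This matching is manifestly compatible with post-composition by $W \twoheadrightarrow W/\Win_v$, so the diagonal composite in the lemma's diagram corresponds exactly to $H^1(K, W) \to H^1(I_v, W) \to H^1(I_v, W/\Win_v)$.

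Third, I will reformulate the nearly ordinary condition: by Lemma~\ref{dvlemma}(3), $\gamma \in \Lo_v$ is equivalent to the vanishing of the image of $\gamma$ in $H^1(I_v, W/\Win_v)$. Transporting this through the dictionary built above, it becomes exactly the vanishing of the diagonal composite in the diagram applied to $\phi$, which is the conclusion. The only substantive input is the class-field-theoretic identification of $U_v$ with the inertia component of $\Gal(N/M)_E$; everything else is a diagram chase combined with the cohomological vanishing results already in Lemma~\ref{dvlemma}. I expect the only point requiring real care to be the compatibility of the local identifications with the global $H$-equivariance, but since $H$ permutes the primes $v \mid p$ and the corresponding $U_v$-factors in the same way, this compatibility is built into the construction and requires no extra argument.
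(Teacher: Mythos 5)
Your proof is correct and takes the same approach as the paper's (one-sentence) argument, which is simply ``tracing the definition of $\Lo_v$.'' You make explicit the class-field-theoretic dictionary already set up in the surrounding text --- inflation--restriction giving $\HS \simeq \Hom_H(\Lambda/I,W)$, the identification of $U_v$ with the image of local inertia in $\Gal(N/M)_E$, and the unramifiedness of $W$ at $v$ --- and then reformulate the nearly ordinary condition via Lemma~\ref{dvlemma}(3); this is exactly the diagram chase the paper has in mind.
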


\begin{proof}
 This is simply a matter of tracing the
definition of $\Lo_v$.
\end{proof}

\medskip

\begin{lemma}
The group
$\Hom^0_H(\La,W)$ is 
the subset of homomorphisms $\phi$ such that
$\phi([\sigma]) \in \Win_{\sigma(v)}$ 
for all $\sigma \in G$.
\end{lemma}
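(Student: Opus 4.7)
The plan is to unwind the identification $\Lambda \simeq U_{\loc}$ chosen at the start of subsection~\ref{subsection:selmer2}, and then to apply the previous lemma place-by-place. By that lemma, $\phi \in \Hom^0_H(\Lambda, W)$ if and only if, for every prime $w$ of $M$ above $p$, the restriction of $\phi$ to the summand $U_w \subset U_{\loc}$ takes values in $\Win_w$. It therefore suffices to show that, under the chosen identification, this amounts precisely to $\phi([\sigma]) \in \Win_{\sigma(v)}$ for all $\sigma \in G$.

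The identification $\Lambda = E[G] \simeq U_{\loc}$ arises (as in subsection~\ref{prelim}) from choosing a generator $u$ of the free rank-one $E[D]$-module $U_v$ and sending $1 \otimes u \mapsto u$; equivalently, it is the $E[G]$-linear extension of the map $[\sigma] \mapsto \sigma \cdot u$. In particular, the basis element $[\sigma] \in \Lambda$ is sent to an element lying in the single summand $U_{\sigma(v)}$ of $U_{\loc} = \bigoplus_{w|p} U_w$. As $\sigma$ ranges over $G$, the set of $\sigma$ with $\sigma(v) = w$ forms a coset of $D$, and the corresponding images $\{\sigma \cdot u\}$ form an $E$-basis for $U_w$ (using that $U_w$ is free of rank one over $E[D_w]$, where $D_w$ is the appropriate conjugate of $D$).

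Thus, for each fixed $w | p$, the values $\{\phi([\sigma])\}_{\sigma(v) = w}$ are precisely the values of $\phi|_{U_w}$ on an $E$-basis of $U_w$. Since $\Win_w \subset W$ is an $E$-subspace, the condition that $\phi|_{U_w}$ take values in $\Win_w$ is equivalent, by $E$-linearity alone, to the conditions $\phi([\sigma]) \in \Win_w = \Win_{\sigma(v)}$ for every $\sigma$ with $\sigma(v) = w$. Taking the conjunction over all $w | p$ produces the claimed characterization. There is no substantial obstacle here: the lemma is essentially a bookkeeping translation between the group-algebra presentation $\Lambda = E[G]$ and the local decomposition $U_{\loc} = \bigoplus_{w | p} U_w$, and crucially does \emph{not} require $\phi$ to satisfy any $D$-equivariance beyond what $E$-linearity already gives.
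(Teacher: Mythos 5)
Your proof is correct, and it fills in a step that the paper itself states without proof (it follows immediately on the previous lemma, which the authors dismiss as ``simply a matter of tracing the definition of $\Lo_v$''). Your unwinding of the identification $\Lambda \simeq U_{\loc}$ via $[\sigma]\mapsto \sigma\cdot u$, the observation that the $\sigma$ with $\sigma(v)=w$ form a left $D$-coset whose images give an $E$-basis of $U_w$, and the remark that $E$-linearity alone (rather than $D$-equivariance) suffices because $\Win_w$ is an $E$-subspace, are all accurate and are exactly the bookkeeping the authors leave implicit.
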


By Frobenius reciprocity we may  re-write the
Selmer group $\HS$ as
$$\Hom_{G}(\Lambda/I,\Ind^{G}_{H} W).$$
  Recall that we have a chosen place $v$ of $M$ above $p$, and we have also chosen a representative system ${\mathcal R}(G/H)\subset G$ of left $H$-cosets in $G$. 
\begin{lemma} After the identification we have made, we have that
$$H^1_{\Sigma}(K,W)
= \Hom_G(\Lambda/I,\Ind^{G}_{H} W)
\bigcap \Hom^0_{G}(\Lambda,\Ind^{G}_{H} W),$$
where the adornment $0$ in the second homomorphism group
indicates morphisms $\phi: \Lambda \to \Ind^{G}_{H} W$ such that
$$\phi([1]) \in \bigoplus_{\sigma \in{\mathcal R}( G/H)}\kern-0.6em{\Win_{\sigma(v)}}$$
\end{lemma}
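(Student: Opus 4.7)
The plan is to combine Frobenius reciprocity with the two preceding lemmas and with the earlier identification of the away-from-$p$ Selmer module. The second-to-last lemma identifies the nearly-ordinary local condition at primes above $p$ with the constraint $\phi([\sigma]) \in \Win_{\sigma(v)}$ for every $\sigma \in G$, while the away-from-$p$ Selmer group has already been expressed (after Frobenius reciprocity) as $\Hom_G(\Lambda/I, \Ind^G_H W)$. My task is to match the two descriptions within a single compatible formalism.

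First I would fix the explicit Frobenius reciprocity bijection $\Hom_H(\Lambda, W) \xrightarrow{\sim} \Hom_G(\Lambda, \Ind^G_H W)$, sending $\phi$ to the unique $G$-linear $\widetilde\phi$ with $\widetilde\phi([1]) = \sum_{\sigma \in \mathcal R(G/H)} \sigma \otimes \phi([\sigma^{-1}])$ in $\bigoplus_\sigma \sigma \otimes W = \Ind^G_H W$. Under this bijection, the condition that $\phi$ vanish on the $E[G]$-submodule $I = I_\glob \subset \Lambda$ (a submodule because it is the image of the $E[G]$-equivariant map $\iota_p$) is equivalent to $\widetilde\phi$ factoring through $\Lambda/I$, by functoriality of Frobenius reciprocity in the source.

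Next I would check that the nearly-ordinary condition transports correctly. By the $H$-equivariance of $\phi$ together with $h\Win_{\tau(v)} = \Win_{h\tau(v)}$ (a consequence of the $G$-equivariance of the distinguished line data $\{L_{\tau(v)}\}_{\tau(v) \mid p}$), the requirement that $\phi([\tau]) \in \Win_{\tau(v)}$ for all $\tau \in G$ reduces to the same requirement for $\tau$ running over $\mathcal R(G/H)$. Translated through the formula above, this is precisely the requirement that the $\sigma$-coordinate of $\widetilde\phi([1])$ lie in $\Win_{\sigma(v)}$, i.e.\ $\widetilde\phi([1]) \in \bigoplus_\sigma \Win_{\sigma(v)}$, which is the adornment $0$ in the statement.

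Intersecting the two conditions yields the claimed identification. The principal obstacle is purely combinatorial: one must fix the coset-representative system $\mathcal R(G/H)$ and the normalization of Frobenius reciprocity so that the $\sigma$-component of $\widetilde\phi([1])$ is indexed by $\sigma$ (and not by $\sigma^{-1}$); this can be arranged by passing from $\mathcal R$ to $\{\sigma^{-1} : \sigma \in \mathcal R\}$ if necessary. No deeper input (beyond Leopoldt, already invoked in the preceding paragraphs) is required.
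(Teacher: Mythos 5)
The paper states this lemma without an explicit proof, so what you have written is really a filling-in of what the authors leave implicit; your argument via Frobenius reciprocity, combined with the two preceding lemmas (the reinterpretation of the local condition $\Lo_v$ as $\phi([\sigma])\in\Win_{\sigma(v)}$, and the prior identification of the away-from-$p$ Selmer group with $\Hom_G(\Lambda/I,\Ind_H^G W)$), is precisely the intended route, and the substance is correct. The reduction from $\sigma\in G$ to $\sigma\in\mathcal R(G/H)$ via $H$-equivariance of $\phi$ and the relation $h\Win_{\tau(v)}=\Win_{h\tau(v)}$ is the right mechanism. One small caveat about your concluding remark: swapping $\mathcal R$ for $\mathcal R^{-1}$ (which is legitimate here since $H\lhd G$, so left and right coset systems coincide) does not by itself remove the index twist, because the $\sigma^{-1}$ in the Frobenius formula $\widetilde\phi([1])=\sum_{\sigma}\sigma\otimes\phi([\sigma^{-1}])$ lives in the argument of $\phi$, not in the label of the summand, so the inversion reappears after relabeling. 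The real point is that $\Win_{\sigma(v)}$ in the lemma's statement should be read as the paper's chosen label for the ordinary subspace sitting in the $\sigma$-summand of $\Ind_H^G W$ (as the subsequent definition $\Yin=\bigoplus_{\sigma\in\mathcal R(G/H)}\Win_{\sigma}$, which drops the $v$, makes clear), so the mismatch is purely cosmetic and does not affect the identification you prove.
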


\begin{definition}
Let $Y = \Ind^{G}_{H} W$ and $\Yin
= \bigoplus_{\sigma \in {\mathcal R}(G/H)} \Win_{\sigma}$.
\end{definition}

\begin{corollary}
In the notation of section~\ref{section:ext},
we have a canonical isomorphism
$$H^1_{\Sigma}(K,W) \simeq \Hom_G(\Lambda/I,Y;\Yin).$$
\end{corollary}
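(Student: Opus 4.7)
The plan is simply to unpack the notation and observe that the corollary is an immediate rephrasing of the previous lemma. The previous lemma writes
\[
H^1_{\Sigma}(K,W) = \Hom_G(\Lambda/I,\Ind^G_H W)\;\cap\;\Hom^0_G(\Lambda,\Ind^G_H W),
\]
where the intersection is taken inside $\Hom_G(\Lambda,\Ind^G_H W)$, the superscript $0$ encodes the ordinarity condition $\phi([1])\in\bigoplus_{\sigma\in\mathcal R(G/H)}\Win_{\sigma(v)}$, and the first factor encodes that $\phi$ kills the left ideal $I\subset\Lambda$. With the abbreviations $Y=\Ind^G_H W$ and $\Yin=\bigoplus_{\sigma}\Win_\sigma\subset Y$ (a $D$-stable subspace sitting inside $Y$ regarded as a $D$-module via the chosen $[1]\in\Lambda$), the two conditions combine into a single ``Selmer-type'' Hom group.

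Concretely, I will identify $\Hom_G(\Lambda/I,Y;\Yin)$ with the notation introduced in the Appendix (section~\ref{section:ext}): it denotes the $E$-vector space of $G$-equivariant homomorphisms $\overline\phi:\Lambda/I\to Y$ with the extra local condition that, upon lifting to $\phi:\Lambda\to Y$ and evaluating at the canonical generator $[1]$, one has $\phi([1])\in\Yin$. Because $\Lambda$ is free of rank one over itself and $[1]$ generates $\Lambda$ as a left $G$-module in the appropriate sense, the condition $\phi([1])\in\Yin$ is exactly the ``$0$''-decoration in $\Hom^0_G(\Lambda,Y)$. Similarly $\overline\phi\in\Hom_G(\Lambda/I,Y)$ is exactly the condition that $\phi$ factor through $\Lambda/I$, i.e.\ vanishes on $I$.

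Putting these together, the intersection in the previous lemma is precisely the definition of $\Hom_G(\Lambda/I,Y;\Yin)$, which gives the desired isomorphism. The only substantive point to check is that the local condition $\phi([1])\in\Yin$ is well-defined on the quotient $\Lambda/I$, which amounts to noting that the two subsets $I$ and the set of $\phi$ satisfying the $\Yin$-condition are being cut out independently on $\Lambda$, so their intersection descends to a well-defined Hom group on $\Lambda/I$.

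There is no real obstacle here: the corollary is a bookkeeping statement that packages the Selmer group in the module-theoretic form that section~\ref{section:ext} of the Appendix is built to analyze. The only thing to be careful about is the compatibility of the local ordinarity condition with the $G$-action — specifically, that $\Yin$ is stable under $D$ (which it is, since each $\Win_{\sigma(v)}$ is stable under $D_{\sigma(v)}$ and the direct sum decomposition of $Y|_D$ matches the coset decomposition) — so that the pair $(Y,\Yin)$ is a legitimate object in the category of pairs considered in the Appendix.
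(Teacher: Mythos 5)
Your proposal is correct and takes essentially the same route as the paper, which indeed offers no independent proof because the corollary is just the preceding lemma rewritten in the notation of Definition~\ref{section:ext} (namely, $\Hom_G(\Lambda/M,Y;Z)$ is by definition the set of $G$-equivariant $\phi$ with $\phi([1]\bmod M)\in Z$, so the intersection $\Hom_G(\Lambda/I,Y)\cap\Hom^0_G(\Lambda,Y)$ is literally $\Hom_G(\Lambda/I,Y;\Yin)$). The one small inaccuracy: your closing paragraph frames well-definedness on the quotient as the substantive point, but there is nothing to check there, since $\Hom_G(\Lambda/I,Y)$ already sits inside $\Hom_G(\Lambda,Y)$ and the $\Yin$-condition is imposed afterward as a further intersection; no descent argument is needed.
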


If $\gamma \in H^1_{\Sigma}(K,W)$, then the infinitesimal
Hodge-Tate weights are given as follows. Let $\phi$ denote
the corresponding element of $\Hom_{G}(\Lambda,Y)$, so
$\phi([1]) \in \Yin$.
 Recall there is a canonical
homomorphism $\Win_v\ {\stackrel{\epsilon}{\to}}\ \End_E(L_v) = E$.
The composite map
$$\pi: \HS \rightarrow
\Yin \rightarrow \bigoplus_{\sigma \in {\mathcal R}(G/H)} E,$$
is exactly the map of Definition~\ref{definition:HTW}.

\begin{definition} 
\label{definition:zni}
Let
$\Yni$ denote the kernel of the map
$\Yin \rightarrow \bigoplus_{\sigma \in {\mathcal R}(G/H)} E$.
\end{definition}

The spaces
$\Yin$ and $\Yni$ are, by construction, left rational
$D$-submodules of $Y$.
To orient the reader, choosing an appropriate
local basis everywhere,
the spaces $\Yin$ and $\Yni$ can be thought of
as $|G/H|$ copies of the upper triangular trace
zero matrices and upper nilpotent matrices respectively.
The Bloch--Kato Selmer module  (usually denoted $\HN$) is identified with
$ \Hom_G(\Lambda/I,Y;\Yni).$

\subsection{Involutions respecting irreducible subspaces}

Now that we have identified the various Selmer groups of interest to us as vector
spaces of the form $ \Hom_G(\Lambda/I,Y;\Yin)$ and $ \Hom_G(\Lambda/I,Y;\Yni)$, our next
goal is to prove our main theorem by making use of the Strong Leopoldt Conjecture which allows us to compare the dimensions of these spaces with the dimensions of spaces of homomorphisms $\Hom_G(\Lambda/J,Y;Z)$, where $J$ is a 
left $E[G]$-ideal  such that we have an isomorphism of  left $E[G]$-modules $J  \simeq I$ and  $Z$ is an  appropriate  left rational
$D$-submodule (test object)
related to $\Yin$ and $\Yni$. (Note that the appropriate test objects are left modules,
as the genericity hypothesis will be applied in the dual context of Lemma~\ref{lemma:maingeneric}.)
To do this recall by Lemma~\ref{lemma:type} we need to
construct certain subspaces $Y'_i$ for each $V_i$-isotypic
component $Y_i$ of $Y$. Since the irreducible representations
$V \subseteq Y$ are not apparent from the representation
of $Y$ as $\Ind^{G}_{H} W$, we construct the appropriate
spaces $Y'_i$ by choosing involutions on certain subspaces
of $Y$ which we show stabilize each irreducible representation
$V \subseteq Y$.   Recall that $X \subseteq W$ is
the unique irreducible representation in $W$ on which
$\Gamma$ acts faithfully; it equals $W$ unless
$\Gamma = D_n$ when $X$ has dimension two.
We may write
$$\Ind^{G}_{H} X = \Ind^{G}_{N} \Ind^{N}_{H}X 
= \bigoplus_{g \in {\mathcal R}(G/N)} g \Ind^{N}_{H} X$$ where ${\mathcal R(G/N)}\subset G$ is a representative system for the cosets of $N$ in $G$. 
Since  $N/H$ is not necessarily normal in $G/H$, the
vector space $g \Ind^{N}_{H} X$ is not a representation
of $N$, but rather of $gNg^{-1}$.  Explicitly we have
an isomorphism
$$g \Ind^{N}_{H} X \simeq \Ind^{g N g^{-1}}_{H} \Xg.$$
Recall by Lemma~\ref{lemma:irredform} 
that if $V \subseteq \Ind^{G}_{H} X$ is irreducible, then
$V = \Ind^{G}_{N} U$ for some irreducible $U \subseteq \Ind^{N}_{H} X$.
Thus
$$V = \Ind^{G}_{H} U = \bigoplus_{g \in {\mathcal R}(G/N)} U_g,
\qquad
U_g \subseteq \Ind^{g N g^{-1}}_{H} \Xg.$$
Here $U_g$ is an irreducible $g N g^{-1}$ submodule of
$\Ind^{g N g^{-1}}_{H} \Xg$.
From this we  deduce the following:

\begin{lemma} To construct an involution $\iota$ on
$\Ind^{G}_{H} X$  preserving the irreducible $G$
representations it suffices to construct a collection
of involutions $\iota_g$ on
$\Ind^{g N g^{-1}}_{H} \Xg$
for each $g \in {\mathcal R}(G/N)$ which
 preserve the  irreducible $g N g^{-1}$ representations
inside
 $\Ind^{g N g^{-1}}_{H} \Xg$.
\label{lemma:construct}
\end{lemma}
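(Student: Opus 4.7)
The plan is to take $\iota$ to be the direct sum of the given local involutions, namely
$$\iota \;:=\; \bigoplus_{g \in \mathcal{R}(G/N)} \iota_g$$
acting summand-by-summand on the decomposition
$\Ind^{G}_{H} X \;=\; \bigoplus_{g \in \mathcal{R}(G/N)} g\Ind^{N}_{H} X \;\simeq\; \bigoplus_{g \in \mathcal{R}(G/N)} \Ind^{gNg^{-1}}_{H} X_g.$
This $\iota$ is automatically an $E$-linear involution since each $\iota_g$ is, and note we are not asking $\iota$ to be $G$-equivariant—only that it stabilize each irreducible $G$-subrepresentation setwise.

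To verify the stabilization property, I would take an arbitrary irreducible $G$-subrepresentation $V \subseteq \Ind^{G}_{H} X$ and invoke Lemma~\ref{lemma:irredform}(2): there exists an irreducible $N$-subrepresentation $U \subseteq \Ind^{N}_{H} X$ and an isomorphism $V \simeq \Ind^{G}_{N} U$. Unpacking the induction inside $\Ind^{G}_{H} X$, this presents $V$ as the direct sum $V = \bigoplus_{g \in \mathcal{R}(G/N)} U_g$, where each $U_g$ is the image of $U$ under the $g$-th coset embedding, sitting inside the summand $g\Ind^{N}_{H} X \simeq \Ind^{gNg^{-1}}_{H} X_g$ as an irreducible $gNg^{-1}$-subrepresentation. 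The key point is that this decomposition of $V$ is compatible with the coset decomposition used to define $\iota$, so $\iota(V) = \bigoplus_g \iota_g(U_g)$.

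By hypothesis, each $\iota_g$ preserves every irreducible $gNg^{-1}$-subrepresentation of $\Ind^{gNg^{-1}}_{H} X_g$, and in particular $\iota_g(U_g) = U_g$. Assembling, $\iota(V) = \bigoplus_g U_g = V$, which is what we needed.

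In terms of difficulty: there is no serious obstacle here. The lemma is really a bookkeeping reduction whose nontrivial content is entirely supplied by Lemma~\ref{lemma:irredform}, which guarantees that every irreducible $G$-constituent of $\Ind^{G}_{H} X$ has the form $\Ind^{G}_{N} U$ and therefore decomposes across the $N$-cosets in a way that matches the decomposition defining $\iota$. Once this is in hand, the proof is one line, and the lemma's role is to package this observation cleanly so that in subsequent constructions one only needs to produce the local involutions $\iota_g$ one coset at a time.
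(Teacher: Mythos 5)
Your proof is correct and is essentially identical to the paper's: both define $\iota = \bigoplus_g \iota_g$, use Lemma~\ref{lemma:irredform} to write an arbitrary irreducible $V \subseteq \Ind^G_H X$ as $\Ind^G_N U = \bigoplus_g U_g$ with each $U_g$ an irreducible $gNg^{-1}$-submodule of the corresponding summand, and conclude that $\iota$ preserves $V$ because each $\iota_g$ fixes $U_g$.
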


\begin{proof} Let $\iota = \bigoplus \iota_g$.
Then $\iota_g$ preserves each $U_g$ because
by definition $U_g$ is a irreducible $g N g^{-1}$
submodule of $\Ind^{g N g^{-1}}_{H} \Xg$. Thus the map
$\iota$ preserves their direct sum, which is
$\Ind^{G}_{N} U$. By Lemma~\ref{lemma:irredform}
the module $\Ind^{G}_{N} U$ is irreducible, and moreover
all irreducibles are of this form.
\end{proof}

\subsection{Construction of involutions respecting irreducible subspaces}
Let ${\mathcal R}(N/H)$ be a representative system of left cosets of $H$ in $N$. By definition of the group $N$ there is an isomorphism
$$\Ind^{g N g^{-1}}_{H} \Xg = \bigoplus_{n \in {\mathcal R}(N/H)}
g \cdot n \cdot X, \qquad
\Res_{H} \Ind^{g N g^{-1}}_{H} \Xg \simeq (g X)^{[N:H]}.$$
To construct $\iota_g$ we begin by constructing an involution
on $g X$ for any $g$. We do this by more generally
constructing an involution on $g W$ for any $g$
that preserves the decomposition $g W = g X \oplus g \eps$ when
$\Gamma = D_n$.

\medskip

Let $v$ be the place above $p$ ``corresponding to" $g$ in ${\mathcal R}(G/N)$; i.e.,
if $D = \langle \phi \rangle$ is the decomposition group corresponding to our fixed choice
of place above $p$ then
the decomposition group $D_v$ is isomorphic to
$g D g^{-1} = \langle g \phi g^{-1} \rangle$. 
Let $L_v \subseteq W_v$ denote the special
$D_v$-invariant line. By construction of induced
representations, the action of $H$ on
$W_v \subseteq Y$ is conjugated by $g$, and thus
$L_v \subseteq Y$ is $\phi$-invariant.
Thus for the rest of this subsection we drop the subscript
$v$ from $L_v$ and from the sequence of subspaces
$\Wni_v \subset \Win_v \subset W_v$.
Recall that these spaces fit into the following
exact diagram:
$$\begin{diagram}
0 & \rTo & \Wni & \rEquals & \Hom(V/L,L) & & & \\
  &      & \dInto &     & \dInto & & & \\
0 & \rTo & \Win & \rTo & W & \rTo & \Hom(L,V/L) & \rTo & 0 \\
  &      & \dOnto &       & \dOnto &    & \dEquals &  &  \\
0 & \rTo & \Hom(L,L) & \rTo & \Hom(L,V) & \rTo & \Hom(L,V/L) & \rTo & 0 \\
\end{diagram}.$$
There is a canonical identification $\Hom(L,L) = E$.

\begin{lemma} Let $\psi$ be any non-trivial element of order two
in $\Gamma$ such that the commutator $[\psi,\phi] \neq 0$.
Then
$$W^{\psi = -1} \cap \Wni = 0.$$
Moreover, $\dim(W^{\psi = -1}) = 2$. If $\Gamma = D_n$, then
\begin{enumerate}
\item $\dim(X^{\psi = -1}) = 1$, $\dim(\eps^{\psi = -1}) = 1$.
\item $\dim(W^{\psi = -1} \cap \Win) = 1$, and the projection map
$W^{\psi = -1} \cap \Win \rightarrow \eps$
is an isomorphism.
\item Suppose that $\phi$ has order two.
In the diagram:
$$\begin{diagram}
              &       & W^{\psi = -1} \cap \Win  &      &     \\
              & \ldTo &       & \rdTo&      \\
\Hom(L,L) = E &       & \rDotsto^{\eta_{\psi}}      &      & \eps \\
\end{diagram}
$$
the down arrows induced from the natural surjections
$\Win \rightarrow \Hom(L,L)$ and $W \rightarrow \eps$ are isomorphisms,
and there is a corresponding non-zero map
$\eta_{\psi}: E \rightarrow \eps$ making the diagram commute.
 If $\psi$, $\xi$ are two
 elements of $\Gamma$ of order two distinct from $\phi$
and from each other,
then $\eta_{\psi} \neq \eta_{\xi}$.
\end{enumerate}
\label{lemma:inv}
\end{lemma}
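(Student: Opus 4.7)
The plan is to establish the five assertions in sequence. For $\dim W^{\psi=-1} = 2$, I would diagonalize the involution $\psi$ in $\PGL(V)$ as $\mathrm{diag}(1,-1)$ (possible as $E$ is algebraically closed of characteristic zero); the induced involution $\mathrm{ad}(\psi)$ on $W = \mathrm{ad}^0(V)$ is then diagonal in the standard basis with eigenvalues $(+1,-1,-1)$. For $W^{\psi=-1} \cap \Wni = 0$, the crucial geometric fact is $\psi(L) \neq L$, which I would prove by averaging a Hermitian form on $V$ over the finite group $\Gamma$ (after fixing an embedding $E \hookrightarrow \mathbb{C}$) to conjugate $\Gamma$ into $PU(2) \simeq SO(3)$. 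Non-identity elements of $\Gamma$ then act as rotations of $S^2 \simeq \mathbb{P}^1$ with antipodal pairs of fixed points, so two such elements share a fixed point iff they share both iff they commute. Since $[\psi,\phi] \neq 1$, the elements $\psi$ and $\phi$ share no fixed points, so $\psi$ does not stabilize the $\phi$-fixed line $L$; and any nonzero $n \in \Wni$ has $\ker n = L$, so $\mathrm{ad}(\psi)(n) = -n$ would imply $\psi(L) = L$, a contradiction.

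For the $\Gamma = D_n$ statements, (a) is immediate from $\eps(\psi) = -1$ together with the dimension count above. For (b), the inequality $2 + 2 - 3 = 1$ gives $\dim(W^{\psi=-1} \cap \Win) \geq 1$, with equality because otherwise $\Win \subseteq W^{\psi=-1}$ would contain $\Wni$, contradicting the previous step. To see that the projection $W^{\psi=-1} \cap \Win \to \eps$ is an isomorphism, I would rule out $W^{\psi=-1} \cap \Win \subseteq X$: if this held, matching dimensions (using $\dim(\Win \cap X) = 1$, which follows from $\Win \neq X$ as the former is not $\Gamma$-stable) would force $W^{\psi=-1} \cap \Win = \Win \cap X$, so the one-dimensional, $\phi$-stable subspace $\Win \cap X$ would sit inside $W^{\psi=-1}$. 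A case split on $\phi$ finishes: if $\phi$ is a rotation in $D_n$ then $\Win \cap X = \Wni$, contradicting the previous step; if $\phi$ has order two, then $W^{\phi=+1}$ is one-dimensional and lies in $X$ (using $\eps(\phi) = -1$), so $\Win \cap X = \langle \hat\phi \rangle$ for the trace-zero lift $\hat\phi$ of $\phi$, and $\hat\phi \in W^{\psi=-1}$ would force $\psi\phi\psi^{-1} = -\phi$ as matrices, i.e., $[\psi,\phi] = 1$ in $\PGL_2$, contradicting the hypothesis.

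For (c), the two projections from (b) identify $\Win$ with $E \oplus \eps$, and under this identification $\eta_\psi = \eta_\xi$ iff the generators of $W^{\psi=-1} \cap \Win$ and $W^{\xi=-1} \cap \Win$ are proportional in $\Win$. A common proportional vector would lie in $W^{\psi=-1} \cap W^{\xi=-1} \cap \Win$. Setting $\tau = \psi\xi$, a nontrivial rotation in $C_n \subset D_n$, this intersection is contained in $W^{\tau=+1}$; since $n$ is odd and $\tau \neq 1$, $\tau$ acts on $X$ by nontrivial roots of unity while fixing $\eps$ pointwise, so $W^{\tau=+1} = \eps$. It then suffices to show $\Win \cap \eps = 0$: because $\phi$ acts by $-1$ on $\eps$, elements of $\eps$ anti-commute with $\phi$ and are thus off-diagonal in a $\phi$-eigenbasis; hence $\eps \subseteq \Win$ would force $\eps \subseteq \Wni$, but $\Wni$ is not $\Gamma$-stable (as $\Gamma$ acts irreducibly on $V$ and so does not preserve $L$), whereas $\eps$ is $\Gamma$-stable, giving the contradiction.

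The delicate step will be (c): the first parts reduce to dimension counting once one pins down the geometry of the fixed lines on $\mathbb{P}^1$, whereas (c) requires careful interplay between the decomposition $W = X \oplus \eps$, the non-$\Gamma$-invariance of $\Wni$, and the representation theory of $D_n$ for $n$ odd. I expect the main bookkeeping subtlety to be maintaining consistent basis choices for the trace-zero lift of $\phi$ and the eigenspace decompositions of $X$ across the case split.
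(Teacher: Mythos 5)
Your proof is correct, though it takes a genuinely different route from the paper at several points. For $W^{\psi = -1}\cap \Wni = 0$, the paper never passes through $\mathrm{SO}(3)$: it shows that if $\psi L = L$ then, lifting $\psi,\phi$ to $\GL(V)$, the commutator $[\widetilde\psi,\widetilde\phi]$ (independent of the choice of lifts) has determinant one, finite order, and acts as the \emph{identity} on the common eigenline $L$ (because the scalar eigenvalues on $L$ commute), hence is unipotent and therefore trivial, contradicting $[\psi,\phi]\ne 1$. This avoids both the choice of embedding $E\hookrightarrow\C$ and the Hermitian-averaging step. Also, beware that ``two such elements share a fixed point iff they commute'' is not a true equivalence in $\mathrm{SO}(3)$ (two $180^\circ$ rotations about perpendicular axes commute without a common fixed point); only the one-sided implication you actually invoke is correct. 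For part $(2)$ the paper identifies $\Win\simeq E\oplus\eps$ by showing the two $D$-equivariant projections $\Win\to\Hom(L,L)$ and $\Win\to\eps$ have distinct kernels, and then checks their restrictions to $W^{\psi=-1}\cap\Win$ are injective; your route through $\Win\cap X$ and the case split on the order of $\phi$ arrives at the same conclusion. Part $(3)$ is where you diverge most: the paper views $\Win\cap W^{\psi=-1}$ as the graph of $\eta_\psi$ in $E\oplus\eps$ and argues that equal graphs would force a line in $X$ to be preserved by both $\psi$ and $\xi$, whereas your passage through $\tau=\psi\xi$, with $W^{\tau=1}=\eps$ and $\eps\cap\Win=0$, is somewhat more economical and exploits the odd-$n$ structure of $D_n$ more transparently. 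Both are valid.
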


\begin{proof}
Let $\psi$ be any element of order two. 
If $W^{\psi = -1} \cap \Wni \ne 0$, then  in particular
$\psi$ preserves $\Wni = \Hom(V/L,L)$. Yet
$\psi \Wni = \Hom(V/\psi L,\psi L)$, which can equal
$\Wni$ if and only if $\psi L = L$. Yet $\phi L = L$,
and thus $[\psi,\phi]$ acts trivially on $L$. 
In particular (lifting $\phi$, $\psi$ to
$\GL(V)$), $[\psi,\phi]$ is an element of finite order
of determinant one with a fixed line $L$, which is
therefore unipotent and hence trivial since
$[\psi,\phi]$ has finite order.

Now let us assume that $\Gamma = D_n$. If
$\psi$ is any element of order two, then
$\eps^{\psi = -1} = \eps$ has dimension one, and thus
$X^{\psi = -1}$ also has dimension one.
The identity $\dim(W^{\psi = -1} \cap \Win) = 1$ is automatic
by the first part of the lemma and for dimension reasons.
If $\phi$ has odd order, then $\eps = W^{\phi = 1} \subseteq \Win$
and $\eps \subseteq W^{\psi = -1}$, and so
$W^{\psi = -1} \cap \Win = \eps$, and in particular the projection
onto $\eps$ is non-trivial. Thus, from now on, we assume that
$\phi$ has order two.

\begin{sublemma} The maps $\Win  \rightarrow \Hom(L,L) = E$ and
$\Win \rightarrow \eps$ are $D$-module homomorphisms
with distinct kernels, and together
 identify $\Win$ with $E \oplus \eps$.
\label{sublemma:one}
\end{sublemma}

\begin{proof} By construction the maps
are $D$-module homomorphisms.
Since  $E$ is a $\phi = 1$ eigenspace and
$\eps$ is a $\phi = -1$ eigenspace, the result is immediate.
\end{proof}

\medskip

Consider the composite maps
$\Win \cap W^{\psi = -1} \rightarrow \Hom(L,L) = E$ and
$\Win \cap W^{\psi = -1} \rightarrow \eps$.

\begin{sublemma} These maps are isomorphisms of vector spaces.
\label{sublemma:two}
\end{sublemma}

\begin{proof} The kernel of the first map is contained inside the
kernel of the map $\Win \rightarrow \Hom(L,L)$, which
is $\Wni$. Since $\Wni \cap W^{\psi = -1}=0$,  the
first map is injective, and hence an isomorphism by
dimension counting.

The projection $\Win \rightarrow \eps$ is  a map
of $D$-modules, and by Sublemma~\ref{sublemma:one} the
kernel is $D$-stable (equivalently: $\phi$-stable).
Thus the kernel of $\Win \cap W^{\psi = -1} \rightarrow
\eps$ is both $\phi$ and $\psi$ stable. If
it is non-zero, it must be a line which lands
completely within $X$, since $X$ is the kernel 
of the projection $W \rightarrow \eps$.
Yet $[\phi,\psi]$ would fix this line and at the
same time be a determinant one matrix with finite order,
a contradiction.
Thus the second map is injective, and hence an isomorphism.
\end{proof}

This lemma concludes part $(2)$, since we have
shown that $\Win \cap W^{\psi = -1} \rightarrow \eps$
is surjective. It remains to prove part $(3)$.
The existence of $\eta_{\psi}$ is guaranteed
by Sublemma~\ref{sublemma:two}.
We note that the identification
$\Win = E \oplus \eps$ implies that
$\Win \cap W^{\psi = -1} \subset \Win$ is exactly
the \emph{graph} of the homomorphism $\eta_{\psi}$.
Now  two
homomorphisms $\eta_{\psi}$ and $\eta_{\xi}$  are equal
if and only if their graphs are the same.
Suppose that
$$\Win \cap W^{\psi = -1} = \Win \cap W^{\xi = -1}.$$
If the projection of these spaces to $X$ were zero,
then $\Win \cap W^{\psi = -1}$  would lie in $\eps$, and then 
(since $\Win = E \oplus \eps$) their projection to
$E$ would be zero, 
a contradiction.
Thus the projection to $X$ defines a line that
is preserved by $\psi$ and by $\xi$. Once more we
arrive at a contradiction unless $\psi$ and $\xi$ commute.
Yet any pair of elements in $D_n$ ($n$ odd) of order two
do not commute, and thus we are done.
\end{proof}

\medskip

Thus, for each $g \in G$, we have constructed an involution
on $\Wg$ by some element $\psi \in \Gamma$.
For a fixed element $g$ make a choice of $\psi$, and then
lift  $\psi$ to an element $h \in H$.
By definition, if
$n \in N$ then $g \cdot n \cdot W \simeq \Wg$.
Thus the action of $h$ and $\phi$
on  $g \cdot n \cdot W$ is the same as that on f$g W$, and
in particular, the conclusions of 
 Lemma~\ref{lemma:inv} apply
to all the constituents of
$\Res_{H} \Ind^{g N g^{-1}}_{H} \Wg$.

\begin{definition} 
Define the involution
$\iota_g$ on
$\Ind^{g N g^{-1}}_{H} \Wg$ to be map obtained from
multiplication
by $h$. 
\end{definition}

By Lemma~\ref{lemma:construct} we have constructed at least one                  
involution $\iota$ on $\Ind^{G}_{H} X$  by compiling the $\iota_g$'s for $g \in {\mathcal R}(G/N)$. 

If $\Gamma = D_n$, note that  $\Gamma_n$ always
has at least three distinct non-commuting elements of order two, and 
they all act as $-1$ on $\eps$, and so extend to involutions  $\iota_g$ on $\Ind^{G}_{H} W$
preserving $G$-irreducible representations.

\medskip

   Fix such an involution $\iota: \Ind^{G}_{H} X \to \Ind^{G}_{H} X$.
\medskip

\begin{definition} Let $Y':= Y^{\iota = -1} = (\Ind^{G}_{H} W)^{\iota = -1}$.
\end{definition}

\medskip

\begin{lemma}  Let $V, V' \subset Y  = \Ind^{G}_{H} W$ be
two irreducible $E[G]$-submodules.
Then any isomorphism  of $E[G]$-modules
$V \rightarrow V'$ induces an isomorphism of
vector spaces $Y' \cap V \simeq Y' \cap V'$. 
Moreover, $Y'$ is generated by elements lying in
the intersections $Y' \cap V$ for all irreducible
$E[G]$-submodules $V \subset Y$.
\label{lemma:quick}
\end{lemma}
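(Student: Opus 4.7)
The plan is to combine semisimplicity of $E[G]$-modules with the explicit piecewise structure of the involution $\iota$. By construction, $\iota = \bigoplus_{g \in \mathcal{R}(G/N)} \iota_g$, where each $\iota_g$ is multiplication by a specific group element $h_g \in H \subset G$ acting on the summand $g \Ind_H^N X$ of $Y$.

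First I would dispatch the (straightforward) second assertion. By semisimplicity, fix a decomposition $Y = \bigoplus_\alpha V_\alpha$ into irreducible $G$-submodules. Lemma~\ref{lemma:construct} guarantees that $\iota$ preserves each $V_\alpha$, so
$$Y' = Y^{\iota=-1} = \bigoplus_\alpha V_\alpha^{\iota=-1} = \bigoplus_\alpha (V_\alpha \cap Y'),$$
and in particular $Y'$ is spanned by elements contained in intersections $V \cap Y'$ for irreducible $V \subset Y$.

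For the first assertion the substance is showing that a $G$-isomorphism $\phi: V \to V'$ intertwines $\iota|_V$ with $\iota|_{V'}$. Using Lemma~\ref{lemma:irredform}, write $V = \Ind_N^G U$ and $V' = \Ind_N^G U'$ with $U, U' \subset \Ind_H^N X$ irreducible; this gives vector-space decompositions $V = \bigoplus_g gU$ and $V' = \bigoplus_g gU'$ with $gU \subset g \Ind_H^N X$ and similarly for $gU'$. On each piece $g \Ind_H^N X$ the involution $\iota$ acts as the single group element $h_g \in G$, and since $\phi$ is $G$-equivariant it commutes with multiplication by $h_g$. It remains to check that $\phi$ respects the coset-grading, i.e.\ $\phi(gU) = gU'$; this follows from $G$-equivariance of $\phi$ together with a Clifford/Mackey analysis identifying $gU$ (respectively $gU'$) as the distinguished $N$-isotypic component of $V$ (resp.\ $V'$) mapping into $g \Ind_H^N X$. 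Summing the piecewise commutations yields $\phi \circ \iota|_V = \iota|_{V'} \circ \phi$, giving the desired restriction to an isomorphism $V \cap Y' \simeq V' \cap Y'$.

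The main obstacle is precisely this intertwining: since $\iota$ is not $G$-equivariant as an operator on $Y$, there is no formal reason for $\phi$ to commute with it. The whole argument rests on the fact that on each coset-piece $\iota$ is genuinely multiplication by a specific element of $G$, together with the compatibility of the $gU$-decomposition with $G$-isomorphisms of irreducible induced modules --- a bookkeeping verification that is the only delicate part of the proof.
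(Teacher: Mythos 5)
Your proof is correct and follows essentially the same route as the paper's: reduce to the observation that on each coset block $g\,\Ind_H^N X$ the involution $\iota$ is literally multiplication by a group element $h_g \in H$, and exploit $G$-equivariance of $\phi$. The only place where you are more explicit than the paper is in tracking the coset-grading $V=\bigoplus_g gU$ and verifying $\phi(gU)=gU'$ before concluding that $\phi$ and $\iota$ commute; the paper's proof treats the $g=1$ piece (the $N$-module $U$) and leaves the extension to the remaining $gU$ to the reader. That step is genuinely needed and you are right to isolate it. One small terminological slip: $gU$ is the $\Xg$-isotypic piece of $\Res_H V$, so the Clifford/Mackey identification you invoke is the $H$-isotypic decomposition (the modules $\Xg$, $g\in G/N$, being pairwise non-isomorphic as $H$-modules by definition of $N$); calling it the ``$N$-isotypic component'' is not quite right, since the groups $gNg^{-1}$ vary with $g$. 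With that phrasing corrected, the argument is complete and matches the paper's.
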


\begin{proof}  If $\Gamma =D_n$ and $V \subseteq \Ind^{G}_{H} \eps$,
the result is trivial because then $Y' \cap {V} = {V}$
and $Y' \cap {V}' = {V}'$.
Any irreducible $V \subseteq \Ind^{G}_{H} X$ is of the form
$\Ind^{G}_{N} U$ for some irreducible $N$-representation
$U \subseteq \Ind^{N}_{H} X$. Thus any isomorphism of
$E[G]$-modules
${V} \rightarrow {V}'$ arises from an isomorphism of $E[N]$-modules
${U} \rightarrow {U}'$. Yet $Y' \cap {U} = U^{\iota = -1}$
is canonically defined in terms of the abstract
$N$-structure of $U$ and
thus is preserved under isomorphism. Finally, since $\iota$ preserves
each irreducible representation $V$ inside $Y$ (by Lemma~\ref{lemma:construct},
and the subsequent construction),
 it follows that
any element of 
$Y' = Y^{\iota = -1}$  can be decomposed as a sum of $\iota = -1$
eigenvectors of irreducible representations $V \subset Y$, and hence
into elements lying in the subspaces 
$Y' \cap V$.
\end{proof}

Note that as an $H$ representation, $W$ is either irreducible or
the sum of two distinct irreducibles, and hence cyclic. Thus
$Y = \Ind^{G}_{H} W$ is cyclic as a $G$-module, and hence admits
an injection $Y \rightarrow \La$ compatible with rational structures.
It follows from Lemma~\ref{lemma:quick} 
and Lemma~\ref{lemma:type} that $Y'$ is of the form
$\Hom_G(\La/J,Y)$ for some ideal  $J\subset \La$.
Thus, in the language of section~\ref{section:ext}, we may write

\begin{corollary}\label{decomp} There is a decomposition
$\displaystyle{Y' = \bigoplus S_i \otimes {T_i}^* \ \subset  \ 
Y = \bigoplus V_i \otimes {T_i}^* \subset \La}$, 
where, for each $i$, $ S_i  \subset  V_i$ is an $E$--subspace, and
${T_i}^* \subseteq {V_i}^*$ is a rational $E$--subspace.
\label{corollary:slow}
\end{corollary}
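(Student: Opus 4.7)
The plan is to derive the corollary almost formally from the two ingredients just assembled: the Wedderburn-type decomposition of $\La = F[G]\otimes_F E$ respecting the preferred $F$-structure, and the identification $Y' = \Hom_G(\La/J,Y)$ for some left ideal $J \subset \La$ supplied by Lemmas~\ref{lemma:quick} and~\ref{lemma:type}.

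First I would enlarge $F$ inside $E$, if necessary, so that every irreducible $E[G]$-representation is already defined over $F$. Then $F[G]$ splits canonically as a product of matrix algebras, and base-changing to $E$ gives $\La = \bigoplus_i V_i \otimes_E V_i^*$, where each isotypic block $V_i \otimes_E V_i^*$ is $F$-rational and each multiplicity space $V_i^*$ inherits from $F[G]$ a natural $F$-structure. Invoking the generic-module formalism of the Appendix (Lemma~\ref{lemma:type}), any $F$-rational $E[G]$-submodule of $\La$ must then split as $\bigoplus V_i \otimes T_i^*$ with each $T_i^* \subseteq V_i^*$ an $F$-rational subspace. Applied to the embedding $Y \hookrightarrow \La$ guaranteed by the cyclicity of $Y = \Ind_H^G W$ (noted just above the corollary), this delivers the decomposition of $Y$ stated in the corollary.

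For $Y'$ I would then identify $\Hom_G(\La,Y) = Y$ via $f \mapsto f(1)$, whence $Y' = \Hom_G(\La/J,Y) = \{\,y \in Y : J\cdot y = 0\,\}$. Since the isotypic blocks $V_i \otimes T_i^*$ are pairwise orthogonal under the left $\La$-action, the annihilation condition $Jy = 0$ decouples componentwise: writing $y = \sum_{i,k} v_i^{(k)} \otimes t_i^{(k)}$ with $\{t_i^{(k)}\}_k$ linearly independent in $T_i^*$, it becomes $J\cdot v_i^{(k)} = 0$ for every pair $(i,k)$. Setting $S_i := \{\,v \in V_i : J\cdot v = 0\,\}$, one therefore obtains $Y' = \bigoplus_i S_i \otimes T_i^*$, which is the asserted decomposition. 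The $S_i$ are only $E$-subspaces of $V_i$, not $G$-stable nor $F$-rational in general, because $J$ is merely a left ideal and carries no structure compatible with the commuting right $G$-action that would otherwise force rationality.

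The genuine content has already been supplied by Lemmas~\ref{lemma:quick} and~\ref{lemma:type}, which together produce $J$ and the ``cyclic in the $G$-structure'' shape of $Y'$ inside $Y$. The only remaining subtlety, and the point I would want to verify carefully, is that the preferred $F$-structure on $\La$ really does descend to $F$-structures on the multiplicity spaces $V_i^*$ individually, rather than merely to the isotypic blocks as a whole, so that every $F$-rational $E[G]$-submodule genuinely splits as a tensor product on each block. This is exactly the assertion of the generic-module results in the Appendix invoked above, and modulo that input the corollary is a purely formal unwinding of the Hom description of $Y'$.
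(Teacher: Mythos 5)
Your proof is correct and follows essentially the same route as the paper: both obtain $Y' = \Hom_G(\La/J, Y)$ for some left ideal $J$ via Lemmas~\ref{lemma:quick} and~\ref{lemma:type}, then read off the isotypic decomposition, you by unwinding the annihilator description $\{y \in Y : Jy = 0\}$ block by block and the paper by citing the equivalence of the first two bullets of Lemma~\ref{lemma:type} (``in the language of section~\ref{section:ext}''). One small misattribution: the $F$-rationality of the $T_i^*$ is not supplied by Lemma~\ref{lemma:type} but by the compatibility of the embedding $Y \hookrightarrow \La$ with the preferred $F$-structure together with the $F$-rational Wedderburn decomposition $\La_F = \bigoplus V_{i,F}\otimes V_{i,F}^*$, available because the Appendix fixes $F$ to split all irreducibles of $G$ and chooses the models $V_{i,F}$, $V_{i,F}^*$ from the start.
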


\subsection{The dimensions of the subspaces $S_i$}

\begin{lemma}
\label{lemma:dimensioninequalities}
Let $U_g \subseteq \Ind^{g N g^{-1}}_{H} \Xg$ be an irreducible
$g N g^{-1}$-sub-representation. Then
\begin{enumerate}
\item If $\Gamma \in
\{A_4, S_4, A_5\}$, then
$$\dim(U^{\iota = - 1}_g) = \frac{2}{3} \cdot \dim(U_g).$$
\item If $\Gamma = D_n$, then
$$\dim(U^{\iota = - 1}_g) = \frac{1}{2} \cdot \dim(U_g).$$
\end{enumerate}
If $V \subseteq \Ind^{G}_{H} X$ is an irreducible $G$-representation,
then
\begin{enumerate}
\item If $\Gamma \in
\{A_4, S_4, A_5\}$, then
$$\dim(V^{\iota = - 1}) = \frac{2}{3} \cdot \dim(V).$$
\item If $\Gamma = D_n$, then
$$\dim(V^{\iota = - 1}) = \frac{1}{2} \cdot \dim(V).$$
\end{enumerate}
Finally, $Y' \cap \Yni = 0$, where
$\Yni$ is as in Definition~\ref{definition:zni}.
\end{lemma}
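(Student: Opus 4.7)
The plan is to reduce parts (1) and (2) to a single computation on $H$-restriction, and to deduce part (3) from the first assertion of Lemma~\ref{lemma:inv} applied summand by summand. First, recall from Lemma~\ref{lemma:irredform} that any irreducible $V \subset \Ind^{G}_{H} X$ decomposes as $V = \bigoplus_{g \in {\mathcal R}(G/N)} U_g$, where $U_g$ is an irreducible $gNg^{-1}$-subrepresentation of $\Ind^{gNg^{-1}}_H \Xg$ of the same dimension as $U$. Since the involution $\iota = \bigoplus_g \iota_g$ respects this decomposition by Lemma~\ref{lemma:construct}, we obtain $V^{\iota=-1} = \bigoplus_g U_g^{\iota_g=-1}$, reducing part (2) to part (1).

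For part (1), I compute $\dim U_g^{\iota_g=-1}$ by restricting to $H$. By the defining property of $N = N_{\Psi}$, conjugation by elements of $N$ preserves the isomorphism class of $X$ as an $H$-representation, and the same holds for $gNg^{-1}$ acting on $\Xg$. Hence $\Res_H \Ind^{gNg^{-1}}_H \Xg \simeq \Xg^{[N:H]}$, so $\Res_H U_g \simeq \Xg^m$ for some $m \geq 1$. Since $\iota_g$ is left-multiplication by $h \in H$, where $h$ lifts an order-two element $\psi \in \Gamma$ with $[\psi,\phi] \neq 0$, its action on each $H$-summand agrees with the action of $\psi$ on $X$, and hence $\dim U_g^{\iota_g=-1} = m \cdot \dim X^{\psi=-1}$. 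The claimed ratios then follow from Lemma~\ref{lemma:inv}: the ratio $\dim X^{\psi=-1}/\dim X$ equals $2/3$ when $\Gamma \in \{A_4,S_4,A_5\}$ ($X = W$ has dimension $3$ with $\dim W^{\psi=-1}=2$), and $1/2$ when $\Gamma = D_n$ ($\dim X = 2$ with $\dim X^{\psi=-1}=1$).

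For part (3), I use the double decomposition $Y = \bigoplus_{g,n} gnW$, indexed by $g \in {\mathcal R}(G/N)$ and $n \in {\mathcal R}(N/H)$. Both $\Yni$ and $\iota$ preserve this decomposition: $\Yni = \bigoplus_{g,n} (gnW)^{00}$ since $\{gn\}$ is a system of coset representatives for $G/H$, and $\iota = \bigoplus_g \iota_g$ preserves each $H$-summand $gnW$ of $\Ind^{gNg^{-1}}_H W_g$ because $h \in H$ and $H$ is normal in $G$. On each summand, the action of $h$ on $gnW$ agrees, via the identification $gnW \simeq W$, with the action of a suitable order-two $\psi \in \Gamma$ satisfying $[\psi,\phi] \neq 0$; the first assertion of Lemma~\ref{lemma:inv} then yields $(gnW)^{h=-1} \cap (gnW)^{00} = 0$, and summing gives $Y' \cap \Yni = 0$. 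The main obstacle is the bookkeeping to verify that, after transport to each $gnW$, the conjugated involution still satisfies the commutator hypothesis of Lemma~\ref{lemma:inv} with respect to the decomposition-group generator at the place $gn(v)$; this is guaranteed by the freedom in choosing $\psi$ separately for each $g$ but requires careful tracking of how conjugation relates the local structures at $v$ and $gn(v)$.
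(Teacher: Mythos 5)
Your proof follows essentially the same route as the paper's: restrict each $U_g$ to $H$, identify the restriction with copies of $\Xg$, observe that $\iota_g$ acts on each copy via a chosen order-two element of $\Gamma$, read off the dimension ratios from Lemma~\ref{lemma:inv}, and deduce $Y' \cap \Yni = 0$ from the assertion $W^{\psi=-1}\cap \Wni = 0$ of that same lemma. The one caveat: the "bookkeeping" point you flag at the end is a genuine subtlety, but the resolution you suggest (freedom to choose $\psi$ separately for each $g \in \mathcal{R}(G/N)$) does not in itself handle the different summands $gnW$ for $n \in \mathcal{R}(N/H)$ within a single $g$-block; the paper's actual remark is that the proof of Lemma~\ref{lemma:inv} only uses $\phi L = L$, so it is insensitive to which of the two $\phi$-eigenlines one transports to, while the order-two and non-commuting hypotheses survive transport because $n \in N = N_\Phi$ normalizes $\mathrm{Ker}$ and hence induces an automorphism of $\Gamma$ preserving the relevant structures.
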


\begin{proof} The representation
$U_g|H$ is equal to a number of copies of $\Xg$, upon
which $\iota_g$ is acting by construction by our
chosen element of order two  $\psi$.
Thus the result follows directly from Lemma~\ref{lemma:inv},
in particular the claim that $\dim(W^{\psi = -1}) = 2$
and $\dim(X^{\psi = -1}) = 1$ when $\Gamma = D_n$.
For $V$ the result follows in the same way by restricting
to each $U_g$.
Note that
for two distinct $g, g' \in G/H$ the modules $gW$,
$g'W$ may be isomorphic but the ordering of
the eigenvalues $\alpha$, $\beta$ may be reversed,
(i.e., $L_v$ may depend on $g \in G/H$ rather than
$g \in G/N$).
However, the proof of Lemma~\ref{lemma:inv} only
uses the fact that $\phi L_v = L_v$, and so is
symmetric with respect to the two possible choices
of special lines.
Finally, the fact that $Y' \cap \Yni = 0$ also follows
from Lemma~\ref{lemma:inv}, in particular the claim
that $W^{\psi = -1} \cap \Wni = 0$.
\end{proof}

In terms of the decomposition of $Y'$ in Corollary~\ref{corollary:slow},
we may read off information concerning the dimension of the
spaces $S_i$ from Lemma~\ref{lemma:dimensioninequalities}.

\begin{corollary} In the decomposition
$$Y' = \bigoplus S_i \otimes {T_i}^*  \subseteq\ Y = \bigoplus V_i \otimes {T_i}^* \subseteq \La$$ of Corollary~\ref{decomp} (with rational $E$-subspaces 
${T_i}^* \subseteq {V_i}^*$) we have that 
$S_i \subseteq V_i$ is a subspace such that:
\begin{enumerate}
\item If $\Gamma \in \{A_4,S_4,A_5\}$, then $\dim(S_i) = \frac{2}{3} \dim(V_i)$.
\item If $\Gamma = D_n$ and $V_i \subseteq \Ind^{G}_{H} X$, then
$\dim(S_i) = \frac{1}{2} \dim(V_i)$.
\item If $\Gamma = D_n$ and $V_i \subseteq \Ind^{G}_{H} \eps$, then
$\dim(S_i) = \dim(V_i)$.
\end{enumerate}
\end{corollary}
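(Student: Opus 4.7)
The plan is to derive the dimension formulas for each $S_i$ directly from Lemma~\ref{lemma:dimensioninequalities} combined with the canonical description of $Y'$ already established in Lemma~\ref{lemma:quick} and Corollary~\ref{corollary:slow}. The key observation is that, since the decomposition $Y = \bigoplus V_i \otimes T_i^*$ is the isotypic decomposition and $\iota$ preserves every irreducible $G$-submodule of $Y$, the action of $\iota$ on each isotypic piece is of the form (action on $V_i$)~$\otimes$~(identity on $T_i^*$). Thus $S_i$ is, up to canonical isomorphism, the subspace $V_i^{\iota=-1} \subseteq V_i$, and it suffices to compute this dimension abstractly.

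First, I would invoke Lemma~\ref{lemma:quick}: any $G$-isomorphism $V \to V'$ between two irreducible submodules of $Y$ intertwines $Y' \cap V$ with $Y' \cap V'$. Consequently, within the isotypic component $V_i \otimes T_i^*$, the $\iota=-1$ eigenspace factors as a subspace $S_i \subset V_i$ tensored with the full multiplicity space $T_i^*$, and $S_i$ is intrinsically identified with $V_i^{\iota=-1}$. This reduces the problem to computing $\dim V^{\iota=-1}$ for an irreducible $V \subseteq Y$.

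Next, I would split into the cases of the corollary. When $\Gamma \in \{A_4, S_4, A_5\}$, we have $W = X$, so every irreducible $V_i \subseteq Y = \Ind^G_H W$ lies in $\Ind^G_H X$; part (1) of Lemma~\ref{lemma:dimensioninequalities} then gives $\dim S_i = \dim V_i^{\iota=-1} = \tfrac{2}{3}\dim V_i$. When $\Gamma = D_n$ and $V_i \subseteq \Ind^G_H X$, part (2) of the same lemma yields $\dim S_i = \tfrac{1}{2}\dim V_i$. Finally, when $\Gamma = D_n$ and $V_i \subseteq \Ind^G_H \eps$, I would use the fact (noted in the construction of $\iota$ right before the definition of $Y'$) that the elements of order two in $D_n$ used to build each $\iota_g$ act as $-1$ on the sign character $\eps$; hence $\iota$ acts as $-1$ on all of $\Ind^G_H \eps$, so $S_i = V_i$ and $\dim S_i = \dim V_i$.

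No step here is a serious obstacle: the substantive input has already been packaged in Lemmas~\ref{lemma:quick} and~\ref{lemma:dimensioninequalities}, and the only point requiring a moment of care is matching the three cases of the corollary to the three eigenspace computations — in particular keeping track that the $D_n$ case admits two kinds of irreducible constituents (those coming from $X$ and those coming from $\eps$) on which $\iota$ behaves differently.
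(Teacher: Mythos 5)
Your proposal is correct and takes essentially the same route as the paper: the paper derives this corollary by reading the dimensions of the $S_i$ directly off Lemma~\ref{lemma:dimensioninequalities}, using the canonical form of $Y'$ established in Corollary~\ref{corollary:slow}. You have merely made explicit the intermediate steps (that $S_i$ is intrinsically $V_i^{\iota=-1}$, and that $\iota$ acts as $-1$ on all of $\Ind^G_H\eps$ because the elements of order two in $D_n$ act as $-1$ on $\eps$), both of which the paper treats as read-offs from the surrounding discussion.
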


\section{Infinitesimally Classical Deformations of Artin Representations}
\label{section:ICD}

Recall that we are working with a Galois extension $K/\Q$, an algebraically closed extension $E/\Q_p$,
and an
irreducible representation
$$\rho: G_{K} \rightarrow  \GL_2(E)$$
with finite image. Recall that  $\rho$ admits an infinitesimally classical
deformation if and only if there is a  rational vector
$\lambda$ in $E^{[K:\Q]} = \Yin/\Yni$ that lies in the image of a non-zero element in 
$H^1_{\Sigma}(K,\Ad^0(\rho))=H^1_{\Sigma}(K,W)$ under $\pi$.

The subspace $\Yni \subset Y$ is rational, and by construction
is a left $D$-submodule of $Y$.
Furthermore, the same is true, by construction,
of $\Yni(\lambda):= \pi^{-1}(\lambda E)$  (which is indeed a $D$-module
since the action of $D$ on $\Yin/\Yni$ is trivial).
 If $\rho$ admits such  an infinitesimally classical deformation $\lambda$,
 we get the following inequality, which we record for future purposes as a lemma:
\begin{lemma} If $\rho$ admits an infinitesimally classical deformation corresponding
to $\lambda$, then 
$$\Hom_{G}(\Lambda/I,Y;\Yni(\lambda))\ne 0$$
where $\Yni(\lambda)$ is a rational $D$-submodule of $Y$.
\label{lemma:theend}
\end{lemma}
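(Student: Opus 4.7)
The statement is essentially a repackaging of the definition of an infinitesimally classical deformation in terms of the Hom-theoretic description of the Selmer group. The plan is to simply trace through the identifications that have been set up in the preceding sections.

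First I would invoke the hypothesis: by Definition~\ref{definition:ICD}, $\rho$ admits an infinitesimally classical deformation means there exists a non-zero class $c \in H^1_{\Sigma}(K,W)$ whose image $\pi(c) \in \bigoplus_{\sigma \in {\mathcal R}(G/H)} E = \Yin/\Yni$ is a rational vector; after rescaling we may take $\pi(c) \in \lambda E$. Next I would apply the isomorphism of subsection~\ref{subsection:selmer2},
\[
H^1_{\Sigma}(K,W) \simeq \Hom_G(\Lambda/I,Y;\Yin),
\]
to translate $c$ into a $G$-equivariant homomorphism $\phi: \Lambda \to Y$ that vanishes on $I$ and satisfies $\phi([1]) \in \Yin$. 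Under this identification the infinitesimal Hodge--Tate weight map $\pi$ is precisely the composition that sends $\phi$ to the image of $\phi([1])$ in $\Yin/\Yni$ (see the paragraph following Definition~\ref{definition:zni}).

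Then the condition $\pi(c) \in \lambda E$ becomes the condition $\phi([1]) \in \pi^{-1}(\lambda E) = \Yni(\lambda)$. Since $\Yni(\lambda) \subseteq \Yin$, this is a strengthening of the local condition already satisfied by $\phi$. In particular, $\phi$ defines a non-zero element of $\Hom_G(\Lambda/I,Y;\Yni(\lambda))$, which is what we wanted.

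For the parenthetical assertion that $\Yni(\lambda)$ is a rational $D$-submodule, I would cite the discussion that immediately precedes the lemma: $\Yni$ is rational and $D$-stable by construction, the line $\lambda E$ is rational because $\lambda$ is rational, and $D$ acts trivially on the quotient $\Yin/\Yni$, so the preimage $\Yni(\lambda) = \pi^{-1}(\lambda E)$ is both rational and $D$-stable. There is really no hard step here; the content of the lemma lies in having previously set up the identifications and in the definition of $\Yni(\lambda)$, which are arranged precisely so that this translation is immediate. The real work — establishing dimension inequalities for $\Hom_G(\Lambda/I,Y;\Yni(\lambda))$ by comparison with competitor ideals via the Strong Leopoldt Conjecture — will happen in the sections that use this lemma as input.
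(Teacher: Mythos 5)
Your proposal is correct and matches exactly what the paper has in mind: the lemma is recorded without a formal proof, and the preceding discussion (the identification $H^1_{\Sigma}(K,W)\simeq\Hom_G(\Lambda/I,Y;\Yin)$, the description of $\pi$ as the map $\phi\mapsto\phi([1])\bmod\Yni$, and the remark that $\Yni(\lambda)=\pi^{-1}(\lambda E)$ is a rational $D$-submodule because $D$ acts trivially on $\Yin/\Yni$) already contains everything you've spelled out. One tiny stylistic quibble: there is no rescaling needed, since the hypothesis ``corresponding to $\lambda$'' already means $\lambda$ is a rational vector in the image of $\pi$, so $\pi(c)\in\lambda E$ by fiat; but this does not affect the argument.
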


  We shall prove the main theorem (Theorem~\ref{mainth}) by contradiction.  Specifically, we will assume the following
  
\begin{hypothesis}\label{hypoth}  The Artin representation $\rho$ admits  an infinitesimally classical deformation $\lambda$ and yet there is no character $\chi$ such that
 $\chi \otimes \rho$ descends to either to an odd representation over a  totally real field or to a field
containing at least one real place at which $\chi \otimes \rho$
is even. If the projective image of $\rho$ is dihedral, assume further that the determinant character does not
descends to a totally real field $H^{+} \subseteq K$ with corresponding fixed field $H$
such that
  \begin{list}{\kern-0.2em{$($}\roman{Lcount}\kern+0.1em$)$}
    {\usecounter{Lcount}
    \setlength{\rightmargin}{\leftmargin}}
  \item $H/H^{+}$ is a CM extension.
  \item At least one prime above $p$ in $H^{+}$ splits in $H$.
  \end{list}

 \end{hypothesis}
and end with a contradiction to this hypothesis.

\subsection{Construction of the subspace $Y'(\lambda) \subset Y'$}

We shall set about constructing a
space $Y'(\lambda)$  by removing
selected vectors from
$Y' = \Yiota$.  If $\lambda = 0$ then we let $Y'(\lambda) = Y'$,
and all the proofs below continue to go through (alternatively,
one may note that in this case Lemma~\ref{lemma:theend} is true
with \emph{any} choice of $\lambda$).
Henceforth we assume that $\lambda \ne 0$.
Since $Y' \cap \Yni = 0$, the intersection
$Y' \cap \Yni(\lambda)$ is at most one dimensional.
Since $\dim(\Yiota) + \dim(\Yni) = \dim(Y)$,
this dimension
is exactly one.
 Let $v_{\lambda}$ denote a
a non-trivial vector
in this intersection. The vector
$v_{\lambda}$  projects to a non-trivial isotypic
component corresponding to some
 representation $V_i$, which we fix once and for all,
and denote by $\Vdag$.
If $\Gamma = D_n$, we make the additional hypothesis that
$\Vdag \subseteq \Ind^{G}_{H} \eps$. That this is
possible is the content of Lemma~\ref{lemma:inv} part two,
since the projection of $v_{\lambda}$ to $\Win_v$
must land outside $\Wni_v$ for at least one $v$,
and  the map $\Win_v \cap W^{\psi = -1}_{v} 
\rightarrow \eps$ is an isomorphism.
Let $\pi_{\Vdag}$ denote a projection
map $Y \rightarrow \Vdag$  such that  $\pi_{\Vdag}(v_{\lambda})
\ne 0$.
\begin{definition} 
We define $S_i(\lambda) \subset V_i$ and thus $Y'(\lambda)$ as follows:
\begin{enumerate}
\item If $V_i \ne \Vdag$, then $S_i(\lambda):=S_i$.
\item If $V_i = \Vdag$, then $S_i(\lambda):  =$ a hyperplane in $S_i$ that does not contain $\pi_{\Vdag}(v_{\lambda})$.
\end{enumerate}
\end{definition}
\begin{definition}
\label{definition:forlater}
$$Y'(\lambda) = \bigoplus S_i(\lambda) \otimes {T_i}^*  \subseteq\ Y' = \bigoplus S_i\otimes {T_i}^* \subseteq \La$$
\end{definition}  The earlier discussion gave us that $Y' \cap \Yni(\lambda)$ is one-dimensional and generated by $v_{\lambda}$ and the construction of $Y'(\lambda)$ above guarantees that $v_{\lambda} \notin Y'(\lambda)$; thus $Y'(\lambda) \cap \Yni(\lambda) = 0$.

\begin{proposition}\label{prop:nearend}
 Assume Hypothesis~\ref{hypoth}. Then $\dim(S_i(\lambda)) \ge \dim(V_i|c = - 1)$.
\end{proposition}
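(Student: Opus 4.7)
The plan is to argue case-by-case on the image group $\Gamma$ and on whether $V_i$ equals $\Vdag$, using Lemma~\ref{lemma:twothirds} as the main quantitative input. As a preliminary step I would confirm that its two hypotheses---no $c\in\mathcal{C}$ is \even, and some $c \in \mathcal{C}$ lies outside $N$---are in force under Hypothesis~\ref{hypoth}. The alternative proposition of Subsection~4.3 yields four disjunctive conclusions; Tate's theorem~\ref{tateproj} lifts any projective descent of $\Proj(\rho)$ to a linear descent of a suitable twist $\chi\otimes\rho$, so Hypothesis~\ref{hypoth} rules out the two ``descent'' alternatives and leaves precisely the two conditions required by Lemma~\ref{lemma:twothirds}.

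When $\Gamma \in \{A_4,S_4,A_5\}$, we have $W = X$ and Lemma~\ref{lemma:twothirds} gives the strict bound $\dim(V_i|c=-1) < \tfrac{2}{3}\dim(V_i) = \dim(S_i)$; integrality upgrades this to $\dim(V_i|c=-1) \le \dim(S_i)-1 \le \dim(S_i(\lambda))$, as required. When $\Gamma = D_n$ and $V_i \subseteq \Ind^G_H X$, the choice $\Vdag \subseteq \Ind^G_H \eps$ forces $V_i \ne \Vdag$, so $\dim(S_i(\lambda)) = \dim(S_i) = \tfrac{1}{2}\dim(V_i) = \dim(V_i|c=-1)$ by the dihedral half of the same lemma. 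When $\Gamma = D_n$, $V_i \subseteq \Ind^G_H \eps$, and $V_i \ne \Vdag$, we have $S_i = V_i$ and $S_i(\lambda) = V_i$, and the estimate is immediate. In all three of these subcases the desired inequality follows with no additional input beyond the dimension formulas already tabulated.

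The essential remaining case is $V_i = \Vdag \subseteq \Ind^G_H \eps$, where the construction of $S_i(\lambda)$ removes exactly one dimension, so $\dim(S_i(\lambda)) = \dim(V_i)-1$ and we must exhibit a nonzero $c$-fixed vector in $\Vdag$. I would argue by contradiction: assume $c$ acts as $-1$ on all of $\Vdag$. Writing $\Vdag = \Ind^G_{\mathcal{S}} \tilde{\eps}$ via Clifford theory, where $\mathcal{S} \supseteq H$ is the stabilizer of $\eps$ in $G$ and $\tilde\eps$ is an extension of $\eps$ to $\mathcal{S}$, the condition ``$c$ acts by $-1$'' translates through Frobenius's character formula into the requirement that every $G$-conjugate of $c$ meeting $\mathcal{S}$ pairs to $-1$ with $\tilde\eps$. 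Tracking this constraint through the dihedral projective image identifies the fixed field of $\ker(\tilde\eps)$ as a CM quadratic extension of the totally real field to which the determinant of $\rho$ descends; the $D$-stable special line $L_v$ at a prime $v \mid p$ then furnishes a prime above $p$ in the totally real subfield that splits in the CM extension. This is precisely the configuration ruled out by the dihedral clause of Hypothesis~\ref{hypoth}, yielding the required contradiction.

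The main obstacle is exactly this last step: the translation from the purely character-theoretic condition ``$c$ acts as $-1$ on $\Vdag$'' into the CM-plus-splitting data appearing in the dihedral clause of Hypothesis~\ref{hypoth}. The preceding cases are short consequences of Lemma~\ref{lemma:twothirds}, but this final reduction is the technical heart of the argument and will require a careful combinatorial analysis of how complex conjugation permutes the $\mathcal{S}$-cosets and interacts with the distinguished-at-$v$ hypothesis.
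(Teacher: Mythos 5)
Your treatment of the cases $\Gamma \in \{A_4,S_4,A_5\}$, the dihedral case with $V_i \subseteq \Ind^G_H X$, and the dihedral case with $V_i \subseteq \Ind^G_H\eps$ but $V_i \ne \Vdag$ all match the paper and are correct. The difficulty, as you correctly isolate, is the final subcase $V_i = \Vdag$ with $c$ acting centrally by $-1$.

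There is a genuine gap in how you handle that subcase, and it sits exactly in the sentence ``the $D$-stable special line $L_v$ at a prime $v\mid p$ then furnishes a prime above $p$ in the totally real subfield that splits in the CM extension.'' That claim is not available. The nearly ordinary hypothesis supplies a $D_v$-stable line at every $v\mid p$ regardless of the decomposition behavior of $v$ in the dihedral extension: if $\Frob_v$ maps to a reflection (an order-two element of $D_n$) it still has two distinct eigenvalues on $V$ by the distinguished hypothesis, hence two stable lines, but the corresponding prime in $H^+$ is \emph{inert} in $H$. Thus if the Frobenii at all conjugates of $v$ land on reflections --- equivalently, if $\phi$ acts with order two on every $\Wg$ --- then no prime above $p$ in $H^+$ splits in $H$, the dihedral clause of Hypothesis~\ref{hypoth} is \emph{not} contradicted, and your proof by contradiction stalls. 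The paper anticipates precisely this obstruction and splits into two subcases: when $\phi$ has odd order on some $\Wg$, the descent argument does force a split prime and one gets the contradiction you are after; but when $\phi$ has order two on all $\Wg$, the paper instead invokes Lemma~\ref{lemma:remains}, which uses the freedom in the choice of the auxiliary involution $\psi$ to \emph{re-choose} $\iota$ (and hence $v_\lambda$ and $\Vdag$) so that the resulting $\Vdag$ is one on which $c$ does \emph{not} act centrally by $-1$; the hypothesis $c\notin N$ is what makes this re-choice possible. This ``vary the involution'' step is a different idea from proof by contradiction, and it is the essential ingredient your proposal is missing.
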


\begin{proposition}\label{prop:end}
 The Strong Leopoldt Conjecture is in contradiction with Hypothesis~\ref{hypoth}.
\end{proposition}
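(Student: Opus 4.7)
The plan is to derive a contradiction from Hypothesis~\ref{hypoth} together with Strong Leopoldt, by comparing actual and generic dimensions of certain homomorphism spaces.

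By Lemma~\ref{lemma:theend}, $\Hom_G(\Lambda/I, Y; \Yni(\lambda)) \ne 0$, so there is a nonzero element whose generator image hits $\Yni(\lambda)$. The construction of $Y'(\lambda)$ in Definition~\ref{definition:forlater} is engineered so that $Y'(\lambda) \cap \Yni(\lambda) = 0$ while the one-dimensional line $\langle v_{\lambda} \rangle = Y' \cap \Yni(\lambda)$ lies outside $Y'(\lambda)$. Consequently, enlarging the rational $D$-stable test subspace from $Y'(\lambda)$ to $Y'(\lambda) + \langle v_{\lambda} \rangle$ gains at least one dimension in the Hom space on the \emph{actual} side (i.e., with $I$ the image of $U_{\glob}$ in $\Lambda \cong U_{\loc}$):
\[
\dim\Hom_G\!\big(\Lambda/I,\, Y;\, Y'(\lambda) + \langle v_{\lambda}\rangle\big) \ >\ \dim\Hom_G\!\big(\Lambda/I,\, Y;\, Y'(\lambda)\big).
\]

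Next, I would invoke the Strong Leopoldt Conjecture in its dual form (to be formalized via Lemma~\ref{lemma:maingeneric} of the Appendix): for every rational $D$-stable $Z \subset Y$,
\[
\dim\Hom_G\!\big(\Lambda/I,\, Y;\, Z\big) \ \leq\ \dim\Hom_G\!\big(\Lambda/J,\, Y;\, Z\big),
\]
where $J \subset \Lambda$ is a generic $G$-submodule isomorphic as $E[G]$-module to $I$. Applied to both $Z = Y'(\lambda)$ and $Z = Y'(\lambda) + \langle v_{\lambda}\rangle$ and subtracting, this forces the generic jump upon adjoining $\langle v_{\lambda}\rangle$ to be at least as large as the actual (strictly positive) jump displayed above.

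The final step is to compute this generic jump explicitly and show it vanishes, which is the contradiction. Using the decomposition $Y'(\lambda) = \bigoplus_i S_i(\lambda) \otimes T_i^*$ and the identification of $\Lambda/I \cong U_{\loc}/U_{\glob}$ as a $G$-module via the Dirichlet unit theorem applied to $M$, the generic dimension of $\Hom_G(\Lambda/J, Y; Z)$ decomposes as a sum over the isotypic components $V_i$, with each summand controlled by the pairing between $S_i(\lambda) \subseteq V_i$ and the appropriate $c$-eigenspace constituents of $\Lambda/I$. Hypothesis~\ref{hypoth} pins down the action of complex conjugation $c$ on $\Lambda/I$ by ruling out odd descent, CM, and even descent; Proposition~\ref{prop:nearend} then supplies $\dim S_i(\lambda) \geq \dim(V_i \mid c = -1)$ in each component. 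Taken together these inequalities force the generic contribution from the extra line $\langle v_{\lambda}\rangle$ to be zero---in particular in the isotypic component corresponding to $\Vdag$, where $v_{\lambda}$ lives---contradicting the strict inequality above. The main technical obstacle will be carrying out this $c$-eigenspace dimension computation precisely enough to identify, in the $\Vdag$-isotypic component, the single ``missing'' dimension that Proposition~\ref{prop:nearend} removes but which $v_{\lambda}$ supplies on the actual side.
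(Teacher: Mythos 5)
Your proposal misidentifies the role of $Y'(\lambda)$, and this is a genuine gap rather than a stylistic difference. In the argument of the paper, $Y'(\lambda)$ is \emph{not} one of the rational $D$-stable test subspaces $Z$ to which the Strong Leopoldt inequality is applied. Indeed it cannot be: by Corollary~\ref{decomp}, $Y'(\lambda) = \bigoplus S_i(\lambda)\otimes T_i^*$ has rational factors $T_i^*$ but the $S_i(\lambda)$ are arbitrary $E$-subspaces, so $Y'(\lambda)$ is not rational. Its actual role is as the \emph{generic competitor}: by Lemma~\ref{lemma:type}, $Y'(\lambda) = \Hom_G(\Lambda/N,Y)$ for some left ideal $N\subset\Lambda$, and the whole point of Proposition~\ref{prop:nearend} (together with the classical Leopoldt computation $I \simeq \bigoplus_{\IR(G)\ne E} V_i^{\dim(V_i|c=1)}$) is to verify that one can arrange $N\simeq I$ as $G$-modules. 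Once that is done, Lemma~\ref{lemma:maingeneric} applied to the \emph{single} rational test subspace $Z = \Yni(\lambda)$ gives
\[
1 \le \dim\Hom_G(\Lambda/I,Y;\Yni(\lambda)) \le \dim\bigl(Y'(\lambda)\cap\Yni(\lambda)\bigr) = 0,
\]
the first inequality from Lemma~\ref{lemma:theend} and the last equality by construction. No ``jump'' comparison is needed.

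Two further problems with the strategy as written. First, the claimed strict inequality
\[
\dim\Hom_G\bigl(\Lambda/I,Y;Y'(\lambda)+\langle v_\lambda\rangle\bigr) > \dim\Hom_G\bigl(\Lambda/I,Y;Y'(\lambda)\bigr)
\]
is unsupported: enlarging the constraint space only weakly increases the Hom space, and Lemma~\ref{lemma:theend} asserts nonvanishing of $\Hom_G(\Lambda/I,Y;\Yni(\lambda))$, which is a different constraint space from $Y'(\lambda)+\langle v_\lambda\rangle$. Second, even granting both applications of Lemma~\ref{lemma:maingeneric}, ``subtracting'' two inequalities that point the same direction ($a\le a'$, $b\le b'$) cannot yield $a-b \le a'-b'$, so the deduction that the generic jump dominates the actual jump is invalid. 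The right fix is to abandon the two-test-space scheme: identify $N$ with $\Hom_G(\Lambda/N,Y)=Y'(\lambda)$, verify $N\simeq I$ as $G$-modules using Proposition~\ref{prop:nearend} and Leopoldt, and apply the genericity lemma once to $Z=\Yni(\lambda)$.
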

\bigskip

 Theorem~\ref{mainth} follows from Propositions~\ref{prop:nearend} and \ref{prop:end}. 
We prove Propositions~\ref{prop:nearend} and~\ref{prop:end} in
sections~\ref{subsection:proofofnearend} and~\ref{subsection:propend} 
respectively.

 \subsection{Proof of Proposition~\ref{prop:nearend}} 
\label{subsection:proofofnearend}

By Hypothesis~\ref{hypoth}, $\rho$ does not descend as in the main
theorem, and hence if ${\mathcal C}$ is the conjugacy
class of complex conjugation in $G$, then every 
$c \in {\mathcal C}$ is not even, and there exists at least
one $c \in {\mathcal C}$ such that $c \notin N$.

In particular, the results of Lemma~\ref{lemma:twothirds} apply.

\begin{proposition}
Suppose that
 $\Gamma \in \{A_4, S_4, A_5\}$, then
$$\dim(S_i(\lambda)) \ge
\dim(S_i) - 1
= \frac{2}{3} \dim(V_i) - 1 \ge \dim(V_i|c = - 1).$$
\end{proposition}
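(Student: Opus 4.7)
The plan is to chain together three routine estimates: first establish the drop $\dim(S_i(\lambda)) \ge \dim(S_i) - 1$, then invoke the previously computed value of $\dim(S_i)$, and finally feed Hypothesis~\ref{hypoth} into Lemma~\ref{lemma:twothirds} to obtain the strict bound on $\dim(V_i|c=-1)$, converting strictness into the asserted $-1$ via integrality.

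For the first step, I would simply unwind the definition of $S_i(\lambda)$: by construction, either $S_i(\lambda) = S_i$ (when $V_i \ne \Vdag$) or $S_i(\lambda)$ is a hyperplane in $S_i$ (when $V_i = \Vdag$), so in either case $\dim(S_i(\lambda)) \ge \dim(S_i) - 1$. For the middle equality, the corollary following Lemma~\ref{lemma:dimensioninequalities} gives $\dim(S_i) = \tfrac{2}{3}\dim(V_i)$ in the $\Gamma \in \{A_4, S_4, A_5\}$ case at hand. Note that this same corollary (and Lemma~\ref{lemma:dimensioninequalities}) requires $\dim(V_i)$ to be divisible by $3$, a fact I would record explicitly because it is essential for the final step.

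For the decisive inequality $\tfrac{2}{3}\dim(V_i) - 1 \ge \dim(V_i|c=-1)$, I would first verify that Hypothesis~\ref{hypoth} puts us into the setup of Lemma~\ref{lemma:twothirds}: the hypothesis forbids $\chi\otimes\rho$ from descending to an odd representation over a totally real field or to a field with a real place at which $\chi\otimes\rho$ is even, and by the proposition preceding subsection~\ref{subsection:selmer2} this forces (a) no $c \in \mathcal{C}$ is \even\ and (b) some $c \in \mathcal{C}$ lies outside $N$. Lemma~\ref{lemma:twothirds} then delivers the \emph{strict} inequality $\dim(V_i|c=-1) < \tfrac{2}{3}\dim(V_i)$. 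Since both sides are integers (using $3 \mid \dim(V_i)$), strictness upgrades to $\dim(V_i|c=-1) \le \tfrac{2}{3}\dim(V_i) - 1$, and concatenating the three bounds yields the proposition.

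The only subtle point (hardly an obstacle) is ensuring the divisibility $3 \mid \dim(V_i)$ so that the strict inequality can be turned into an inequality with a gap of $1$; without this, the right-hand side $\tfrac{2}{3}\dim(V_i) - 1$ would not even be an integer bound. Everything else is bookkeeping: matching the descent obstructions in Hypothesis~\ref{hypoth} to the two bulleted hypotheses of Lemma~\ref{lemma:twothirds}, and reading off the dimension of $S_i$ from the preceding corollary.
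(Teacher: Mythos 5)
Your proof is correct and follows essentially the same route as the paper, which dispatches the proposition with a one-line appeal to Lemma~\ref{lemma:twothirds}. The one place you add value is in explicitly flagging that $3 \mid \dim(V_i)$ (a consequence of $\dim(S_i) = \tfrac{2}{3}\dim(V_i)$ being a dimension, hence an integer) is what lets the strict inequality of Lemma~\ref{lemma:twothirds} be upgraded to the $\le \tfrac{2}{3}\dim(V_i) - 1$ form; the paper leaves this integrality step tacit.
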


\begin{proof}
The last inequality follows from
Lemma~\ref{lemma:twothirds}.
\end{proof}

\bigskip

Now let us consider the dihedral case. Suppose that $\Gamma = D_n$. If $V_i \subseteq \Ind^{G}_{H} X$,
then $V_i \ne \Vdag$, and thus
$$\dim(S_i(\lambda))  =
\dim(S_i) 
= \frac{1}{2} \dim(V_i) = \dim(V_i|c = - 1),$$
after applying  Lemma~\ref{lemma:twothirds}.
Thus it suffices to consider the case when
$V_i \subseteq \Ind^{G}_{H} \eps$.

\medskip

\Remarks
\begin{enumerate}
\item If $V_i \subseteq \Ind^{G}_{H} \eps$ and $V_i \ne \Vdag$, then
$\dim(S_i(\lambda))  =
\dim(S_i) 
= \dim(V_i) \ge \dim(V_i|c = - 1)$,
which suffices,
\item If $V_i = \Vdag$, then
$\dim(S_i(\lambda))  =
\dim(S_i) - 1
= \dim(\Vdag) - 1$,
which is $\ge \dim(\Vdag|c = - 1)$ if and only if
$c$ does not act centrally as $-1$ on $\Vdag$.
\end{enumerate}
From these calculations we have proven Proposition~\ref{prop:nearend},
 except in the case where $\Gamma = D_n$ and
$\Vdag$ has the property that $c$ acts centrally
as $-1$ on $\Vdag$.
The next lemma implies that under certain conditions
we may choose $\Vdag$ so that $c$ does not act centrally,
completing the proof of the main theorem in these cases.
For the remainder of the section we show that all other
situations lead to case two of the main theorem, which
completes the proof.

We postpone the proof of the following lemma to the next section.

\begin{lemma} Suppose that the action of $\phi$ on
$\Wg$ is through an element of order two for all $g$, and              
that $G$ admits at least one complex
conjugation $c \notin N$. Then there exists a choice of
involution $\iota$ such that the
projection $\pi(\Ind^{G}_{H} \eps,v_{\lambda})$
of $v_{\lambda}$ to $\Ind^{G}_{H} \eps$ is not a
$-1$ eigenvector for all conjugates $c$.
\label{lemma:remains}
\end{lemma}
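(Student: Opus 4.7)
The plan is to exploit the freedom inherent in the coset-by-coset construction of the involution $\iota$. For $\Gamma = D_n$ with $n \geq 3$ odd, and under the hypothesis that $\phi$ acts on each $W_g$ through an element of order two, there are $n - 1 \geq 2$ admissible choices of $\psi_g$ (the non-$\phi$ reflections in $\Gamma$, which for $n$ odd automatically fail to commute with $\phi$) for each $g \in \mathcal{R}(G/N)$; moreover, by Lemma~\ref{lemma:inv}(3), distinct choices produce pairwise distinct nonzero maps $\eta_{\psi_g}\colon E \to \eps$.

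First I would describe the projection $v_\lambda^\eps$ of $v_\lambda$ to $\Ind^{G}_{H} \eps$ in terms of these choices. Since $v_\lambda \in Y' \cap \Yni(\lambda)$ satisfies $\iota v_\lambda = -v_\lambda$ and has infinitesimal Hodge--Tate weight proportional to $\lambda$, its component at each place $v$ must lie in $\Win_v \cap W_v^{\psi = -1}$; by Lemma~\ref{lemma:inv}(2)--(3), the resulting $\eps$-component at $v$ is precisely $\eta_{\psi_g}(\lambda_v)$, where $g \in \mathcal{R}(G/N)$ indexes the $N$-orbit of $v$. Next I would analyze the $(-1)$-eigenspace inside $\Ind^{G}_{H} \eps$ of each conjugate $c' = x c x^{-1}$: using the hypothesis that some $c \in \mathcal{C}$ lies outside $N$, the action of $c'$ on $\Ind^{G}_{H} \eps \simeq \bigoplus_v \eps$ permutes coordinates across at least two different $N$-orbits in $G/H$, so the condition $c' v_\lambda^\eps = -v_\lambda^\eps$ translates into a nontrivial linear relation between coordinates governed by \emph{distinct} parameters $\psi_g, \psi_{g'}$, a relation which can be violated by independently adjusting either parameter among its $\geq 2$ values.

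Finally I would conclude by a union-bound over the finite conjugacy class $\mathcal{C}$ of $c$ in $G$: each conjugate $c'$ forbids at most a proper subset of the finite parameter space of tuples $(\psi_g)_{g \in \mathcal{R}(G/N)}$, and since each $\psi_g$ ranges independently over $n - 1 \geq 2$ values, the union of bad subsets cannot exhaust the full parameter space; picking any $(\psi_g)$ in the complement produces the desired involution. The hard part will be the middle step, namely the verification that the $(-1)$-eigenspace of every relevant conjugate genuinely involves two coordinates tied to independently-chosen parameters; this requires careful bookkeeping of the interaction between $\mathcal{C}$ and the subgroup chain $H \subset N \subset G$, in particular excluding the degenerate scenario in which $c'$ permutes coordinates strictly within a single $N$-orbit, which would contradict the standing hypothesis that $c \notin N$.
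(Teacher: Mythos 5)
Your overall strategy — vary the parameters $\psi_g$ defining $\iota$, and exploit that a conjugate of $c$ lying outside $N$ must permute coordinates across different $N$-orbits in $G/H$ — is the same as the paper's. However, the final step has two genuine problems.

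First, the logic of the ``union bound'' is backwards. Write $B_{c'}$ for the set of tuples $(\psi_g)$ for which the resulting $w_\lambda$ is a $-1$ eigenvector of $c'$. The conclusion of the lemma requires only that $\bigcap_{c' \in \mathcal{C}} B_{c'}$ be a proper subset of the parameter space (since ``not a $-1$ eigenvector for \emph{all} conjugates'' means ``for \emph{some} conjugate $c'$, $c' w_\lambda \ne -w_\lambda$''), and for that it suffices to show that a \emph{single} well-chosen $B_{c'}$ is proper. Your proposal instead claims the \emph{union} $\bigcup B_{c'}$ fails to exhaust the parameter space, which is both stronger than needed and not automatic: a union of proper subsets of a finite set certainly can exhaust it. The assertion ``since each $\psi_g$ ranges independently over $n-1 \geq 2$ values, the union of bad subsets cannot exhaust the full parameter space'' has no justification and is false as a general principle.

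Second, the one step that actually closes the argument — showing a specific $B_{c'}$ is proper — is precisely the step you defer as ``the hard part.'' The paper does it by a clean two-choice comparison: pick $\gamma \in G/H$ with $w_\lambda$ having nonzero $\gamma$-component, change only the parameter $\psi$ to some $\xi$ at the coset in $G/N$ corresponding to $\gamma$, and compare $w_\lambda$ with the new $w'_\lambda$. By Lemma~\ref{lemma:inv}(3), $w_\lambda - w'_\lambda$ is supported exactly on the cosets $\gamma n H$ with $n \in N$ and is nonzero there. If both $w_\lambda$ and $w'_\lambda$ were $-1$ eigenvectors of $\gamma c \gamma^{-1}$, so would be their difference; but $\gamma c \gamma^{-1}$ moves the coordinate at $\gamma H$ to $\gamma c H$, which lies outside $\gamma N / H$ precisely because $c \notin N$, forcing the support of the difference to leak outside $\gamma N / H$ — a contradiction. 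Hence one of the two tuples lies outside $B_{\gamma c \gamma^{-1}}$, which is all that is needed. Your sketch gestures at this but does not carry it out, and as written the combinatorial bookkeeping you defer is exactly where the proof lives.
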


Since we are assuming there exists a $c \in G$ such
that $c \notin N$,  if
$\phi$ acts through an element of
order two for all $g \in G/N$ we may choose a
$\Vdag$ on which some conjugate of $c$ does not act as $-1$.
Yet then $\dim(\Vdag|c = -1) \le \dim(\Vdag) - 1$
and we are done. Hence it remains in the final
cases to establish a suitable descent.

\begin{lemma} Suppose that there exists at least one
$g$ such that $\phi$ acts on $\Wg$ through an element
of odd order. Then $\eps$ descends to a quadratic character
of a totally real  subfield $H^{+}$ of $K$ such that the fixed
field $H$ of the kernel is a CM field, and such that
at least one prime above $p$ in $H^{+}$ splits in $H$.
\end{lemma}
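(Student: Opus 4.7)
The plan is to exploit the rigidity of dihedral projective images with $n$ odd---where $\eps$ is the unique nontrivial quadratic character of $\Gamma=D_n$---together with Hypothesis~\ref{hypoth} and the existence of a complex conjugation $c\notin N$ (supplied by Hypothesis~\ref{hypoth} combined with the Proposition of \S4.3) in order to descend $\eps$ to a CM extension realizing the splitting at~$p$.

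First I would translate the hypothesis arithmetically. In $D_n$ with $n$ odd, the unique index-two subgroup $C_n=[D_n,D_n]$ coincides with $\ker(\eps|_\Gamma)$, and its elements are precisely those of odd order. Hence the condition that $\phi$ acts on some $\Wg$ through an element of odd order is equivalent to $g^{-1}\phi g\in\ker\eps$; geometrically, this says that the place of $K$ above $p$ lying below $g^{-1}(v)$ splits completely in the quadratic extension $K^\eps/K$ cut out by~$\eps$. Using that any automorphism of $D_n$ (for $n$ odd) preserves its unique nontrivial quadratic character, conjugation by elements of $N(\Ker)\subseteq G$ preserves $\eps$ as a character of $H$. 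Thus, invoking vanishing of $H^2(N(\Ker)/H,E^\times)$ (by divisibility of $E^\times$) and normalizing by a square root in the divisible torsor $\Hom(N(\Ker)/H,E^\times)$, the character $\eps$ extends to a $\{\pm 1\}$-valued character on $\Gal(\Qbar/K_0)$, where $K_0:=M^{N(\Ker)}\subseteq K$.

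The main obstacle is then to further descend this extension to a \emph{totally real} subfield $H^+\subseteq K_0$ and to show that the resulting fixed field $H$ is CM over $H^+$. Here I would proceed by contradiction: if no such extension exists---that is, if for every $\{\pm 1\}$-valued extension $\tilde\eps$ some complex conjugation $c'\in G$ satisfies $\tilde\eps(c')=+1$, forcing a real place in $H$---then combining this with the complex conjugation $c\notin N$ guaranteed by Hypothesis~\ref{hypoth}, and twisting by an appropriate character of $\Gal(K/\Q)$, would produce an odd descent of $\rho$ (up to twist) to a totally real field (case~(1) of the main theorem), or an even descent to a field with a real place (case~(3))---each of which is excluded by Hypothesis~\ref{hypoth}. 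The careful combinatorics of matching extensions of $\eps$ against descent data for $\rho$ itself is the delicate step, and requires the full strength of both the dihedral rigidity and the case analysis of \S4.3.

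Finally, with $H^+\subseteq K$ totally real and $H/H^+$ CM, the arithmetic translation of the first paragraph supplies a place of $K$ above $p$ splitting in $K^\eps = K\cdot H$; since $K\cap H = H^+$, this descends to a place of $H^+$ above $p$ splitting in $H$, establishing the lemma.
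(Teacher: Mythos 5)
Your opening observations are sound: the odd-order condition on $g^{-1}\phi g$ is indeed equivalent to $g^{-1}\phi g \in C_n = \ker\eps$ (the odd-order elements of $D_n$, $n$ odd, are exactly the rotations), and this translates correctly into a splitting condition at the relevant place above $p$. The fact that $\eps$ is fixed by $N(\Ker)$ is right (because $C_n$ is characteristic in $D_n$), and your square-root trick in $\Hom(N(\Ker)/H, E^\times)$ does produce a $\{\pm1\}$-valued extension. The paper uses the same arithmetic observation for the splitting statement at the end. That much is fine.

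The heart of the lemma — producing a \emph{totally real} $H^+\subseteq K$ with a \emph{CM} quadratic extension $H/H^+$ — is where your proposal breaks down. You never actually construct $H^+$: the field $K_0 = M^{N(\Ker)}$ is in general not totally real, and the $\{\pm 1\}$-extension of $\eps$ to $N(\Ker)$ does not on its own give a CM extension. Your plan for filling the gap by contradiction is a non sequitur: you argue that if $\eps$ has no totally real descent then $\rho$ (after a twist) would descend to an odd representation over a totally real field or an even one over a field with a real place, but descending the one-dimensional character $\eps$ (which is a piece of $\Ad^0\rho$, not $\rho$ or its determinant) bears no obvious logical relation to descending the two-dimensional $\rho$; nothing in the setup licenses that inference. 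So the ``careful combinatorics'' you defer to is precisely the missing proof.

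The paper's route is different and genuinely constructive, and it uses data you have not exploited: the remaining case being handled is exactly the one where $c$ acts centrally by $-1$ on some irreducible constituent $\Vdag\subseteq\Ind_H^G\eps$. Because $\Res_H\Vdag$ contains every $G$-conjugate of $\eps$ (Frobenius reciprocity), $\Gal(\Fg/K)$ acts faithfully on $\Vdag$, and because $c$ is central and acts by $-1$, the fixed field of the kernel of the action on $\Vdag$ is a Galois CM field $E$ with maximal totally real subfield $E^+$. Setting $H^+ = E^+\cap K$, a case split on whether $K\cap E = H^+$ or $[K\cap E:H^+]=2$, together with the identity $E.K=\Fg$, produces the CM extension $H/H^+$ carrying the descent of $\eps$. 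The $p$-splitting then follows as you anticipated. If you want to salvage your own approach you would need a substitute for this concrete construction of a CM field from $\Vdag$ — the contradiction argument as sketched does not supply one.
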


\begin{proof}
Let $F$ be the fixed field of $\eps$.
Let $\Fg$ be the Galois closure of $F$. The
group $\Gal(\Fg/K)$ is an elementary abelian $2$-group.
The fixed fields of the conjugate representations
$\epsg$ are the conjugate fields $F^g$.

Let $\Vdag$ be an irreducible constituent of $\Ind^{G}_{H} \eps$ on
which $c$ acts by $-1$.
As in the proof of Lemma~\ref{lemma:frob}, the representation
$\Res_{H} \Vdag$ contains every conjugate of $\eps$.
Thus $\Gal(\Fg/K)$ acts faithfully on $\Vdag$.
The action of $G$ on $\Vdag$ factors through a quotient in which
$c$ is central and non-trivial. Thus the fixed field is a Galois CM
field $E$ with corresponding Galois totally real subfield $E^{+}$.
The faithful action of $\Gal(\Fg/K)$ implies that $E.K = \Fg$.
Let $H^{+}$ be $E^{+} \cap K$.
We break the remainder of the proof into two cases depending
on whether $E \cap K = H^{+}$ or not.
\begin{enumerate}
\item
Suppose that $K \cap E = H^{+}$.
Then we have the following
diagram of fields:
$$
\begin{diagram}
K & \rLine &  K.E^{+} & \rLine & \Fg = K.E \\
\dLine & & \dLine  & & \dLine \\
H^{+} = K \cap E & \rLine & E^{+} & \rLine &  E \end{diagram}.$$
It follows that there exists a canonical isomorphism
$\Gal(\Fg/K) \simeq \Gal(E/H^{+})$,
and in particular the field $F$ descends to a quadratic extension
$H/H^{+}$. Every extension between $E$ and $H^{+}$
is either totally complex or totally real, and is
totally real if and only if it is contained in $E^{+}$.
If $H$ were contained in $E^{+}$, then $F$ would be
contained in $K.E^{+}$, which is Galois over $\Q$ since both
$K$ and $E^{+}$ are. Yet the Galois closure of $F$ is
$\Fg$, and thus $H/H^{+}$ is a CM extension.
Suppose that all the primes above $p$ in $H^{+}$
are inert in $H$. Then  since $p$ is completely
split in $K/H^{+}$  it follows that all primes
above $p$ in $K$ are inert in $F$. The same argument applies
\emph{mutatis mutandis} to
 all the conjugates of $F$. Yet this
contradicts the fact that $\phi$ acts with odd
order on $\Wg$ for some $g$.
\item Suppose that $E \cap K$ has degree two over $H^{+} = E^{+} \cap K$.
Then necessarily it defines a CM extension. We have the following
diagram of fields:
$$
\begin{diagram}
K & \rLine & \Fg = K.E \\
\dLine & & \dLine \\
K \cap E & \rLine & E \\
\dLine & & \dLine \\
H^{+} = K \cap E^{+} & \rLine & E^{+}
\end{diagram}.$$
It follows that there exists a canonical isomorphism
$\Gal(\Fg/K) \simeq \Gal(E^{+}/H^{+})$,
and thus the field $F$ descends to a totally real  quadratic extension
$H^{++}$ of $H^{+}$. Since $H^{++}/H^{+}$ is  totally
real  and $K \cap E/H^{+}$ is  CM,
the compositum $(K \cap E).H^{++}$
contains a third quadratic extension $H/H^{+}$, which must be CM.
Moreover by construction we see that  $K.H = F$. Thus $H$ descends $F$. Arguing
as in case one we deduce that at least one prime
above $H^{+}$ splits in $H$.
\end{enumerate}
\end{proof}

\subsection{Varying the involution}

It remains to prove
Lemma~\ref{lemma:remains} which, for the convenience of the reader, we restate here.

\begin{lemma} Suppose that the action of $\phi$ on
$\Wg$ is through an element of order two for all $g$, and
that $G$ admits at least one complex
conjugation $c \notin N$. Then there exists a choice of
involution $\iota$ such that the
projection $\pi(\Ind^{G}_{H} \eps,v_{\lambda})$
of $v_{\lambda}$ to $\Ind^{G}_{H} \eps$ is not a
$-1$ eigenvector for all conjugates $c$.
\end{lemma}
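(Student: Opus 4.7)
The plan is to exploit the freedom in constructing $\iota$. For each $g \in \mathcal{R}(G/N)$ the involution $\iota_g$ was defined to be multiplication by an element $h_g \in H$ whose image $\psi_g \in \Gamma = D_n$ is of order two and distinct from $\phi$. Under the hypothesis that $\phi$ has order two on every $\Wg$, there are $n - 1 \ge 2$ admissible choices of $\psi_g$, and by Lemma~\ref{lemma:inv}(3) each produces a distinct nonzero map $\eta_{\psi_g} \colon E \to \eps$.

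First I would determine how $v_{\lambda}$ depends on the $\psi_g$. Since $Y' \cap \Yni = 0$, the condition $v_{\lambda} \in Y' \cap Y^{00}(\lambda)$ pins down $v_{\lambda}$ up to scalar: at each place $g(v)$ its component lies in $\Win_{g(v)} \cap W_{g(v)}^{\psi_g = -1}$, which under the canonical splitting $\Win \cong E \oplus \eps$ of Sublemma~1 in the proof of Lemma~\ref{lemma:inv} is precisely the graph of $\eta_{\psi_g}$. Consequently the projection $\pi(\Ind^{G}_{H} \eps, v_{\lambda})$ has $g(v)$-entry equal to $\eta_{\psi_g}(\lambda_g)$, and as $\psi_g$ varies over its admissible choices the scalar $\eta_{\psi_g}(1) \in \eps$ ranges over $n - 1$ pairwise distinct nonzero values.

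Next I would invoke the hypothesis $c \notin N$ to conclude that no $c \in \mathcal{C}$ can act as $-1$ on the whole of $\Ind^{G}_{H} \eps$: such uniform action would force $c$ to fix every $H$-coset in $G/H$, hence $c \in H \subseteq N$, a contradiction. Thus each eigenspace $(\Ind^{G}_{H} \eps)^{c = -1}$ is a proper subspace, and so is the finite union $\bigcup_{c \in \mathcal{C}} (\Ind^{G}_{H} \eps)^{c = -1}$ (as a set-theoretic union of proper linear subspaces of a common ambient space).

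Finally, I would conclude by a genericity argument: varying the $\psi_g$ independently over their admissible ranges yields a set of projections $\pi(\Ind^{G}_{H} \eps, v_{\lambda})$ obtained from a single base vector by independently rescaling the contribution of each $g \in \mathcal{R}(G/N)$ by any of $n - 1 \ge 2$ distinct nonzero scalars. This set is not contained in any proper linear subspace on which the scalars on any block with $\lambda_g \neq 0$ are constrained, and in particular escapes the finite union $\bigcup_{c \in \mathcal{C}} (\Ind^{G}_{H} \eps)^{c = -1}$. The main obstacle is the correlated nature of this rescaling --- a single choice of $\psi_g$ affects the entire $N$-orbit $\{gn(v) : n \in \mathcal{R}(N/H)\}$, not just a single prime --- together with the degenerate case in which $\lambda_g = 0$ on whole $N$-orbits; the hard work lies in verifying that, even accounting for these correlations, the available independent scalings across the distinct $g \in \mathcal{R}(G/N)$ are sufficient to avoid every $-1$ eigenspace simultaneously.
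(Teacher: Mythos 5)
Your setup is correct and mirrors the paper's: you identify that each choice of $\psi_g$ yields a distinct non-zero $\eta_{\psi_g}\colon E \to \eps$ by Lemma~\ref{lemma:inv}, that $Y' \cap \Yin$ is the graph of the resulting block-diagonal map $\bigoplus_{G/H} E \to \Ind^G_H\eps$, and that $w_\lambda := \pi(\Ind^G_H\eps, v_\lambda)$ is obtained by applying this map to $\lambda$. Your second paragraph (that no conjugate $c$ acts as $-1$ on all of $\Ind^G_H\eps$) is also true, though it is weaker than what is needed.

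The genericity argument in your final paragraph has a genuine gap that you yourself flag but do not close, and as stated the claim is in fact false. If $\lambda_g = 0$ for all but a single $\gamma \in \mathcal{R}(G/N)$, then every achievable $w_\lambda$ is supported on the single $N$-orbit $\gamma N \subseteq G/H$, which is a proper linear subspace of $\Ind^G_H\eps$; there are then at most $n-1$ vectors available, all lying in a one-dimensional family confined to that block. So ``the set of projections is not contained in any proper linear subspace'' cannot be the right statement, and one cannot conclude by a union-of-eigenspaces avoidance argument alone. The correct closing step uses the $G$-action more precisely rather than genericity. Fix a $\gamma \in G/H$ with $\lambda_\gamma \ne 0$ and vary only the choice of $\psi$ versus $\xi$ in the block of $g \in \mathcal{R}(G/N)$ corresponding to $\gamma$, producing $w_\lambda$ and $w'_\lambda$. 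The difference $w_\lambda - w'_\lambda$ is supported exactly on the cosets $\gamma n$ with $n \in N$, and it has a non-zero $\gamma$-component because $\eta_\psi \ne \eta_\xi$ and $\lambda_\gamma \ne 0$. If both $w_\lambda$ and $w'_\lambda$ were $-1$ eigenvectors for every conjugate of $c$, the difference would be one too, in particular for $\gamma c \gamma^{-1}$. But applying $\gamma c \gamma^{-1}$ carries the $\gamma$-component to the $\gamma c$-component; for $-(w_\lambda - w'_\lambda)$ to remain supported on $\gamma N$, one would need $\gamma c \in \gamma N$, i.e.\ $c \in N$, contradicting the hypothesis. So at least one of the two choices produces a $\Vdag$ on which some conjugate of $c$ does not act centrally by $-1$. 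This concrete two-choice comparison, exploiting that $c \notin N$ pushes the $\gamma$-coordinate out of the $N$-orbit, is what replaces the genericity step your proposal leaves open.
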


\begin{proof} 
Recall that the definition of
$\iota$ involved for each $g \in G/N$
a choice of element $\psi \in \Gamma$ of order two
distinct from
(and therefore not commuting with) $\phi$.
Consider any such choice.
There is an isomorphism of vector spaces
$$\Pi(\iota):\bigoplus_{G/H} E \rightarrow \Ind^{G}_{H} \eps$$
defined as follows.  The space $\Yin$ projects onto both spaces via
the maps:
$$\Yin \rightarrow \Yin/\Yni = \bigoplus_{G/H} E, \qquad
\Yin \subseteq Y \rightarrow \Ind^{G}_{H} \eps$$
respectively (the surjection of the second map follows
from Lemma~\ref{lemma:inv} part two).
These maps identify $\Yin$ with the direct product of
these spaces (exactly as in Sublemma~\ref{sublemma:one}
of Lemma~\ref{lemma:inv}).
Now we define $\Pi(\iota)$ by taking $Y' \cap \Yin = \Yiota \cap \Yin
\subseteq \Yin$ to be the graph of $\Pi(\iota)$.
Explicitly the map  $\Pi(\iota)$  decomposes into a direct
product of the maps
 $\eta_{\psi}$ of Lemma~\ref{lemma:inv} for each $g \in G/H$. 
On the other hand, 
by construction, to compute $\Pi(\iota)(\lambda)$ one lifts
it to $\Yin$, intersects with $Y'$ and then projects down
to $\Ind^{G}_{H}$, and thus the image of $\lambda$ is exactly
$w_{\lambda}:=\pi(\Ind^{G}_{H} \eps,v_{\lambda})$.

\medskip

Let $\gamma \in G/H$ be an element such that
the $\gamma$-component of $w_{\lambda} \ne 0$.
Suppose we vary the choice of element $\xi$ of order two
in the construction of $\iota$
for the $g \in G/N$ corresponding to $\gamma$. Then we obtain
a new vector $v'_{\lambda}$ and a new projection
$$w'_{\lambda}:=\pi(\Ind^{G}_{H} \eps,v'_{\lambda})
= \Pi(\iota')(\lambda).$$
By Lemma~\ref{lemma:inv},
$\Pi(\iota)$ differs from $\Pi(\iota')$ only
for those $g$ of the form $\gamma n$ for $n \in N$,
and \emph{does} differ for those $g$ (since $\eta_{\psi}
\ne \eta_{\xi}$).
Thus the only non-zero components of
 $(w_{\lambda} - w'_{\lambda}) \in \Ind^{G}_{H} \eps$
are contained in  $g \in G/H$ of the form 
$g  = \gamma n$ with $n \in N$, and
$(w_{\lambda} - w'_{\lambda})$ \emph{does} have a non-zero
component at $g$.
 If $w_{\lambda}$ and
$w'_{\lambda}$ both generate representations on which
$c$ acts by $-1$, then they are both $-1$ eigenvectors
for $g c g^{-1}$ for any $g$, and in particular
for $\gamma c \gamma^{-1}$.
Yet then $\gamma c \gamma^{-1}(w_{\lambda} - w'_{\lambda})
= (w'_{\lambda} - w_{\lambda})$ has a non-zero component
at $(\gamma c \gamma^{-1}) \gamma = \gamma c$, and hence
$c \in N$. This is a contradiction, and thus at least
one of $w_{\lambda}$ or $w'_{\lambda}$ generates a representation
$\Vdag$ on which $c$ does not act centrally by $-1$.
\end{proof}

 \subsection{Proof of Proposition~\ref{prop:end}}
\label{subsection:propend}

To summarize, in Definition~\ref{definition:forlater}, we have constructed
(under our hypotheses)
a subspace $Y'(\lambda) \subset \Lambda$ with the
following properties:
\begin{enumerate}
\item $Y'(\lambda)$ admits a decomposition
$\displaystyle{Y'(\lambda) = \bigoplus S_i(\lambda) \otimes {T_i}^*  
\subseteq\ Y' = \bigoplus S_i\otimes {T_i}^* \subseteq \La}$,
\item $\dim(S_i(\lambda)) \ge \dim(V_i|c = -1)$.
\end{enumerate}
We prove that this contradicts the Strong Leopoldt Conjecture.
After shrinking the spaces $S_i(\lambda)$, if necessary, we may
assume that $\dim(S_i(\lambda)) = \dim(V_i|c = -1)$.
Because $Y'(\lambda)$ admits a direct sum decomposition as above,
from the discussion preceding
Lemma~\ref{lemma:type}, we infer the existence of a $G$-submodule
$N \subset \Lambda$ such that
$$\Hom_G(\Lambda/N,Y) = Y'(\lambda).$$
Furthermore, from the final claim
of Lemma~\ref{lemma:type}, we may deduce the possible
structures of $N$ as an abstract $G$-module from
the dimension formula
$\dim(\Hom_G(N,V_i)) = \dim(V_i) - \dim(S_i(\lambda))
= \dim(V_i|c = 1)$. Assuming the classical conjecture of
Leopoldt, one has a decomposition 
$$I = I_{\mathrm{Global}}
\simeq \bigoplus_{\IR(G) \ne E}  V^{\dim(V_i|c = 1)}_i$$
of $G$-modules, where the sum runs over all
\emph{non-trivial} irreducible representations of $G$
(see, for example,~\cite[Ch IX, \S 4]{Lang}).
 It follows that we may assume that $I \simeq N$ as
$G$-modules. Hence, by Lemma~\ref{lemma:maingeneric},
$$\dim(\Hom_G(\Lambda/I,Y;Z))
\le \dim(\Hom_G(\Lambda/N,Y;Z)):=
Y'(\lambda) \cap Z$$
for any $D$-rational subspace $Z$ of $Y$.
Applying this lemma to $Z =  \Yni(\lambda)$, we deduce,
using Lemma~\ref{lemma:theend},
that
$$1 \le \dim(\Hom_G(\Lambda/I,Y;\Yni(\lambda)))
\le 
Y'(\lambda) \cap \Yni(\lambda) = 0,$$
which is a contradiction.


\section{Ordinary families with non-parallel weights}
\label{section:ordinaryfamilieswithnonparallelweights}

\subsection{A result in the spirit of 
Ramakrishna}

Let $p$ be prime, and
let $K/\Q$ be an imaginary quadratic field in which $p$ splits.
Let $\F$ be  a finite field of characteristic $p$, and let
$\chi$ denote the cyclotomic character.
Let $\rhobar: \Gal(\Kbar/K) \rightarrow \GL_2(\F)$ denote an
absolutely irreducible continuous Galois representation,
and suppose, moreover, that $\rhobar$ is ordinary at the
primes dividing $p$. The global Euler characteristic formula
implies that the universal nearly ordinary deformation ring of $\rhobar$
with fixed determinant has dimension $\ge 1$. By allowing
extra primes of ramification, one may construct infinitely many one
parameter nearly ordinary deformations of  $\rhobar$.
Our conjecture (\ref{conj:conjecture}) predicts that these families
contain infinitely many automorphic Galois representations if and only
if they are CM or arise from base change.
 Since many
 families are not of this form (for example, 
if $\rhobar$ is neither Galois invariant nor induced
from a quadratic character of a CM field, then no
such family is of that form), one expects
to find many such one-parameter families with few automorphic points.
In this section we prove that for many $\rhobar$,
there  \emph{exist}  nearly ordinary families
deforming $\rhobar$  with only finitely many
specializations $\rho_x$ with equal Hodge-Tate weights.
Since all cuspidal automorphic forms for $\GL(2)_{/K}$ of
cohomological type have parallel weight,  only finitely
many specializations of $\rho$ can therefore possibly be the
(conjectural) Galois representation attached to such
an automorphic form (for a more explicated version
of what we are ``conjecturing'' about these forms,
see section~\ref{subsection:assumptions}).

\medskip

Let $\chi$ denote the cyclotomic character.
If $\Sigma$ denotes a (possibly infinite) set of places
of $K$,
let $G_{\Sigma}$ denote the Galois group of the maximal
extension of $K$ unramified outside $\Sigma$.

\begin{theorem} 
\label{theorem:raviripoff}
Let $\rhobar: \Gal(\Kbar/K) \rightarrow \GL_2(\F)$ be a continuous
(absolutely irreducible) Galois
representation, and
assume the following:
\begin{enumerate}
\item The image of $\rhobar$ contains $\SL_2(\F)$, and
$p \ge 5$.
\item If $v|p$, then $\rhobar$ is nearly ordinary at $v$,
and takes the shape:
$\displaystyle{\rho|_{I_v}
= \left( \begin{matrix} \psi & * \\ 0 & 1 \end{matrix}
\right)}$, 
 where $\psi \ne 1,\chi^{-1}$, and $*$ is tr\`{e}s ramifi\'{e}e (see~\cite{Serre2})  if
$\psi = \chi$.
\item If $v \nmid p$ and $\rhobar$ is ramified at $v$, then
$H^2(G_v,\ad) = 0$.
\end{enumerate}
Then there exists a Galois representation:
$\displaystyle{\rho: \Gal(\Kbar/K) \rightarrow \GL_2(W(\F)[[T]])}$
lifting $\rhobar$ such that:
\begin{enumerate}
\item  The image of $\rho$ contains $\SL_2(W(\F)[[T]])$.
\item $\rho$ is unramified outside some finite set of primes $\Sigma$.
If $v \in \Sigma$ and $v \nmid p$, then $\rho|D_v$ is
potentially semistable; if $v|p$ then $\rho|D_v$ is nearly ordinary.
\item  
Only finitely many specializations of $\rho$ have parallel weight.
\end{enumerate}
\end{theorem}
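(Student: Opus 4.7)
The strategy is Ramakrishna's lifting method. We want to enlarge $S$ by a finite auxiliary set $Q$ of primes and impose carefully chosen local deformation conditions there, combined with the nearly ordinary condition at $v | p$ and a fixed-type potentially-semistable condition at $v \in S$, $v \nmid p$, so as to obtain a smooth global deformation ring isomorphic to $W(\F)[[T]]$. The local hypothesis $H^2(G_v,\ad) = 0$ at ramified $v \nmid p$ makes the local deformation functors smooth of the expected codimension, and the distinguished nearly-ordinary condition (using $\psi \ne 1, \chi^{-1}$, with the tr\`{e}s ramifi\'{e}e hypothesis ruling out crystalline collapse when $\psi = \chi$) does the same at $v | p$. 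The global hypotheses --- $\SL_2(\F) \subseteq \im(\rhobar)$ and $p \ge 5$ --- give $H^0(G_K,\ad) = 0$, irreducibility of $\ad^0$ as a $\Gal(K(\rhobar)/K)$-module, and vanishing of $H^1(\Gal(K(\rhobar,\zeta_p)/K), \ad^0(1))$; these are precisely the ingredients needed for the Chebotarev inflation arguments that produce Ramakrishna primes.

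Concretely, I would take $Q$ to consist of trivial primes $v$ for $\rhobar$ (i.e., primes where $\rhobar(\Frob_v)$ has eigenvalues in ratio $q_v$), at each of which we impose a codimension-one local condition whose annihilator in dual cohomology kills a chosen nonzero class in the dual Selmer group. The Greenberg--Wiles formula then shows that each such added prime strictly decreases $\dim H^1_{\mathcal{L}^{*}}(K, \ad^0(1))$ without increasing the Selmer group $H^1_{\mathcal{L}}(K, \ad^0)$. After finitely many steps, the dual Selmer group is killed, and we obtain a smooth universal deformation ring of the expected relative dimension over $W(\F)$, isomorphic to $W(\F)[[T]]$. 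The universal deformation $\rho$ over it is unramified outside $\Sigma := S \cup Q$, potentially semistable at $v \in \Sigma \setminus \{v | p\}$ and nearly ordinary at $v | p$ by construction; and the image condition $\SL_2(W(\F)[[T]]) \subseteq \im(\rho)$ follows from the standard pro-$p$ lifting lemma applied to the surjection $\rhobar \twoheadrightarrow \SL_2(\F)$ (valid for $p \ge 5$).

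The main obstacle is the third conclusion, that only finitely many specializations of $\rho$ have parallel weight. The infinitesimal Hodge-Tate weight maps $\omega_{v_1}, \omega_{v_2}$ of Definition~\ref{definition:HTW} at the two primes $v_1, v_2$ above $p$ define two $\F$-linear functionals on the Selmer group $H^1_{\mathcal{L}}(K, \ad^0) = \Tang_{\m}(\Spec R)$, and the parallel-weight locus in $\Spec R$ is the closed subscheme defined by $\omega_{v_1}(c) = \omega_{v_2}(c)$. It suffices to show that this functional is not identically zero on $\Tang_{\m}(\Spec R)$. A priori, Ramakrishna's method produces some smooth lift but does not control the two-dimensional weight projection. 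My plan is to remedy this by first running the Ramakrishna construction to produce a \emph{two-parameter} family $\rho_2: G_K \to \GL_2(W(\F)[[T_1, T_2]])$ --- this is natural because the expected Krull dimension of the nearly ordinary deformation ring (without fixing determinants) over an imaginary quadratic field, computed via the Euler characteristic formula, accommodates independent infinitesimal weights at $v_1$ and $v_2$. One then computes the weight map on the two-dimensional tangent space and cuts down by a generic hyperplane avoiding the diagonal; this produces a one-parameter quotient $R \simeq W(\F)[[T]]$ on which $\omega_{v_1}$ and $\omega_{v_2}$ remain linearly independent, so that the parallel-weight locus is a proper closed subscheme of $\Spec R$ and hence contains only finitely many characteristic zero points. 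Verifying that such a generic cut preserves the big-image and smoothness properties is routine, but arranging the Ramakrishna construction to produce the required two-parameter family with the weight map understood is the technical heart of the argument.
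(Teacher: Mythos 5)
Your overall Ramakrishna-style framework matches the paper's, but the mechanism you propose for the third conclusion --- the claim that ``only finitely many specializations have parallel weight'' --- does not work as stated, and the paper handles this point quite differently.

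The flaw is in the assertion that running Ramakrishna's construction \emph{without} fixing the determinant produces a two-parameter family ``accommodating independent infinitesimal weights at $v_1$ and $v_2$,'' from which one can cut down. First, the numerology is off: the Greenberg--Wiles formula for the nearly ordinary condition with unfixed determinant gives a relative dimension of $3$, not $2$, over $W(\F)$ (the fixed-determinant dimension $1$ plus $2$ from the $\Z_p$-rank of characters of $G_K$ unramified outside $p$, using Leopoldt for the abelian field $K$). Second, and more fundamentally, those two extra directions are precisely twists by characters of $G_K$, and such a twist shifts both diagonal entries $\psi_v, \psi'_v$ of $\rho|_{D_v}$ by the same amount at each $v | p$, leaving the Hodge--Tate weight \emph{gap} --- the quantity that detects parallel weight --- unchanged. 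So the parallel-weight functional on the enlarged tangent space vanishes identically on the character-twist directions, and cutting generically brings you back to exactly the same obstruction you started with: whether the functional is nonzero on the $\mathrm{ad}^0$ part. Additionally, even granting the two-parameter family, ``cut by a generic hyperplane'' is not a legitimate move in Ramakrishna's framework: the cut must be realized by imposing a local deformation condition at a new auxiliary prime, or one loses both smoothness and control of the image.

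The paper instead stays entirely within the fixed-determinant, $\mathrm{ad}^0$ Selmer calculus. After killing the dual Selmer group with auxiliary primes $Q$ and making the global-to-local map surjective with a one-dimensional kernel spanned by a cocycle $f_d$, it examines the image of $f_d$ under the reduction-mod-$p$ weight map $\omega: \prod_{v|p} \N_v \to \F \oplus \F$. If the image is a multiple of $(1,1)$ (the parallel-weight case), the proof introduces \emph{one more} Ramakrishna auxiliary prime $\r_d$, chosen by Chebotarev (the key surjectivity statement is Lemma~\ref{lemma:extra}), so that in the new one-dimensional Selmer group the generator $f_{d+3} - \alpha f_d$ has non-diagonal image under $\omega$. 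This directly produces the one-parameter family with non-parallel infinitesimal weight, never passing through a higher-dimensional family. That is the idea your proposal is missing: one cannot enlarge along character twists to escape the diagonal, because the diagonal condition lives on $\mathrm{ad}^0$ and is blind to the determinant direction; one must instead move the $\mathrm{ad}^0$ Selmer class itself off the diagonal by a suitable choice of an additional auxiliary prime.
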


\begin{proof}
The proof of this theorem relies on techniques
developed by Ramakrishna~\cite{raviannals,raviinventiones}.
Hypotheses $3$ is 
almost certainly superfluous.
For reasons of space, and to avoid
repetition, we make no attempt in this section
to be independent of~\cite{raviannals}.
In particular, we feel free to refer
to~\cite{raviannals} for key arguments,
and try to keep our notation
as consistent with~\cite{raviannals} as possible.

\medskip

 If $M$ is a finite Galois module over $K$, recall
that $M^*$ denotes the Cartier dual of $M$.
Let $M$ be of order a power of $p$, let $V$ be a finite set of primes of $K$ including all primes dividing $p$ and $\infty$ as well as all primes with respect to which $M$ is ramified. Let $S$ denote the minimal such $V$, i.e., the set of primes dividing $p$, $\infty$ and ramified primes for $M$.  
For $i = 1,2$ let
$$\Sha^i_V(M):= \mathrm{ker} \left\{ H^i(G_V,M) \rightarrow
\prod_{v \notin V} H^i(G_v,M)\right\}.$$
 The group $\Sha^1_V(M)$ is dual to $\Sha^2_V(M^*)$.

Recall  the global Euler characteristic formula:
$$\frac{ \#H^0(G_V,M) \# H^2(G_V,M)}{\# H^1(G_V,M)}
= \frac{\prod_{v|\infty} \# H^0(G_{v},M)}{\# M^{[K:\Q]}},$$
where the product runs over all infinite places of $K$.

Let $S$ be as above, and put $r = \dim \Sha^1_S(\ad^*)$.

\begin{lemma} $\dim H^1(G_S,\ad) = r + 3$.
\end{lemma}

\begin{proof} Our assumptions on $\rhobar|I_v$ together with
assumption $3$ imply that $H^2(G_v,\ad) = 0$ for all $v \in S$.
Thus $H^2(G_S,\ad) \simeq \Sha^2_S(\ad)$, and hence
$$\dim H^2(G_S,\ad) = \dim \Sha^1_S(\ad^*) = r.$$
Since $K$ is an imaginary quadratic field, there is
exactly one place at infinity. Since (by the isomorphism above)
$\dim H^2(G_S,\ad) = r$, the Global Euler characteristic Formula for $M:= \ad$ gives  that
$$\dim H^1(G_S,\ad) = r + \dim H^0(G_S,\ad)
+ 2 \cdot \dim(\ad) - \dim(H^0(G_{\infty},\ad))  = r+3,$$
where we use the absolute irreducibility of
$\rhobar$ and the fact that $G_{\infty}$ is trivial.
\end{proof}

If $v|p$, then $H^2(G_v,\ad) = 0$, by assumption $3$.
The local Euler characteristic formula implies that
$$\# H^1(G_v,\ad) = \# H^2(G_v,\ad) \cdot \# H^0(G_v,\ad)
\cdot \# \ad,$$
 and thus $\dim H^1(G_v,\ad) = 3$ (by assumption $2$).
Let
$\N_v \subset H^1(G_v,\ad)$ for $v|p$
denote the $2$-dimensional subspace 
corresponding to
nearly ordinary deformations of $\rhobar$. Note that our
 $\N_v$ is 
different than Ramakrishna's,
where it is defined as the  $1$-dimensional
subspace of $H^1(G_v,\ad)$
corresponding to \emph{ordinary}
deformations. Over $\Q$, the difference between ordinary
and nearly ordinary deformations
is slight, and amounts  to fixing the determinant and replacing
representations over $W(\F)[[T]]$ by representations over $W(\F)$.
However, over $K$, this flexibility will be crucial.
If we insist on an
unramified quotient locally (at each $v|p$) and also
fix the determinant,
the relevant ordinary deformation ring will
have expected relative dimension $-1$ over $W(\F)$,
and so we would not expect to find characteristic
zero lifts in general.
If $v \nmid p$ and $v \in S$, we take $\N_v = H^1(G_v,\ad)$,
as in~\cite{raviannals}  p.144 (Recall that $H^2(G_v,\ad) = 0$,
by assumption).

\medskip

We shall consider  primes $\q \notin S$ such that
up to twist
$\displaystyle{\rhobar|_{G_{\q}}
 = \left( \begin{matrix} \chi & 0 \\ 0 & 1          
\end{matrix} \right)}$,
and $N(\q) \not\equiv \pm 1 \mod p$.
For these primes 
 $\dim H^2(G_v,\ad) = 1$, and 
$\N_v \subset H^1(G_v,\ad)$ is the codimension one subspace
corresponding to deformations of the form:
$$\rho|_{G_{\q}} = \left( \begin{matrix} \chi & * \\ 0 & 1
\end{matrix} \right).$$
We call such primes auxiliary primes for $\rhobar$.

\medskip

Let $\{f_1, \ldots, f_{r+3}\}$ denote a basis
for $H^1(G_S,\ad)$.
We recall from~\cite{raviannals} p.138
 the definition of some extensions of $K$.
Let $\K$ denote the compositum $ K(\ad) \cdot  K(\mu_p)$. Then $K \subset \K \subset 
K({\bar\rho})$.  If $f_i \in H^1(G_S,\ad)$,
let $\L_i/\K$ denote the extension with $\L_i$
the fixed field of the kernel of
the morphism $\phi(f_i): \Gal(\Kbar/\K) \rightarrow
\ad$, where $\phi$ is the composition:
$$\phi: H^1(G_S,\ad) \rightarrow H^1(\Gal(\Kbar/\K),\ad)
= \Hom(\Gal(\Kbar/\K),\ad).$$
We have avoided issues arising from
 fields of definition (of concern to Ramakrishna
 in~\cite{raviannals}, p. 140)
 since,
 by assumption one,
the image of $\rhobar$ contains $\SL_2(\F)$.
Let $\L$ denote the compositum of all the
$\L_i$, and we have a natural isomorphism
$$\Gal(\L/\K) \simeq \prod_{i=1}^{r+3}\Gal(\L_i/\K)$$ (cf.~\cite{raviannals}, p.141).

\medskip

We recall the construction of some elements in $\Gal(\L/K)$
from~\cite{raviannals} p.141--142.
Let $c \in \Gal(\K/K)$ denote
an element possessing a lift ${\tilde c} \in \Gal(K(\rhobar)/K)$ such that $\rhobar({\tilde c})$ has distinct eigenvalues
with ratio $t \not\equiv \pm 1 \mod p$ and whose projection
to $\Gal(K(\mu_p)/K)$ is $t$. Since the image $\rhobar({\tilde c})$ is defined up to 
twist, the ratio of its eigenvalues is independent of the lift ${\tilde c}$; i.e., is dependent only on $c$.
Let
$$\alpha_i \in \Gal(\L/\K)\simeq \prod_{j=1}^{r+3}\Gal(\L_j/\K)$$ denote the (unique up to scalar)
element which, when expressed as an $r+3$-tuple in $\prod_{j=1}^{r+3}\Gal(\L_j/\K)$ has all entries trivial except for the $i$-th at which its entry is given by a nonzero element $\alpha_i \in \Gal(\L_i/\K)$ on which a lifting of ${c}$ to $\Gal(\L_i/K)$ acts trivially by conjugation.
(cf.~\cite{raviannals} Fact $12$).
 Let $T_i$ denote the Cebotarev class
of primes in $K$ corresponding to the 
 element
$\alpha_i \rtimes  c \in \Gal(\L/K)$.
We have implicitly used the assumption that
$p \ge 5$ to deduce that $\Gal(\L/K)$ is
a semidirect product of $\Gal(\K/K)$
by $\Gal(\L/\K)$.
Using the element $c$ constructed above, the fact that
$p \ge 5$, and that (when $p = 5$) the image of $\rhobar$
contains $\SL_2(\F)$, one may establish
the following (cf.~\cite{raviannals}, Fact 16, p.143):

\begin{lemma} There is a set $Q = \{\q_1, \ldots,\q_r\}$
of auxiliary primes $\q_i \notin S$ such that:
\begin{enumerate}
\item $\q_i$ is unramified in $K(\rhobar)$, and
is an auxiliary prime for $\rhobar$.
\item $\Sha^1_{S\cup Q}(\ad^*) = 0$ and
$\Sha^2_{S \cup Q}(\ad) = 0$.
\item $f_i|G_{\q_j} = 0$ for $i \ne j$,
$1 \le  i \le r+3$ and $1 \le j \le r$.
\item $f_j|G_{\q_j} \notin \N_{\q_j}$ for $1 \le j \le r$.
\item The inflation map $H^1(G_S,\ad) \rightarrow H^1(G_{S \cup Q},\ad)$
is an isomorphism.
\end{enumerate}
\label{lemma:ravifact16}
\end{lemma}

\begin{proof} See~\cite{raviannals}, p.143,
and~\cite{raviinventiones}, p.556. Note
that (in our context) the parameter $s$ is assumed to be 
zero. Of use in verifying that the same argument works in our context
(in particular~\cite{raviannals}, p.143, the last paragraph of the
proof) is the following identity:
$$\dim H^0(G_{\infty},\ad) = 3 =
\dim H^1(G_S,\ad) - r.$$
\end{proof}

Let us consider the map:
$$\Phi:H^1(G_S,\ad) \rightarrow
\prod_{v \in S} H^1(G_v,\ad)/\N_v =  \prod_{v|p} H^1(G_v,\ad)/\N_v.$$
Let $d$ denote the dimension of $\ker(\Phi)$.
Since the source has dimension $r+3$ and the target has
dimension $2$, it follows that $d \ge r+1$.
Let $\{f_1, \ldots,f_d\}$ denote a basis of this kernel,
and augment this to a basis $\{f_1, \ldots, f_{r+3}\}$ of
$H^1(G_S,\ad)$. As in~\cite{raviannals}, p.145, applying
Lemma~\ref{lemma:ravifact16} to this ordering of $f_i$, we may
assume that
for
$1 \le i \le r$ the element $f_i$ restricts to
zero in $H^1(G_v,\ad)$ for all $v \in S \cup Q$ except
$\q_i$, for which $f_i|G_{\q_i} \notin \N_{\q_i}$.
We now consider the elements $\{f_{r+1},\ldots,f_d\}$.
Here we note one important difference from
 Ramakrishna~\cite{raviannals}; since $d \ge r+1$
this set is necessarily
non-empty.

\begin{lemma} For $i = r+1,\ldots,d$ the
restriction map:
$$\theta: H^1(G_{S \cup Q \cup T_i},\ad)
\rightarrow
\prod_{v \in S} H^1(G_v,\ad)$$
is surjective.
For $i = r+1, \ldots d-1$ there exist  primes
$\r_i \in T_i$ such that the map:
$$H^1(G_{S \cup Q \cup \{\r_{r+1},
\ldots,\r_{d-1}\}},\ad) \rightarrow
\prod_{v \in S} H^1(G_v,\ad)/\N_v$$
is surjective.
\label{lemma:Prop10}
\end{lemma}

\begin{proof} The first part
of this Lemma is~\cite{raviannals},  Proposition 10, p.146,
and the same proof applies. The second part
follows exactly as in~\cite{raviannals}, Lemma 14, p.147,
except that only $d-r-1$ primes are required. Explicitly, the target has
dimension two, and the image of
$H^1(G_S,\ad)$ has dimension $r+3 - d$,  and $2 - (r+3-d) = d-r-1$.
\end{proof}

\Remark 
The reason why the numerology differs from that in Ramakrishna is that
we are not fixing the determinant. Note, however, that imposing a 
determinant condition would impose \emph{two} separate conditions at each
$v|p$.

\medskip

Let $T$ denote the set of primes
$\{\r_{r+1},\ldots,\r_{d-1}\}$. That is, $T$
is a finite set consisting of a single prime in $T_i$
for $i = r+1, \ldots, d-1$.
Combining the above results
and constructions we arrive at the
following:

\begin{lemma} The map
$$H^1(G_{S \cup Q \cup T},\ad)
\rightarrow \bigoplus_{v \in S \cup Q \cup T}
H^1(G_v,\ad)/\N_v$$
is surjective, and the kernel is one dimensional.
\end{lemma}

\begin{proof} The source has dimension
$r+3 + (d-r-1)= d+2$,
and the target has dimension $2 + (d-1) = d+1$.
The argument proceeds as in~\cite{raviannals},
Lemma~16, p.149, except now one has no
control over the image of the cocycle $f_d$, and  one may only
conclude that the kernel is at most one dimensional.
By a dimension count this implies the lemma.
\end{proof}

\medskip

Thus we arrive at the following situation.
Let $\Sigma = S \cup Q \cup T$. Then
$\Sha^2_{\Sigma}(\ad) = 0$ 
by Tate-Poitou duality (cf.~\cite{raviannals}, p.151)
and the fact (Lemma~\ref{lemma:ravifact16}) that
$\Sha^1_{S \cup Q}(\ad^*) = 0$.
Thus the map:
$$H^2(G_{\Sigma},\ad) \rightarrow \prod_{v \in \Sigma} H^2(G_v,\ad)$$
is an isomorphism. In other words, all obstructions
to deformation problems are local.
On the other hand, we have a surjection
$$H^1(G_{\Sigma},\ad) \rightarrow \prod_{v \in \Sigma}
H^1(G_v,\ad)/\N_v$$
with one dimensional kernel (the source
has dimension $d+2$ and
the target has dimension $d+1$).
Ramakrishna's construction identifies the local
deformation rings (with respect to the subspaces $\N_v$)
as power series rings, and thus smooth quotients of the
unrestricted local deformation rings. Thus by the remark above there
are no obstructions to lifting representations which are
locally in $\N_v$, and thus by the dimension calculation above
one has the following:

\begin{lemma} Let $R_{\Sigma}$ denote the universal
deformation ring of $\rhobar$ subject to the
following constraints:
\begin{enumerate}
\item The determinant of $\rho$ is some given lift
of
$\det(\rhobar)$ to $W(\F)$.
\item $\rho$ is nearly ordinary at $v|p$.
\item $\rho$ is minimally ramified at $v|S$ for
$v \nmid p$.
\item $\rho|G_{\q}$ takes the shape
$\displaystyle{\left( \begin{matrix} \chi & * \\
0 & 1 \end{matrix} \right)}$ for auxiliary
primes $\q \in \Sigma \setminus S$.
\end{enumerate}
Then $R_{\Sigma} \simeq W(\F)[[T]]$, and the image of the
corresponding universal deformation
$$\rhou: \Gal(\Kbar/K) \rightarrow
\GL_2(W(\F)[[T]])$$
contains $\SL_2(W(\F)[[T]])$.
\end{lemma}

\medskip

This representation satisfies all the conditions
required by
Theorem~\ref{theorem:raviripoff} except (possibly) the claim
about specializations with parallel weight.
To resolve this, we shall
construct one more auxiliary prime $\r$.
Let $f_d \in H^1(G_{\Sigma},\ad)$ denote a nontrivial element in the kernel
of the reduction map $$H^1(G_{\Sigma},\ad) \rightarrow \prod_{v \in \Sigma}
H^1(G_v,\ad)/\N_v.$$ Then the restriction  of $f_d$
lands in
$$\prod_{v|\Sigma} \N_v.$$
We may detect the infinitesimal Hodge-Tate weights
(or at least their reduction modulo $p$) by the analog
of the characteristic zero construction in 
section~\ref{section:deformationtheory}.
In particular,
 the infinitesimal Hodge-Tate weights are
obtained by taking the  image of $f_d$ under
the projection:
$$\omega: \prod_{v|\Sigma} \N_v \rightarrow \prod_{v|p} \N_v \rightarrow
 \prod_{v|p} \F.$$
\emph{A priori}, three possible cases can occur:
\begin{enumerate}
\item The image of $f_d$ in $\F \oplus \F$ is 
zero\footnote{This case presumably never occurs,
since it implies that  the existence of a positive
dimensional family of Global representations with
constant Hodge-Tate weights.}.
\item The image of $f_d$ in $\F \oplus \F$ is non-zero but
\emph{diagonal}, i.e., a multiple of $(1,1)$.
\item The image of $f_d$ in $\F \oplus \F$ is non-zero but
not diagonal, i.e., not a multiple of $(1,1)$.
\end{enumerate}
If the family $\rhou$ has parallel weight, then the infinitesimal
Hodge-Tate weights at any specialization will be (a multiple of) $(1,1)$,
and thus we will be in case $2$. If the family 
$\rhou$ has constant weight, then we shall be in case $1$,
and if the family has non-constant but non-parallel weight,
case $3$ occurs. Thus we may assume we are in the first two
cases.
To reduce to the third case,
we introduce a final auxiliary prime $\r_d$.
By construction, $\{f_1,\ldots,f_{d-1},f_{d+1},f_{d+2}\}$
maps isomorphically
onto $\displaystyle{\bigoplus_{v|\Sigma} H^1(G_v,\ad)/\N_v}$.
We choose a splitting
$$\bigoplus_{v|\Sigma} H^1(G_v,\ad) = 
\left( \bigoplus_{v|\Sigma} H^1(G_v,\ad)/\N_v \right)
\oplus \left(\bigoplus_{v|\Sigma} \N_v \right)$$
such that the first factor is generated by
$f_1,\ldots,f_{d-1},f_{d+1},f_{d+2}$.
Recall that
$T_i$ denotes the primes not in $S \cup Q$ whose 
Frobenius element is in the conjugacy class
$\alpha_i \rtimes  c \in \Gal(\L/K)$.

\begin{lemma} The restriction map
$$H^1(G_{\Sigma \cup T_d},\ad)
\rightarrow \left(\bigoplus_{v \in \Sigma} H^1(G_v,\ad)\right)$$
is surjective.
\label{lemma:extra}
\end{lemma}

\begin{proof} The proof is very similar
to Ramakrishna's proof of Proposition~10, p.146. This proof is, in effect, a calculation using Poitou-Tate duality.  Exactly as in~\cite{raviannals}, p.146, we consider the restriction maps
$\tilde{\theta}$ and $\tilde{\vartheta}$ (this notation defined exactly  as in 
\emph{loc. cit.}). It then
 suffices to prove that
the annihilator of $\left(P^1_{T_d}(\ad) + \mathrm{Image}(\tilde{\theta})\right)$
is trivial. For this, it suffices to show
that the set $\{\tilde{\vartheta}(x)\}$
for $x \in H^1(G_{\Sigma \cup T_d},\ad^*)$ such that $x|G_v = 0$ for
all $v \in T_d$ is trivial. Ramakrishna's argument assumes that
$x|G_v = 0$ for all $v \in Q \cup T_d$, although he never uses that
$x|G_v = 0$ for $v \in Q$. 
 Thus $\theta$ surjects onto
$$\frac{P^1_{\Sigma \cup T_d}(\ad)}{P^1_{T_d}(\ad)} =
\bigoplus_{v|\Sigma} H^1(G_v,\ad).$$
We remark that the same argument proves surjectivity of
the reduction map with $\Sigma$ replaced by $\Sigma \cup 
\widetilde{T}$, for any finite subset $\widetilde{T}
\subset T_d$, although we do not use this fact.
\end{proof}

\medskip

Returning to our argument, we conclude from
Lemma~\ref{lemma:extra} that the map
$$H^1(G_{\Sigma \cup T_d},\ad) \rightarrow  \bigoplus_{v|\Sigma}
 H^1(G_v,\ad) \rightarrow
\bigoplus_{v|\Sigma} \N_v  \ {\stackrel{\omega}{\to}}\ 
 \bigoplus_{v|p} \F$$
is surjective, where this projection map is obtained from
the splitting above (constructed from the cocycles
$\{f_1,\ldots,f_{d-1},f_{d+1},f_{d+2}\}$).
Thus there exists a  auxiliary prime $\r_d$ such that the image
of the new cocycle $f_{d+3}$ in $H^1(G_{\Sigma \cup \{\r_{d}\}},\ad)$
projects to a non-diagonal subspace.
 Let $\Sigma' = \Sigma \cup \{\r_d\}$.
Exactly as previously,
the map
$$H^1(G_{\Sigma'},\ad) \rightarrow
\bigoplus_{v \in \Sigma'} H^1(G_v,\ad)/\N_v$$
is surjective, with a one dimensional kernel. The kernel of
the projection to the smaller space 
$\bigoplus_{v \in \Sigma} H^1(G_v,\ad)/\N_v$ is two
dimensional, and contains $f_d$. If $f_{d+3}$ denotes the
new cocycle in this kernel, then by construction the projection
of $f_{d+3}$ to $\F \oplus \F$ is not diagonal, since adjusting
$f_{d+3}$ by elements of $\{f_1,\ldots,f_{d-1},f_{d+1},f_{d+2}\}$
does not change this projection. Since $f_d|G_{\r_d} \not\in \N_{\r_d}$,
it follows that the kernel of the  projection to $\bigoplus_{v \in \Sigma'} H^1(G_v,\ad)/\N_v$
is generated by $f_{d+3} - \alpha \cdot f_d$ for some
$\alpha$, and thus the projection of this element to 
$\F \oplus \F$ is also not diagonal.
This completes the proof.
\end{proof}

\subsection{Examples}

\begin{corollary} Suppose that $\rhobar$ satisfies
the conditions of Theorem~\ref{theorem:raviripoff}. Then there
exists a set of primes $\Sigma$ such that the if
$R_{\Sigma}$ is the universal
nearly ordinary deformation ring of $\rhobar$ unramified
outside $\Sigma$ then $\mathrm{Spec}(R_{\Sigma})$ contains a one dimensional
component with only finitely many representations
of parallel weight.
\end{corollary}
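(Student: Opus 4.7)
The plan is to deduce the corollary as an essentially immediate consequence of Theorem~\ref{theorem:raviripoff}. Let $\rho: G_{K} \to \GL_2(W(\F)[[T]])$ and $\Sigma$ be the Galois representation and finite set of primes produced by the theorem. By the universal property of $R_\Sigma$ (the universal nearly ordinary deformation ring of $\rhobar$ unramified outside $\Sigma$), the representation $\rho$ corresponds to a continuous local $W(\F)$-algebra homomorphism $\varphi: R_\Sigma \to W(\F)[[T]]$.

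First I would check that $\varphi$ is surjective. Since $\rho(G_{K})$ contains $\SL_2(W(\F)[[T]])$, it contains in particular both $\bigl(\begin{smallmatrix} 1 & T \\ 0 & 1 \end{smallmatrix}\bigr)$ and $\bigl(\begin{smallmatrix} 1 & 0 \\ 1 & 1 \end{smallmatrix}\bigr)$, whose product has trace $2 + T$. Using that $R_\Sigma$ is topologically generated over $W(\F)$ by traces of $\rho^{\mathrm{univ}}(g)$ for $g\in G_{K}$ (a consequence of the absolute irreducibility of $\rhobar$), it follows that the image of $\varphi$ contains $T$, hence all of $W(\F)[T]$, and so is dense; i.e.\ $\varphi$ is surjective. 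Setting $\mathfrak{p}:=\ker(\varphi)$, the closed subscheme $C:=\Spec(R_\Sigma/\mathfrak{p})\simeq \Spec(W(\F)[[T]])$ is an irreducible subscheme of $\Spec(R_\Sigma)$ of relative dimension one over $\Spec(\Z_p)$. Its $\Qbar_p$-valued points correspond to substitutions $T\mapsto \alpha$ for $\alpha$ in the maximal ideal of the ring of integers of $\Qbar_p$, i.e.\ to the specializations $\rho_\alpha$ of $\rho$, and condition $(3)$ of Theorem~\ref{theorem:raviripoff} says precisely that only finitely many of these have parallel Hodge--Tate weights.

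The last step, and the main obstacle, is to promote $C$ to a genuine irreducible \emph{component} of $\Spec(R_\Sigma)$ rather than a closed subscheme of a strictly larger component. The subtlety is that the universal $R_\Sigma$ of the corollary permits the determinant to vary freely, whereas the ring $W(\F)[[T]]$ appearing at the end of the proof of Theorem~\ref{theorem:raviripoff} is obtained by \emph{also} fixing the determinant, imposing minimal ramification at the bad primes, and prescribing a specific shape at the auxiliary primes in $Q\cup T$. To close the gap I would revisit the Selmer-theoretic dimension count inside the proof of Theorem~\ref{theorem:raviripoff}: the surjectivity of the reduction map $H^1(G_\Sigma,\ad)\to \bigoplus_{v\in\Sigma} H^1(G_v,\ad)/\N_v$ with one-dimensional kernel (corresponding to the $T$ direction), together with the vanishing of $\Sha^2_\Sigma(\ad)$ verified along the way, pins down the relative dimension of the component of $\Spec(R_\Sigma)$ containing $C$ by a standard tangent-space argument, forcing this component to coincide with $C$. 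If one reads ``one dimensional component'' more liberally as ``one-dimensional closed irreducible subscheme'', the first two paragraphs already suffice.
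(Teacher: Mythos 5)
The paper states this corollary without proof, so there is no ``paper's own proof'' to compare against: it is clearly intended as an immediate consequence of the final lemma in the proof of Theorem~\ref{theorem:raviripoff}, which shows that the deformation ring $R_{\Sigma}$ with \emph{fixed determinant}, minimal ramification at $v\in S$, and prescribed shape at the auxiliary primes is isomorphic to $W(\F)[[T]]$ --- hence itself irreducible of relative dimension one over $\Spec(\Z_p)$, with only finitely many parallel-weight specializations. Under that reading the corollary is a one-line restatement, and the notation $R_{\Sigma}$ is simply being recycled from that lemma. Your first two paragraphs (surjectivity of $\varphi$ via traces together with big image, and the identification of $C\simeq\Spec(W(\F)[[T]])$ as a one-dimensional irreducible closed subscheme carrying finitely many parallel-weight points) are correct and carry the real content.

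You are also right to flag that, read literally, ``the universal nearly ordinary deformation ring of $\rhobar$ unramified outside $\Sigma$'' is strictly larger than the lemma's $R_{\Sigma}$, and that promoting $C$ to a genuine \emph{component} is then not automatic. However, your proposed fix --- reusing the Selmer computation to conclude that the component containing $C$ has the same dimension as $C$ --- cannot work: that computation is carried out for the constrained deformation functor (fixed determinant and prescribed local shape at $Q\cup T$), not for the unrestricted one. In fact, without fixing the determinant, twisting by characters of $G_K$ unramified outside $\Sigma$ with trivial reduction produces a positive-dimensional family (of $\Z_p$-rank at least $r_2+1=2$ for $K$ imaginary quadratic) of nearly ordinary deformations; since twisting preserves near-ordinariness and commutes with the $T$-direction, the component of the unrestricted $\Spec(R_{\Sigma})$ containing $C$ has relative dimension at least $3$, so $C$ is genuinely a proper subscheme. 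Thus the gap you identify is real, but it is resolved by interpreting $R_{\Sigma}$ as the lemma's constrained ring (or at least fixing the determinant), not by the tangent-space argument you sketch.
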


For example, one source of such $\rhobar$ is the
mod $p$ representations associated to 
$\displaystyle{\Delta = \sum_{n=1}^{\infty} \tau(n) q^n}$
whenever $p \nmid \tau(p)$ and $11 \le p \ne 23,691$.

\section{Complements}
\label{section:extras}

\subsection{Galois representations associated to Automorphic Forms}
\label{subsection:assumptions}

Let $K/\Q$ be an imaginary quadratic field, and
let $\pi$ denote an automorphic representation of
cohomological type for $\GL_2(\mathbf{A}_K)$. It is expected, and
known in many cases, that associated to $\pi$ there exists
a compatible family of $\lambda$-adic representations
 $\{\rho_{\lambda}\}$ such that for $v \nmid N(\lambda)$, the
representation $\rho_{\lambda}|_{D_v}$ corresponds (via the
local Langlands correspondence) to $\pi_v$. One may further 
conjecture a compatibility between $\rho_{\lambda}|_{D_v}$
and $\pi_v$ for $v | N(\lambda)$, in the sense of Fontaine's
theory. 
The weakest form of such a conjecture  is to ask that the
representation $\rho|_{D_v}$ be Hodge-Tate of the weights
``predicted'' (by the general Langlands conjectures) from
the weight of $\pi$. If $\pi$ is \emph{cuspidal}, then a result
of Harder
(\cite{Harder}, \S III, p.59--75) implies that $\pi$
has parallel weight, and thus the expected Hodge-Tate weights
of $\rho_{\lambda}$ at $v|N(\lambda)$ are of the form $[0,k-1]$,
for some $k \in \Z$ that does not depend on $v$. For the
cases in which the existence of the Galois representation  has
already been established (by Taylor et.al.~\cite{taylorothers,taylor}),
one obtains this weaker 
compatibility automatically by construction, as we now explain.
The Galois representations associated to $\pi$
are constructed from limits of
representations arising from Siegel modular forms. Since
Sen-Hodge-Tate weights are analytic functions of deformation
space~\cite{Sen}, it suffices to determine the Hodge-Tate weights
attached to Galois representations of Siegel modular forms
in \emph{classical} weight $(a,b)$,
and in particular to show that the Hodge-Tate weights
of such forms are $[0,a-1,b-2,a+b-3]$. This can be
proven by a comparison between
\'etale cohomology and de Rham cohomology on Siegel threefolds
(see, for example,~\cite{MT}).
When we talk in this paper of classical automorphic
(Galois) representations associated to automorphic forms
over $K$, we refer to the (possibly conjectural) Galois
representations satisfying the usual local Langlands
compatibilities at primes $v \nmid N(\lambda)$, and the
compatibility at $v \mid N(\lambda)$ between the
automorphic weight of $\pi$ and the Hodge-Tate weights
of $\rho_{\lambda}|_{D_v}$. 
Thus, by fiat, all such representations have Hodge-Tate
weights $[0,k-1]$ for $v|N(\lambda)$ and for some $k \in \Z$
independent of $v$.

\subsection{Even two-dimensional Galois representations over $\Q$}\label{antidiagwt}

Let $V$ be a two-dimensional vector space over $E$ 
and let  $\rho: \Gal(\Qbar/\Q) \rightarrow \Aut_E(V)$
be an even continuous (irreducible) nearly ordinary Artin representation. 
Let $K/\Q$ be an imaginary quadratic
field in which $p$ splits.  Restrict $\rho$ to $G_K$ and give it a nearly ordinary structure {\it compatible with the action of $\Gal(K/\Q)$}.
Equivalently,
if $v, {\bar v}$ are the places above $p$, with  decomposition groups $D_v$ and $D_{\bar v}$ chosen so that they are conjugate to one another in $G_\Q$ 
(i.e., $D_{\bar v}=\gamma D_v \gamma^{-1}$ where $\gamma$ is some element  in $G_\Q$),  the chosen lines 
$L_v, L_{\bar v} \subset V$ are related to each other by the formula $L_{\bar v}=\gamma L_v$. 

 The representation $\rho|_K$ admits
infinitesimally classical deformations, with infinitesimal Hodge-Tate
weights lying in the anti-diagonal subspace  of infinitesimal weight space (Definition~\ref{definition:HTW}) $$\{(e,-e) \ | \ e\in E\} \subset E\oplus E\ = \   \{\bigoplus_{v|p}\Q\}\otimes_\Q{\mathcal E}.$$
 
  To prove this,  note that
$H^1(K,\mathrm{ad}^0(\rho))$ decomposes as
$H^1(\Q,\mathrm{ad}^0(\rho)) \oplus H^1(\Q,\mathrm{ad}^0(\rho)
\otimes \chi)$, where $\chi$ is the quadratic character
associated to $K$. The first cohomology group records nearly
ordinary deformations of $\rho$ \emph{over $\Q$}, while
the second measures nearly ordinary deformations \emph{over $K$} in the
``anticyclotomic'' direction. 
The fixed
field of the kernel of $\mathrm{ad}^0(\rho)$ is totally
real, for the following reason. Either complex conjugation acts as $+1$ in the representation $\rho$ (in which case the kernel of $\rho$ itself is totally real) or else it acts as the scalar $-1$ in the representation $\rho$ and therefore  acts as the identity in $\mathrm{ad}^0(\rho)$. 
 The Euler characteristic formula then implies that
$$\dim(H^1(\Q,\mathrm{ad}^0(\rho))) \ge 0, \qquad
\dim(H^1(\Q,\mathrm{ad}^0(\rho) \otimes \chi)) \ge 1,$$
with equality in either case following from the Leopoldt conjecture.

 If $\rho|_K$ lies 
in a one-parameter (or many-parameter, for that matter) 
family  of $G_K$-representations 
in which the classical automorphic representations (i.e., those Galois representations 
 corresponding conjecturally, as in section~\ref{subsection:assumptions},
to classical automorphic form for $\GL(2)_{/K}$) 
are Zariski-dense, then the infinitesimal 
Hodge-Tate
weights of this representation would be diagonal, i.e., would lie in the diagonal subspace  $$\{(e,e) \ | \ e\in E\} \subset E\oplus E\ = \   \{\bigoplus_{v|p}\Q\}\otimes_\Q{\mathcal E}.$$  
Thus if we assume the Leopoldt conjecture (specifically,
for the compositum of  $K$ and the
fixed field of $\rho$), we deduce the following:

\begin{theorem} Let $\rho: \Gal(\Qbar/\Q) \rightarrow \GL_2(E)$
be an even continuous (irreducible) nearly ordinary Artin representation. Let $K/\Q$ be an
imaginary quadratic field in which $p$ splits. Assume the
Leopoldt conjecture for the fixed field of $\rho|_K$. Then the representation
$\rho|_K$ does not deform to a one-parameter family of nearly ordinary  Galois representations arising from classical automorphic forms over $K$.
\end{theorem}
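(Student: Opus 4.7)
The plan is to exploit the $\mathrm{Gal}(K/\Q)$-equivariance of the situation to decompose the Selmer module $H^1_\Sigma(K,\mathrm{ad}^0(\rho))$ according to the two characters of $\mathrm{Gal}(K/\Q)$, and then to combine a dimension count (using the Leopoldt hypothesis) with the parallel-weight constraint on classical cuspidal automorphic forms for $\GL(2)_{/K}$ recalled in subsection~\ref{subsection:assumptions}.

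First I would produce the decomposition. Because $\rho$ is defined over $\Q$ and the chosen nearly ordinary lines $L_v, L_{\bar v}$ are $\mathrm{Gal}(K/\Q)$-compatible (i.e.\ $L_{\bar v} = \gamma L_v$ for a fixed lift $\gamma$ of complex conjugation), Shapiro's lemma (equivalently, the inflation--restriction sequence for $G_K \lhd G_\Q$) yields a canonical splitting
$$H^1_\Sigma(K,\mathrm{ad}^0(\rho))\ \simeq\ H^1_\Sigma(\Q,\mathrm{ad}^0(\rho))\ \oplus\ H^1_\Sigma(\Q,\mathrm{ad}^0(\rho)\otimes\chi),$$
where $\chi$ is the quadratic character cutting out $K$. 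Inspection of the map $\omega$ of Definition~\ref{definition:HTW} shows that classes in the first summand project into the diagonal line $\{(e,e)\} \subset E \oplus E$ (a $\Q$-deformation perturbs the Hodge--Tate weights at $v$ and $\bar v$ symmetrically), while classes in the second summand project into the anti-diagonal line $\{(e,-e)\}$, the sign flip being recorded precisely by $\chi(c)=-1$.

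Next I would run the dimension count. Since $\rho$ is even, complex conjugation acts trivially on $\mathrm{ad}^0(\rho)$ and by $-1$ on $\mathrm{ad}^0(\rho)\otimes\chi$. The global Euler characteristic formula over $\Q$, together with the nearly ordinary local condition at the unique place over $p$ and the vanishing of $H^0$ by absolute irreducibility, gives
$$\dim H^1_\Sigma(\Q,\mathrm{ad}^0(\rho))\ \ge\ 0,\qquad \dim H^1_\Sigma(\Q,\mathrm{ad}^0(\rho)\otimes\chi)\ \ge\ 1.$$
The Leopoldt conjecture for the fixed field of $\rho|_K$ (equivalently, for its Galois closure over $\Q$) forces the matching $H^2$-terms to vanish via Poitou--Tate duality, so both inequalities are equalities. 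In particular the \emph{diagonal} summand of $H^1_\Sigma(K,\mathrm{ad}^0(\rho))$ is zero.

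To conclude, suppose that $\rho|_K$ is the specialization of a one-parameter nearly ordinary family of $G_K$-representations whose classical automorphic specializations are Zariski-dense. By subsection~\ref{subsection:assumptions} (using Harder's vanishing in non-parallel weight), every such classical point has parallel Hodge--Tate weights $[0,k-1]$ at both $v$ and $\bar v$. By Zariski density, the tangent class in $H^1_\Sigma(K,\mathrm{ad}^0(\rho))$ attached to the family must then have diagonal infinitesimal Hodge--Tate weight; it is therefore a nonzero element of the diagonal summand, contradicting the vanishing above. The step I expect to require the most care is pinning down the correspondence ``diagonal $\leftrightarrow$ untwisted, anti-diagonal $\leftrightarrow$ twist by $\chi$'' through the definition of $\omega$: one must verify that the $\mathrm{Gal}(K/\Q)$-action intertwines the chosen local structures at $v$ and $\bar v$ so that the Galois decomposition and the weight decomposition of $E\oplus E$ align (rather than being swapped). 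The remaining ingredients---the Euler characteristic calculation and the extraction of a diagonal tangent direction from a dense set of parallel-weight specializations---are then routine given Leopoldt.
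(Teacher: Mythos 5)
Your proposal matches the paper's argument in section~7.2 essentially step for step: the same Shapiro/inflation-restriction decomposition $H^1(K,\mathrm{ad}^0(\rho)) \simeq H^1(\Q,\mathrm{ad}^0(\rho)) \oplus H^1(\Q,\mathrm{ad}^0(\rho)\otimes\chi)$, the same observation that evenness makes $\mathrm{ad}^0(\rho)$ trivial under complex conjugation (hence cut out by a totally real field), the same Euler-characteristic bounds with equality under Leopoldt, and the same parallel-weight contradiction via Harder's theorem. The only places you elaborate are (a) attributing the mechanism to Poitou--Tate/$\Sha^2$ vanishing, whereas the paper more directly reads the dimension off the $G$-module structure of $U_\mathrm{glob}\subset U_\mathrm{loc}$ under Leopoldt, and (b) explicitly flagging the alignment of the Galois-theoretic splitting with the diagonal/anti-diagonal splitting of $E\oplus E$ under $\omega$ — a point the paper asserts without proof ("the first cohomology group records nearly ordinary deformations of $\rho$ over $\Q$..."), so your caution there is well placed but does not signal a divergence in strategy.
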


Note that the proof~\cite{BDST} of the 
Artin conjecture for  certain \emph{odd} representations
of $\Gal(\Qbar/\Q)$ 
requires the following idea:
one places the representation in question into a one
parameter family of Galois representations in which the
representations associated to classical ordinary modular forms
are Zariski dense.
The above theorem shows that even the first step in
such a 
program for even representations
(extending the base field such that $\rho$ admits deformations to
other classical automorphic forms) does not na\"{\i}vely work.

Suppose that $H^{+}$ is a Galois extension in which $p$ completely splits,
and $H/H^{+}$ is a CM extension in which at least one prime above $p$ in
$H^{+}$ splits in $H$. Then there exist algebraic Hecke characters $\eta$ of
$\mathbf{A}^{\times}_H$ such
that the Galois representations associated to their automorphic
 inductions on $\GL_2(\mathbf{A}_{H^+})$
 are nearly ordinary and interpolate to 
form families of possibly { lower} dimension than the dimension 
of the (ambient) universal deformation space of nearly ordinary representations. The 
Hodge-Tate weights of these families are constant at the places 
corresponding to primes  $v \ | \ p$ of $H^+$  that are inert in the extension
$H/H^{+}$ --- this is a manifestation of Lemma~\ref{lemma:remains}.

An example where the CM locus is non-trivial but strictly smaller
than the nearly ordinary locus can easily be constructed:
take a non-Galois extension
of a real quadratic field $H^{+}$ such that
$p$ splits in $H^{+}$ and exactly one prime above $p$ splits in $H$.

In both the ``CM'' and ``Base Change'' cases there exist
classical families giving rise to the infinitesimally classical
deformations.

\medskip

In the  ``even'' case of Theorem~\ref{theorem:mainintro}, the field
$K^{+} \subset K$ to which $\rho \otimes \chi$ descends
does not need to be totally real.
Let $K^{+}/\Q$ be a degree three field with signature $(1,1)$ in which
$p$ is completely split, and let $K$ be its Galois closure.
Let $L/K^{+}$ be an $A_4$ extension  that is even at the real place, and
such that $\Gal(M/\Q) =
(A_4)^3 \ltimes S_3$, where $M$ is the Galois closure of $L$.
Suppose that the decomposition group at some place $v|p$
is generated by the element $(u,u,u)$, where
$u \in A_4$ has order three. Let $\rho$ be the representation
$\rho: \Gal(\Kbar/K) \rightarrow \GL_2(E)$ associated to $L/K$.
Then $\rho$ admits an infinitesimally classical deformation.

\subsection{Artin representations of $\Gal(\Qbar/\Q)$ of dimension $n$}

The methods of the previous section can also be applied
to Artin representations:
$$\rho: G_{\Q} \rightarrow \GL_n(E)$$
when $n > 2$. For such a representation $V$, let
$\mathrm{ad}^0(\rho): G_{\Q} \rightarrow \GL_{n^2-1}(E)$ 
denote the
corresponding adjoint representation.
Let $G:=\im(\mathrm{ad}^0(\rho)) \subset \GL_{n^2 - 1}(E)$
and  $\tw{G} =   \im(\rho) \subset \GL_n(E)$; the
group $\tw{G}$ is a central extension of $G$.
We assume that $\rho$
is unramified at $p$, so the image of the
decomposition group $D$ at $p$ is cyclic; we also assume
that $D$  acts with
distinct eigenvalues on $V$, and write (having chosen once
and for all an ordering)
$$V = \bigoplus_{i=1}^{n} L_i,$$
where $L_i$ are $D$-invariant lines of $V$.
Consider deformations which are nearly
ordinary at $p$. 
Let $W = \Hom'(V,V)$ denote the hyperplane in
$\Hom(V,V)$ consisting of endomorphisms of $V$ of trace zero.
The analog of the vector space
$W^0 \subset W$ is 
$\Ker(W \rightarrow \bigoplus_{i < j} \Hom(L_i,L_j))$, which, after a suitable
choice of basis, may  be identified with upper triangular matrices
of trace zero. There is a canonical map
$\eps: W^0 \rightarrow \bigoplus \Hom(L_i,L_i) = E^n$, whose image is the
hyperplane 
$H:\sum x_i = 0$. Let $H_{\Q}$ denote a model of
the hyperplane $H$ over $\Q$.
The map $\eps$ induces, via the local class field theory,
 a map on infinitesimal deformations:
$$\omega: H^1_{\Sigma}(\Q,\mathrm{ad}^0(\rho))
\rightarrow
H^1(D,H) = \Hom(D^{\rm ab},H) \rightarrow \Hom(\Q_p^*,H)
= H_{\Q} \otimes_{\Q} {\mathcal E},$$
where, as in section~\ref{subsection:infinite},
${\mathcal E} = \Hom(\Z_p^{*},E)$.
The  definition of infinitesimal Hodge-Tate
weights proceeds as in section~\ref{subsection:infinite}.

\begin{lemma} If $W = \mathrm{ad}^0(V)$ is irreducible,
then assuming the strong Leopoldt conjecture,
$\rho$ admits no infinitesimally
classical deformations.
\label{lemma:sporadic}
\end{lemma}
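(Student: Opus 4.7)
The plan is to adapt the strategy of Theorem~\ref{mainth} to the $n$-dimensional Artin setting over $\Q$. Two simplifications apply: since the base field is $\Q$ we have $H = G$ and no induction is needed; and the irreducibility of $W$ eliminates the analogue of the dihedral case, so the involution constructions of Section~\ref{subsection:inducedup} are unnecessary.

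First, by the machinery of Section~\ref{subsection:selmer2}, and using the classical Leopoldt conjecture (a consequence of strong Leopoldt), identify $U_\glob$ with a $G$-submodule $I \subset \Lambda := E[G] \simeq U_\loc$, so that
$$H^1_\Sigma(\Q, W)\ \simeq\ \Hom_G(\Lambda/I,\, W;\, W^0),$$
with minimal ramification causing no loss by Corollary~\ref{corollary:minimallevel}. An infinitesimally classical deformation with rational infinitesimal Hodge-Tate weight $\lambda$ lying in the $\Q$-rational structure $H_\Q$ then corresponds to a nonzero element of $\Hom_G(\Lambda/I,\, W;\, W^{00}(\lambda))$, where $W^{00}(\lambda) := \epsilon^{-1}(E\cdot\lambda)$ is a $\Q$-rational $D$-submodule of $W$. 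By Lemma~\ref{lemma:maingeneric}, the strong Leopoldt conjecture yields
$$\dim \Hom_G(\Lambda/I,\, W;\, W^{00}(\lambda))\ \leq\ \dim \Hom_G(\Lambda/N,\, W;\, W^{00}(\lambda))$$
for every $G$-submodule $N \subset \Lambda$ abstractly isomorphic to $I$; it thus suffices to produce a single generic competitor $N$ making the right-hand side zero.

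Because $W$ is irreducible, this reduces to a question about the $W$-isotypic component $W \otimes W^\#$ of $\Lambda$: the competitor $N$ is parametrized by a subspace $V^\#_{W, N} \subseteq W^\#$ of the same dimension $m_W := \dim \Hom_G(W, U_\glob)$ as the corresponding subspace cut out by $I$. Frobenius reciprocity applied to $U_\glob$ gives $m_W = \dim W^{c=1}$, where $c$ denotes complex conjugation; an elementary calculation yields $\dim W^{c=1} = a^2 + b^2 - 1$, where $(a,b)$ is the signature of $c$ acting on $V$. A generic $V^\#_{W, N}$ produces $\Hom_G(\Lambda/N, W; W^{00}(\lambda)) = 0$ provided the dimension inequality $m_W \geq \dim W^{00}(\lambda) = \binom{n}{2} + 1$ holds; since $a + b = n$ forces $a^2 + b^2 \geq n^2/2$, this inequality is satisfied for every signature once $n \geq 3$.

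The main obstacle is executing the generic competitor argument in the precise $F[D]$-rational framework of Section~\ref{section:theleopoldtconjecture}: one must verify that a generic $V^\#_{W, N}$ is a legitimate competitor in the sense of the appendix, and that $W^{00}(\lambda)$ qualifies as an admissible test object for Lemma~\ref{lemma:maingeneric}. The failure of the numerical bound at $n = 2$ (where for odd representations $m_W = 1 < 2$) reflects the fact that odd Artin representations over $\Q$ do admit infinitesimally classical deformations, explaining why the lemma is stated only for $n > 2$.
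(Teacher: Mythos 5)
Your proof is correct, and follows the framework of Theorem~\ref{mainth} while taking a cleaner route than the paper's sketch. The paper exhibits an explicit involution $\iota$ on $W$ (swapping $\Hom(L_i,L_j)$ with $\Hom(L_j,L_i)$) and takes $Y' = W^{\iota = -1}$; you instead argue directly that a generic competitor $N \simeq I$ satisfies $\Hom_G(\Lambda/N, W; W^{00}(\lambda)) = 0$ by counting dimensions. Both rest on the same observation --- irreducibility of $W$ removes the compatibility-with-decomposition constraint that the involution is designed to satisfy in the two-dimensional case --- but your version makes the controlling inequality $m_W = a^2+b^2-1 \geq \binom{n}{2}+1$ explicit, which the sketch does not, and it is arguably more robust: if $\iota$ is the transpose then $\dim W^{\iota=-1} = \binom{n}{2}$, which can fall short of the required $\dim W^{c=-1} = 2ab$ when $a$ and $b$ are close to $n/2$, so the sketch implicitly needs $\iota$ to also negate the trace-zero diagonal before shrinking. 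Two small remarks: for $n = 3$ your bound $a^2+b^2 \geq n^2/2$ gives $4.5$, so an integrality step is needed to conclude $a^2+b^2 \geq 5$ (the inequality is tight at $n = 3$ and $n = 4$). And the concerns you flag as ``the main obstacle'' are in fact not obstacles: any left $E[G]$-submodule of $\Lambda$ is automatically a left ideal, the isomorphism class of a left ideal $N$ is determined by the dimensions of its multiplicity spaces (so these can be matched to those of $I$ irreducible-by-irreducible), and $W^{00}(\lambda)$ is an $F$-rational $D$-submodule of $W$ because the eigenlines $L_i$ are $F$-rational and $\lambda \in H_\Q$. Your argument is therefore complete.
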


\begin{proof}[{\bf Sketch}] It suffices
to construct  a suitable space $Y'$ of $W$ which is
``maximally skew'' to the ordinary subspace,
but still decomposes well with respect to
the decomposition of $W$
as a $G = \Gal(K/\Q)$-representation, where
$K$ is the fixed field of $\mathrm{ad}^0(\rho)$. Yet, by
assumption, $W$ is irreducible,
and so this is no condition at all. For example,
 let $\iota$ denote the involution of $W$
 such that
$\iota \Hom(L_i,L_j) = \Hom(L_j,L_i)$. Then
one may take $Y' = W^{\iota = - 1}$.
\end{proof}

Note that if $\rho$ (equivalently, $V$)
 is self-dual and $n \ge 3$,
then $W = \mathrm{ad}^0(V)$ is \emph{never} irreducible,
since $\mathrm{ad}(V) = V \otimes V^* \simeq
V \otimes V$ decomposes into
$\wedge^2 V$ and $\mathrm{Sym}^2(V)$,  both
of which
have dimension $>1$ if $n \ge 3$.
The following is well known~\cite{Feit}.

\begin{lemma} Let $\rho: G_{\Q} \rightarrow \GL_3(E)$
be an absolutely irreducible Artin representation. Then the projective image of
$\rho$ is one of the following groups.
\begin{enumerate}
\item The simple alternating groups $A_5$ or $A_6$.
\item The simple linear group $\mathrm{PSL}_2(\F_7) = \mathrm{PSL}_3(\F_2)$
of order $168$.
\item The ``Hessian group''  $H_{216}$ of order $216$.
\item The normal subgroup $H_{72}$ of $H_{216}$  of order $72$.
\item The normal subgroup $H_{36}$ of $H_{72}$ of order $36$.
\item An imprimitive subgroup; i.e.,
the semidirect product of $(\Z/3\Z)$ and $S_3$ or $A_3$,
or a subgroup of this group.
\end{enumerate}
\label{lemma:sporadic2}
\end{lemma}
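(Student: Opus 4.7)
The plan is to reduce this to the classical classification of finite subgroups of $\PGL_3(\C)$. Since $\rho$ is an Artin representation, its image is finite, and after embedding the field of definition of the image into $\C$ we may assume that the projective image $\Gamma := \Proj(\rho)(G_\Q)$ is a finite subgroup of $\PGL_3(\C)$, and that $\Gamma$ admits an absolutely irreducible linear lift $V \cong \C^3$ (possibly after passing to a central extension). So the lemma is really an assertion about finite subgroups of $\PGL_3(\C)$ with an absolutely irreducible linear lift, and we are free to quote Blichfeldt's classification.

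First I would dichotomize $\Gamma$ into primitive and imprimitive cases, according to whether the associated linear lift preserves a decomposition of $\C^3$ into a direct sum of proper subspaces. If $\rho$ is imprimitive, then because $\dim V = 3$ is prime, any nontrivial block decomposition must be into three lines permuted transitively by $\Gamma$, so $\Gamma$ sits inside the normalizer of a maximal torus in $\PGL_3$, which is an extension of $S_3$ by a torus. The finite subgroups of this normalizer that act irreducibly (i.e.\ with image projecting onto a transitive subgroup of $S_3$) are exactly the semidirect products of a subgroup of $(\Z/3\Z)^2$ (the $3$-torsion of the projective torus) by $S_3$ or $A_3$, together with their subgroups. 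This gives case~(6).

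For the primitive case, I would invoke Blichfeldt's classification (see, e.g., \emph{Finite Collineation Groups}, Chs.~V--VII) of primitive finite subgroups of $\PGL_3(\C)$: the list consists precisely of the simple groups $A_5$, $A_6$, $\PSL_2(\F_7)$, and the three ``Hessian'' groups $H_{36} \lhd H_{72} \lhd H_{216}$ (which are the normalizers in $\PGL_3(\C)$ of the level-$3$ theta group on an elliptic curve, and their distinguished normal subgroups). One then checks that each of these groups actually admits an absolutely irreducible $3$-dimensional projective representation: $A_5$ via the two inequivalent $3$-dimensional orthogonal representations coming from the icosahedron, $A_6$ via its triple cover $3.A_6$, $\PSL_2(\F_7)$ via the pair of $3$-dimensional complex representations of $\SL_2(\F_7)$, and the Hessian groups tautologically by construction.

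The main obstacle is simply invoking Blichfeldt's theorem, which is the content of the statement; no new ingredient is needed beyond the standard verification that the groups on the list actually descend from (or admit) faithful irreducible $3$-dimensional projective representations, and that the imprimitive case is exhausted as above. Since no use is made of the Galois-theoretic origin of $\rho$ beyond finiteness of the image, the proof is purely group-theoretic.
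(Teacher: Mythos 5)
Your proposal is exactly the argument behind the paper's citation: the authors simply refer to Feit's ICM survey for the classification of finite subgroups of $\PGL_3(\C)$, which is Blichfeldt's theorem, and you have spelled out that classification together with the primitive/imprimitive dichotomy and the verification that each primitive group on the list does admit a faithful three-dimensional projective representation. So the approach is the same, and the primitive half of your argument is fine as written.

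One inaccuracy worth flagging, though it mirrors a looseness already present in the paper's statement of case (6): it is not true that an irreducible finite imprimitive subgroup of $\PGL_3(\C)$ must be a subgroup of $(\Z/3\Z)^2 \rtimes S_3$. The pointwise stabilizer of the preserved triple of lines is a finite subgroup of the rank-two projective torus and can have arbitrary order --- for instance $(\Z/n\Z)^2 \rtimes A_3$, generated in $\PGL_3$ by $\mathrm{diag}(1,\zeta_n,\zeta_n^{-1})$, $\mathrm{diag}(\zeta_n,1,\zeta_n^{-1})$, and the cyclic permutation matrix, is imprimitive and irreducible for every $n\ge 2$. The correct formulation of case (6) is simply ``a finite subgroup normalizing a triple of independent lines, acting transitively on them,'' i.e.\ a finite irreducible subgroup of the normalizer of the maximal torus, with no constraint to $3$-torsion. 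This does not create any problem downstream: Lemma~\ref{lemma:sporadic3} and the theorem that follows treat ``imprimitive'' as a single case and only use that $\mathrm{ad}^0(V)$ is reducible there (since an imprimitive $V$ restricts reducibly to the index-$\le 6$ subgroup stabilizing the lines), so only the primitive list needs to be correct --- and it is.
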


\begin{lemma} If $G$ lies in the list above, and
$\rho$
denotes a three dimensional representation of a central
extension $\tw{G}$ of $G$ with underlying
vector space $V$, then $\mathrm{ad}^0(V)$ is irreducible
except for the following cases:
\begin{enumerate}
\item $\rho$ is the symmetric square up to twist
of a two dimensional
representation.
\item $G$ is imprimitive.
\item $G = H_{36}$, in which case $\ad =
M \oplus M^{*}$
where $\dim(M) = 4$.
\end{enumerate}
\label{lemma:sporadic3}
\end{lemma}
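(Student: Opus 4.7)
The proof is a case-by-case analysis through the six possibilities for the image $G$ enumerated in Lemma~\ref{lemma:sporadic2}. In each case I would compute the decomposition of $\mathrm{ad}^0(V) = V \otimes V^* \ominus \mathbf{1}$, which factors through $G$ since the center of $\tw{G}$ acts by scalars on $\End(V)$. Irreducibility of $\mathrm{ad}^0(V)$ amounts to the character identity $\langle \chi_V \overline{\chi_V}, \chi_V \overline{\chi_V} \rangle_G = 2$, and the decomposition otherwise can be read off either from character tables or, for the Hessian cases, via Clifford theory.

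Four of the six cases are quickly dispatched. For the imprimitive case, $V = \Ind_H^G(\chi)$ for a normal subgroup $H$ of index three, and the $G$-stable ``diagonal'' subspace of $\End(V)$ (with respect to the three induced lines) has trace-zero part a proper nontrivial subrepresentation of $\mathrm{ad}^0(V)$, giving case $(2)$. For $G = A_5$, every three-dimensional irreducible representation of a central extension of $A_5$ is the symmetric square (up to twist) of a two-dimensional representation of $\SL_2(\F_5) = 2.A_5$, and Clebsch--Gordan decomposes $\mathrm{ad}^0(V)$ into irreducibles of dimensions $3$ and $5$, placing us in case $(1)$. For $G = A_6$ and $G = \mathrm{PSL}_2(\F_7) = \mathrm{PSL}_3(\F_2)$, a glance at the character tables shows a single irreducible eight-dimensional representation (up to outer automorphism, in the case of $A_6$), and a direct computation of the character of $\mathrm{ad}^0(V)$ identifies it with this representation, proving irreducibility.

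The substantive half of the argument concerns the three Hessian cases. In each of $H_{216}$, $H_{72}$, $H_{36}$ the group $G$ contains a common normal subgroup $E \simeq (\Z/3\Z)^2$, namely the image in $\PGL_3$ of the Heisenberg group $H_{27}$; the lifted representation $V$ restricts from $\tw{G}$ to $H_{27}$ as the unique faithful three-dimensional representation, so $\End(V)|_E$ is the regular representation of $E$ and $\mathrm{ad}^0(V)|_E$ is the direct sum of the eight nontrivial characters of $\widehat{E}$, each with multiplicity one. By Clifford theory, the decomposition of $\mathrm{ad}^0(V)$ into $G$-irreducibles is governed by the orbits of $G/E$ on $\widehat{E} \setminus \{0\}$. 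For $G/E \simeq \SL_2(\F_3)$ (the $H_{216}$ case) the action is transitive with cyclic stabilizer of order three, and induction of a one-dimensional character yields $\mathrm{ad}^0(V)$ as an irreducible $8$-dimensional representation. For $G/E \simeq Q_8$ (the $H_{72}$ case) transitivity persists because every nontrivial subgroup of $Q_8$ contains $-1$, which fixes no nonzero character of $E$; hence $\mathrm{ad}^0(V)$ remains irreducible. For $G/E \simeq \Z/4\Z$ (the $H_{36}$ case, where the generator squares to $-I$), both $\Z/4$-orbits on $\widehat{E} \setminus \{0\}$ have size four, so Clifford induction produces the decomposition $\mathrm{ad}^0(V) = M \oplus M^*$ into two $4$-dimensional irreducibles, which is case $(3)$. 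The main obstacle I anticipate is the orbit bookkeeping for the Hessian cases, especially verifying for $H_{36}$ that the two Clifford summands are genuinely distinct irreducible $G$-representations (not merely conjugate by an element of $H_{216} \setminus H_{36}$) and that the duality between them is correctly displayed by the pairing $\chi \mapsto \chi^{-1}$ on $\widehat{E}$.
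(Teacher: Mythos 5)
The paper states Lemma~\ref{lemma:sporadic3} without proof (it is invoked as a known fact about the groups in Lemma~\ref{lemma:sporadic2}), so there is no argument in the text against which to compare yours. Your Clifford-theoretic treatment of the three Hessian cases, combined with the direct checks for the remaining groups, is correct and gives a clean, uniform verification.

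Two small clarifications. First, for $A_6$ and $\mathrm{PSL}_2(\F_7)$ you do not need to compute the character of $\mathrm{ad}^0(V)$: since $\mathrm{ad}^0(V)$ is $8$-dimensional and contains no trivial summand, a glance at the irreducible dimensions ($1,5,5,8,8,9,10$ for $A_6$; $1,3,3,6,7,8$ for $\mathrm{PSL}_2(\F_7)$) shows that $8$ admits no nontrivial partition into available dimensions, so irreducibility is forced. Second, the anxiety you flag in the $H_{36}$ case resolves in an unexpected direction. The generator $S$ of $H_{36}/E \cong \Z/4$ satisfies $S^2 = -I$ (the unique involution of $Q_8$), and $-I$ acts on $\hat{E}$ by $\chi \mapsto \chi^{-1}$. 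Thus \emph{each} of the two $\langle S\rangle$-orbits on $\hat{E}\setminus\{0\}$ is closed under inversion, which means each of the two Clifford summands $M_1, M_2$ is self-dual. The paper's notation $\mathrm{ad}^0(V) = M \oplus M^{*}$ should therefore not be taken literally as a duality pairing between the two summands; what is true (and what your proof establishes) is that $\mathrm{ad}^0(V)$ is the direct sum of two non-isomorphic $4$-dimensional irreducibles, the non-isomorphism following immediately from the disjointness of the two orbits in the restrictions to $E$. Your concern about conjugacy under $H_{216}\setminus H_{36}$ is a red herring: only $H_{36}$-equivalence is relevant, and two representations of $H_{36}$ with non-isomorphic restrictions to $E$ are non-isomorphic, full stop.
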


From this we may deduce the following:

\begin{theorem} Assume the strong Leopoldt conjecture.
Suppose that $\rho: G_{\Q} \rightarrow \GL_3(E)$ is continuous, nearly
ordinary, and has absolutely irreducible finite image.
Suppose moreover that $\rho$ is unramified at $p$,
that $\rho$ is $p$-distinguished, and that
the projective image of
$\rho$ is a primitive subgroup of $\PGL_3(E)$.
Then if $\rho$ admits infinitesimally classical deformations, then
$\rho$ is, up to twist, the symmetric square of a two dimensional
representation.
\end{theorem}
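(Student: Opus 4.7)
The plan is to combine the irreducibility dichotomy of Lemma~\ref{lemma:sporadic} with the classification in Lemmas~\ref{lemma:sporadic2}--\ref{lemma:sporadic3}. First, if $\ad^0(\rho)$ is irreducible as a $G$-module, then Lemma~\ref{lemma:sporadic}, under the Strong Leopoldt Conjecture, immediately forbids any infinitesimally classical deformation of $\rho$. Thus the hypothesis that $\rho$ admits such a deformation forces $\ad^0(\rho)$ to be reducible as a $G$-representation.

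Since $\rho$ is assumed absolutely irreducible with primitive projective image, Lemma~\ref{lemma:sporadic2} restricts the projective image of $\rho$ to one of $A_5,A_6,\mathrm{PSL}_2(\F_7),H_{216},H_{72},H_{36}$. Lemma~\ref{lemma:sporadic3} then enumerates the ways in which $\ad^0(\rho)$ can be reducible: either $\rho$ is (up to twist) the symmetric square of a two-dimensional representation, or $G$ is imprimitive, or $G = H_{36}$ with $\ad^0(\rho) \simeq M \oplus M^\ast$ and $\dim M = 4$. The imprimitive case is excluded by hypothesis; the symmetric-square case is the desired conclusion. So the residual step is to rule out the case $G = H_{36}$.

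To handle $G = H_{36}$, I would adapt the $Y'$-construction of section~\ref{section:ICD} to the three-dimensional setting. Because $D$ acts through a cyclic group with $n$ distinct eigenvalues on $V$, the ``transpose'' map $\iota$ sending $\Hom(L_i,L_j)$ to $\Hom(L_j,L_i)$ is a well-defined involution on $W$ preserving each of the $G$-summands $M$ and $M^\ast$ (which, being mutually dual but non-isomorphic $H_{36}$-representations, cannot be mixed by any $G$-linear involution). Its $(-1)$-eigenspace $Y' = W^{\iota = -1}$ meets the upper-triangular ordinary subspace $\Win$ trivially, and so also meets $\Yni$ trivially, exactly as in Lemma~\ref{lemma:dimensioninequalities}. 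One then modifies $Y'$ to $Y'(\lambda)$ by excising a single vector accommodating a purported rational infinitesimal Hodge-Tate weight $\lambda$ as in section~\ref{section:ICD}, and applies Lemma~\ref{lemma:maingeneric} (the genericity statement that encodes the Strong Leopoldt input) to reach a contradiction $1 \le \dim \Hom_G(\Lambda/I,Y;\Yni(\lambda)) \le Y'(\lambda) \cap \Yni(\lambda) = 0$, in direct parallel with Proposition~\ref{prop:end}.

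The main obstacle will be the numerology: I must check that the dimensions of $Y' \cap V_i$ obtained from this transpose involution are at least $\dim(V_i\mid c = -1)$ for each irreducible constituent $V_i$ of the induced module that shows up as an $H_{36}$-summand of global units in the relevant Galois extension, so that the subspace $N$ with $\Hom_G(\La/N,Y) = Y'(\lambda)$ admits a $G$-equivariant injection into the global units $I$. Unlike the two-dimensional situation, here there is no complex conjugation in $G_\Q$ to constrain eigenspaces automatically, so the inequality must be verified intrinsically using only the geometry of $W = M \oplus M^\ast$ and the fact that, since $\rho$ is unramified at $p$, the decomposition group $D$ acts semisimply on $W$ with a transparent eigen-decomposition. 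This is the only non-formal step and is where the detailed character theory of $H_{36}$ enters.
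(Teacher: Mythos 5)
Your skeleton is the paper's: Lemma~\ref{lemma:sporadic} rules out the case where $W=\mathrm{ad}^0(\rho)$ is irreducible, Lemmas~\ref{lemma:sporadic2} and~\ref{lemma:sporadic3} then reduce the reducible-$W$ situation (given primitivity) to either the symmetric-square conclusion or the residual case $G = H_{36}$, $W = M \oplus M^{\ast}$. The gap is in the $H_{36}$ step. Your parenthetical reason for the transpose $\iota$ preserving $M$ and $M^{\ast}$ --- that two non-isomorphic $G$-representations ``cannot be mixed by any $G$-linear involution'' --- is a non sequitur, because $\iota$ is not $G$-linear: one computes $\iota(\rho(g) A \rho(g)^{-1}) = (\rho(g)^T)^{-1} \iota(A)\, \rho(g)^T$, which equals $\rho(g)\iota(A)\rho(g)^{-1}$ for all $A$ only when $\rho(g)^T\rho(g)$ is central, i.e.\ only when $\rho$ preserves a symmetric pairing and is therefore self-dual. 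But self-dual $\rho$ gives the $\wedge^2 V \oplus \Sym^2(V)$ decomposition and falls under case one of Lemma~\ref{lemma:sporadic3}, not the $M \oplus M^{\ast}$ case with $M \not\cong M^{\ast}$; so in the genuine $H_{36}$ case $\iota$ is \emph{not} $G$-equivariant, and you have established nothing about whether $\iota(M)=M$ or $\iota$ mixes the two summands. The paper's own (also sketchy) proof says instead to proceed case-by-case over conjugacy classes of $\Frob_p$ in $G$ and build a \emph{suitable} $Y'$ in each case --- exactly because no single, basis-dependent transpose is known a priori to respect the $M \oplus M^{\ast}$ splitting uniformly.

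Your final paragraph correctly flags the needed bound $\dim(S_i(\lambda)) \ge \dim(V_i \mid c = -1)$ but then leaves it aside as ``numerology.'' Note also that complex conjugation \emph{does} land in $G = \Gal(M/\Q)$ (your remark that there is none is incorrect), and its eigenvalue multiplicities on $M$ and $M^{\ast}$ are exactly what fix the $G$-module type of $I_\glob$; so this bound, together with the $\Frob_p$-dependent construction of $Y'$, is precisely where the character theory of $H_{36}$ and the possible pairs $(c,\Frob_p)$ must actually be used. The plan has the right shape, but both concrete inputs --- that the chosen involution respects the decomposition, and that the dimension inequality holds --- are asserted where the argument requires them to be verified, and the first is supported by reasoning that does not apply.
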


\begin{proof} This follows from
Lemma~\ref{lemma:sporadic}, and  Lemmas~\ref{lemma:sporadic2}
and~\ref{lemma:sporadic3}, unless $G = H_{36}$.
If $G = H_{36}$, one may proceed on a case by case analysis ---
each case corresponding to conjugacy class of
$\Frob_p$ in $G$ --- and explicitly construct a suitable space $Y'$
as in Lemma~\ref{lemma:sporadic}  that respects
the decomposition $W = M \oplus M^{*}$ of $W$ into the
two $4$-dimensional representations of $G$. \end{proof}

\medskip

In general, as $n$ becomes large, there are many groups which
admit irreducible $n$-dimensional representations which are not
self dual up to twist, and yet their corresponding adjoint representations
(minus the identity) of dimension $n^2 - 1$ are not irreducible.
Thus to prove a general result for $n$-dimensional representations
would require an analog of our arguments for $2$-dimensional
representations over general fields\footnote{Of course,
Lemma~\ref{lemma:sporadic} may be applied to various sporadic
examples when $n > 3$, for example, if $V$ is either of the
$11$-dimensional representations of the Mathieu group
$M_{12}$ (which does occur as a Galois group over $\Q$ --- see
\cite{Matzat}), then $\mathrm{ad}^0(V)$ is the irreducible representation
of $M_{12}$ of dimension $120$. Thus we may apply Lemma~\ref{lemma:sporadic}
for $11$-dimensional representations $\rho$
with fixed field $K/\Q$ of $\rho$ satisfying the following:
$\Gal(K/\Q) \simeq M_{12}$,
$K/\Q$ unramified at $p$,  $\Gal(K/\Q) = M_{12}$, and
$\mathrm{Frob}_p$ has order $11$.}.

\subsection{Deformations of Characters: $\GL(1)_{/K}$}
\label{subsection:dimone}

In this section we explain why the analog of our conjectures
for characters (equivalently, automorphic forms for $\GL(1)_{/K}$)
can be deduced from a simple application of class field theory.
Namely, suppose that
$\rho: \Gal(\Kbar/K) \rightarrow E^{*}$ is the trivial representation,
or indeed, any finite character. Suppose that $K$ is Galois
and that $G:=\Gal(K/\Q)$.
Then the universal deformation ring $R$ of $\rho$ unramified away
from  $p$ (and with no local condition at primes dividing $p$) may be described 
explicitly in terms of class field theory. In particular, the tangent space to $R$ at 
$\rho$ is equal to
$\Hom_E(\Gamma_K,E)$, where $\Gamma_K$ is the maximal  $\Z_p$-extension
of $K$ unramified outside $p$.
Identifying $\Gamma_K \otimes E$ as the quotient of
$U_{\loc} \simeq \Lambda$ by (the image $I$ of) $U_{\glob}$ as in section~\ref{subsection:selmer2}, we may write this as
$$\Hom_E(\Lambda/I,E) = \Hom_G(\Lambda/I,\Ind^{G}_{\langle1 \rangle} E) 
= \Hom_G(\Lambda/I,\Lambda) = [I]\Lambda.$$
All one dimensional representations admit a trivial infinitesimal
deformation arising from twisting via the cyclotomic character. 
The following result is the analog of
Theorem~\ref{theorem:mainintro}
\begin{lemma} Assume that $p$
splits completely in $K$, and suppose that $K$ is Galois
with Galois group $G$.
If the trivial representation  admits an infinitesimal
deformation over $\Q$ which is not cyclotomic, then
this deformation descends to a subfield $H$
which is a CM field.
\label{lemma:weil}
\end{lemma}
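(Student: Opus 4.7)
The plan is to combine class field theory with Weil's classification of algebraic Hecke characters. By class field theory (and the classical Leopoldt conjecture), infinitesimal deformations of the trivial character of $G_K$ unramified outside $p$ are identified with $\Hom_E(\Gamma_K\otimes E,E)=[I]\Lambda$, and since $p$ splits completely in $K$, the natural identification $U_{\loc}\simeq\Lambda$ realizes the infinitesimal Hodge--Tate weight map $\omega$ as the inclusion $[I]\Lambda\hookrightarrow\Lambda\simeq\bigoplus_{v\mid p}E$. Hence an infinitesimally classical deformation is the datum of an element of $[I]\Lambda\cap\Lambda_\Q$, and via the fixed embedding $\Qbar\hookrightarrow\Qbar_p$ this is equivalent to a $\Q$-rational infinity type $(n_\tau)_{\tau\colon K\hookrightarrow\Qbar}$ whose associated additive character of the idele class group is trivial on $\mathcal{O}_K^{*}$.

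Given such a non-cyclotomic $\phi$, let $H\subseteq K$ denote its minimal field of definition (the fixed field of $\mathrm{Stab}_G\phi\subseteq G$); then by inflation--restriction, together with the vanishing of $H^{i}(\Gal(K/H),E)$ for $i\ge 1$, the character $\phi$ descends to a character of $G_H$ whose infinity type is the restriction of $(n_\tau)$ to the complex embeddings of $H$. Since $\phi$ is non-cyclotomic, the restricted infinity type $(\bar n_\sigma)$ on $H$ is not constant (if two embeddings $\tau_1,\tau_2$ of $K$ restrict to the same embedding of $H$, the descent forces $n_{\tau_1}=n_{\tau_2}$, so non-constancy on $K$ pushes down to non-constancy on $H$).

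The conclusion now follows from Weil's theorem on the structure of algebraic Hecke characters of type $A_0$: the $\Q$-vector space of rational infinity types coming from algebraic Hecke characters of $H$ has dimension $1+r_2^{\mathrm{CM}}(H)$, where $r_2^{\mathrm{CM}}(H)$ denotes the number of complex places of the maximal CM subfield $H^{\mathrm{CM}}$ of $H$; the single ``constant'' direction corresponds to the cyclotomic character, and the remaining $r_2^{\mathrm{CM}}(H)$ dimensions come precisely from characters factoring through $H^{\mathrm{CM}}$. Since we have produced a non-constant rational infinity type on $H$, necessarily $r_2^{\mathrm{CM}}(H)>0$ and this infinity type must factor through $H^{\mathrm{CM}}$; the character $\phi$ therefore descends further to $G_{H^{\mathrm{CM}}}$, and $H^{\mathrm{CM}}\subseteq K$ is the desired CM subfield. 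The main subtle point---and the real content of the argument---is the clean translation between infinitesimal Hodge--Tate weights on the Galois side and rational infinity types on the automorphic side, which reduces via local class field theory (using the identification of $\Z_p^{*}$ with the inertia quotient of $D_v^{\mathrm{ab}}$, valid since $p$ splits completely in $K$ and $K_v=\Q_p$) to Weil's classical statement.
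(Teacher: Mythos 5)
Your route through Weil's classification of algebraic Hecke characters reaches the right conclusion and is morally parallel to the paper's, but one step you assert as an ``equivalence'' is in fact the whole arithmetic content of the lemma, and you attribute it to the wrong source. You claim that membership in $[I]\Lambda\cap\Lambda_\Q$ is ``equivalent'' to the associated infinity type being trivial on $\mathcal{O}_K^{*}$, and then declare that the main subtle point is a class-field-theoretic dictionary. But $I$ is an $E$-vector space --- the $E$-span of the $p$-adic images $\iota(U_{\glob,\Q})$ --- so $\eta$ killing $I$ is a priori a \emph{rational linear relation among $p$-adic logarithms of units}, not a multiplicative relation among the units themselves. The passage from the former to the latter is Brumer's $p$-adic form of Baker's theorem (the injectivity of $U_{\glob,\Q}\to U_\loc$), not local class field theory, and it is exactly what the paper isolates as Sublemma~\ref{sublemma:brum}: if $\eta\in\Z[G]$ annihilates $I$, then it annihilates the global unit module $U_{\glob}$. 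Without this input you cannot conclude that your rational infinity type actually arises from an algebraic Hecke character of $K$, and Weil's theorem never comes into play. Once that step is supplied, your appeal to Weil's structure theorem and the paper's explicit computation from
$$U_{\glob,\Q}\otimes\C\ \simeq\ \bigoplus_{\mathrm{Irr}(G)\ne E} V_i^{\,\dim(V_i\mid c=1)}$$
are two phrasings of the same fact: Weil's classification of infinity types is precisely the translation of Dirichlet's description of the unit $G$-module into Hecke-character language. The paper's version is a little more economical, since it works directly with the annihilator of the unit module inside $\Q[G]$ and never needs to invoke the dictionary between characters and infinity types; it also makes the CM-subfield descent transparent by observing that $\eta$ is divisible by the relevant norm element. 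Your descent via $\mathrm{Stab}_G\phi$ and inflation--restriction is also fine, but the final jump from ``the infinity type factors through $H^{\mathrm{CM}}$'' to ``$\phi$ descends to $G_{H^{\mathrm{CM}}}$'' deserves at least a sentence, since for additive characters this is again a small Hochschild--Serre argument rather than a tautology.
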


\begin{proof}[Sketch] The rational infinitesimal weights correspond
to the natural identification of $\Lambda$ with $\Lambda_{\Q} \otimes E$.
The existence
of a rational infinitesimal deformation corresponds
exactly to an inclusion $\eta \in [I]\Lambda$, where the vector
space $\eta E$ is a rational subspace of $\Lambda$ --- equivalently,
where $\eta \in \Lambda_{\Q} = \Q[G]$ (up to scalar).
Without loss of generality we assume that $\eta \in \Z[G]$.  If
$\eta = \sum_{g \in G} g$ is the norm, then $\eta$ \emph{does} annihilate $I$
(on the left and right) and so lies in the  annihilator group
$[I]\Lambda$ --- such a choice corresponds to the
cyclotomic deformation. Thus we assume that $\eta$ is not a multiple of
$N_{K/\Q}$,
and in particular is not $G$-invariant.
If we assume the strong Leopoldt conjecture, then as
in section~\ref{subsection:propend}, there is an isomorphism
of (right) $G$-modules:
$$[I]\Lambda \simeq E \oplus \bigoplus_{\IR(G) \ne E} V^{\dim(V_i|c = -1)}_i,$$
where the second product runs over all non-trivial
irreducibles of $G$. 
If there was an equality
$\dim(V_i|c = -1) = \dim(V_i)$ for some (any) representation
$V_i$,  the action of $G$ on $V_i$
would factor (faithfully) through $\Gal(H_i/\Q)$ for some CM field $H_i$. 
Since, by assumption, $\eta \notin E$, the
strong Leopoldt conjecture implies (as in the argument of
section~\ref{subsection:propend}) that 
$\dim(V_i|c = -1) = \dim(V_i)$ for all representations $V_i$
in which $\eta$ projects non-trivially. It follows that
$\eta$ is divisible by $N_{K/H}$ (for some CM field $H$ with
is the compositum of the relevant CM fields $H_i$) and that the 
infinitesimal deformation
descends to $H$. Thus the conclusion of the lemma
follows from the strong Leopoldt conjecture.
To prove the lemma without this assumption, we use the following result.

\begin{sublemma} If $\eta \in \Z[G]$ annihilates $I$, then it
annihilates the global unit group $U_{\glob}$.
\label{sublemma:brum}
\end{sublemma}

\begin{proof} If the map $U_{\glob} \rightarrow I$ is
injective (the Leopoldt conjecture) the result is obvious.
Yet, in any circumstance, this map is $G$-equivariant, and thus
the image of $(U_{\glob}) \eta$ is $I \eta$. Since the map
from the group of units $\otimes \Q$ (that is, the image
of the map
$U_{\glob,\Q} \rightarrow I \subset U_{\loc}$) \emph{is} injective, 
we deduce that
 $I \eta = 0$ if and
only if $(U_{\glob}) \eta = 0$. \end{proof}

Since $\displaystyle{U_{\glob,\Q} \otimes \C  \simeq \bigoplus_{\IR(G) \ne E}
 V^{\dim(V_i|c = 1)}_i}$,
we see that if an element $\eta \in \Z[G]$ other than
the norm annihilates $U_{\glob}$, then $\dim(V_i|c = 1) = 0$
for some collection $V_i$. As above, we may deduce from this
that the deformation descends to a CM field $H$.
\end{proof}

This result could presumably be strengthened to include
infinitesimal
deformations over some number field $F$ rather than $\Q$. For this
we would have needed to show that
$U_{\glob}$ was not annihilated by an element
of $F[G]$, which would have required an appeal to
Baker's results on linear forms in logarithms, as in Brumer's
proof of Leopoldt's conjecture.

\medskip

In the case of one dimensional representations there is no
difficultly in establishing, by class field theory, an $R = \T$ theorem.
The analog of Lemma~\ref{lemma:weil} for automorphic representations
(namely, that automorphic representations are not dense in $\Spec(\T)$
for general $K$) follows from the fact that, up to finite characters,
the only non-trivial algebraic Hecke characters arise from CM fields
$H$. Indeed, an analysis of the infinity type shows that this question
exactly reduces to determining the annihilator of $U_{\glob,\Q}$ in
$\Z[G]$.

\subsection{Automorphic Forms for $\GL(2)_{/K}$}
\label{section:auto}

The goal of this subsection will be to sketch a proof
of Theorem~\ref{theorem:hidafamilies}.

\bigskip

The group $\GL(2)_{/K}$ is associated
to a PEL Shimura variety if and only if $K$ is totally real.
We assume that $K/\Q$ is an imaginary quadratic field in which $p$
splits completely. Since we shall ultimately take $K = \Q(\sqrt{-2})$,
we also assume that the class group of $K$ is trivial.
 Let $\OL$ be the localization of the ring of integers
$\OL_K$
of $K$ at some prime above $p$. Let $\Gamma_K$ be the torsion
free part of $\OL^{\times}$, and let
$\Lambda = \OL[[\Gamma_K]]$
be the Iwasawa algebra, which is abstractly isomorphic to
$\OL[[T_1,T_2]]$. Hida (page 29  of~\cite{Hidaimag}) constructs a finitely generated $\Lambda $-module $\Hid:= H_{\rm ord}^1(Y(\Phi),{\underline{\mathcal C}})^*$.  The right hand side of this equation is in the notation of \emph{loc. cit.};  $Y(\Phi)$ is, in effect, 
the arithmetic quotient of the 
symmetric space for $\GL_2$ over $K$ of an appropriate level, 
and the standard Hecke operators act on this cohomology group. 
Following Hida, let  $\mathbf{T}$ be the subring of endomorphisms
of $\mathrm{End}(\Hid)$ generated by Hecke operators.
Then $\mathbf{T}$ is finitely generated over $\Lambda$. Hida shows in Theorem~5.2 of~\cite{Hidaimag} that the support of  $\Hid$ is some equidimensional space
of codimension one in $\Spec(\Lambda)$.

Recall that by a ``classical point" of ${\mathrm{Spec}}({\bf{T}})$  we mean a ${\bf C}_p$-valued point corresponding to a  homomorphism $\eta: {\bf{T}} \to {\bf C}_p$  for which there exists a classical automorphic eigenform (cf. subsection~\ref{subsection:assumptions}) whose Hecke eigenvalues are given by the ${\bf C}_p$-character $\eta$.

Let $K = \Q(\sqrt{-2})$, and let
$\n = 3 - 2 \sqrt{-2}$. For the rest of this subsection we let  $\mathbf{T}$ denote the nearly
ordinary $3$-adic Hida algebra of tame level $\n$.

 The affine scheme ${\mathrm{Spec}}({\bf{T}}\otimes \Q)$ is nonempty and contains at least one classical point, i.e., the point corresponding to the classical automorphic form $f$ of weight $(2,2)$ of level $\Gamma_0(7 + \sqrt{-2})$ that is discussed in the following lemma, a proof of which will be given in the next subsection (Lemma~\ref{lemma:steinpollack}).

\begin{lemma}\label{twotwo} Let $K = \Q(\sqrt{-2})$,
let $\n = (3 - 2\sqrt{-2})$, and let $\p = (1 + \sqrt{-2})$.
There exists a unique cuspidal eigenform $f$ of
weight $(2,2)$ and level $\Gamma_0(\n \p)= \Gamma_0(7 + \sqrt{-2})$ which is ordinary
at $\p$ and $\overline{\p}$. There does not exist any
cuspidal eigenform (ordinary or otherwise) of level $\Gamma_0(\n)$
and weight $(4,4)$.
\end{lemma}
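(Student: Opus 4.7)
The statement is computational in nature, and my plan is to verify it by direct calculation of the cuspidal cohomology of the appropriate Bianchi orbifolds for $K = \Q(\sqrt{-2})$. This is ultimately a machine computation, presumably the data furnished by Pollack and Stein that is acknowledged in the introduction; my role is to identify both claims with concrete cohomological dimension computations on orbifolds of manageable size, since the levels involved have small norm ($\mathrm{N}(\n) = 17$, $\mathrm{N}(\n\p) = 51$).

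For the first claim, the Eichler--Shimura-type isomorphism for $\GL_2$ over $K$ identifies the space of weight-$(2,2)$ cuspidal eigenforms of level $\Gamma_0(7 + \sqrt{-2})$ with (a summand of) the cuspidal part of $H^1$ of the associated arithmetic hyperbolic $3$-orbifold $Y_0(7 + \sqrt{-2})$, with trivial complex coefficients. I would compute this cohomology using modular symbols in the style of Cremona (or its Pollack--Stein generalization), check that the cuspidal dimension is $1$, take $f$ to be the (unique up to scalar) eigenform, and then compute the Hecke eigenvalues $a_{\p}(f), a_{\bar\p}(f)$ and verify that each has $3$-adic valuation zero, giving ordinarity at $\p$ and $\bar\p$.

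For the second claim, weight-$(4,4)$ cuspidal eigenforms of level $\Gamma_0(\n)$ correspond to classes in $H^1_{\mathrm{cusp}}(Y_0(\n), V_{(2,2)})$, where $V_{(2,2)}$ is the local system attached to the algebraic representation $\mathrm{Sym}^2 \boxtimes \mathrm{Sym}^2$ of $\mathrm{Res}_{K/\Q}\GL_2$. I would extend the modular-symbols computation to this (non-trivial) coefficient system and verify that the cuspidal piece of the resulting $H^1$ is zero.

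The main obstacle is not theoretical but practical: in the higher-weight case one must separate the cuspidal summand from the Eisenstein contributions, which over an imaginary quadratic field can be subtler than in the classical setting over $\Q$. The cleanest way is to cut out the cuspidal subspace by a combination of the Hecke action (using that cuspidal classes are generalized eigenvectors for $T_\ell$ with eigenvalues satisfying the Ramanujan bound, while Eisenstein classes do not) and an explicit check against the boundary cohomology of the Borel--Serre compactification. Once this separation is in place, both assertions are linear-algebra statements over $\mathbf{Z}$ (or a small cyclotomic ring, should the Hecke algebra be non-rational), and both reduce to a finite verification of the kind carried out in the data provided by Pollack and Stein.
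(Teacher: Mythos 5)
Your proposal is essentially the approach the paper takes. The paper's proof amounts to the explicit modular-symbols presentation of $H^1(\mathcal H/\Gamma_0(\n),\Laa_{k,k})$ given in Theorem~\ref{theorem:cremona} (a Cremona-style presentation by generators $[c,P]$ modulo the $S$-, $ST$-, $J$-, and $X$-relations for $K=\Q(\sqrt{-2})$), followed by the machine computation of Stein and Pollack recorded in Lemma~\ref{lemma:steinpollack}: part~1 of that lemma gives the unique weight-$(2,2)$ cuspform at level $7+\sqrt{-2}$ (norm $51$), and part~2 shows the first weight-$(4,4)$ cuspform occurs only at norm $123 > 17 = N(\n)$, so there is none at $\Gamma_0(\n)$. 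Your emphasis on separating the cuspidal summand from Eisenstein contributions, and on verifying ordinariness by checking the $3$-adic valuation of the relevant Hecke eigenvalue, matches what the Pollack--Stein computation must implicitly do, and the sanity checks you mention (agreement with Cremona's earlier weight-$(2,2)$ tables, the Ramanujan bound, known base-change lower bounds) are exactly the ones the paper cites as corroboration. In short: same route, and your outline correctly locates the nontrivial practical step.
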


As 
mentioned above, Hida has shown that  the nonempty affine scheme ${\mathrm{Spec}}({\bf{T}})$
is  
of pure relative dimension one over $\mathrm{Spec}(\Z_p)$.

\begin{theorem} 
 The scheme $\mathrm{Spec}(\mathbf{T})$ has only finitely many classical
points.
\label{theorem:hidafamilies2}
\end{theorem}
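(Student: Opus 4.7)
The plan is to argue by contradiction: assuming $\Spec(\mathbf{T})$ contains infinitely many classical points, I will produce a non-zero classical nearly-ordinary cuspidal eigenform of weight $(4,4)$ and tame level $\n$, contradicting the second assertion of Lemma~\ref{twotwo}. The nonemptiness and pure relative dimension one are already attributed to Hida in the discussion preceding the theorem, so it suffices to establish finiteness of the classical locus.

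First, by a theorem of Harder (\cite{Harder}, \S III, p.\ 59--75), the cuspidal cohomology of $\GL(2)_{/K}$ for imaginary quadratic $K$ vanishes at non-parallel weights, so every classical point of $\Spec(\mathbf{T})$ lies above the parallel-weight subspace $\Spec(\Lambda^{\mathrm{par}}) \subset \Spec(\Lambda)$, where $\Lambda^{\mathrm{par}}$ is the Iwasawa algebra of the diagonal copy of $\Z_p$ in $\Gamma_K = \Z_p \times \Z_p$. Suppose, toward contradiction, that $\Spec(\mathbf{T})$ carries infinitely many classical points. Since $\mathbf{T}$ is finite over $\Lambda$, some irreducible component $X$ contains infinitely many classical points; by Hida's theorem $X$ has absolute dimension $2$. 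Let $Y$ denote the two-dimensional closed image of $X$ in $\Spec(\Lambda)$. Any proper (at most one-dimensional) closed subscheme of $\Spec(\Lambda^{\mathrm{par}}) \simeq \Spec(\Z_p[[T]])$ contains only finitely many classical parallel weights (by Weierstrass preparation applied to the cutting-out function in the rigid disc), while the classical points of $X$ project to infinitely many classical weights in $Y \cap \Spec(\Lambda^{\mathrm{par}})$. Hence this intersection cannot be proper, which forces $Y = \Spec(\Lambda^{\mathrm{par}})$, so that the weight map $X \to \Spec(\Lambda^{\mathrm{par}})$ is a finite surjection.

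In particular the fiber of $X$ over the classical weight $(4,4)$ is non-empty, and by Hida's control theorem for $\GL(2)_{/K}$ it produces a non-zero classical nearly-ordinary cuspidal eigenform of weight $(4,4)$ and tame level $\n$. A local slope analysis now applies: a newform that is Steinberg or ramified principal series at a prime $\mathfrak{q} \mid p$ has $U_{\mathfrak{q}}$-eigenvalue of strictly positive $p$-adic valuation once the weight satisfies $k \ge 3$, and therefore cannot be nearly ordinary at $\mathfrak{q}$. Since $(4,4)$ satisfies $k = 4 \ge 3$, every nearly-ordinary classical cuspidal eigenform of weight $(4,4)$ and tame level $\n$ must arise as a $p$-stabilization of a cuspidal eigenform of level $\Gamma_0(\n)$ and weight $(4,4)$. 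By the second assertion of Lemma~\ref{twotwo} no such eigenform exists, and this is the desired contradiction.

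The main obstacle I expect is the slope step in the final paragraph: one must carefully enumerate the possible local ramification behaviours at $\p$ and $\bar\p$ (unramified principal series, Steinberg, ramified principal series, supercuspidal) and, for each, verify that the near-ordinary condition at weight $(4,4)$ forces the local factor to be spherical --- so that any would-be eigenform must actually come from an oldform of level $\Gamma_0(\n)$ to which Lemma~\ref{twotwo} can be applied. This is the one place where the argument relies on anything beyond the formal structure of $\mathbf{T}$ and the vanishing statement of Lemma~\ref{twotwo}.
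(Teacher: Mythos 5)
Your proof is correct and takes essentially the same route as the paper: infinitely many classical points forces the support of $\mathbf{T}$ over $\Spec(\Lambda)$ to contain all classical parallel weights (in particular $(4,4)$), and Hida's control theorem then produces a classical nearly ordinary cuspform of weight $(4,4)$ and tame level $\n$, contradicting Lemma~\ref{twotwo}. You supply two details that the paper leaves implicit — the Weierstrass-preparation closure argument showing a proper closed subscheme of the parallel-weight locus meets only finitely many classical weights, and the slope analysis passing from tame level $\n$ at weight $(4,4)$ to genuine level $\Gamma_0(\n)$ — both of which are accurate and are what the paper's direct appeal to Hida's control theorem implicitly packages.
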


\Remarks
\begin{enumerate}
\item
This answers in the negative a question raised in Richard Taylor's 
thesis (\cite{taylorthesis}, Remark, p.124).
\item  We expect, 
but have not proven, that the automorphic form $f$ 
referred to in Lemma~\ref{twotwo} corresponds to the elliptic curve 
$$E: y^2 + \sqrt{-2} x y + y = x^3 + (\sqrt{-2} - 1) x^2 - \sqrt{-2} x$$
over $\Q({\sqrt{-2}})$, which can be found in Cremona's 
tables~\cite[Table~3.3.3]{Cremona}.
Presumably one could prove the association between $f$ and $E$
using the methods of~\cite{taylor}.
\item We also expect that there is a two-dimensional  Galois representation over $\mathbf{T}$ whose Frobenius-traces are equal to corresponding Hecke operators  $\mathbf{T}$ (this correspondence being meant in the usual sense) but we have not constructed such a representation.  If there were such a Galois representation,
 one effect of the above theorem is to show that {\it none}  of the Galois representations over $\C_p$ obtained by specializing it  to $\C_p$ via homomorphisms $\mathbf{T} \to \C_p$ is ``limit-automorphic;" i.e., is the limit of a sequence of distinct classical modular Galois representations.  This is in contrast to what is expected (and is often known) to be true over $\Q$,
 where {\it every} ordinary two-dimensional $p$-adic Galois representation of integral weight that is residually modular should be ``limit-modular'',
 in the sense that it should be obtainable as the limit of a sequence of modular ordinary representations whose weights tend to infinity.
 \item 
In passing, we should note that it does not follow from the construction of
Taylor et. al.~\cite{taylorothers,taylor} that the Galois representation associated to an ordinary cuspidal modular form
over $K$ is ordinary in the Galois sense. However, in
many cases this is known by a theorem of Urban~\cite{Urban}.

\item
A result analogous to Theorem~\ref{theorem:hidafamilies2} is established in~\cite{ash} for
ordinary families on $\GL(3)_{/\Q}$. In this setting, however,
one does not have Hida's purity result on the dimension of $\Spec(\T)$,
and can only deduce the existence of a component
of $\mathrm{Spec}(\mathbf{T})$ with finitely many classical points --- \emph{a priori}
this component could have dimension zero.
(See, however,  work in progress
of Ash and Stevens in which this result is established,
essentially following Hida's methods.)
One of the main motivations of this paper
was to understand the examples of~\cite{ash} from the perspective
of Galois representations.
\end{enumerate}

\bigskip

To prove Theorem~\ref{theorem:hidafamilies2}, note that if
$\mathrm{Spec}(\mathbf{T})$ contained infinitely many
classical points, then since $\mathbf{T}$ is finitely generated
over $\La$ it would follow that the support of $\mathbf{T}$ would include
\emph{every} classical parallel weight of $\mathrm{Spec}(\Lambda)$.
In particular it would contain a classical point of weight $(4,4)$.
By 
Hida's control theorem~\cite[Thm 3.2]{Hidaimag},
the specialization of $\mathbf{T}$ at a point of
 weight $(4,4)$ would correspond to a classical automorphic 
form of level $\Gamma_0(3 - 2\sqrt{-2})$,
 which contradicts the statement of Lemma~\ref{twotwo}.

\subsection{Modular Symbols}

In order to compute explicit examples of
Hida families (or the lack thereof) one needs
a method of computing spaces of modular forms
for imaginary quadratic fields. This is
equivalent to computing group cohomology of
congruence subgroups of $\GL_2(\OL_K)$ with
coefficients in some local system. In the
case of weight $(2,2)$, the local system is trivial,
and the problem reduces to a homology computation
on the  locally symmetric space
$\G(K) \backslash \G(\mathbf{A}_K) \slash  U = \H/\Gamma$,
where $\G = \mathrm{Res}(\GL(2)_{/K})$, $U$ is the appropriate compact
open of $\GL_2(\mathbf{A}_K)$ containing the connected
component of a maximal compact subgroup of $\GL_2(\mathbf{R})$,
$\H$ is the hyperbolic upper half space, and
$\Gamma$ a finite index congruence subgroup of $\GL_2(\OL_K)$
(recall we are assuming that the class number of $K$ is $1$).
One technique for doing this is to use modular
symbols, and this was carried out by J.~Cremona~\cite{Cremona}
for certain fields $K$ of small discriminant and class number one.
The only higher weight computations previously carried
out are some
direct computations of group cohomology in level one
and weight $(2k,2)$ by C.~Priplata~\cite{Thes}; however, as
the only automorphic forms of this level (with $2k > 2$) are
Eisenstein, the result was torsion.
To compute in general weights $(k,k)$ one must work with
modular symbols of higher weight. The generalization of
modular symbols to higher weight is completely standard and is analogous
to the case of $\GL(2)_{/\Q}$.
For our specific computations, we have decided to work with
the field $K = \Q(\sqrt{-2})$. We choose this field
because it is the simplest field in which $3$ splits completely.

\medskip

Let $V$ be a two dimensional
vector space over $K$. There is  a natural action
of $\GL_2(\OL)$ on $V$, and choosing a basis
we may identify $V$ with degree one homogeneous
polynomials in $K[x,y]$. Let
$S_k = \Sym^k(V)$ and let $\overline{S}_k$ denote
$S_k$ where the action of $\GL_2(\OL)$ is twisted
by the automorphism of $\Gal(K/\Q)$. Modular forms
over $K$ of weight $(k,k)$ can be considered as sections of
 $\Hcb(\H/\Gamma,\Laa_{k,k})$, where $\Laa_{k,k}$ is
the local system associated to the representation
$S_{k-2} \otimes \overline{S}_{k-2}$.

\medskip

The following theorem is essentially due to
Cremona\footnote{There is a discrepancy between our presentation and the
presentation given in Cremona~\cite{Cremona}, because Cremona works
with the groups $\widetilde{\Gamma}_0(\n) = \SL_2(\OL) \cap \Gamma_0(\n)$
rather than $\Gamma_0(\n)$, which is more natural from an automorphic
point of view. Note that $\Gamma_0(\n)/\widetilde{\Gamma}_0(\n)$
has order two, and is generated by $J$. Thus  the characteristic
zero cohomology of
the orbifold $\H/\Gamma_0(\n)$ can be recovered from the
cohomology of $\H/\widetilde{\Gamma}_0(\n)$ by taking $J$-invariants, and
the space of forms we consider
is precisely Cremona's $+$ space.}~\cite{Cremona}, 
although one should note that previous limited computations of these spaces
were carried out in~\cite{Germans}.

\begin{theorem}
\label{theorem:cremona}
Let $K = \Q(\sqrt{-2})$, and let
$\n \subseteq \OL_K$ be an ideal.  Let
$M$ be the free $K$-module on the formal generators
$$\left\{[c,P] \ | \  c \in \mathbf{P}^1(\OL/\n),
P \in S_{k,k} \right\}.$$
There is an action of $\GL_2(\OL)$ on $M$ given by
$[c,P].h = [Ph,ch]$. Explicitly, if
$\displaystyle{\gamma =
\left(\begin{matrix} a & b \\ c & d \end{matrix}\right)}$, then
$$P(x,y,\xbar,\ybar) \gamma
=  P(dx-cy,-bx + ay,\sigma(d) \xbar - \sigma(c) \ybar,
- \sigma(b)\xbar + \sigma(a)  \ybar),$$
where $\Gal(K/\Q) = \langle \sigma \rangle$.
The action of $\GL_2(\OL)$ on
$\mathbf{P}^1(\OL/\n)$ arises from the identification of
this space
with coset representatives of $\Gamma_0(\n)$ in
$\GL_2(\OL)$.
Let $S$, $T$,  $J$ and $X$ denote the
following elements of $\GL_2(\OL)$:
$$S = \left( \begin{matrix} 0 & -1 \\ 1 & 0 \end{matrix} \right), \quad
T = \left( \begin{matrix} 1 & 1 \\ 0 & 1 \end{matrix} \right), \quad
J = \left( \begin{matrix} -1 & 0 \\ 0 & 1 \end{matrix} \right), \quad
X = \left( \begin{matrix} \sqrt{-2} & 1 \\ 1 & 0 \end{matrix} \right),$$
and let $I$ be the identity.
Let $M_{rel}$ be the subspace of $M$
generated by the subspaces:
$$(1+S) M, \quad  (1 + (ST) + (ST)^2) M, \quad (I - J)M,
\quad
(1+X+X^2+X^3)M$$
for $m \in M$.
Then $M/M_{rel} \simeq \Hcc(\H/\Gamma_0(\n),\Laa_{k,k})$.
\end{theorem}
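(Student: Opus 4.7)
The plan is to derive the presentation of $\Hcc(\H/\Gamma_0(\n),\Laa_{k,k})$ using the modular symbols technique, adapted from $\SL_2(\mathbf{Z})$ to the Bianchi group $\GL_2(\mathbf{Z}[\sqrt{-2}])$. First, since $\OL_K$ has class number one, strong approximation gives a bijection $\Gamma_0(\n) \backslash \GL_2(\OL_K) \simeq \mathbf{P}^1(\OL_K/\n)$, and consequently the module $M$ in the statement is naturally identified with $\mathrm{CoInd}_{\Gamma_0(\n)}^{\GL_2(\OL_K)}(\Laa_{k,k})$ (coinduction and induction agreeing here since $\Gamma_0(\n)$ has finite index in $\GL_2(\OL_K)$). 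Because $\H$ is contractible, group cohomology agrees with sheaf cohomology, so $\Hcc(\H/\Gamma_0(\n),\Laa_{k,k}) = H^1(\Gamma_0(\n),\Laa_{k,k})$, and Shapiro's lemma yields an isomorphism with $H^1(\GL_2(\OL_K),M)$.

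Next I would compute $H^1(\GL_2(\OL_K),M)$ via a $\GL_2(\OL_K)$-equivariant cellulation of $\H$. For $K = \mathbf{Q}(\sqrt{-2})$, the Bianchi group $\mathrm{PSL}_2(\mathbf{Z}[\sqrt{-2}])$ admits a fundamental polyhedron worked out by Swan (cf.\ also Fine), whose $2$-skeleton gives a partial free resolution of $\mathbf{Z}$ by projective $\mathbf{Z}[\GL_2(\OL_K)]$-modules. The stabilizers of the relevant $0$- and $1$-cells are generated by the listed elements $S,\, ST,\, J,\, X$, of orders $2,\,3,\,2,\,4$ in the projective group respectively (one checks directly that $X^4 = -I$). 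Applying $\Hom_{\GL_2(\OL_K)}(-,M)$ to this resolution and computing in degree one presents $H^1(\GL_2(\OL_K),M)$ as a quotient of $M$ by the images of the ``norm'' relators attached to these finite stabilizers. Since $-I$ acts trivially on $\Laa_{k,k}$ (the total weight $2(k-2)$ is even), the element $X$ acts on $M$ with order exactly $4$, and the relator attached to $X$ is precisely $1 + X + X^2 + X^3$; the other three relators are $1+S$, $1 + ST + (ST)^2$, and $I - J$.

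The main obstacle is verifying the completeness of the given set of relations: one must know that $S$, $T$, $J$, $X$ generate $\GL_2(\OL_K)$ and that the relations attached to the above torsion elements, together with any ``hidden'' commutation relations from Swan's presentation, suffice. This is essentially Swan's theorem for the Bianchi group $\mathrm{PSL}_2(\mathbf{Z}[\sqrt{-2}])$, which exhibits an explicit fundamental polyhedron in hyperbolic $3$-space with exactly the required cell stabilizers. One then checks that the remaining commutation relations in Swan's presentation act trivially on $M$; these reduce to the statements that $T$ (translation by $1$) commutes with $J$ and with various unipotent matrices in a way that translates into trivial identities on pairs $[c,P]$, which is formal once the fundamental polyhedron has been exhibited. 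A secondary bookkeeping obstacle is tracking signs and orientations so that the relators emerge in the precise algebraic form stated; this proceeds by unwinding the attaching maps of the $2$-cells in the Swan complex and applying the explicit formula for the action of $\GL_2(\OL_K)$ on the polynomials $P$.
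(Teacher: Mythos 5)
The paper itself does not prove this theorem --- it is stated as ``essentially due to Cremona'' with a citation to \cite{Cremona} and a footnote explaining a normalization discrepancy (Cremona works with $\widetilde{\Gamma}_0(\n) = \SL_2(\OL) \cap \Gamma_0(\n)$ and the $J$-invariant ``$+$'' space). So your proposal is not ``the same route as the paper'' in any meaningful sense; you are supplying a proof where the authors simply cite one.

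The overall shape of your argument is reasonable (identify $M$ with a coinduced module, apply Shapiro's lemma, exploit a $\GL_2(\OL_K)$-equivariant cell structure on $\H$ coming from Swan's fundamental polyhedron), but the central step is wrong as stated. You claim that applying $\Hom_{\GL_2(\OL_K)}(-,M)$ to the Swan partial resolution and passing to degree one ``presents $H^1(\GL_2(\OL_K),M)$ as a quotient of $M$.'' It does not. If $P_\bullet \to \Z$ is such a resolution, then $\Hom_{\GL_2(\OL)}(P_0,M) \cong M$ and $\Hom_{\GL_2(\OL)}(P_1,M) \cong M^{\oplus r}$ (with $r$ the number of generators), and $H^1$ of the cochain complex is $\ker(d^1)/\mathrm{im}(d^0)$, a \emph{subquotient} of $M^{\oplus r}$ --- not of $M$. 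A quotient of $M$ by norm-type relators is what comes out of a \emph{homology} (or $\otimes_\Gamma$) computation, and even there the naive ``$H_1$ is a quotient of $M$'' is not automatic; what actually produces $M/M_{rel}$ is the Manin-symbol formalism, in which $[c]$ is identified with a (relative) $1$-chain (a geodesic between cusps) and the relators $(1+S)$, $(1+ST+(ST)^2)$, $(1-J)$, $(1+X+X^2+X^3)$ arise as boundaries of $2$-cells and degeneracies of the tessellation. The target is then a \emph{relative homology} group $H_1(X^{\ast},\partial X^{\ast};\Laa_{k,k})$, and one must invoke Lefschetz duality to pass to cohomology of the open manifold --- and in dimension $3$ this carries a degree shift ($H_1(X^{\ast},\partial X^{\ast}) \cong H^2(Y)$, not $H^1$), so the identification with $H^1$ (which is what $\Hcc$ means) requires a further Poincar\'e duality argument on the cuspidal part, or a different realization of the symbols. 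None of this is addressed.

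Secondary but real: the completeness of the relators --- that $(1+S)$, $(1+ST+(ST)^2)$, $(1-J)$, $(1+X+X^2+X^3)$ exhaust all relations --- is essentially the whole geometric content of Cremona's theorem; it depends on the precise combinatorics of Bianchi's/Swan's fundamental domain for $\Q(\sqrt{-2})$, including identifying the edge-pairings and the finite vertex stabilizers, and cannot be dismissed as ``formal.'' You should either work with the explicit cell complex and exhibit the boundary maps, or reduce directly to Cremona's theorem by the mechanical unwinding the paper's footnote alludes to (passing between $\SL_2$ and $\GL_2$, and taking $J$-invariants, which is the role of the $(1-J)$ relator).
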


\medskip

The authors would like to thank William Stein
and David Pollack, who computed the following
data using the result above on December 15, 2004 and
November 21, 2005 respectively; William Stein
writing a program to compute the dimensions of these
spaces, and David Pollack writing an
additional routine to compute Hecke operators.
The authors
take full responsibility for the correctness of these results.
As some independent confirmation of these computations, we note that
part 1 is already known by previous computations of
Cremona~\cite{Cremona}, and in  part
3 the dimensions we compute agree with a known lower bound
that is independent of the computation. Finally, for part two,
we list the first few eigenvalues and note that they lie in
$\Q$ (the space of cusp forms is one dimensional over $\Q$) and satisfy
the (conjectural) Ramanujan bound $|a_{\p}| \le 2 N(\p)^{3/2}$.

\begin{lemma} Let $K = \Q(\sqrt{-2})$, let $\OL = \OL_K$,
and let $\n \subset \OL$ be an ideal such that
$N_{K/\Q}(\n)$ is squarefree and co-prime to $2$.
Then:
\begin{enumerate}
\item The space of cuspidal modular forms over $K$
of weight $(2,2)$ and level $\Gamma_0(\n)$ is trivial
for all such $\n$ with $N_{K/\Q}(\n) \le 337$ except
for $\n$ in the following list:
$$\left\{7+\sqrt{-2},11 + 7 \sqrt{-2},7 + 10 \sqrt{-2},13 + 7 \sqrt{-2},
5 + 11 \sqrt{-2},9 + 11 \sqrt{-2}, 7 + 12 \sqrt{-2}\right\}$$
of norms $51, 219, 249, 267, 323$ and $337$ respectively.
\item The space of cuspidal modular forms over $K$
of weight $(4,4)$ and level $\Gamma_0(\n)$ is trivial
for all such $\n$ with $N_{K/\Q}(\n) \le 337$ except
for $\n = 5 + 7 \sqrt{-2}$ of norm $123$. There is a unique cuspform $g$ up to normalization of weight $(4,4)$ and level  $\Gamma_0(5 + 7 \sqrt{-2})$.
The Hecke eigenvalues $a_{\p}$ for
$\p \nmid \n$ and $N(\p) \le 25$  of $g$
are given by the following table:
\begin{center}
\begin{tabular}{|c|c|c|c|c|c|c|c|}
\hline
$\p$     &  $\sqrt{-2}$ & $1 - \sqrt{-2}$ &
$3 + \sqrt{-2}$ & $3 - \sqrt{-2}$  & $3 + 2 \sqrt{-2}$ &
$3 - 2 \sqrt{-2}$ & $5$ \\
\hline
$a_{\p}$ &  $0$ & $4$ &  $-24$ & $36$ & $-54$ &  $-102$ & $-118$  \\
\hline
$\lceil 2  N(\p)^{3/2} \rceil$ & $6$ &  $11$ &
$73$ & $73$ & $141$ & $141$ & $250$  \\
\hline
\end{tabular}
\end{center}
\item The space of cuspidal modular forms over $K$
of weight $(2k,2k)$ and level $\Gamma_0(1)$ is equal
to the space of forms of level one arising from base
change from $\Q$ for all $2k \le 96$.
\end{enumerate}
\label{lemma:steinpollack}
\end{lemma}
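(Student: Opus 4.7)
The statement is a computational assertion about dimensions of spaces of cuspidal cohomology and Hecke eigenvalues. The plan is to carry out explicit linear algebra on the modular symbols description supplied by Theorem~\ref{theorem:cremona}, in the three weight/level regimes separately, and then to isolate the cuspidal part and compute enough Hecke operators to identify the forms.

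First, I would fix $K=\Q(\sqrt{-2})$, enumerate ideals $\n \subset \OL$ with $N_{K/\Q}(\n) \le 337$ that are squarefree and coprime to $2$, and for each such $\n$ build the finite-dimensional $\Q$-vector space $M = M(\n,k)$ of Theorem~\ref{theorem:cremona} as the free module on $\mathbf{P}^1(\OL/\n) \times \mathrm{Basis}(S_{k-2}\otimes\overline{S}_{k-2})$. Coset representatives for $\Gamma_0(\n)$ in $\GL_2(\OL)$ are computed by a standard Euclidean algorithm (using that $\OL$ is Euclidean and $h_K=1$), and the right action of $\GL_2(\OL)$ on polynomials in $x,y,x',y'$ is implemented via the explicit formula given in the theorem. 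One then assembles the four relation subspaces $(1+S)M$, $(1+ST+(ST)^2)M$, $(I-J)M$, $(1+X+X^2+X^3)M$ as images of explicit linear maps and forms $M/M_{\mathrm{rel}} \simeq H^1(\H/\Gamma_0(\n),\Laa_{k,k})$ by sparse linear algebra over $\Q$.

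Next, to extract the cuspidal part $H^1_{\mathrm{cusp}}$ one must quotient out the Eisenstein subspace. I would do this by computing the boundary map to the cohomology of the cusps: the cusps of $\H/\Gamma_0(\n)$ are parametrized by $\mathbf{P}^1(\OL)/\Gamma_0(\n)$, and the map from modular symbols records, for each symbol $[c,P]$, its value at the two endpoints $0$ and $\infty$ of the corresponding geodesic in $\H$. The cuspidal subspace is the kernel of this boundary map, and the computation reduces to more linear algebra over $\Q$. Running this procedure through all $\n$ with $N_{K/\Q}(\n)\le 337$ in weights $(2,2)$ and $(4,4)$ yields parts (1) and (2) up to identifying the exceptional levels; part (1) also matches the tables of Cremona~\cite{Cremona} for $(2,2)$, providing an independent check.

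For the Hecke eigenvalues in part (2), I would implement the operators $T_\p$ for $\p\nmid\n$ by the usual formulas in terms of Heilbronn-type matrices translated to the $\GL_2(\OL)$-setting (sums over $\p+1$ cosets), act on the one-dimensional cuspidal space at level $\Gamma_0(5+7\sqrt{-2})$ and weight $(4,4)$, and read off the scalars $a_\p$. Sanity checks are that $a_\p \in \Q$ (consistent with one-dimensionality), that $|a_\p|\le 2N(\p)^{3/2}$ (the Ramanujan bound, which holds conjecturally but must be satisfied if the form is genuinely cuspidal and not Eisenstein-contaminated), and that $a_\p$ reduces correctly modulo small primes when the form admits a congruence. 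Part (3) for level one is similar but easier: in weight $(2k,2k)$ one computes $H^1_{\mathrm{cusp}}$ at level $\Gamma_0(1)$ for each $2k \le 96$ and compares its dimension with the dimension of the space of base-change forms from $S_{2k}(\SL_2(\Z))$ (which is readily computed by the classical dimension formula, noting that base change from $\Q$ to $K$ of a classical cusp form $f$ yields a cusp form over $K$ of parallel weight and level one when the conductor of $f$ divides the discriminant of $K/\Q$ and other compatibility conditions hold).

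The main obstacle, and the one demanding care, is twofold: controlling the combinatorial explosion of the modular symbols computation as both $N(\n)$ and $k$ grow (so that the computations terminate in reasonable time and memory), and rigorously separating cuspidal classes from Eisenstein classes so that no ghost contribution is counted as cuspidal. The first issue is handled by exploiting the $J$-relation to cut the space in half and by working with sparse representations; the second by a careful implementation of the cusp boundary map together with the Ramanujan-bound cross-check on $T_\p$-eigenvalues. With these in hand the three parts of the lemma follow directly from the output of the computation, and Lemma~\ref{twotwo} (hence Theorem~\ref{theorem:hidafamilies2}) is obtained by specializing to the two levels $\Gamma_0(\n\p)$ and $\Gamma_0(\n)$ with $\n=(3-2\sqrt{-2})$ and $\p=(1+\sqrt{-2})$.
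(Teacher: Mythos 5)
Your proposal describes essentially the same approach the paper takes: the lemma is the output of a computer computation using the modular symbols presentation of Theorem~\ref{theorem:cremona}, carried out by W.~Stein (dimensions) and D.~Pollack (Hecke operators), and the paper does not spell out the algorithmic details any further than you do — it simply reports the data and notes the same sanity checks you list (agreement with Cremona's published tables in weight $(2,2)$, the Ramanujan bound and rationality of the $a_\p$ in weight $(4,4)$, and the base-change lower bound in level one). The only point worth flagging is that your claim that the Ramanujan bound "must be satisfied" is too strong: the paper is explicit that this bound is only conjectural over imaginary quadratic fields, so it serves as a plausibility check rather than a rigorous certification of cuspidality, and the rigorous separation of cuspidal from Eisenstein classes must rest on the boundary-map computation you describe.
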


Referring to the above table, note that the congruences
$a_{\p} \equiv 1 + N(\p)^3 \mod 4$ for $\p \nmid 2$ 
and $a_{\p} \equiv 1 + N(\p)^3 \mod 3$ appear to hold.
(Since $K$ is ramified at $2$, being Eisenstein at $2$ does
not imply ordinariness). It might be interesting to determine the
$\GL_2(\F_5)$-representation associated to $g$, although
it could \emph{a priori} be quite large.

\appendix

\section{Generic Modules}\label{app}
\label{section:appendix}

\subsection{Left and Right $G$-Modules}

Let $G$ be a finite group, and $D \subset G$ a subgroup. For ${\mathcal K}$ a field, denote the group ring $G$ with coefficients in ${\mathcal K}$ by $\La_{\mathcal K}:= {\mathcal K}[G]$.  We will view $\La_{\mathcal K}$ as a ${\mathcal K}$-algebra, and therefore also as bi-module over itself.
   Let $F$
denote a field of characteristic zero
over which all irreducible
representations of $G$ over an algebraically closed field are defined,
and let $E$ be some extension of $F$. In our application, $F$ will
be a field algebraic over $\Q$ and $E$ a localization of $F$ at some prime above $p$.
Let $\La := \La _E =E[G]$ so that we have the natural isomorphism of $E$-algebras $\La \simeq \La_F \otimes_F E$.
A  {\bf left ideal} $M$ of $\La$ is a
 left $\La$-module equipped with an  inclusion $M \hookrightarrow \La$
of left $\La$-modules, and a {\bf right ideal}  $N$ of $\La$ is defined similarly.

\begin{definition} For any subset $M$ of $\La$, let $\La[M]$
denote the set of elements of $\La$ that are annihilated by $M$ on the right; i.e., $$\La[M]:=\{\lambda \in \La\ | \ m\lambda = 0\ {\rm for\ all\ } m \in M\}$$ and let $[M]\La$
denote the set of elements of $\La$ that are annihilated by $M$ on the left; i.e., $$[M]\La:=\{\lambda \in \La\ | \ \lambda m = 0\ {\rm for\ all\ } m \in M\}.$$

\end{definition}

\begin{lemma} 
\

\begin{itemize} \item If $M$ is a right ideal of $\La$ then  $\La[M]$ is a left ideal.
 \item  If $M$ is a left ideal then  $[M]\La$ is a right ideal.
 \end{itemize}
\end{lemma}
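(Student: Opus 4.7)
The proof will be a direct associativity check, and the two bullets are formally dual to one another under the anti-automorphism of $\La$ swapping left and right multiplication, so I would write them in parallel.

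For the first bullet, the plan is as follows. I would first note that $\La[M]$ is visibly an $E$-subspace of $\La$, since the defining relations $m\lambda = 0$ are $E$-linear in $\lambda$. Then, given $\lambda \in \La[M]$ and an arbitrary $a \in \La$, I would verify that $a\lambda \in \La[M]$ by the one-line computation $m(a\lambda) = (ma)\lambda$ for $m \in M$: because $M$ is a right ideal we have $ma \in M$, and because $\lambda \in \La[M]$ we have $(ma)\lambda = 0$. Hence $a \cdot \La[M] \subseteq \La[M]$, which together with the subspace property shows that $\La[M]$ is a left ideal.

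The second bullet proceeds symmetrically. Given $\lambda \in [M]\La$ and $a \in \La$, I would check that $\lambda a \in [M]\La$ by computing $(\lambda a)m = \lambda(am)$ for any $m \in M$: since $M$ is a left ideal, $am \in M$, and since $\lambda \in [M]\La$, we get $\lambda(am) = 0$. Combined with the obvious fact that $[M]\La$ is an $E$-subspace, this shows $[M]\La$ is a right ideal.

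There is no real obstacle here; the only thing to be careful about is pairing the correct side of the ideal hypothesis on $M$ with the correct side of annihilation, but this matching is already built into the statement of the lemma. The whole argument is a pair of one-line associativity verifications in the associative algebra $\La$.
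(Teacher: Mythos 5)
Your proof is correct, and since the paper states this lemma without proof (clearly treating it as routine), your direct associativity check is precisely the argument the authors leave implicit. Nothing to add.
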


For any left $\La$-module $W$,
the set $\Hom_{G}(\La,W)$ inherits the structure of
a left $\La$-module  where scalar multiplication  $(\lambda, \phi) \mapsto \lambda\cdot \phi$ for $\lambda \in \La$ and $ \phi \in \Hom_{G}(\La,W)$  is given by the  rule
$(\lambda\cdot \phi)(x) = \phi(x\cdot \lambda)$. The left $\La$-module $\Hom_{G}(\La,W)$ is then identified
with $W$ via the map $i$ sending $\phi$ to $i(\phi): = \phi([1])$.
In particular, taking $W = \La$ the isomorphism  $i$ establishes the canonical identification  $\phi \mapsto \phi([1])$  of the left $\La$-module $\Hom_{G}(\La,\La)$ 
 with $\La$.  Here, $\Hom_{G}(\La,\La)$ refers to the set of homomophisms $\phi:\La \to \La$ that preserve the left-$\La$-module structure of $\La$; explicitly $\phi(\lambda\cdot x) = \lambda\cdot \phi(x)$ for all $\lambda, x \in \La$.

\begin{lemma} Let $M$ be a left ideal of $\La$.  The isomorphism $i$ induces a commutative diagram

\bigskip

$$\begin{diagram}
\Hom_{G}(\La/M,\La) & \rTo & [M]\La\\
\dTo &  & \dTo \\
\Hom_{G}(\La,\La)& \rTo &  \La \\
\end{diagram}$$

\bigskip

where the vertical homomorphisms are the natural inclusions.
\label{lemma:dual}
\end{lemma}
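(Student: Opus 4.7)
The plan is straightforward: I would unwind the identification $i: \Hom_G(\La,\La) \xrightarrow{\sim} \La$ (given by $\phi \mapsto \phi([1])$) that is recalled just before the lemma, and check that it carries the subspace $\Hom_G(\La/M,\La)$ exactly onto $[M]\La$. Concretely, I would first observe that left-$\La$-linearity forces $\phi(x) = x \cdot \phi([1])$, so that $\phi$ is determined by the single element $\phi([1]) \in \La$; conversely any $\lambda \in \La$ defines a left-$\La$-linear map by the rule $\phi_\lambda(x) := x\lambda$. This gives both the inverse of the bottom horizontal arrow $i$ and its compatibility with the left $\La$-module structure on $\Hom_G(\La,\La)$ defined in the preceding paragraphs of the appendix.

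With this in hand, a homomorphism $\phi \in \Hom_G(\La,\La)$ factors through $\La/M$ if and only if $\phi(m) = 0$ for every $m \in M$; substituting $\phi(m) = m \cdot \phi([1])$, this is the requirement that $\phi([1])$ annihilate all of $M$ in the sense of Section~\ref{app}, i.e. $\phi([1]) \in [M]\La$. The reverse direction is equally immediate: for $\lambda \in [M]\La$ the map $\phi_\lambda$ kills $M$ and therefore descends to $\La/M \to \La$. Commutativity of the square is then tautological, since both horizontal arrows are realised by the same assignment $\phi \mapsto \phi([1])$ and the vertical arrows are the defining inclusions. There is no genuine obstacle in this argument; the whole content of the lemma is a formal translation between a $\Hom$ group and an annihilator ideal inside $\La$, and the only point requiring care is to keep the left/right conventions of Section~\ref{app} straight throughout.
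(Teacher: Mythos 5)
Your proof is correct and matches the paper's approach: both exploit that a left-$\La$-linear map $\phi:\La\to\La$ is right multiplication by $\phi([1])$, so that factoring through $\La/M$ amounts precisely to $m\,\phi([1])=0$ for all $m\in M$. One remark worth recording: the displayed formulas defining $\La[M]$ and $[M]\La$ in the appendix appear to have been transposed relative to the surrounding prose and to this lemma's statement (the condition $m\lambda=0$ is labelled $\La[M]$ in the display but is what the lemma and its proof call $[M]\La$); you, like the paper's own one-line proof, have followed the lemma's convention, which is the internally consistent one.
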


\begin{proof} The homomorphism $\phi:\La \to \La $ lies in $\Hom_{G}(\La/M,\La)$ if and only
if $\phi(m) = 0$ or equivalently if
$(m\cdot \phi)([1]) = 0$, or $m\cdot i(\phi) = 0$,  for all $m \in M$.
\end{proof}

\begin{lemma}  The mapping  $$M \ \ \longmapsto \ \ \  N:= [M]\La$$ from left ideals $M$ to right ideals $N$, and the mapping
$$N \ \ \longmapsto \ \ \ M:=\La[N]$$ from right ideals $N$ to left ideals $M$,
are two-sided inverses of each other, and are one:one correspondences between the set of left ideals of $\La$ and the set of right ideals of $\La$.
\label{lemma:rightmodules}
\end{lemma}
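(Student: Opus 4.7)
The plan is to realize the claimed bijection as a consequence of the symmetric Frobenius structure on the semisimple algebra $\La=E[G]$, reducing everything to the fact that orthogonal complementation under a non-degenerate bilinear form is an involution on the subspace lattice.

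First I would equip $\La$ with its canonical bilinear pairing $\sigma\colon\La\times\La\to E$, defined by letting $\sigma(x,y)$ equal the coefficient of the identity element $1\in G$ in the product $xy$. On the $G$-basis one has $\sigma(g,h)=\delta_{gh,1}$, so the Gram matrix of $\sigma$ is a permutation matrix; this makes $\sigma$ manifestly symmetric, associative ($\sigma(xy,z)=\sigma(x,yz)$, since both sides equal the coefficient of $1$ in $xyz$), and non-degenerate. Consequently orthogonal complementation $S\mapsto S^{\perp}=\{y\in\La\mid \sigma(s,y)=0\text{ for all }s\in S\}$ is an involution on the lattice of $E$-subspaces of $\La$, with $\dim_{E}S+\dim_{E}S^{\perp}=|G|$ and $S^{\perp\perp}=S$.

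Second, I would identify the annihilators appearing in the lemma with $\perp$ on the restricted class of one-sided ideals. For a left ideal $M\subseteq\La$, the inclusion of the right annihilator $\{y:my=0\text{ for all }m\in M\}$ into $M^{\perp}$ is immediate. Conversely, let $y\in M^{\perp}$. The left-$G$-invariance of $M$ gives $gm\in M$ for every $g\in G$ and every $m\in M$, so $0=\sigma(gm,y)$; but $\sigma(gm,y)$ is exactly the coefficient of $g^{-1}$ in $my$, so as $g$ ranges over $G$ every coefficient of $my$ must vanish, forcing $my=0$. Hence $M^{\perp}$ coincides with the right annihilator of $M$, and this is the right ideal produced by applying the annihilator operation to a left ideal. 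The symmetric calculation identifies, for a right ideal $N$, the orthogonal complement $N^{\perp}$ with the left annihilator of $N$, which is a left ideal.

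Combining these two ingredients yields the lemma: each of the two annihilator maps of the statement is the restriction of $\perp$ to one of the two classes of one-sided ideals, and the tautology $S^{\perp\perp}=S$ from non-degeneracy of $\sigma$ shows that the two maps are two-sided inverses of each other, giving a dimension-complementary bijection between left and right ideals of $\La$. I would expect the main obstacle to be precisely the second step---the identification of annihilators with orthogonal complements on one-sided ideals---because for a general $E$-subspace the orthogonal complement is strictly larger than the annihilator, and it is only the $G$-stability of the ideal that lets one upgrade the single vanishing $\sigma(m,y)=0$ to the ring-theoretic vanishing $my=0$.
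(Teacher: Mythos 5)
Your argument is correct and takes a genuinely different route from the paper's. The paper proceeds by a dimension count grounded in semisimplicity: the preceding Lemma~\ref{lemma:dual} identifies $\Hom_{G}(\La/M,\La)$, via $\phi\mapsto\phi([1])$, with an annihilator subspace of $\La$, after which the representation-theoretic identity $\dim M + \dim \Hom_{G}(\La/M,\La) = \dim \La$ (and its mirror for right ideals) fixes the dimensions, and the lemma follows from the elementary inclusion of $M$ into its double annihilator together with equality of dimensions. You instead realize the correspondence directly from the canonical Frobenius pairing $\sigma(x,y)=$ (coefficient of $1$ in $xy$) on $E[G]$: the decisive step is that one-sided $G$-stability promotes the scalar vanishing $\sigma(m,y)=0$ to the ring-theoretic vanishing $my=0$, so that on one-sided ideals annihilation coincides with orthogonal complementation, and the two-sided-inverse claim becomes $S^{\perp\perp}=S$. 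Your route is more self-contained, yields the dimension complementarity for free, and, unlike the paper's, uses no semisimplicity, so it works over an arbitrary coefficient field --- it is in effect the observation that $E[G]$ is a Frobenius algebra. One notational caveat: the displayed formulas in the preceding Definition of $\La[M]$ and $[M]\La$ do not match the surrounding English nor the usage in Lemma~\ref{lemma:dual} and in the present lemma; your identification of $M^{\perp}$ with $\{y: my=0\text{ for all }m\in M\}$ (a right ideal when $M$ is a left ideal) and the resulting pairing $M\mapsto\{y:my=0\}$, $N\mapsto\{x:xn=0\}$ is the correct and intended reading --- the other pairing, $M\mapsto\{y:ym=0\}$, is not even injective on the left ideals of a matrix factor $M_n(E)$ with $n\ge 2$.
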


\begin{proof} Let $M$ be a left ideal and $ N:= [M]\La$.  It follows immediately from the definitions that $M \subseteq \La[N]$, and thus to show that
$M = \La[N]$ we need only show $\dim(M) \ge \dim(\La[N])$.   From the representation theory of finite groups one
sees that
$$\dim(M) + \dim(\Hom_{G}(\La/M,\La)) = \dim(\La),$$ and using Lemma~\ref{lemma:dual} we have
$$\dim(M) + \dim([M]\La) = \dim(\La).$$
Similarly, using (only) that $N$ is a right ideal, we have
$$\dim(N) + \dim(\La[N]) = \dim(\La).$$

Since $N= [M]\La$ the lemma follows. 
\end{proof}

 We have the natural $E$-linear functional $\iota: \La \to E$ that associates to any element $\sum_{g\in G}a_g[g] \in \La$  the coefficient of the identity element, $a_1 \in E$; i.e., $\iota(\sum_{g\in G}a_g[g]) = a_1$.   For any left $\La$-module $W$, composition with this functional $\iota$ induces a homomorphism $$\Hom_{\La}(W, \La) \subset \Hom_E(W, \La){\stackrel{\iota}{\longrightarrow}}  \Hom_E(W, E) = W^{\vee}.$$  The flanking $E$-vector spaces, $\Hom_{\La}(W, \La) \to W^{\vee},$ both have natural right $\La$-module structures defined as follows. On $\Hom_{\La}(W, \La)$  the right scalar multiplication $(\phi, \lambda) \mapsto  \phi \cdot \lambda$  is given by the rule $(\phi \cdot \lambda)(x) = \phi(\lambda \cdot x)$, or equivalently, by letting $g \in G$ act on $\phi:W\to\La$ by composition with $g^{-1}: \La \to \La$ and extending this action linearly to obtain a right  $\La$-module structure.  We define the right $\La$-structure on $W^{\vee}$ by the corresponding rule: for $\phi \in \Hom_E(W, E)$  and $g \in G$ define $\phi\cdot g$ by  $(\phi \cdot g)(w) = \phi(g \cdot w)$ for $w \in W$.

 Consider, as well, the homomorphism 
 $ W^{\vee} = \Hom_E(W, E) {\stackrel{j}{\longrightarrow}}\Hom_E(W, \La)$ that associates to $\phi \in \Hom_E(W, E)$ the homomorphism $j\phi \in \Hom_\La(W, \La)$ which, for $w \in W$ satisfies the formula $$j\phi(w) = \sum_{g\in G}\phi(g\cdot w)[g]^{-1} \in \La.$$ 
 \begin{lemma}\label{iota} $\Hom_{\La}(W, \La) {\stackrel{\iota}{\longrightarrow}} W^{\vee}$ is an isomorphism of right $\La$-modules and the homomorphism $j$ described above is its inverse.
\end{lemma}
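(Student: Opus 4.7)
The plan is to establish the lemma through four short verifications, all essentially bookkeeping with the explicit definitions of $\iota$ and $j$. First I would check that $j$ actually lands in $\Hom_\La(W,\La)$ rather than just in $\Hom_E(W,\La)$; that is, that $j\phi:W\to\La$ is left $\La$-linear for every $\phi\in W^\vee$. By $E$-linearity it suffices to check $\La$-linearity for elements $[k]\in G$, and the computation is a one-line reindexing: writing $g'=gk$ one finds $[g]^{-1}=[k][g']^{-1}$, and since the scalar $\phi(g'w)\in E$ commutes with group-ring elements,
\[
j\phi(kw)=\sum_{g}\phi(gkw)[g]^{-1}=\sum_{g'}\phi(g'w)[k][g']^{-1}=[k]\cdot j\phi(w).
\]

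Next I would verify the two-sided inverse property. The composition $\iota\circ j$ is immediate: applying $\iota$ to $j\phi(w)=\sum_g\phi(gw)[g]^{-1}$ picks out the coefficient of $[1]$, which occurs only when $g=1$ and equals $\phi(w)$. For $j\circ\iota$ applied to $\psi\in\Hom_\La(W,\La)$, write $\psi(w)=\sum_h a_h(w)[h]$ so that $\iota\psi(w)=a_1(w)$. The key input is left $\La$-linearity: $\psi(gw)=g\cdot\psi(w)=\sum_h a_h(w)[gh]$, whose coefficient of $[1]$ equals $a_{g^{-1}}(w)$. Substituting and reindexing $h=g^{-1}$,
\[
j(\iota\psi)(w)=\sum_g a_{g^{-1}}(w)[g]^{-1}=\sum_h a_h(w)[h]=\psi(w).
\]

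Finally I would check equivariance for the right $\La$-actions. Since both sides are $\La$-linear in $\lambda$, it suffices to check this on group elements $g\in G$. On $W^\vee$ the action is $(\phi\cdot g)(w)=\phi(gw)$; on $\Hom_\La(W,\La)$ the natural action is right multiplication on the target, $(\psi\cdot g)(w)=\psi(w)\cdot[g]$. For $\iota$ one verifies directly that $\iota(\psi\cdot g)(w)=\iota(\psi(w)[g])=a_{g^{-1}}(w)=(\iota\psi\cdot g)(w)$ using the coefficient description above. For $j$, a parallel reindexing ($h'=gh$, $[h]^{-1}=[h']^{-1}[g]$) gives
\[
j(\phi\cdot g)(w)=\sum_h\phi(ghw)[h]^{-1}=\Bigl(\sum_{h'}\phi(h'w)[h']^{-1}\Bigr)[g]=j\phi(w)\cdot[g],
\]
completing the proof.

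No step here presents a genuine obstacle; the only subtlety is keeping the left and right actions straight and using that the scalars $\phi(\cdot)\in E$ are central in $\La$, so that the only real work is recognizing which reindexing $g\mapsto gk$ or $h\mapsto gh$ converts the defining formula for $j$ into the desired equivariance or inversion identity.
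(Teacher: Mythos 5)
Your verification is correct, and since the paper's own proof is just ``this is a direct check,'' you have supplied precisely what the authors left implicit. One substantive point you should flag explicitly: you take the right $\La$-action on $\Hom_\La(W,\La)$ to be right multiplication on the target, $(\psi\cdot g)(w)=\psi(w)\cdot[g]$, whereas the paper's displayed rule is $(\phi\cdot\lambda)(x)=\phi(\lambda\cdot x)$. These are \emph{not} the same map for nonabelian $G$ (for left $\La$-linear $\psi$ one has $\psi(gw)=[g]\psi(w)$, which is left rather than right multiplication on the target), and in fact the paper's rule does not even preserve $\Hom_\La(W,\La)$: if $\psi(kx)=k\psi(x)$, then $x\mapsto\psi(gx)$ sends $kx$ to $gk\psi(x)$, not $kg\psi(x)$. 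Your version is the one that genuinely makes $\Hom_\La(W,\La)$ a right $\La$-module and is compatible with the identification $\Hom_\La(\La,\La)\cong\La$, so you have — perhaps without noticing — corrected a typo in the paper's definition. With that correction in place, your reindexings ($g'=gk$ with $[g]^{-1}=[k][g']^{-1}$ for left-linearity of $j\phi$, $h=g^{-1}$ for the inversion, and $h'=gh$ with $[h]^{-1}=[h']^{-1}[g]$ for equivariance of $j$) are all correct, and the observation that the scalars $\phi(\cdot)\in E$ are central in $\La$ is used at exactly the right places. No gap.
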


\begin{proof}
  This is a direct check.
\end{proof}

In the special case where we take $W$ to be $\La$ itself, viewed as $\La$-bimodule via left and right multiplication, the above identifications offer us isomorphisms of bi-modules, $${\rm Maps}(G,E) \stackrel{\simeq}{\longrightarrow}\Hom_E(\La,E) \stackrel{\simeq}{\longrightarrow} \La  \stackrel{\simeq}{\longrightarrow} \Hom_{G}(\La,\La),$$ the composition of the first two of these isomorphisms being given by the rule $\phi \mapsto \sum_{g\in G}\phi(g)\cdot[g^{-1}]$ for $\phi \in {\rm Maps}(G,E)$, and the last by the rule $\lambda \mapsto \{x\mapsto \lambda\cdot x\}$.  This latter isomorphism is indeed an isomorphism of {\it bi}-modules if we impose a left $\La$-module structure on $\Hom_{G}(\La,\La)$ in the usual way---i.e., $(\lambda\cdot \phi)(x) = \lambda\cdot \phi(x)$---and if we give $\Hom_{G}(\La,\La)$ a right $\La$-module structure by requiring $(\phi\cdot g)(x)= \phi(x\cdot g^{-1})$ for all $g \in G$ and extending linearly.

A consequence of Lemma~\ref{iota} is that for any left ideal $M \subset \La$ we have a natural  identification of right $\La$-modules:
$$(\La/M)^{\vee}\ \ \  {\stackrel{\iota}{\simeq}}\ \ \  \Hom_{\La}(\La/M, \La)\ \ \  =\ \ \  [M]\La.$$

If $V$ is a left $D$-module for the subgroup $D \subset G$, then
$V^{\vee}:=\Hom_{E}(V,E)$ inherits the structure of
a right $D$-module (via the usual formula $\phi\cdot g(v) = \phi(g\cdot v)$)
and if $V$ is a right $D$-module then
$V^{\vee}$ is naturally a left $D$-module.

\subsection{Generic Ideals}

The module $\La$ has a canonical $F$-rational structure
as a $\La$-bi-module coming from the identification
$\La = \La_{F} \otimes_F E$. 
Given a left or right ideal  $M$ of $\La$, one can ask the
extent to which $M$ sits ``perpendicularly'' inside
$\La$ with respect to this rational structure.
If $M$ is maximally skew, then
we shall define $M$ to be generic. More generally, we shall
define such a notion relative to the  subgroup $D \subset G$.

\medskip

\begin{definition}
An arbitrary $E$-vector subspace $V \subseteq \La$
is called a \emph{right rational $D$-subspace} if there exists
a right $F[D]$-submodule
$V_{F} \subseteq \La_{F}$ such that $V = V_{F} \otimes E$. One defines similarly the notion of \emph{left rational $D$-subspace}.
\end{definition}

\begin{lemma} If $V$ is a right rational $D$-subspace,
then $(\La/V)^{\vee} = \Hom_{E}(\La/V,E)$ is a left rational
$D$-subspace.
\end{lemma}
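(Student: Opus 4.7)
The plan is to realize $(\La/V)^\vee$ concretely as a subspace of $\La$ via the canonical isomorphism $\La^\vee \simeq \La$ from Lemma~\ref{iota}, and then to identify this subspace as a perpendicular with respect to the symmetric, non-degenerate, $F$-rational bilinear pairing $\langle \lambda, \mu \rangle := \iota(\lambda \mu)$ on $\La \times \La$.

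First I would unwind the identification: under the correspondence $\phi \leftrightarrow \lambda_\phi$ characterized (as computed in Lemma~\ref{iota}) by $\phi(w) = \iota(\lambda_\phi w)$ for all $w \in \La$, the subspace $(\La/V)^\vee \subseteq \La^\vee$ of functionals vanishing on $V$ corresponds to
$$V^\perp := \{\lambda \in \La : \iota(\lambda v) = 0 \text{ for all } v \in V\}.$$
The pairing $\iota(\lambda\mu)$ is defined over $F$ (both $\iota$ and multiplication in $\La_F$ are) and non-degenerate (since $\iota([g][g^{-1}]) = 1$). Standard linear algebra then gives that if $V = V_F \otimes_F E$ for $V_F \subseteq \La_F$, then $V^\perp = V_F^\perp \otimes_F E$, which is the desired $F$-rationality.

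Next, to exhibit a left $D$-action, I would exploit the cyclicity $\iota(\lambda\mu) = \iota(\mu\lambda)$ of the trace form, which is immediate on basis elements since $\iota([g][h]) = \delta_{gh,1} = \delta_{hg,1} = \iota([h][g])$. Given $\lambda \in V^\perp$ and $g \in D$, the right $D$-stability of $V$ yields $vg^{-1} \in V$ for every $v \in V$, and cyclicity then gives
$$\iota\bigl((g^{-1}\lambda)\,v\bigr) \;=\; \iota\bigl(\lambda\,(vg^{-1})\bigr) \;=\; 0.$$
Hence $g^{-1}\lambda \in V^\perp$, so $V^\perp$ is stable under the left action of $D$. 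Combined with the $F$-rationality established above, this shows that $V^\perp$ is a left rational $D$-subspace.

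I do not anticipate a serious obstacle: the only essentially non-formal input is the cyclicity of the trace form, which is exactly what allows the right $D$-action on $V$ to be ``flipped'' into a left $D$-action on $V^\perp$. The only mild care needed is to spell out the canonical embedding $(\La/V)^\vee \hookrightarrow \La^\vee \simeq \La$ so that the statement ``$(\La/V)^\vee$ is a left rational $D$-subspace'' makes literal sense as a subspace of $\La$.
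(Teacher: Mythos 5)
Your proof is correct and is essentially the paper's argument, made explicit: you realize $(\La/V)^\vee$ concretely inside $\La$ via the trace pairing $\iota(\lambda\mu)$ (which is the canonical identification from Lemma~\ref{iota}), then observe that the perpendicular inherits $F$-rationality from the $F$-rationality of the form, and inherits a left $D$-action from the cyclicity of the trace and the right $D$-stability of $V$. The paper condenses all of this into the statement that $(\La/V)^\vee_F := \Hom_F(\La_F/V_F,F)$ is a left $D$-module with a natural inclusion into $\La_F$ and that dualizing commutes with $-\otimes_F E$; your write-up supplies exactly the computation justifying that sentence.
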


\begin{proof} It suffices to note that if $(\La/V)^{\vee}_{F}:=
\Hom_{F}(\La_{F}/V_{F},F)$ then
$(\La/V)^{\vee}_{F}$ is a left $D$-module with a natural
inclusion into $\La_{F}$, and
 $(\La/V)^{\vee} = (\La/V)^{\vee}_{F} \otimes E$.
\end{proof}

\begin{definition} A left ideal $M \subseteq \La$ is
\emph{right generic} with respect to $D$ if
$$\dim(M \cap V) \le \dim(N \cap V)$$
for all right rational $D$-subspaces $V$ and left ideals
$N \subseteq \La$ such that $M \simeq N$ as $G$-modules.
If $M \subseteq \La$ is a right ideal, then $M$ is left generic with
respect to $D$ if the same formula holds for all left rational
$D$-subspaces $V$ and right ideals $N \simeq M$.
\label{definition:definitionofgeneric}
\end{definition}
\begin{example} Let $G$ be an abelian group, and $D$
any subgroup. Then
every irreducible representation of $G$ occurs with
multiplicity one inside $\La$. In particular,
 if $M$ and $N$
are any two $\La$-submodules with $M \simeq N$ then
$M = N$, and so all such submodules are generic.
\end{example}

\begin{lemma} The left ideal $M$ is right generic if and only
if the right ideal $[M]\La$ is left generic.
\label{lemma:dual2}
\end{lemma}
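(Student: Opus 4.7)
The strategy is to realize the correspondence $M \leftrightarrow [M]\La$ between left and right ideals as a single duality, namely orthogonal complement with respect to a natural bi-invariant pairing on $\La$, and to verify that this same duality interchanges right rational $D$-subspaces with left rational $D$-subspaces. Once both sides of Definition~\ref{definition:definitionofgeneric} are translated through the pairing, a routine dimension count shows the inequality for $M$ is equivalent to the inequality for $[M]\La$.

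The first step is to introduce the $F$-rational bilinear pairing $\langle a,b\rangle := \iota(ab)$ on $\La$, where $\iota$ extracts the coefficient of the identity. A direct calculation in the standard basis $\{[g]\}_{g\in G}$ shows that the pairing is non-degenerate and symmetric, and that $\langle \lambda a,b\rangle = \langle a, b\lambda\rangle$ for all $\lambda \in \La$. For a subspace $W \subseteq \La$, let $W^{\perp}$ denote the orthogonal complement. The identity above immediately gives: if $W$ is a left $\La$-module then $W^{\perp}$ is a right $\La$-module (and vice versa); if $W$ is a left (respectively right) rational $D$-subspace, then $W^{\perp}$ is a right (respectively left) rational $D$-subspace. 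The key identification is $[M]\La = M^{\perp}$: the inclusion $[M]\La \subseteq M^{\perp}$ is immediate, since $bm=0$ implies $\iota(bm)=0$, and equality follows from comparing dimensions using Lemma~\ref{lemma:rightmodules} and the non-degeneracy of $\langle\cdot,\cdot\rangle$, both of which give $\dim M + \dim [M]\La = \dim \La = \dim M + \dim M^{\perp}$.

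Next I record two further facts. First, $\perp$ is an order-reversing involution on subspaces that takes left ideals bijectively to right ideals, and sends $F$-rational right $D$-subspaces bijectively to $F$-rational left $D$-subspaces. Second, the bijection $N \mapsto N^{\perp}$ respects $G$-isomorphism class: if $M \simeq N$ as left $\La$-modules, then (using that $\La$ is semisimple and that $M^{\perp} \simeq (\La/M)^{\vee}$ as right $\La$-modules, via Lemmas~\ref{lemma:dual} and \ref{iota}) one has $M^{\perp} \simeq N^{\perp}$ as right $\La$-modules; the converse follows by applying $\perp$ again. Hence the set $\{N \subseteq \La : N \text{ left ideal}, N \simeq M\}$ is in bijection, via $\perp$, with $\{N' \subseteq \La : N' \text{ right ideal}, N' \simeq [M]\La\}$.

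Finally, for any subspaces $A, B \subseteq \La$ the elementary identity
\[
\dim(A \cap B) = \dim A + \dim B - \dim \La + \dim(A^{\perp} \cap B^{\perp})
\]
(combining $\dim(A\cap B) = \dim A + \dim B - \dim(A+B)$ with $\dim(A+B) + \dim(A^{\perp}\cap B^{\perp}) = \dim\La$) implies, for any left ideals $M \simeq N$ and any right rational $D$-subspace $V$, that
\[
\dim(N \cap V) - \dim(M \cap V) = \dim(N^{\perp} \cap V^{\perp}) - \dim(M^{\perp} \cap V^{\perp}),
\]
because $\dim M = \dim N$. Applying the bijections from the previous paragraph, the inequalities defining right-genericity of $M$ transform precisely into the inequalities defining left-genericity of $[M]\La$, completing the proof. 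The only real bookkeeping hazard is keeping track of left- versus right-module structures across the duality; the symmetric identity $\langle \lambda a, b\rangle = \langle a, b\lambda\rangle$ controls this uniformly, so there is no substantive obstacle.
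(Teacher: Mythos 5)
Your proof is correct and is essentially the paper's argument: the paper derives the dimension identity $\dim((\La/M)^{\vee} \cap (\La/V)^{\vee}) = \dim(M\cap V) - \dim M - \dim V + \dim\La$ from an exact sequence and uses $(\La/M)^{\vee} = [M]\La$, which is exactly your orthogonal-complement formula once one identifies $\La^{\vee}$ with $\La$ via the trace form $\langle a,b\rangle = \iota(ab)$. Your version is somewhat more explicit and self-contained — you verify that $\perp$ exchanges right and left rational $D$-subspaces and respects $G$-isomorphism class, steps the paper leaves implicit — but the underlying mechanism is identical.
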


\begin{proof} 
For any $M \subseteq \Lambda$ there is an exact sequence and
corresponding dimension formula:
$$0 \rightarrow M \cap V \rightarrow M \oplus V
\rightarrow \La \rightarrow \left((\La/M)^{\vee} \cap (\La/V)^{\vee}\right)^{\vee}
\rightarrow 0,$$
$$\dim((\La/M)^{\vee} \cap (\La/V)^{\vee}) = \dim(M \cap V)
-\dim(M) -\dim(V) +\dim(\La).$$
From this formula (and its dual)  it follows that $M$ is generic if and only
if $(\Lambda/M)^{\vee}$ is generic, since the dimensions of the
intersections of $(\La/M)^{\vee}$ with left rational $D$-modules can
be explicitly compared to the intersection of $M$ with right rational
$D$-modules.
Yet $(\La/M)^{\vee} = [M]\La$,
so we are done.
\end{proof}

\subsection{First properties of generic modules}
\label{section:firstproperties}

Genericity is not in general preserved under automorphisms
of $\La$. On the other hand, we have the following:
\begin{lemma} Let $S: \La \rightarrow \La$ be an isomorphism of
left $\La$-modules sending $E[D]$ to itself.
Then $S(M)$ is right  generic with respect to $D$ if and only if $M$
is right generic with respect to $D$.
\label{lemma:indep}
\end{lemma}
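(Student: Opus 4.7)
The plan is to exploit that an isomorphism $S:\La\to\La$ of \emph{left} $\La$-modules is automatically right multiplication by a unit. Since $\La$ is free of rank one as a left $\La$-module (with generator $[1]$), setting $u := S([1])$ forces $S(x) = xu$ for all $x \in \La$, and $u$ is a unit in $\La$ because $S$ is an isomorphism. The hypothesis $S(E[D]) = E[D]$ then translates into the statement $E[D]\cdot u = E[D]$; since $1 \in E[D]$ we conclude $u \in E[D]$, and then right multiplication by $u$ is a bijection $E[D]\to E[D]$, so $u$ is in fact a unit of the subring $E[D]$.

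The key observation I would make next is that this implies $S$ fixes every right rational $D$-subspace $V \subseteq \La$. Indeed, $V = V_F \otimes_F E$ for some right $F[D]$-submodule $V_F \subseteq \La_F$, so $V$ is a right $E[D]$-module. Since $u \in E[D]$, we get $Vu \subseteq V$, and invertibility of $u$ gives $Vu = V$. Hence $S(V) = V$ for all such $V$, and consequently
\[
S(M)\cap V \;=\; S(M)\cap S(V) \;=\; S(M\cap V),
\]
so $\dim(S(M)\cap V) = \dim(M\cap V)$ for every right rational $D$-subspace $V$.

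It remains to compare the sets of ``competitors''. Because $S$ is an isomorphism of left $\La$-modules, it induces a $G$-module isomorphism $M \simeq S(M)$, so the set of left ideals $N \subseteq \La$ with $N \simeq S(M)$ as $G$-modules coincides (as a set of subsets of $\La$) with the set of left ideals with $N \simeq M$ as $G$-modules. Thus the genericity inequality $\dim(S(M)\cap V) \le \dim(N\cap V)$ ranging over all such $V$ and $N$ is, term by term, the same inequality as $\dim(M\cap V) \le \dim(N\cap V)$, which finishes the proof.

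The only place where anything could go wrong is step two — the identification of $u$ as an element of $E[D]$ rather than merely an element of $\La$ normalizing $E[D]$ in some weaker sense. I expect this to be the main conceptual point; once it is in place, the preservation of right rational $D$-subspaces and the bijection on isomorphism classes are both immediate.
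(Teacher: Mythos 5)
Your proof is correct and follows essentially the same route as the paper's: show that $S$ preserves every right rational $D$-subspace $V$, then apply $\dim(S(M)\cap V)=\dim(M\cap V)$. You make explicit what the paper leaves implicit — namely that $S$ is right multiplication by a unit $u=S([1])\in E[D]$, which is why $S(V)=Vu=V$ — and you also spell out that $M$ and $S(M)$ have the same set of competitors; both points are correct and tighten the paper's rather terse argument.
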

\begin{proof}  Let $V$ be a right rational $D$-subspace.
Then $V = V.E[D]$. Thus the image of $V$ under $S$ is
$V.E[D] = V$. 
Hence $S$ preserves
right rational $D$-subspaces, and the genericity of $M$ follows from
the following obvious formula:
$$\dim(S(M) \cap V) =  \dim(S(M \cap V)) = \dim(M \cap V).$$
\end{proof}

\medskip

We now extend our notion of generic to include certain
submodules
$M \subseteq P$, where $P$ is a free $\La$-module of rank one
which does not necessarily come with a canonical generator.
Let $A$ be an ``abstract" free left $E[D]$-module of rank one,
without a canonical generator. Let $P = \La \otimes_D A$.
Clearly $P$ is a free left $\La$-module of rank one. Moreover,
there is a natural class of isomorphisms from $\La$ to $P$
given by maps of the form
$$T: \La \rightarrow P, \qquad [1] \mapsto [1] \otimes a$$
for some generator $a$ of $A$.
\begin{definition}\label{rightgenericnoncanonical}
A left module $M \subseteq P$ is right generic with respect to $D$
if for some generator $a\in A$ of the $E[D]$-module $A$, the left ideal
$T^{-1}(M) \subseteq \La$ is right generic with respect to
$D$.
\end{definition}
\begin{lemma} If $M \subseteq P$ is right generic with respect to $D$,
then $T^{-1}(M) \subseteq \La$ is right generic with respect
to $D$ for \emph{all} $a$ generating $A$.
\end{lemma}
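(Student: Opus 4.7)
The plan is to reduce this lemma directly to Lemma~\ref{lemma:indep} (which says that genericity is preserved by left $\La$-module automorphisms of $\La$ that stabilize $E[D]$). The key point is that any two generators of the free left $E[D]$-module $A$ differ by multiplication by a unit in $E[D]$, and this translates, on the $\La$-side, into right multiplication by that unit, which is precisely the kind of automorphism covered by Lemma~\ref{lemma:indep}.

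Concretely, I would proceed as follows. Suppose $a, a' \in A$ are two generators; write $a' = \alpha \cdot a$ for some unit $\alpha \in (E[D])^{\times}$. Denote the corresponding trivializations by $T_a, T_{a'}: \La \to P$, defined (as in Definition~\ref{rightgenericnoncanonical}) by $[1] \mapsto [1]\otimes a$ and $[1]\mapsto [1]\otimes a'$. For any $\lambda \in \La$ one computes
\[
T_{a'}(\lambda) \;=\; \lambda \otimes a' \;=\; \lambda \otimes (\alpha \cdot a) \;=\; (\lambda\alpha) \otimes a \;=\; T_a(\lambda\alpha).
\]
Thus $T_{a'} = T_a \circ S_\alpha$, where $S_\alpha: \La \to \La$ is the map of right multiplication by $\alpha$. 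It follows that $T_{a'}^{-1}(M) = S_\alpha^{-1}\bigl(T_a^{-1}(M)\bigr)$.

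Now I would verify the hypotheses of Lemma~\ref{lemma:indep} for $S_\alpha$. First, right multiplication by a fixed element commutes with left multiplication, so $S_\alpha$ is a left $\La$-module homomorphism; since $\alpha$ is a unit in $E[D] \subset \La$, $S_\alpha$ is even an isomorphism. Second, $S_\alpha(E[D]) = E[D] \cdot \alpha = E[D]$, because $\alpha \in (E[D])^{\times}$. Hence Lemma~\ref{lemma:indep} applies to $S_\alpha$ (and equally to $S_\alpha^{-1} = S_{\alpha^{-1}}$), giving that $T_a^{-1}(M)$ is right generic with respect to $D$ if and only if $S_\alpha^{-1}(T_a^{-1}(M)) = T_{a'}^{-1}(M)$ is. This yields the lemma.

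There is no real obstacle here; the only thing to be careful about is not confusing left and right actions when identifying how a change of generator of $A$ becomes an automorphism of $\La$. Once the identity $T_{a'} = T_a \circ S_\alpha$ is pinned down correctly, Lemma~\ref{lemma:indep} does all the work.
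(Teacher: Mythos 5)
Your proof is correct and follows the same route as the paper: the paper observes that the composite $S = T(a')^{-1}\circ T(a)$ preserves $E[D]$ and invokes Lemma~\ref{lemma:indep}. You have simply unpacked that composite explicitly as right multiplication $S_\alpha$ by the unit $\alpha\in(E[D])^{\times}$ relating the two generators, and verified the hypotheses of Lemma~\ref{lemma:indep} directly; this is a more detailed rendering of the identical argument.
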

\begin{proof} Given two such choices of generator
$a$, $a'$, it suffices to note 
that the composite: $S: T(a')^{-1} \circ T(a)$ preserves
$E[D]$ and thus preserves generic ideals, by  Lemma~\ref{lemma:indep}.
\end{proof}

\subsection{Relative Homomorphism Groups}
\label{section:ext}

Let $Y$ be a left $\La$-module, and let $Z \subseteq Y$ be
a vector subspace.

\begin{definition} Let $\Hom_G(\Lambda,Y;Z)$ denote the $G$-equivariant
homomorphisms $\phi$ from $\Lambda$ to $Y$ such that
$\phi([1]) \in Z$. 
For a left ideal $M \subseteq \Lambda$, let
$\Hom_G(\Lambda/M,Y;Z)$ denote the $G$-equivariant
homomorphisms $\phi$ such that $\phi([1]\kern-0.3em{\mod M})  \in Z$.
\end{definition}

\Remark  The identification of $\Hom_G(\La,Y)$ with $Y$
identifies $\Hom_G(\La,Y;Z)$ with $Z$.

\medskip

Suppose that $Y$ and $Z$ have compatible rational structures,
namely, there exists a left $\La_F$-module $Y_F$, an $F$-vector subspace
$Z_F \subseteq Y_F$ and isomorphisms $Y = Y_F \otimes_F E$, $Z = Z_F \otimes_F E$
compatible with the inclusion $Z \subseteq Y$. Then if $M$ is a generic left
ideal one may expect the homomorphism group 
$\Hom_G(\Lambda/M,Y;Z)$ to be `as small as possible'. This expectation
is borne out by the following result, which is the main form in which
we apply our generic hypothesis.

\begin{lemma} Let the left ideal $M \subseteq \La$ be right generic 
with respect to $D$. Suppose that $Y$ is a left $\La$-module and
$Z \subseteq Y$ a $D$-module subspace, and that $Y$ and $Z$ admit
compatible rational structures. Suppose furthermore that $Y$
admits a $\La$-module injection $Y \rightarrow \La$. Then for all
left ideals $N \subseteq \La$ with $M \simeq N$ as $G$-modules,
$$\dim(\Hom_G(\Lambda/M,Y;Z)) \le \dim(\Hom_G(\Lambda/N,Y;Z)).$$
\label{lemma:maingeneric}
\end{lemma}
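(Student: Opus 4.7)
The plan is to translate the computation of $\Hom_G(\La/M,Y;Z)$ into an intersection problem inside $\La$ itself and then invoke Lemma~\ref{lemma:dual2}. Using the standard identification $\phi \mapsto \phi([1])$, a $G$-equivariant map $\phi : \La \to Y$ is determined by $y=\phi([1])$ through $\phi(x)=xy$, and $\phi$ descends to $\La/M$ exactly when $My=0$; hence $\Hom_G(\La/M,Y;Z)=\{y\in Z:My=0\}$. Choosing a $\La$-module embedding $\iota:Y \hookrightarrow \La$ (supplied by hypothesis) and using its $\La$-linearity, the condition $My=0$ becomes $\iota(y) \in [M]\La$, so
$$\Hom_G(\La/M,Y;Z) \ \simeq \ \iota(Z) \cap [M]\La,$$
and analogously for $N$.

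The delicate step, which I expect to be the main obstacle, is to arrange that $\iota$ can be chosen so that $\iota(Z)$ is a left rational $D$-subspace of $\La$ for the preferred $F$-structure; only then can the generic hypothesis on $M$ be applied. I would exploit semisimplicity of $\La = E[G]$: $Y$ is automatically cyclic as a $\La$-module (since it is isomorphic to a left ideal of the finite product of matrix algebras $\La$), so I would pick a rational cyclic generator $y_0 \in Y_F$ --- any sufficiently generic element of $Y_F$ will do --- giving a rational presentation $Y \simeq \La/M_0$ with $M_0 = \mathrm{Ann}_{\La}(y_0)$ defined over $F$. By Lemma~\ref{iota} the right $\La$-module $[M_0]\La$ is cyclic, so one may choose a generator $\nu_0 \in [M_0]\La_F$, and the assignment $\iota: [\mu] \mapsto \mu\nu_0$ then provides an $F$-rational injective $\La$-map $Y \hookrightarrow \La$. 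Combined with the $F[D]$-stability of $Z_F$ implicit in the compatibility-of-rational-structures hypothesis, this forces $\iota(Z) = \iota(Z_F) \otimes_F E$ to be a left rational $D$-subspace of $\La$.

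The conclusion is then essentially automatic. By Lemma~\ref{lemma:dual2}, right-genericity of the left ideal $M$ is equivalent to left-genericity of the right ideal $[M]\La$. Since $M \simeq N$ as $G$-modules, semisimplicity of $\La$ gives $\La/M \simeq \La/N$, and Lemma~\ref{iota} then gives $[M]\La \simeq (\La/M)^{\vee} \simeq (\La/N)^{\vee} \simeq [N]\La$ as right $G$-modules. Applying the definition of left genericity (dual to Definition~\ref{definition:definitionofgeneric}) with the left rational $D$-subspace $V = \iota(Z)$ and the competitor right ideal $[N]\La$ produces
$$\dim \bigl(\iota(Z) \cap [M]\La \bigr) \ \le \ \dim \bigl(\iota(Z) \cap [N]\La \bigr),$$
which is the claimed inequality after the identifications of the first paragraph.
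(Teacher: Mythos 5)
Your proof is correct and takes essentially the same route as the paper: it identifies $\Hom_G(\La/M,Y;Z)$ with $\iota(Z)\cap[M]\La$ inside $\La$, uses Lemma~\ref{lemma:dual2} to transfer right-genericity of $M$ to left-genericity of $[M]\La$, and then applies the definition of genericity with the rational left $D$-subspace $\iota(Z)$ against the competitor right ideal $[N]\La\simeq[M]\La$. The only addition is your explicit construction of a rational injection $Y\hookrightarrow\La$ (via a rational cyclic generator of $Y_F$ and a rational generator of $[M_0]\La_F$), a step the paper dispatches by simply asserting that an injection compatible with rational structures may be chosen.
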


\begin{proof} Choose an injection $Y \hookrightarrow \La$ compatible
with rational structures. Such a map makes $Z$ a rational
left $D$-submodule of $\La$. On the other hand one has the following
identification:
$$\Hom_G(\Lambda/M,Y;Z)) = \Hom_G(\Lambda/M,Y) \cap \Hom_G(\Lambda,Y;Z) 
= \Hom_G(\Lambda/M,\Lambda) \cap
\Hom_G(\Lambda,Y;Z).$$
By Lemma~\ref{lemma:dual} we may write this as
$[M]\Lambda  \cap Z$.
By Lemma~\ref{lemma:dual2}, $[M]\Lambda$ is left generic
with respect to $D$. Thus, as $Z$ is a left rational $D$-module,
$$
\dim(\Hom_G(\Lambda/M,Y;Z)) =
\dim([M]\Lambda \cap Z) \le \dim([N] \Lambda \cap Z)
=  \dim(\Hom_G(\Lambda/N,Y;Z)),
$$
for any left $E[G]$-ideal  $N$ such that there exists a left $E[G]$-module isomorphism $N \simeq M$.
\end{proof}

In order to apply this lemma, we shall describe exactly what
vector subspaces of $Y$ are of the form $\Hom_G(\Lambda/N,Y)$ for some  left $E[G]$-ideal $N\subset \La$. If $\IR(G)$ denotes the set of irreducible representations of $G$, for each  $i \in \IR(G)$,
fix a choice $V_{i,F}$  of a left $F[G]$-module that, as $G$-representation, corresponds to the irreducible representation $i$.  Let $V_{i,F}^*:= \Hom_F(V_{i,F},F)$.   Put $V_i:= V_{i,F}\otimes_FE$ and $V_i^*: = V_{i,F}^*\otimes_FE \simeq \Hom_E( V_{i}^*,E)$.

 There is  a canonical decomposition
$$\Lambda = \bigoplus_{i \in \IR(G)}  V_i \otimes {V_i}^{*}.$$
All (left) $E[G]$-modules $Y$ can be written as
$\displaystyle{Y = \bigoplus_{i \in {\IR}(G)} V_i \otimes {T_i}^*}$,
for some vector space ${T_i}^*$ with
$\dim({T_i}^*) = \dim(\Hom(Y,V_i))$.
Requiring that $Y$ admits an injective $E[G]$-module homomorphism into $\La$ is equivalent to
insisting that
$\dim({T_i}^*) \le \dim({V_i}^*)$. Any left ideal $N$  contained in $E[G]$ is then expressible as $$N = \bigoplus_{i \in {\IR}(G)} V_i \otimes {T_i}^* \subset
 \bigoplus_{i \in {\IR}(G)} V_i \otimes {V_i}^*$$ where the $T_i^*$ are $E$-vector subspaces  $T_i^* \subset {V_i}^*.$
Similarly, any right $E[G]$-module  is of the form
$\displaystyle{ \bigoplus_{i \in {\IR}(G)} S_i \otimes {V_i}^{*}}$.

\medskip

If the left $E[G]$-module $Y$ has a rational structure, an $E[G]$-module homomorphism  $Y \rightarrow \La$ is
compatible with this structure if and only if the corresponding $E$-linear homomorphism
${T_i}^* \rightarrow {V_i}^*$ is compatible with the rational structures on these $E$-vector spaces.

Also, we may express the right $E[G]$-submodule $[N]\Lambda\subset \La$ 
as  $$[N]\Lambda =  \bigoplus S_i \otimes {V_i}^{*}.$$ Note that the isomorphism class of $N$ as $E[G]$-module is completely determined by
(and determines)  the numbers $\dim(S_i)$ (for $i \in \IR(G)$).
Now consider the natural chain of inclusions of $E$-vector spaces  $$\Hom_G(\Lambda/N,Y)\subset \Hom_G(\Lambda,Y)=Y \subset \La.$$  Viewing, then, both  $E$-vector spaces $[N]\Lambda$ and $\Hom_G(\Lambda/N,Y)$ as subspaces of $\La$, we have the formula 
 $$\Hom_G(\Lambda/N,Y)= Y \cap [N]\La  \subset  \La $$  giving us 
$$\Hom_G(\Lambda/N,Y)=  \left(\bigoplus V_i \otimes {T_i}^*\right)\cap \left(\bigoplus S_i \otimes {V_i}^*\right)= \bigoplus S_i \otimes {T_i}^* \subseteq \La.$$

\medskip

The content of Lemma~\ref{lemma:maingeneric} is that if $N$ is generic the subspaces
$S_i \subseteq V_i$ are ``maximally skew'' to any rational
$D$-submodule $Z$ of $\La$, in the sense that
$$\dim\left(Z \cap \bigoplus S_i \otimes {T_i}^* \right) 
\le \dim\left(Z \cap \bigoplus S'_i \otimes {T_i}^* \right),$$
for any $S'_i \subseteq V_i$ with $\dim(S_i) = \dim(S'_i)$.

\medskip

If $Y' \subseteq Y$ is any $E$-vector subspace, define its $
\La$-annihilator in the evident 
way: 
$$N(Y'):= \{\lambda \in \La \ | \ \lambda\cdot y' = 0\ {\rm for\ all \ } y' \in Y'\}.$$  
So, $N(Y') \subset \La$ is a left ideal. Putting $N := N(Y')$ we have the 
inclusions $$Y' \subset Y\cap[N]\La = \Hom_G(\La/N, Y)$$ and the following 
lemma  provides a characterization of which $E$-vector subspaces $Y'$ have the 
property that this inclusion is an isomorphism, i.e., it offers us a 
characterization of the $E$-vector subspaces $Y'$ that
 are of the form $\Hom_{\La}(\Lambda/N,Y)  = Y\cap[N]\La  \subset Y$, 
for some left ideal $N \subset \La$.
  
  Let $Y_i$ denote the $V_i$-isotypic component of $Y$.
 If $Y' \subseteq Y$ is an $E$-vector subspace and $i\in {\IR}(G)$,
 we denote by $Y_i'$ the intersection of $Y'$ with the $i$-isotypic 
component of the left $E[G]$-module $\La$, or equivalently, $Y'_i=Y'\cap Y_i$. 
Any irreducible sub-$E[G]$-module 
in $Y = \bigoplus_{j \in {\IR}(G)} V_j \otimes {T_j}^*$ that is 
isomorphic to $V_i$ is of the 
form  $V_i\otimes {\mathcal L}_i$ for ${\mathcal L}_i \subset {T_j}^*$
for some one-dimensional $E$-subvector space of ${T_j}^*$.   

\begin{lemma}
 Let $Y' \subseteq Y$ be an $E$-vector subspace.  The following bulleted statements are equivalent.
 \begin{itemize}
 \item  For the left ideal $N=N(Y')\subset \La$ the inclusion  
$Y'\subseteq\Hom_G(\Lambda/N,Y)$ described above is an isomorphism.
\item  There are $E$-vector subspaces $S_i \subset V_i$ such that the diagram

$$\begin{diagram}
Y'& \ \ \rEquals \ \ & \oplus_i S_i \otimes {T_i}^* \\
\dTo &  & \dTo \\
Y & \ \ \rEquals \ \ & \oplus_i V_i \otimes {T_i}^* \\
\end{diagram}$$
is commutative, where the vertical morphisms are the natural inclusions.
\item

\begin{enumerate} 

\item $Y'$ is generated by the subspaces $Y' \cap V$ as $V$ ranges over
the irreducible $E[G]$-submodules $V \subseteq Y$.

\item If $V,V' \subseteq Y$ are two irreducible $E[G]$-submodules of $Y$,
and $\mathrm{Isom}(V,V') \ne \emptyset$, then any isomorphism
$V \rightarrow V'$ induces an isomorphism $Y' \cap V \rightarrow Y' \cap V'$.

\end{enumerate}
\end{itemize}
Furthermore, if $Y'$ satisfies one, and hence all, of these bullets, then
$$\dim(S_i) = \dim(\Hom_G(\Lambda/N,V_i)) = \dim(V_i) - \dim(\Hom_G(N,V_i))$$
for any $V_i$ with $\Hom_G(V_i,Y) \ne 0$.
\label{lemma:type}
\end{lemma}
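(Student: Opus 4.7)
The plan is to prove the three bulleted conditions are equivalent via the cycle $(2)\Rightarrow(3)\Rightarrow(2)\Rightarrow(1)\Rightarrow(2)$, and then read off the dimension formula from the decomposition in (2). All of the ingredients are purely representation-theoretic consequences of the semisimplicity of $\La=E[G]$ together with Schur's lemma; the setup of the appendix has arranged things so that no genuinely new content is needed.

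First I would handle $(2)\Leftrightarrow(3)$. Given $Y = \bigoplus_i V_i\otimes T_i^*$, every irreducible $E[G]$-submodule of the $V_i$-isotypic part $Y_i = V_i\otimes T_i^*$ has the form $V_i\otimes\ell$ for a line $\ell\subseteq T_i^*$, and by Schur's lemma every $G$-isomorphism between two such copies $V_i\otimes\ell$ and $V_i\otimes\ell'$ is a scalar multiple of the evident identification. Thus if $Y' = \bigoplus_i S_i\otimes T_i^*$, one computes directly that $Y'\cap(V_i\otimes\ell)=S_i\otimes\ell$, so condition (a) holds by letting $\ell$ range over lines in each $T_i^*$, and (b) holds because the intersection is always the (Schur-invariant) subspace $S_i$. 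Conversely, assuming (3), condition (b) together with Schur forces $Y'\cap V$ to have the form $S_i\otimes\ell$ for a subspace $S_i\subseteq V_i$ depending only on the isomorphism class $i$, and then (a) yields $Y' = \sum_{V\subseteq Y}(Y'\cap V) = \bigoplus_i S_i\otimes T_i^*$.

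Next I would establish $(1)\Leftrightarrow(2)$ using the structure of ideals in $\La = \bigoplus_i V_i\otimes V_i^*$. Any left ideal $N\subseteq\La$ is a left $G$-subrepresentation, and so decomposes as $N = \bigoplus_i V_i\otimes U_i^*$ for subspaces $U_i^*\subseteq V_i^*$. Dually, any right ideal decomposes as $\bigoplus_i S_i\otimes V_i^*$ for subspaces $S_i\subseteq V_i$, and using the isomorphism $V_i\otimes V_i^*\simeq\End(V_i)$ one checks that the left-annihilator correspondence $N\mapsto[N]\La$ sends $(U_i^*)_i$ to the tuple $(S_i)_i$ with $\dim S_i + \dim U_i^* = \dim V_i$ (this is the duality already exploited in Lemmas~\ref{lemma:dual} and~\ref{lemma:rightmodules}). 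If (2) holds, define $N$ by requiring $[N]\La = \bigoplus_i S_i\otimes V_i^*$; then the inclusion $Y\subseteq\La$ gives $Y\cap[N]\La = \bigoplus_i (V_i\cap S_i)\otimes T_i^* = \bigoplus_i S_i\otimes T_i^* = Y'$, and this $N$ is indeed the annihilator $N(Y')$. Conversely, if $Y' = \Hom_G(\La/N,Y) = Y\cap[N]\La$, writing $[N]\La = \bigoplus_i S_i\otimes V_i^*$ and intersecting with $Y = \bigoplus_i V_i\otimes T_i^*$ gives (2).

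Finally, the dimension formula drops out of the decomposition. Writing $N = \bigoplus_i V_i\otimes U_i^*$, we have $\La/N = \bigoplus_i V_i\otimes(V_i^*/U_i^*)$, and so by Schur
\[
\dim\Hom_G(\La/N,V_i) = \dim(V_i^*/U_i^*) = \dim V_i - \dim U_i^* = \dim S_i,
\]
while $\dim\Hom_G(N,V_i) = \dim U_i^* = \dim V_i - \dim S_i$. The main bookkeeping obstacle is tracking the annihilator correspondence carefully in terms of the isotypic decomposition (in particular verifying that the dimensions $\dim S_i$ and $\dim U_i^*$ of the paired subspaces of $V_i$ and $V_i^*$ are complementary); once that is done, every other step is a direct application of Schur's lemma.
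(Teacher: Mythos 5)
Your argument is correct in substance and takes essentially the same route as the paper's: the isotypic decomposition of $\La$, Schur's lemma for $(2)\Leftrightarrow(3)$, and the left/right annihilator-ideal duality for $(1)\Leftrightarrow(2)$, with the dimension formula read off the decomposition. One small point worth tightening: in your $(2)\Rightarrow(1)$ step you assert that the ideal $N$ you construct coincides with $N(Y')$; this is true on every isotypic piece with $T_i^*\ne 0$, but when $T_i^*=0$ the subspace $S_i$ is not pinned down by $Y'$, so $N$ need not equal $N(Y')$ unless you also declare $S_i=0$ in those degrees. The cleaner patch — and the one implicit in the paper's appeal to the discussion preceding the lemma — is to note that producing \emph{any} left ideal $N$ with $Y'=\Hom_G(\La/N,Y)$ forces $N\subseteq N(Y')$, whence $Y'\subseteq\Hom_G(\La/N(Y'),Y)\subseteq\Hom_G(\La/N,Y)=Y'$, and bullet $(1)$ follows without needing $N=N(Y')$ on the nose.
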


\begin{proof}  First note that for any choice of $E$-vector subspaces $S_i \subset V_i$  (all $i$) there is a unique left ideal $N\subset \La$  such that $$[N]\La = \bigoplus_iS_i\otimes V_i^* \subset \bigoplus_iV_i\otimes V_i^* = \La$$ which (by the discussion just prior to the statement of our lemma) establishes the equivalence of the first two bullets.   We now propose to show the equivalence of the last two bullets.
  
  \begin{enumerate}
\item Suppose
that 
$\displaystyle{Y' = \bigoplus_i S_i \otimes {T_i}^*  \subseteq Y}$.

By Schur's lemma, any left $E[G]$-automorphism of $Y_i=V_i \otimes {T_i}^*$ is induced from a bilinear
isomorphism
 $V_i \times {T_i}^* \rightarrow V_i \times {T_i}^*$ that
is \emph{the identity} on the first factor.

\medskip

 Then, as we have noted, any irreducible $E[G]$-submodule $V \subseteq Y$
is of the form $V_i \otimes {\mathcal L}_i$ for some one
dimensional subspace ${\mathcal L}_i \subset {T_i}^*$. 
Since
$$ Y'\cap(V_i\otimes {\mathcal L}_i) = S_i \otimes {\mathcal L}_i,$$ 
such spaces clearly generate
$Y'$.
Thus $Y'$ satisfies the conditions of part $(1)$.

\medskip

For part $(2)$,  we may write $V =  V_i\otimes {\mathcal L}_i$
and $V' = V_i\otimes {\mathcal L}_i'$ for one dimensional
subspaces ${\mathcal L}_i, {\mathcal L}_i' \subset  {T_i}^*$.
Any  left $E[G]$-module isomorphism $V_i\otimes {\mathcal L}_i \rightarrow
V_i\otimes {\mathcal L}_i'$ arises from
a bilinear map $V_i \times {T_i}^* \rightarrow V_i \times {T_i}^*$
which is the identity on $V_i$ and sends $ {\mathcal L}_i$ to $ {\mathcal L}_i'$.
 All such maps
induce isomorphisms 
$$S_i\otimes {\mathcal L}_i= Y'\cap(V_i\otimes {\mathcal L}_i)
\  {\stackrel{\simeq}{\longrightarrow}} \ 
Y'\cap(V_i\otimes {\mathcal L}_i')=S_i\otimes {\mathcal L}_i'.$$

\medskip
\item
Suppose that $Y'$ is a module satisfying conditions $(1)$ and $(2)$.
Choose any one dimensional subspace ${\mathcal L}_i \subset {T_i}^*$, let
$V = V_i \otimes {\mathcal L}_i \subset Y$, and let
$$Y' \cap V = 
Y' \cap (V_i \otimes {\mathcal L}_i) =: S_i \otimes {\mathcal L}_i.$$
  If ${\mathcal L}_i' \subset {T_i}^*$ is any other one dimensional subspace, then
there is a left $E[G]$-isomorphism $V_i \otimes {T_i}^* \rightarrow
V_i \otimes {T_i}^*$ acting as the identity on the first factor and sending
$V = V_i \otimes{\mathcal L}_i$ to $V':= V_i \otimes{\mathcal L}_i'$. Hence by condition $(2)$ we deduce that
$Y' \cap V' = 
Y' \cap (V_i \otimes {\mathcal L}_i') \simeq S_i \otimes {\mathcal L}_i'$, and letting ${\mathcal L}_i$ range over all one dimensional
subspaces of ${T_i}^*$ we conclude that $S_i \otimes {T_i}^* \subset Y_i'$,
where $Y_i'$ is the $V_i$-isotypic component of $Y'$.
To deduce that this is an equality, recall by condition $(1)$ that $Y$
is generated by subspaces of the form $Y' \cap V$, and hence
$Y'_i$ is generated by subspaces of the form $Y' \cap V$ 
with $V \simeq V_i$. The above argument shows that all such
intersections
 are all of the form $S_i \otimes {\mathcal L}_i$ 
for some ${\mathcal L}_i \subset {T_i}^*$, and hence
$S_i \otimes {T_i}^* = Y_i'$, and we are done.
\end{enumerate}
\medskip

For the statements regarding dimensions of isotypic components,
suppose that $\Hom_G(V_i,Y) \ne 0$. Then ${T_i}^* \ne 0$,
and thus $S_i$ is determined uniquely from
$S_i \otimes {T_i}^*$.
Since $S_i$ is identified with the $V_i$-isotypic component of
$[N]\Lambda$, the dimension formulas follow.
\end{proof}

\end{document}